\theoremstyle{definition}
\newtheorem{definition}{Definition}[section]
\newtheorem{remark}{Remark}[section]
\newtheorem{example}{Example}[section]
\theoremstyle{plain}
\newtheorem{theorem}{Theorem}[section]
\newtheorem{lemma}{Lemma}[section]
\newtheorem{proposition}[lemma]{Proposition}
\newtheorem{corollary}[lemma]{Corollary}
\newcommand{\Fr}{\mathcal{F}}
\newcommand{\Ideal}{\mathcal{I}}
\newcommand{\LM}{\Lambda}
\newcommand{\Dp}{\mathrm{Dp}}
\newcommand{\Ft}{\mathrm{F}}
\newcommand{\Low}{\mathrm{L}}
\newcommand{\gr}{\mathrm{gr}}
\newcommand{\Gr}{\mathrm{Gr}}
\tikzset{arrow/.style={
        decoration={markings,
            mark= at position #1 with {\arrow{stealth}},
        },
        postaction={decorate}
    }
}
\tikzset{reversearrow/.style={
        decoration={markings,
            mark= at position #1 with {\arrow{stealth reversed}},
        },
        postaction={decorate}
    }
}
\newcommand{\fc}{\textbf{fc}}
\newcommand{\nfc}{\textbf{nfc}}
\begin{document}

\title{Construction of a Quotient Ring of $\mathbb{Z}_2\Fr$ in which a Binomial $1 + w$ is Invertible Using Small Cancellation Methods}
%\author{E. Rips, A. Kanel-Belov, A. Atkarskaya, E. Plotkin}

\author{A.\,Atkarskaya}
\affil{Department of Mathematics, Bar-Ilan University, 5290002 Ramat Gan, Israel \\ atkarskaya.agatha@gmail.com}

\author{A.\,Kanel-Belov \footnote{The paper was supported by Russian Science foundation grant \textnumero 17-11-01337.}}
\affil{Department of Mathematics, Bar-Ilan University, 5290002 Ramat Gan, Israel \\ beloval@macs.biu.ac.il}

\author{E.\,Plotkin}
\affil{Department of Mathematics, Bar-Ilan University, 5290002 Ramat Gan, Israel \\ plotkin.evgeny@gmail.com}

\author{E.\,Rips}
\affil{Department of Mathematics, The Hebrew University of Jerusalem, Givat Ram, 9190401 Jerusalem, Israel \\ eliyahu.rips@mail.huji.ac.il}

\date{}

\maketitle

\textit{Dedicated to Professor Boris\,I.\,Plotkin with love and respect on the occasion of his 90th Anniversary.}

\begin{abstract} We apply small cancellation methods originating from group theory to investigate the structure of a quotient ring $\mathbb{Z}_2\Fr / \Ideal$, where $\mathbb{Z}_2\Fr$ is the group algebra of the free group $\Fr$ over the field $\mathbb{Z}_2$, and the ideal $\Ideal$ is generated by a single trinomial $1 + v + vw$, where $v$ is a complicated word depending on $w$. In $\mathbb{Z}_2\Fr / \Ideal$ we have $(1 + w)^{-1} = v$, so $1 + w$ becomes invertible. We construct an explicit linear basis of $\mathbb{Z}_2\Fr / \Ideal$ (thus showing that $\mathbb{Z}_2\Fr / \Ideal \neq 0$). This is the first step in constructing rings with exotic properties.
\end{abstract}

\section{Introduction}
This paper describes the first step in a construction of a skew field with a finitely generated multiplicative group.

We will construct this skew field as a quotient ring of a group algebra of a finitely generated free group $\Fr$. The full construction of such a skew field would involve an iterative procedure, in which every step is similar to the one described in this paper, in a more complicated situation. The resulting injective limit will be either $0$ or a skew field.

Our main objective is to show that the resulting skew field is non trivial, and, moreover, to obtain a skew field of infinite dimension over its center in which every non zero element is equal to a monomial, that is, an element of the free group $\Fr$. Hence, when $\Fr$ is finitely generated, the multiplicative group of this skew field is finitely generated. In order to show that the resulting skew field is non trivial, we need to develop its structure theory.

In this paper we consider the following problem. Let $\mathbb{Z}_2\Fr$ be the group algebra of the free group $\Fr$ over the field $\mathbb{Z}_2$, and let $1 + w$ be a binomial. We would like to find a quotient ring of $\mathbb{Z}_2\Fr$ in which the image of $1 + w$ is invertible, without losing control of the quotient ring relations.

Our way to deal with this problem is influenced by an analogue with small cancellation in groups. Namely, we equate $(1 + w)^{-1}$ to a complicated word
\begin{equation*}
v = x^{\alpha}y x^{\alpha + 1}y \cdots x^{\beta - 1}y
\end{equation*}
with $\vert w \vert \ll \alpha \ll \beta$, where $\vert w \vert$ is a word length of $w$ in $\Fr$. The monomial $v$ exhibits small cancellation properties because every subword of $v$ that contains at least two letters $y$ appears in $v$ only once (\cite{LyndonSchupp}).

In the case of groups we consider semicanonical monomials, namely, monomials that do not contain parts of the relations (or of their cyclic conjugates and inverses) that are too big. It turns out that semicanonical monomials representing the same element of the group are connected by a so-called one-layer diagram. The transition between semicanonical monomials representing the same element is done by substituting subwords of the relations by the inverses of their complements repeatedly (such transitions are called turns).

Here we imitate this process with inevitable complications. Since our relations are not binomial but polynomial, we substitute a subword by the sum of the rest of the monomials of the relation. We call such transitions multi-turns. The simple-minded picture goes like this
\begin{center}
\begin{tikzpicture}
\draw[|-|, black, thick] (0,0) to node[at start, below] {$U$} (10,0);
\draw[|-|, black, very thick] (1,0)--(2.5,0) node[midway, above] {$a_{h_1}^{(1)}$};
\draw[|-|, black, very thick] (3.2,0)--(4.7,0) node[midway, above] {$a_{h_2}^{(2)}$};
\draw[|-|, black, very thick] (7.5,0)--(9,0) node[midway, above] {$a_{h_k}^{(k)}$};
\path (4.7, 0) to node[midway, above] {$\ldots$} (7.5, 0);
\end{tikzpicture}
\end{center}
where $U$ is a monomial, and we have polynomial relations
\begin{equation*}
\sum\limits_{j_i = 1}^{l_i} a_{j_i}^{(i)} = 0,\ i = 1, \ldots, k,\ h_i \in \lbrace 1, \ldots, l_i \rbrace.
\end{equation*}
Performing a number of multi-turns, we obtain sums of monomials of the form
\begin{center}
\begin{tikzpicture}
\draw[|-|, black, thick] (0,0)--(10,0);
\draw[|-|, black, very thick] (1,0)--(2.5,0) node[midway, above] {$a_{j_1}^{(1)}$};
\draw[|-|, black, very thick] (3.2,0)--(4.7,0) node[midway, above] {$a_{j_2}^{(2)}$};
\draw[|-|, black, very thick] (7.5,0)--(9,0) node[midway, above] {$a_{j_k}^{(k)}$};
\path (4.7, 0) to node[midway, above] {$\ldots$} (7.5, 0);
\end{tikzpicture}
\end{center}
We have $l_1\cdot \ldots \cdot l_k$ monomials with linear dependencies induced by the multi-turns. In our (simple-minded) case we obtain a linear space of dimension $(l_1 - 1)\cdot\ldots\cdot (l_k - 1)$.

The actual situation is more complicated, due to a number of factors. Let us list some of them:
\begin{itemize}
\item
Some of the words $a^{(i)}_{j_i}$ might be short, causing certain degeneracies.
\item
There can be highly non-trivial interactions between the neighbour occurrences, so that the order of performing the multi-turns might matter.
\end{itemize}
This explains why pursuing this program turns out not entirely simple. We have to introduce a number of elaborate concepts to deal with the monomials and the subwords of the relations appearing in them (for this, see especially Section~\ref{monomials_chart_dependencies}).

To control the degenerations, we introduce a (decreasing) filtration and study its properties (Section~\ref{structure_calc}). Then we need to study the interaction between the linear dependencies and the filtration. Because of the properties like transversality and non-degeneracy, this interaction has nice properties (Section~\ref{fin_quotient_spaces_section}, Section~\ref{basis_descr_section}). Our final result (Theorem~\ref{whole_quotient_ring_structure}) supplies an explicit linear basis for the quotient ring. In particular, the quotient ring does not collapse to $0$.

Then we introduce a linear generating set of our quotient ring with particularly nice properties. For this generating set, the multiplication can be expressed as a sum of monomials forming thin triangles with the factors (Section~\ref{multiplication_geometry_section}, Theorem~\ref{thin_triangles_theorem}). Small cancellation groups are hyperbolic, so their multiplication is expressed by thin triangles. While we do not possess the concept of a hyperbolic ring (even for algebras over a field), the quotient ring we construct does display some features of what might be expected for a ``hyperbolic ring''.

\section{Basic definitions}
\label{basic_def}
Consider the group ring $\mathbb{Z}_2\Fr$, where $\Fr$ is a free group with at least 4 free generators. We will call elements of $\Fr$ words or monomials. We deal only with reduced monomials unless the converse is explicitly stated. For our purposes we can use any field $k$, but we choose the field $\mathbb{Z}_2$ to simplify our calculations. Let us fix $w \in \Fr$, an arbitrary cyclically reduced primitive (not a proper power) word from $\Fr$. Fix positive integers $\alpha$ and $\beta$ such that $\vert w \vert \ll \alpha \ll \beta$, where $\vert w \vert$ is a word length of $w$ in $\Fr$. Our aim is inverting of the binomial $1 + w$. Consider the word
\begin{equation}
\label{v_def}
v = x^{\alpha}y x^{\alpha + 1}y \cdots x^{\beta - 1}y
\end{equation}
such that $w$ does not start or end with the letters $x, y, x^{-1}, y^{-1}$. We are going to study a structure of the quotient ring $\mathbb{Z}_2\Fr / \Ideal$, where
\begin{equation}
\label{I_def}
\Ideal = \langle1 + v + vw\rangle.
\end{equation}
Clearly, in the quotient ring we have $v(1 + w) = 1$. Multiplying it by $v$ on the right side, we obtain $v(1 + w)v = v$. Since $v$ is a word, it is invertible in $\mathbb{Z}_2 \Fr$, so, it is also invertible in $\mathbb{Z}_2\Fr / \Ideal$ or the quotient ring is trivial. Multiplying the last equation by $v^{-1}$, we obtain $(1 + w)v = 1$. Thus, $v^{-1} = 1 + w$ in the ring $\mathbb{Z}_2\Fr / \Ideal$.

\begin{definition}
Suppose $M(v, w)$ is a non-commutative monomial over the words $v, w$. We call arbitrary subwords of monomials $M(v, w)$ \emph{(v, w)-generalized fractional powers} (or simply \emph{generalized fractional powers}).
\end{definition}

Let $v_1$ be an arbitrary subword of $v$ (or of $v^{-1}$). We define an additive (length) measure (aka $\LM$-measure) on subwords of $v$ (or $v^{-1}$) in the following way
\begin{equation}
\label{measure}
\LM(v_1) = \frac{m_y(v_1)}{\beta - \alpha},
\end{equation}
where $m_y(v_1)$ is the number of occurrences of the letter $y$ (or of the letter $y^{-1}$) in $v_1$. Obviously, $\LM(v) = 1$ ($\LM(v^{-1}) = 1$). For any subword $w_1$ of $w$ we put $\LM(w_1) = 0$. From now on, let us call $\LM$-measure of a generalized fractional power the sum of $\LM$-measure of its parts.

Let $U$ be a word and $U_1$ be its subword. We call the triple that consists of $U$, $U_1$ and the position of $U_1$ in $U$ \emph{an occurrence of $U_1$ in $U$}.

An arbitrary word from $\Fr$ may contain occurrences of generalized fractional powers. While the notion of a maximal occurrence seems to be intuitively simple, it turns out that the notion we actually need has considerable subtleties.

So, we define a maximal occurrence in the following way. Let $a$ be a generalized fractional power, that is, $a$ is an occurrence in a monomial $M(v, w) = M_LaM_R$. Assume that $M_L = M_1(v, w)M_L^{\prime}$, where $M_1(v, w)$ is a monomial in $v, w$, and $M_L^{\prime}$ has no initial subword equal to a monomial in $v, w$. In the same way, assume that $M_R = M_R^{\prime}M_2(v, w)$, where $M_2(v, w)$ is a monomial in $v, w$, and $M_R^{\prime}$ has no final subword equal to a monomial in $v, w$. Then we consider $a$ as an occurrence in the monomial
\begin{equation*}
M_0(v, w) = M_1(v, w)^{-1}M(v, w)M_2^{-1}(v, w) = M_L^{\prime}aM_R^{\prime}.
\end{equation*}
Consider a set of monomials
\begin{equation*}
\mathcal{A} = \lbrace M(v, w) \mid M(v, w) = M_i(v, w)M_0(v, w)M_j(v, w), i, j \in \mathbb{N} \rbrace,
\end{equation*}
where $M_i(v, w)$ and $M_j(v, w)$ are arbitrary monomials in $v$, $w$. Then the occurrence of $a$ in $M_0(v, w)$ can be considered as the occurrence of $a$ in $M(v, w) \in \mathcal{A}$ with the same position in $M_0(v, w)$.

Suppose $a$ is an occurrence in a word $U \in \Fr$, a letter $a_1$ prolongs $a$ in $U$ from the left side, a letter $a_2$ prolongs $a$ in $U$ from the right side. If $a$ is an initial subword of $U$, $a_1$ is absent; if $a$ is a final subword of $U$, $a_2$ is absent. If for any $M(v, w) \in \mathcal{A}$ the occurrence $a$ can be prolonged neither by $a_1$ on the left nor by $a_2$ on the right in $M(v, w)$, then we call $a$ \emph{a maximal occurrence of a generalized fractional power in $U$}.

Let $U = U_L a U_R$. The above definition in fact means the following. The occurrence $a$ is a maximal occurrence of a generalized fractional power in $U$ if and only if it satisfies one of the conditions:
\begin{enumerate}[label=(L\arabic*)]
\item
$U_L$ is the empty word;
\item
$U_L = U_L^{\prime}a_1$, $M_L^{\prime} = M_L^{\prime\prime}a_1^{\prime}$, $a_1 \neq a_1^{\prime}$;
\item
$U_L = U_L^{\prime}a_1$, $M_L^{\prime}$ is the empty word, the last letter in each $M_i(v, w)$ is different from $a_1$;
\end{enumerate}
and one of the conditions
\begin{enumerate}[label=(R\arabic*)]
\item
$U_R$ is the empty word;
\item
$U_R = a_2U_R^{\prime}$, $M_R^{\prime} = a_2^{\prime}M_R^{\prime\prime}$, $a_2 \neq a_2^{\prime}$;
\item
$U_R = a_2U_R^{\prime}$, $M_R^{\prime}$ is the empty word, the first letter in each $M_j(v, w)$ is different from $a_2$.
\end{enumerate}

\begin{definition}
Let $\tau$ be some small value, $U$ be an arbitrary word from $\Fr$. We call the set of maximal occurrences of generalized fractional powers in $U$ of $\LM$-measure greater or equal to a given threshold $\tau$ \emph{the chart of the word $U$}, and we call the corresponding occurrences of generalized fractional powers \emph{members of the chart}.
\end{definition}

Let us put $\varepsilon = \frac{1}{\beta - \alpha}$. Further we will assume that $\tau \geqslant 10\varepsilon$.

Any subword of $v$ containing at least two letters $y$ appears in a unique way in $v$. Moreover, a subword of $v^l$ containing at least two letters $y$ appears in $v^l$ uniquely modulo the period $v$. Therefore, if the $\LM$-measure of some maximal occurrence of a generalized fractional power is greater or equal to $2\varepsilon$, it can not be properly contained in another occurrence of a generalized fractional power (because otherwise it would not be maximal). In particular, given the chart of some word, one member of the chart can not be properly contained in another. However, members of the chart may have overlaps.

Consider an overlap of two members of the chart. If the overlap is a common part of two subwords of $v^l$, then according to the last remark, the $\LM$-measure of the overlap is not greater than $\varepsilon$. If the overlap contains a subword of $w^k$, it may have a more complicated form. Since $\alpha \gg \vert w \vert$, $w$ does not contain subwords $y^{\delta_1}x^{\gamma}y^{\delta_2}$, where $\vert \gamma\vert \geqslant \alpha$. Hence, since the $\LM$-measure of subwords of $w$ is equal to zero, the $\LM$-measure of the overlap still is not greater than $\varepsilon$ for both members of the chart. In particular, since $\tau \geqslant 10\varepsilon$, we never have a situation when one member of the chart is fully covered by others.

\begin{definition}
\label{v_diagram}
Consider oriented graphs with edges marked by generators of the group $\Fr$. Take such a graph of the form
\begin{equation}
\begin{tikzpicture}
\begin{scope}
\coordinate (O) at ($(0,0)+(90:1.5 and 2.2)$);

\draw[black, thick, reversearrow=0.95] (0, 0) ellipse (1.5 and 2.2) node at (1.8, 0) {$v$};
\node[circle,fill,inner sep=1.2] at (O) {};
\path let \p1 = (O) in node at (\x1-16, \y1+3) {$O$};

\draw[black, thick, reversearrow=0.3] (0, 2.2+0.8) ellipse (0.6 and 0.8) node at (0.3, 2.2+1.8) {$w^k$};
\draw[black, thick, reversearrow=0.3] (0, 2.2+0.7*0.6) ellipse (0.5*0.6 and 0.7*0.6) node at (0.3, 1.3 + 1.8) {$w$};

\draw[black, thick, reversearrow=0.7] (0, 2.2-0.8) ellipse (0.6 and 0.8) node at (0.3, 2.2-1.7) {$w^{-k}$};
\draw[black, thick, reversearrow=0.7] (0, 2.2-0.7*0.6) ellipse (0.5*0.6 and 0.7*0.6) node at (0.3, 2.2-1) {$w^{-1}$};

\end{scope}
\end{tikzpicture}
\label{v_diagram_picture}
\end{equation}
Here to each integer power of $w$ corresponds a separate arc. That is, there are infinitely many arcs that correspond to different $w^{k}$. We call this graph a \emph{$v$-diagram}.
\end{definition}
Assume that we have an oriented path in the graph~\eqref{v_diagram_picture}. When we go along this path, we can write down the mark of an edge if we pass the edge in the positive direction, and we can write down the inverse to the mark of an edge if we pass the edge in the negative direction. As a result, we obtain a generalized fractional power (possibly after cancellations if there are any).

It is easy to see that to each monomial over $v, w$ there corresponds a path in the graph~\eqref{v_diagram_picture} with the initial and the final vertex $O$. By the definition, every generalized fractional power is a subword in a monomial over $v, w$. Hence, given a generalized fractional power $M$, one can specify two points $I$ and $F$ on the graph \eqref{v_diagram_picture} such that $M$ corresponds to the unique path starting at $I$ and ending at $F$. For the sake of uniqueness, we assume that we always choose an arc with maximal absolute value of degree of $w$ along the path. So, one can see that $M$ corresponds to the path in the graph of one of the types \eqref{path_type1} --- \eqref{path_type3}.

Note that if a generalized fractional power is of $\LM$-measure strictly greater than $\varepsilon$, then the positions of the initial and the final points that belong to the $v$-arc are uniquely determined.

In what follows we use for $v$ the notation $v = v_iv_mv_f$, where $v_i$ is some initial part, $v_m$ is some middle part and $v_f$ is some final part of $v$ (any part is allowed to be empty). Analogously to the notion of $v$-diagram, in this paper we represent all monomials (not only generalized fractional powers) as segments that consist of oriented edges marked by generators of the group $\Fr$; such segments we always read from left to right. So, for $v$ we have

\vspace{0.1cm}
\begin{center}
\begin{tikzpicture}
\begin{scope}[thick,decoration={
    markings,
    mark=at position 0.6 with {\arrow{stealth}}}
    ]

    \draw[|-|, black, postaction={decorate}] (-2,0)--(-0.7,0) node [below, near start] {$v$};
    \node at (-0.7*0.5 , 0) {$=$};
    \draw[|-, black, postaction={decorate}] (0,0)--(1,0) node [midway, below] {$v_i$};
    \draw[|-, black, postaction={decorate}] (1,0)--(2,0) node [midway, below] {$v_m$};
    \draw[|-|, black, postaction={decorate}] (2,0)--(3,0) node [midway, below] {$v_f$};
\end{scope}
\end{tikzpicture}
\end{center}
\vspace{0.1cm}
Similarly, for $w$ we use the notation $w = w_iw_mw_f$, where $w_i$ is some initial part, $w_m$ is some middle part and $w_f$ is some final part of $w$ (any part is allowed to be empty).

Let us enumerate all possible positions of points $I$ and $F$ on the $v$-diagram. Thereby, we enumerate all possible types of generalized fractional powers. In every case, we explicitly write the corresponding forms of generalized fractional powers. As above, $M(v, w)$ is a monomial over $v$ and $w$. First, both points $I$ and $F$ may lie on the $v$-arc. Then they divide the $v$-arc into three parts and, according to the notations introduced above, we denote them $v_i$, $v_m$ and $v_f$ and obtain the pictures
\begin{equation}
\begin{tikzpicture}
\begin{scope}
\coordinate (O) at ($(0,0)+(90:1.5 and 2.2)$);
\coordinate (Si) at ($(0,0)+(10:1.5 and 2.2)$);
\coordinate (Se) at ($(0,0)+(195:1.5 and 2.2)$);

%\path (O) arc (90:10:1.5 and 2.2) coordinate[pos= 0.5] (vi);
\coordinate (vi) at (0.96, 1.69);
%\path (Si) arc (10:-360+195:1.5 and 2.2) coordinate[pos= 0.5] (vm);
\coordinate (vm) at (0.32, -2.15);
%\path (Se) arc (195:90:1.5 and 2.2) coordinate[pos= 0.5] (vf);
\coordinate (vf) at (-1.19, 1.33);

%\draw[black, thick] (0, 0) ellipse (1.5 and 2.2);
\draw[black, thick, arrow=0.6] (O) arc (90:10:1.5 and 2.2);
\node[right] at (vi) {$v_i$};
\draw[black, thick, arrow=0.6] (Si) arc (10:-360+195:1.5 and 2.2);
\node[below] at (vm) {$v_m$};
\draw[black, thick, arrow=0.4] (Se) arc (195:90:1.5 and 2.2);
\node[left, yshift=2] at (vf) {$v_f$};

\node[circle,fill,inner sep=1.2] at (O) {};
\path let \p1 = (O) in node at (\x1-16, \y1+3) {$O$};

\node[circle,fill,inner sep=1.2] at (Si) {};
\node[right] at (Si) {$I$};
\node[circle,fill,inner sep=1.2] at (Se) {};
\node[left] at (Se) {$F$};

\draw[black, thick, reversearrow=0.3] (0, 2.2+0.8) ellipse (0.6 and 0.8) node at (0.3, 2.2+1.8) {$w^k$};
\draw[black, thick, reversearrow=0.3] (0, 2.2+0.7*0.6) ellipse (0.5*0.6 and 0.7*0.6);

\draw[black, thick, reversearrow=0.7] (0, 2.2-0.8) ellipse (0.6 and 0.8) node at (0.3, 2.2-1.7) {$w^{-k}$};
\draw[black, thick, reversearrow=0.7] (0, 2.2-0.7*0.6) ellipse (0.5*0.6 and 0.7*0.6);
\node at (0, -4) {$\begin{aligned}
&v_mv_fM(v, w)v_f^{-1},\\
&v_mv_fM(v, w)v_iv_m,\\
&v_i^{-1}M(v, w)v_iv_m,\\
&v_i^{-1}M(v, w)v_f^{-1};
\end{aligned}$
};
\end{scope}
\begin{scope}
\coordinate (O) at ($(5,0)+(90:1.5 and 2.2)$);
\coordinate (Si) at ($(5,0)+(10:1.5 and 2.2)$);
\coordinate (Se) at ($(5,0)+(195:1.5 and 2.2)$);

%\path (O) arc (90:10:1.5 and 2.2) coordinate[pos= 0.5] (vi);
\coordinate (vi) at (5 + 0.96, 1.69);
%\path (Si) arc (10:-360+195:1.5 and 2.2) coordinate[pos= 0.5] (vm);
\coordinate (vm) at (5 + 0.32, -2.15);
%\path (Se) arc (195:90:1.5 and 2.2) coordinate[pos= 0.5] (vf);
\coordinate (vf) at (5 + -1.19, 1.33);

%\draw[black, thick] (0, 0) ellipse (1.5 and 2.2);
\draw[black, thick, arrow=0.6] (O) arc (90:10:1.5 and 2.2);
\node[right] at (vi) {$v_i$};
\draw[black, thick, arrow=0.6] (Si) arc (10:-360+195:1.5 and 2.2);
\node[below] at (vm) {$v_m$};
\draw[black, thick, arrow=0.4] (Se) arc (195:90:1.5 and 2.2);
\node[left, yshift=2] at (vf) {$v_f$};

\node[circle,fill,inner sep=1.2] at (O) {};
\path let \p1 = (O) in node at (\x1-16, \y1+3) {$O$};

\node[circle,fill,inner sep=1.2] at (Si) {};
\node[right] at (Si) {$F$};
\node[circle,fill,inner sep=1.2] at (Se) {};
\node[left] at (Se) {$I$};

\draw[black, thick, reversearrow=0.3] (5, 2.2+0.8) ellipse (0.6 and 0.8) node at (5.3, 2.2+1.8) {$w^k$};
\draw[black, thick, reversearrow=0.3] (5, 2.2+0.7*0.6) ellipse (0.5*0.6 and 0.7*0.6);

\draw[black, thick, reversearrow=0.7] (5, 2.2-0.8) ellipse (0.6 and 0.8) node at (5.3, 2.2-1.7) {$w^{-k}$};
\draw[black, thick, reversearrow=0.7] (5, 2.2-0.7*0.6) ellipse (0.5*0.6 and 0.7*0.6);
\end{scope}
\node at (5, -4) {$\begin{aligned}
&v_f M(v, w)v_i,\\
&v_f M(v, w)v_f^{-1}v_m^{-1},\\
&v_m^{-1}v_i^{-1}M(v, w)v_i,\\
&v_m^{-1}v_i^{-1}M(v, w)v_f^{-1}v_m^{-1}.
\end{aligned}$
};
\end{tikzpicture}
\label{path_type1}
\end{equation}

The next configuration is when the point $I$ lies on the $v$-arc and the point $F$ lies on a $w$-arc or vice versa. Then, according to the above notations, we put $v = v_iv_f$, $w = w_iw_f$ and obtain the pictures
\begin{equation}
\begin{tikzpicture}
\begin{scope}
\coordinate (O) at ($(0,0)+(90:1.5 and 2.2)$);
\coordinate (Se) at ($(0,0)+(320:1.5 and 2.2)$);
\coordinate (Si) at ($(0, 2.2+0.7*0.6)+(40:0.5*0.6 and 0.7*0.6)$);

%\path (O) arc (90:-360+320:1.5 and 2.2) coordinate[pos= 0.5] (vi);
\coordinate (vi) at (1.36, 0.93);
%\path (Se) arc (320:90:1.5 and 2.2) coordinate[pos= 0.5] (vf);
\coordinate (vf) at (-1.36, -0.93);

%\draw[black, thick] (0, 0) ellipse (1.5 and 2.2);
\draw[black, thick, arrow=0.6] (O) arc (90:-360+320:1.5 and 2.2);
\node[right] at (vi) {$v_i$};
\draw[black, thick, arrow=0.4] (Se) arc (320:90:1.5 and 2.2);
\node[left, yshift=2] at (vf) {$v_f$};

\node[circle,fill,inner sep=1.2] at (O) {};
\path let \p1 = (O) in node at (\x1-16, \y1+3) {$O$};

\node[circle,fill,inner sep=1.2] at (Si) {};
\node[above] at (Si) {$I$};
\node[circle,fill,inner sep=1.2] at (Se) {};
\node[below] at (Se) {$F$};

\draw[black, thick, reversearrow=0.3] (0, 2.2+0.8) ellipse (0.6 and 0.8) node at (0.3, 2.2+1.8) {$w^k$};
%\draw[black, thick, reversearrow=0.3] (0, 2.2+0.7*0.6) ellipse (0.5*0.6 and 0.7*0.6);
\draw[black, thick, arrow=0.5] (Si) arc (40:-85:0.5*0.6 and 0.7*0.6);
\draw[black, thick, reversearrow=0.3] (Si) arc (40:360-40:0.5*0.6 and 0.7*0.6);

\draw[black, thick, reversearrow=0.7] (0, 2.2-0.8) ellipse (0.6 and 0.8) node at (0.3, 2.2-1.7) {$w^{-k}$};
\draw[black, thick, reversearrow=0.7] (0, 2.2-0.7*0.6) ellipse (0.5*0.6 and 0.7*0.6);
\node at (0, -4) {$\begin{aligned}
&w_fM(v, w)v_i,\\
&w_fM(v, w)v_f^{-1},\\
&w_i^{-1}M(v, w)v_i,\\
&w_i^{-1}M(v, w)v_f^{-1};
\end{aligned}$
};
\end{scope}
\begin{scope}
\coordinate (O) at ($(5,0)+(90:1.5 and 2.2)$);
\coordinate (Se) at ($(5,0)+(320:1.5 and 2.2)$);
\coordinate (Si) at ($(5, 2.2+0.7*0.6)+(40:0.5*0.6 and 0.7*0.6)$);

%\path (O) arc (90:-360+320:1.5 and 2.2) coordinate[pos= 0.5] (vi);
\coordinate (vi) at (5 + 1.36, 0.93);
%\path (Se) arc (320:90:1.5 and 2.2) coordinate[pos= 0.5] (vf);
\coordinate (vf) at (5 + -1.36, -0.93);

%\draw[black, thick] (0, 0) ellipse (1.5 and 2.2);
\draw[black, thick, arrow=0.6] (O) arc (90:-360+320:1.5 and 2.2);
\node[right] at (vi) {$v_i$};
\draw[black, thick, arrow=0.4] (Se) arc (320:90:1.5 and 2.2);
\node[left, yshift=2] at (vf) {$v_f$};

\node[circle,fill,inner sep=1.2] at (O) {};
\path let \p1 = (O) in node at (\x1-16, \y1+3) {$O$};

\node[circle,fill,inner sep=1.2] at (Si) {};
\node[above] at (Si) {$F$};
\node[circle,fill,inner sep=1.2] at (Se) {};
\node[below] at (Se) {$I$};

%\draw[black, thick, arrow=0.5] (Si) arc (10:-85:0.5 and 0.7) node[midway, right, xshift=3] {$w_f$};
%\draw[black, thick, reversearrow=0.3] (Si) arc (10:360-10:0.5 and 0.7) node at (5.3, 2.2+1.7) {$w^k$};

\draw[black, thick, reversearrow=0.3] (5, 2.2+0.8) ellipse (0.6 and 0.8) node at (5.3, 2.2+1.8) {$w^k$};
%\draw[black, thick, reversearrow=0.3] (5, 2.2+0.7*0.6) ellipse (0.5*0.6 and 0.7*0.6);
\draw[black, thick, arrow=0.5] (Si) arc (40:-85:0.5*0.6 and 0.7*0.6);
\draw[black, thick, reversearrow=0.3] (Si) arc (40:360-40:0.5*0.6 and 0.7*0.6);

\draw[black, thick, reversearrow=0.7] (5, 2.2-0.8) ellipse (0.6 and 0.8) node at (5.3, 2.2-1.7) {$w^{-k}$};
\draw[black, thick, reversearrow=0.7] (5, 2.2-0.7*0.6) ellipse (0.5*0.6 and 0.7*0.6);
\node at (5, -4) {$\begin{aligned}
&v_fM(v, w)w_i,\\
&v_i^{-1}M(v, w)w_i,\\
&v_fM(v, w)w_f^{-1},\\
&v_i^{-1}M(v, w)w_f^{-1}.
\end{aligned}$
};
\end{scope}
\end{tikzpicture}
\label{path_type2}
\end{equation}
In picture \eqref{path_type2} points $I$ and $F$ lie on a positive $w$-arc, but they may lie on a negative $w$-arc within this type of paths as well.

The last configuration is when both points $I$ and $F$ lie on a $w$-arc. Then, according to the above notations, we put $w = w_iw_mw_f$ and obtain the pictures
\begin{equation}
\begin{tikzpicture}
\begin{scope}
\coordinate (O) at ($(0,0)+(90:1.5 and 2.2)$);
\coordinate (Si) at ($(0, 2.2+0.7*0.6)+(10:0.5*0.6 and 0.7*0.6)$);
\coordinate (Se) at ($(0, 2.2+0.7*0.6)+(170:0.5*0.6 and 0.7*0.6)$);

\draw[black, thick, reversearrow=0.8] (0, 0) ellipse (1.5 and 2.2) node at (1.8, 0) {$v$};

\node[circle,fill,inner sep=1.2] at (O) {};
\path let \p1 = (O) in node at (\x1 - 16, \y1+3) {$O$};

\node[circle,fill,inner sep=1.2] at (Si) {};
\node[above, xshift=3] at (Si) {$I$};
\node[circle,fill,inner sep=1.2] at (Se) {};
\node[above, xshift=-3] at (Se) {$F$};

\draw[black, thick, reversearrow=0.3] (0, 2.2+0.8) ellipse (0.6 and 0.8) node at (0.3, 2.2+1.8) {$w^k$};
%\draw[black, thick, reversearrow=0.3] (0, 2.2+0.7*0.6) ellipse (0.5*0.6 and 0.7*0.6);
\draw[black, thick, arrow=0.4] (Se) arc (170:10:0.5*0.6 and 0.7*0.6);
\draw[black, thick, arrow=0.5] (Si) arc (10:-85:0.5*0.6 and 0.7*0.6);
\draw[black, thick, reversearrow=0.4] (Se) arc (170:170+85:0.5*0.6 and 0.7*0.6);

\draw[black, thick, reversearrow=0.7] (0, 2.2-0.8) ellipse (0.6 and 0.8) node at (0.3, 2.2-1.7) {$w^{-k}$};
\draw[black, thick, reversearrow=0.7] (0, 2.2-0.7*0.6) ellipse (0.5*0.6 and 0.7*0.6);
\node at (0, -4) {$\begin{aligned}
&w_f M(v, w)w_i,\\
&w_f M(v, w)w_f^{-1}w_m^{-1},\\
&w_m^{-1}w_i^{-1}M(v, w)w_i,\\
&w_m^{-1}w_i^{-1}M(v, w)w_f^{-1}w_m^{-1};
\end{aligned}$
};
\end{scope}
\begin{scope}
\coordinate (O) at ($(5,0)+(90:1.5 and 2.2)$);
\coordinate (Si) at ($(5, 2.2+0.7*0.6)+(10:0.5*0.6 and 0.7*0.6)$);
\coordinate (Se) at ($(5, 2.2+0.7*0.6)+(170:0.5*0.6 and 0.7*0.6)$);

\draw[black, thick, reversearrow=0.8] (5, 0) ellipse (1.5 and 2.2) node at (6.8, 0) {$v$};

\node[circle,fill,inner sep=1.2] at (O) {};
\path let \p1 = (O) in node at (\x1 - 16, \y1+3) {$O$};

\node[circle,fill,inner sep=1.2] at (Si) {};
\node[above, xshift=3] at (Si) {$F$};
\node[circle,fill,inner sep=1.2] at (Se) {};
\node[above, xshift=-3] at (Se) {$I$};

\draw[black, thick, reversearrow=0.3] (5, 2.2+0.8) ellipse (0.6 and 0.8) node at (5.3, 2.2+1.8) {$w^k$};
%\draw[black, thick, reversearrow=0.3] (5, 2.2+0.7*0.6) ellipse (0.5*0.6 and 0.7*0.6);
\draw[black, thick, arrow=0.4] (Se) arc (170:10:0.5*0.6 and 0.7*0.6);
\draw[black, thick, arrow=0.5] (Si) arc (10:-85:0.5*0.6 and 0.7*0.6);
\draw[black, thick, reversearrow=0.4] (Se) arc (170:170+85:0.5*0.6 and 0.7*0.6);

\draw[black, thick, reversearrow=0.7] (5, 2.2-0.8) ellipse (0.6 and 0.8) node at (5.3, 2.2-1.7) {$w^{-k}$};
\draw[black, thick, reversearrow=0.7] (5, 2.2-0.7*0.6) ellipse (0.5*0.6 and 0.7*0.6);
\node at (5, -4) {$\begin{aligned}
&w_mw_fM(v, w)w_f^{-1},\\
&w_mw_fM(v, w)w_iw_m,\\
&w_i^{-1}M(v, w)w_iw_m,\\
&w_i^{-1}M(v, w)w_f^{-1}.
\end{aligned}$
};
\end{scope}
\end{tikzpicture}
\label{path_type3}
\end{equation}
In picture \eqref{path_type3} points $I$ and $F$ lie on a positive $w$-arc, but each of them may lie on a negative $w$-arc within this type of paths as well. Also the point $I$ and the point $F$ may lie on different $w$-arcs.

Given a set of generalized fractional powers $M_j$, $j = 1, \ldots, k$, that correspond to the paths with the same initial and the same final point in the diagram of type \eqref{path_type1} --- \eqref{path_type3}, their sum $\sum_{j = 1}^{k}M_j$ corresponds to the collection of these paths.

Let $\mathbb{Z}_2(w)$ be the field of rational functions in one variable $w$ over $\mathbb{Z}_2$. Consider a non-commutative Laurent polynomial $P(x_1, x_2)$ over $\mathbb{Z}_2$ such that
\begin{equation}
\label{vanish_in_field}
P((1 + w)^{-1}, w) = 0 \emph{ as an element of } \mathbb{Z}_2(w).
\end{equation}
Notice that if $P(x_1, x_2)$ satisfies condition~\eqref{vanish_in_field}, then $P(v, w) \in \langle 1 + v + vw \rangle = \Ideal$.

\begin{example}
Let us give an example of such polynomials. The first example comes from the obvious equality in $ \mathbb{Z}_2(w)$
\begin{equation*}
w \cdot \frac{1}{1 + w} = \frac{1}{1 +w} \cdot w.
\end{equation*}
From this equality it follows that the polynomial
\begin{equation*}
P(x_1, x_2) = x_1x_2 + x_2x_1
\end{equation*}
satisfies condition~\eqref{vanish_in_field} and
\begin{equation*}
P(v, w) = vw + wv \in \Ideal.
\end{equation*}

Recalling the reduction of rational functions of the form $\frac{w^{\pm k}}{1 + w}$ in $\mathbb{Z}_2(w)$ to elementary fractions, we obtain the following equalities in $\mathbb{Z}_2(w)$ for $k > 0$:
\begin{align*}
\frac{w^k}{1 + w} & = \frac{w^k + w^{k-1} + w^{k-1} + \ldots + w + w + 1 + 1}{1 + w} \\ &= w^{k-1} + w^{k-2} + \ldots + w + 1 + \frac{1}{1 + w}
\end{align*}
and
\begin{align*}
\frac{w^{-k}}{1 + w}& = \frac{w^{-k} + w^{-k+1} + w^{-k+1} + \ldots + w^{-1} + w^{-1} + 1 + 1}{1 + w} \\ &= w^{-k} + w^{-k+1} + \ldots + w^{-1} + \frac{1}{1 + w}.
\end{align*}
Hence, the polynomials
\begin{align*}
&P_1(x_1, x_2) = x_1x_2^{k} + x_2^{k - 1} + x_2^{k - 2} + \ldots + x_2 + 1 + x_1,\\
&P_2(x_1, x_2) = x_1x_2^{-k} + x_2^{-k} + x_2^{-k + 1} + \ldots + x_2^{-1} + x_1
\end{align*}
satisfy condition~\eqref{vanish_in_field} and
\begin{align*}
&P_1(v, w) = vw^{k} + w^{k - 1} + w^{k - 2} + \ldots + w + 1 + v \in \Ideal,\\
&P_2(v, w) = vw^{-k} + w^{-k} + w^{-k + 1} + \ldots + w^{-1} + v \in \Ideal.
\end{align*}
\end{example}

According to types \eqref{path_type1} --- \eqref{path_type3} of $v$-diagrams that correspond to possible forms of generalized fractional powers, we consider the list of expressions \eqref{mturn1} --- \eqref{mturn6}, where $P(x_1, x_2)$ is a non-commutative Laurent polynomial such that $P((1 + w)^{-1}, w) = 0$ as an element of $\mathbb{Z}_2(w)$. We allow possibility of cancellations in monomials in \eqref{mturn1} --- \eqref{mturn6}. First, consider expressions
\begin{align}
\begin{split}
\label{mturn1}
&v_f P(v, w)v_i,\\
&v_f P(v, w)v_f^{-1}v_m^{-1},\\
&v_m^{-1}v_i^{-1}P(v, w)v_i,\\
&v_m^{-1}v_i^{-1}P(v, w)v_f^{-1}v_m^{-1}.
\end{split}
\end{align}
To each expression of \eqref{mturn1}, there corresponds a collection of paths in the graph~\eqref{path_type1}. Similarly, to each expression
\begin{align}
\begin{split}
\label{mturn2}
&v_mv_fP(v, w)v_f^{-1},\\
&v_mv_fP(v, w)v_iv_m,\\
&v_i^{-1}P(v, w)v_iv_m,\\
&v_i^{-1}P(v, w)v_f^{-1},
\end{split}
\end{align}
there corresponds a collection of paths in the graph~\eqref{path_type1}. To each expression
\begin{align}
\begin{split}
\label{mturn3}
&w_fP(v, w)v_i,\\
&w_fP(v, w)v_f^{-1},\\
&w_i^{-1}P(v, w)v_i,\\
&w_i^{-1}P(v, w)v_f^{-1},
\end{split}
\end{align}
there corresponds a collection of paths in the graph~\eqref{path_type2}. Similarly, to each expression
\begin{align}
\begin{split}
\label{mturn4}
&v_fP(v, w)w_i,\\
&v_i^{-1}P(v, w)w_i,\\
&v_fP(v, w)w_f^{-1},\\
&v_i^{-1}P(v, w)w_f^{-1},
\end{split}
\end{align}
there corresponds a collection of paths in the graph~\eqref{path_type2}. Finally, consider the expressions
\begin{align}
\begin{split}
\label{mturn5}
&w_f P(v, w)w_i,\\
&w_f P(v, w)w_f^{-1}w_m^{-1},\\
&w_m^{-1}w_i^{-1}P(v, w)w_i,\\
&w_m^{-1}w_i^{-1}P(v, w)w_f^{-1}w_m^{-1}.
\end{split}
\end{align}
To each expression of \eqref{mturn5} there corresponds a collection of paths in the graph~\eqref{path_type3}. Similarly, to each expression
\begin{align}
\begin{split}
\label{mturn6}
&w_mw_fP(v, w)w_f^{-1},\\
&w_mw_fP(v, w)w_iw_m,\\
&w_i^{-1}P(v, w)w_iw_m,\\
&w_i^{-1}P(v, w)w_f^{-1},
\end{split}
\end{align}
there corresponds a collection of paths in the graph~\eqref{path_type3}. Since $P(v, w) \in \Ideal$, the expressions \eqref{mturn1} --- \eqref{mturn6} vanish in $\mathbb{Z}_2\Fr / \Ideal$.

Every expression \eqref{mturn1} --- \eqref{mturn6} is, in fact, a linear combination of generalized fractional powers. For any linear combination $\sum_{j = 1}^{k}M_j$ of such type (where the cancellations in monomials are already performed) we call the monomials $M_{j_1}, M_{j_2}$, $j_1, j_2 \in \{1, \ldots, k\}$, \emph{incident monomials}. Notice that the paths in the $v$-diagram corresponding to incident monomials always have the same initial points and the same final points. Clearly, one generalized fractional power has an infinite number of incident monomials because they may contain different powers of $v$ and $w$.

Suppose we have a group $G$ with a relator $R = M_1M_2^{-1}$. Let $U = LM_1R$. Recall that the transition from $U = LM_1R$ to $LM_2R$ representing the same element of $G$
\vspace{0.1cm}
\begin{center}
\begin{tikzpicture}
\draw[|-|, black, thick] (0,0)--(2,0) node [near start, above] {$L$};
\draw[black, thick, arrow=0.5] (2,0) to [bend left=60] node [above] {$M_2$} (4,0);
\draw[black, thick, arrow=0.5] (2,0) to [bend right=60] node [below] {$M_1$} (4,0);
\draw[|-|, black, thick] (4,0)--(6,0) node [near end, above] {$R$};
\end{tikzpicture}
\vspace{0.1cm}
\end{center}
is called a \emph{turn} of an occurrence of a subrelation $M_1$ (to its complement $M_2$). We generalize this notion to a multi-turn. Multi-turns will play a central role in our work.
\begin{definition}
\label{multiturn_def}
Let $\sum_{j = 1}^{k}M_j$ be one of the expressions \eqref{mturn1} --- \eqref{mturn6} (where the cancellations in monomials are already performed). We call the transition
\begin{equation*}
M_h\longmapsto \sum\limits_{\substack{j = 1 \\ j\neq h}}^{k} M_j,
\end{equation*}
 an \emph{elementary multi-turn of $M_h$}.

Let $U_h = LM_hR$
\vspace{0.1cm}
\begin{center}
\begin{tikzpicture}
\node at (-0.7 , 0) {$U_h=$};
\draw[|-, black, thick, arrow=0.6] (0,0)--(1.5,0) node [near start, below] {$L$};
\draw[|-|, thick, black, arrow=0.6] (1.5,0)--(2.5,0) node [midway, below] {$M_h$};
\draw[-|, black, thick, arrow=0.6] (2.5,0)--(4,0) node [near end, below] {$R$};
\end{tikzpicture}
\end{center}
\vspace{0.1cm}
be a monomial in $\mathbb{Z}_2\Fr$, where $M_h$ is a generalized fractional power, and $\sum_{j = 1}^{k} M_j$ be one of the expressions \eqref{mturn1} --- \eqref{mturn6}, then the transition
\begin{equation*}
U_h \longmapsto \sum\limits_{\substack{j = 1 \\ j\neq h}}^{k} U_j,
\end{equation*}
where $U_j = LM_jR$,
\vspace{0.1cm}
\begin{center}
\begin{tikzpicture}
\node at (-0.7 , 0) {$U_j = $};
\draw[|-, black, thick, arrow=0.6] (0,0)--(1.5,0) node [near start, below] {$L$};
\draw[|-|, thick, black, arrow=0.6] (1.5,0)--(2.5,0) node [midway, below] {$M_j$};
\draw[-|, black, thick, arrow=0.6] (2.5,0)--(4,0) node [near end, below] {$R$};
\node at (5.9, 0) {$j = 1, \ldots, k, j\neq h,$};
\end{tikzpicture}
\end{center}
\vspace{0.1cm}
is called a \emph{multi-turn of the occurrence $M_h$ in $U_h$}. Notice that since $\sum_{j = 1}^{k} M_j \in \Ideal$, $U_h = \sum_{\substack{j = 1 \\ j\neq h}}^{k} U_j$ in the ring $\mathbb{Z}_2\Fr / \Ideal$.

We call the corresponding sum $\sum_{j = 1}^{k} U_j$ \emph{the support of the multi-turn}. Then the expressions~\eqref{mturn1} --- \eqref{mturn6} are all types of supports of elementary multi-turns.
\end{definition}

\section{Linear dependencies on $\mathbb{Z}_2\Fr$ induced by multi-turns. The description of the ideal $\Ideal$ as a linear subspace of $\mathbb{Z}_2\Fr$}
\label{monomials_chart_dependencies}

\subsection{How multi-turns influence the chart}
\label{mt_configurations}
Let $U$ be a word and $a$ and $b$ be members of its chart, i.e., maximal occurrences of generalized fractional powers. Since the monomial $U$ is represented as a segment, it is natural to represent members of the chart of $U$ as its subsegments. Let us describe possible configurations of the members $a$ and $b$.
\begin{enumerate}
\item
We say that the members of the chart $a$ and $b$ are \emph{separated} if there exists some non empty subword between them in the word $U$.
\begin{center}
\begin{tikzpicture}
\draw[|-|, black, thick] (0,0)--(6,0);
\node[below, xshift=5] at (0, 0) {$U$};
\draw[|-|, black, very thick] (1,0)--(2.5,0) node[midway, below] {$a$};
\draw[|-|, black, very thick] (3.5,0)--(5,0) node[midway, below] {$b$};
\node[text width=2cm, align=center] at (3.5,1) {\footnotesize{\baselineskip=10ptnon empty subword \par}};
\draw[->, black, thick] (3.5, 0.7) to [bend right] (3, 0);
\end{tikzpicture}
\end{center}
\item
We say that the members $a$ and $b$ \emph{touch at a point} if $a$ and $b$ are adjacent and have no common non empty subword.
\begin{center}
\begin{tikzpicture}
\draw[|-|, black, thick] (0,0)--(6,0);
\node[below, xshift=5] at (0, 0) {$U$};
\draw[|-|, black, very thick] (1,0)--(2.5,0) node[midway, below] {$a$};
\draw[|-|, black, very thick] (2.5,0)--(4,0) node[midway, below] {$b$};
\end{tikzpicture}
\end{center}
\item
We say that the members $a$ and $b$ \emph{have an overlap} if the members have non empty common subword. Recall that the $\LM$-measure of the overlap is not greater than $\varepsilon$.
\begin{center}
\begin{tikzpicture}
\draw[|-|, black, thick] (0,0)--(6,0);
\node[below, xshift=5] at (0, 0) {$U$};
\draw[|-|, black, very thick] (1,0.1)--(2.5,0.1) node[midway, above] {$a$};
\draw[|-|, black, very thick] (2,-0.1)--(3.5,-0.1) node[midway, below] {$b$};
\node[text width=2cm, align=center] at (2.5,1) {\footnotesize{\baselineskip=10ptoverlap \par}};
\draw[->, black, thick] (2.5, 0.7) to [bend right] (2.25, 0.1);
\end{tikzpicture}
\end{center}
\end{enumerate}

We will call the member of the chart of $U$ closest to $a$ from the left side \emph{the left neighbour of $a$ in the chart of $U$}. Similarly, we will call the member of the chart of $U$ closest to $a$ from the right side \emph{the right neighbour of $a$ in the chart of $U$}. Recall that maximal occurrences of generalized fractional powers are considered as members of the chart only when they have $\LM$-measure greater or equal to a given threshold $\tau$ and we assume that $\tau \geqslant 10\varepsilon$. The restriction on the $\LM$-measure allows us to avoid situations when one member of the chart is fully covered by other members. So, we avoid the situation when there exist two members that are not separated from $a$ on one side. For example, we never obtain configurations like the following:
\begin{center}
\begin{tikzpicture}
\draw[|-|, black, thick] (0,0)--(6,0);
\node[below, xshift=5] at (0, 0) {$U$};
\draw[|-|, black, very thick] (1,0)--(2.5,0) node[midway, below] {$a$};
\draw[|-|, black, very thick] (2.5,0)--(4,0) node[midway, below] {$b$};
\draw[|-|, black, very thick] (2,0.1)--(3,0.1) node[midway, above] {$c$};
\node at (5,0.5) {$\LM(c) \leqslant 2\varepsilon$};
\end{tikzpicture}

\begin{tikzpicture}
\draw[|-|, black, thick] (0,0)--(6,0);
\node[below, xshift=5] at (0, 0) {$U$};
\draw[|-|, black, very thick] (1,0.1)--(2.5,0.1) node[midway, below] {$a$};
\draw[|-|, black, very thick] (2.2,-0.1)--(3.8,-0.1) node[midway, below] {$b$};
\draw[|-|, black, very thick] (2,0.2)--(2.8,0.2) node[midway, above] {$c$};
\node at (5,0.5) {$\LM(c) \leqslant 2\varepsilon$};
\end{tikzpicture}
\end{center}

Suppose $U_h = La_hR$ is a word, $a_h$ is a member of its chart, in particular $\LM(a_h) \geqslant \tau$. Suppose $U_h \mapsto \sum_{\substack{j = 1 \\ j\neq h}}^{k}U_j$ is a multi-turn of $U_h$ that comes from an elementary multi-turn $a_h \mapsto \sum_{\substack{j = 1 \\ j\neq h}}^{k}a_j$. So, $U_h = \sum_{\substack{j = 1\\ j\neq h}}^{k}U_j$ in the ring $\mathbb{Z}_2\Fr / \Ideal$. Let us study how the chart of $U_j = La_jR$, $j = 1, \ldots, k$, $j\neq h$, is related to the chart of $U_h = La_hR$. We distinguish three types of monomials in the sum $\sum_{\substack{j = 1\\ j\neq h}}^{k}U_j$:
\begin{enumerate}
\item
\label{keep_structure}
$La_jR$, where $\LM(a_j) > \varepsilon$;
\item
\label{donot_keep_structure1}
$La_jR$, where $a_j = 1$.
\item
\label{donot_keep_structure2}
$La_jR$, where $\LM(a_j) \leqslant \varepsilon$ but $a_j \neq 1$;
\end{enumerate}

Let $U_j = La_jR$ be a monomial of type~\ref{keep_structure}, that is, $\LM(a_j) > \varepsilon$. Let us study the chart of $La_jR$. First of all notice that since $a_h$ is a maximal occurrence of a generalized fractional power in $La_hR$, the monomial $U_j = La_jR$ has no cancellations and $a_j$ is a maximal occurrence of a generalized fractional power in $U_j$. Clearly, all members of the chart of $U_h = La_hR$ that are separated from $a_h$ remain unchanged in the chart of $U_j = La_jR$. Let $b_h$ be the left neighbour of $a_h$ in the chart of $U_h$. Assume $b_h$ is not separated from $a_h$, and $b_h^{\prime}$ is its initial subword such that $L = L^{\prime}b_h^{\prime}$, $U_h = L^{\prime}b_h^{\prime}a_hR$. Then there is a corresponding occurrence of $b_h^{\prime}$ in $U_j$. Let $b_j$ be the member of the chart of $U_j$ that prolongs $b_h^{\prime}$. Clearly, the member of the chart of $U_h$ that prolongs $b_h^{\prime}$ is precisely $b_h$. The member $b_j$ may differ from $b_h$ when $j\neq h$. There are four possibilities:
\begin{enumerate}[label=\ref{keep_structure}.\arabic*]
\item
\label{overlap_appears}
The members $b_h$ and $a_h$ touch at a point in $U_h$. The members $b_j$ and $a_j$ have a non-empty overlap in $U_j$. Assume $c$ is the overlap between $b_j$ and $a_j$. In this case $b_j = b_hc$. For example, we may obtain this effect when a beginning of $v_m$ is replaced by an end of $v_i^{-1}$ or vice versa.
\begin{center}
\begin{tikzpicture}
\draw[|-, black, thick] (0,0)--(2,0);
\draw[|-,black, very thick] (2,0)--(4,0) node [midway, below] {$b_h$};
\draw[black, thick] (4,0) to [bend left=60] coordinate[pos=0.3] (A) node [above] {$a_j$} (6,0);
\draw[black, thick] (4,0) to [bend right=60] node [below] {$a_h$} (6,0);
\draw[-|, black, very thick] (4,0) to [bend left=13] node [below, xshift=3, yshift=3] {$c$} (A);
\draw[-|, black, thick] (6,0)--(8,0);
\end{tikzpicture}
\end{center}
\item
\label{overlap_increases}
The members $b_h$ and $a_h$ have an overlap in $U_h$. The member $b_h$ is enlarged in $U_j$ with the use of $a_j$ and the overlap between $b_j$ and $a_j$ increases in $U_j$. If the overlap increases by a piece $c$, then $b_j = b_hc$.
\begin{center}
\begin{tikzpicture}
\draw[|-|, black, thick] (0,0)--(6,0);
\node[below, xshift=5] at (0, 0) {$U_h$};
\draw[|-|, black, very thick] (1,0.1)--(2.5,0.1) node[midway, above] {$b_h$};
\draw[|-|, black, very thick] (2,-0.1)--(2.5,-0.1) node[midway, below] {$a^{\prime}$};
\draw[black!10!gray, very thick] (2.5,-0.1)--(3,-0.1) node[midway, below] {$a^{\prime\prime}$};
\draw[|-|, black, very thick] (3,-0.1)--(4.5,-0.1) node[midway, below] {$a^{\prime\prime\prime}$};
\draw [thick, decorate, decoration={brace, amplitude=10pt, raise=12pt, mirror}] (2, -0.1) to node[midway, below, yshift=-20pt] {$a_h$} (4.5, -0.1);
\end{tikzpicture}

\begin{tikzpicture}
\draw[|-|, black, thick] (0,0)--(6,0);
\node[below, xshift=5] at (0, 0) {$U_j$};
\draw[|-|, black, very thick] (1,0.1)--(2.5,0.1) node[midway, above] {$b_h$};
\draw[-|, black, very thick] (2.5,0.1)--(3,0.1) node[midway, above] {$c$};
\draw [thick, decorate, decoration={brace, amplitude=10pt, raise=12pt}] (1, 0.1) to node[midway, above, yshift=20pt] {$b_j$} (3, 0.1);
\draw[|-|, black, very thick] (2,-0.1)--(2.5,-0.1) node[midway, below] {$a^{\prime}$};
\draw[-|, black, very thick] (2.5,-0.1)--(4,-0.1) node[midway, below] {$a^{\prime\prime\prime}$};
\draw [thick, decorate, decoration={brace, amplitude=10pt, raise=12pt, mirror}] (2, -0.1) to node[midway, below, yshift=-20pt] {$a_j$} (4, -0.1);
\end{tikzpicture}
\end{center}
\item
\label{overlap_disappears}
The members $b_h$ and $a_h$ have an overlap in $U_h$. The member $b_h$ is shortened in $U_j$ such that $b_j$ and $a_j$ touch at a point in $U_j$. For example, we obtain this effect when a beginning of $v_m$ is replaced by an end of $v_i^{-1}$ or vice versa. In this case $b_h = b_jc$.
\begin{center}
\begin{tikzpicture}
\draw[|-, black, thick] (0,0)--(2,0);
\draw[|-,black, very thick] (2,0)--(4,0) node [midway, below] {$b_j$};
\draw[black, thick] (4,0) to [bend left=60] coordinate[pos=0.3] (A) node [above] {$a_h$} (6,0);
\draw[black, thick] (4,0) to [bend right=60] node [below] {$a_j$} (6,0);
\draw[-|, black, very thick] (4,0) to [bend left=13] node [below, xshift=3, yshift=3] {$c$} (A);
\draw[-|, black, thick] (6,0)--(8,0);
\end{tikzpicture}
\end{center}
\item
\label{overlap_decreases}
The members $b_h$ and $a_h$ have an overlap in $U_h$. The member $b_h$ is shortened in $U_j$ and the overlap between $b_j$ and $a_j$ becomes shorter in $U_j$. If the overlap decreases on a piece $c$, then $b_h = b_jc$.
\begin{center}
\begin{tikzpicture}
\draw[|-|, black, thick] (0,0)--(6,0);
\node[below, xshift=5] at (0, 0) {$U_h$};
\draw[|-|, black, very thick] (2.5,0.1)--(3,0.1) node[midway, above] {$c$};
\draw[|-, black, very thick] (1,0.1)--(2.5,0.1);
\draw [thick, decorate, decoration={brace, amplitude=10pt, raise=8pt}] (1, 0.1) to node[midway, above, yshift=16pt] {$b_h$} (3, 0.1);
\draw[|-, black, very thick] (2,-0.1)--(2.5,-0.1) node[midway, below] {$a^{\prime}$};
\draw[|-|, black, very thick] (2.5,-0.1)--(4,-0.1) node[midway, below] {$a^{\prime\prime\prime}$};
\draw [thick, decorate, decoration={brace, amplitude=10pt, raise=12pt, mirror}] (2, -0.1) to node[midway, below, yshift=-20pt] {$a_h$} (4, -0.1);
\end{tikzpicture}

\begin{tikzpicture}
\draw[|-|, black, thick] (0,0)--(6,0);
\node[below, xshift=5] at (0, 0) {$U_j$};
\draw[|-|, black, very thick] (1,0.1)--(2.5,0.1) node[midway, above] {$b_j$};
\draw[|-|, black, very thick] (2,-0.1)--(2.5,-0.1) node[midway, below] {$a^{\prime}$};
\draw[black!10!gray, very thick] (2.5,-0.1)--(3,-0.1) node[midway, below] {$a^{\prime\prime}$};
\draw[|-|, black, very thick] (3,-0.1)--(4.5,-0.1) node[midway, below] {$a^{\prime\prime\prime}$};
\draw [thick, decorate, decoration={brace, amplitude=10pt, raise=12pt, mirror}] (2, -0.1) to node[midway, below, yshift=-20pt] {$a_j$} (4.5, -0.1);
\end{tikzpicture}
\end{center}
\end{enumerate}
The essential observation is that either $b_h = b_jc$, or $b_j = b_hc$; in both cases the $\LM$-measure of $c$ can not exceed $\varepsilon$. The possibilities for being changed for the right neighbour of $a_h$ in the chart are the same.

The cases described in \ref{overlap_appears}---\ref{overlap_decreases} may occur even if $\LM(b_h) < \tau$ or $\LM(b_j) < \tau$. Therefore, the chart of $U_j$ also may change as a result of the following effect. Assume $\tau - \varepsilon \leqslant \LM(b_h) < \tau$. Then we may obtain $\LM(b_j) \geqslant \tau$ in $U_j$, that is, $b_j$ is counted as a member of the chart of $U_j$. So, a new member appears in the chart of $U_j$. It can also happen that $\LM(b_j) < \tau$, while $\LM(b_h) \geqslant \tau$.

Consider the chart of a monomial $U_j = LR$ of type~\ref{donot_keep_structure1}. Cancellations between $L$ and $R$ may occur. Suppose $L = L^{\prime}C$, $R = C^{-1}R^{\prime}$ and $L^{\prime}R^{\prime}$ does not have further cancellations. Let $P$ be the meeting point of $L^{\prime}$ and $R^{\prime}$. Members of the chart of $L^{\prime}R^{\prime}$ that are separated from the point $P$ remain unchanged. The possible configurations are as follows:
\begin{enumerate}[label=\ref{donot_keep_structure1}.\arabic*]
\item
\label{all_separated2}
$L^{\prime}$ does not have a terminal subword equal to some generalized fractional power and $R^{\prime}$ does not have an initial subword equal to some generalized fractional power. Then the chart of $L^{\prime}R^{\prime}$ consists of members that lie left to $P$ in $L^{\prime}$ and right to $P$ in $R^{\prime}$.
\begin{center}
\begin{tikzpicture}
\draw[|-|, black, thick] (0,0)--(6,0);
\path[|-|, black, thick] (0,0) to node[below, at start, xshift=8] {$L^{\prime}$} (6,0);
\path[|-|, black, thick] (0,0) to node[below, at end, xshift=-8] {$R^{\prime}$} (6,0);
\draw[|-|, black, very thick] (1,0)--(2.5,0) node[midway, below] {$a$};
\node[circle,fill,inner sep=1] at (3,0) {};
\node[below] at (3,0) {$P$};
\end{tikzpicture}

\begin{tikzpicture}
\draw[|-|, black, thick] (0,0)--(6,0);
\path[|-|, black, thick] (0,0) to node[below, at start, xshift=8] {$L^{\prime}$} (6,0);
\path[|-|, black, thick] (0,0) to node[below, at end, xshift=-8] {$R^{\prime}$} (6,0);
\draw[|-|, black, very thick] (3.5,0)--(5,0) node[midway, below] {$b$};
\node[circle,fill,inner sep=1] at (3,0) {};
\node[below] at (3,0) {$P$};
\end{tikzpicture}
\end{center}
\end{enumerate}
Suppose $L^{\prime} = L^{\prime}_1a$, $R^{\prime} = bR^{\prime}_1$, $a$ and $b$ are maximal occurrences of generalized fractional powers in $L^{\prime}$ and in $R^{\prime},$ respectively, possibly with $\LM$-measure less than $\tau$. We allow one of $a$ or $b$ to be equal to $1$. The occurrence of $a$ in $L^{\prime}R^{\prime}$ does not have to be maximal. Let us denote by $a^{\prime}$ the maximal occurrence of a generalized fractional power that prolongs $a$ in $L^{\prime}R^{\prime}$. Similarly, let us denote by $b^{\prime}$ the maximal occurrence of a generalized fractional power that prolongs $b$ in $L^{\prime}R^{\prime}$. Then interactions between $a^{\prime}$ and $b^{\prime}$ are as follows:
\begin{enumerate}[label=\ref{donot_keep_structure1}.\arabic*]
\setcounter{enumi}{1}
\item
\label{strictly_touch2}
$a^{\prime}$ and $b^{\prime}$ touch at a point in $L^{\prime}R^{\prime}$. Then like in the case~\ref{all_separated2} the chart of $L^{\prime}R^{\prime}$ consists precisely of the members contained in $L^{\prime}$ and the members contained in $R^{\prime}$.
\begin{center}
\begin{tikzpicture}
\draw[|-|, black, thick] (0,0)--(6,0);
\node at (0.5, 0) [above] {$L^{\prime}$};
\node at (5.5, 0) [above] {$R^{\prime}$};
\node at (0.5, 0) [below] {$U_j$};
\draw[|-|, black, very thick] (1.3,0.1)--(2.7,0.1) node[midway, above] {$a^{\prime}$};
\draw[|-|, black, very thick] (2.7,0.1)--(4.5,0.1) node[midway, above] {$b^{\prime}$};
\node at (2.7, 0) [below] {$P$};
\node[circle,fill,inner sep=1] at (2.7, 0) {};
\end{tikzpicture}
\end{center}
\item
\label{slightly_enlarge2}
$a^{\prime}$ and $b^{\prime}$ in $L^{\prime}R^{\prime}$ have an overlap. Then $a^{\prime}$ is at most slightly enlarged with respect to $a$, and $b^{\prime}$ is at most slightly enlarged with respect to $b$. Since the $\LM$-measure of their overlap does not exceed $\varepsilon$, their total increase does not exceed $\varepsilon$.
\begin{center}
\begin{tikzpicture}
\coordinate (S) at (0,0);
\coordinate (Li) at (2, 0);
\coordinate (ai) at (3.5, 0);
\coordinate (Rf) at (5, 0);
\coordinate (E) at (7, 0);
\draw[|-, black, thick] (S)--(Li) node [near start, above] {$L^{\prime}$} node[near start, below] {$U_j$};
\draw[|-|,black, very thick] (2, 0.1)--(3.4, 0.1) node [midway, above, yshift=-2] {$a$};
\draw[-|,black, very thick] (3.4, 0.1)--(3.6, 0.1);
\draw [thick, decorate, decoration={brace, amplitude=10pt, raise=4pt}] (2, 0.1) to node[midway, above, yshift=12] {$a^{\prime}$} (3.6, 0.1);
\node at (3.4, -0.5) {$P$};
\draw[|-|,black, very thick] (3.2, -0.1)--(3.4, -0.1);
\draw[-|,black, very thick] (3.4, -0.1)--(5, -0.1) node [midway, below, yshift=2] {$b$};
\draw [thick, decorate, decoration={brace, mirror, amplitude=10pt, raise=4pt}] (3.2, -0.1) to node[midway, below, yshift=-12] {$b^{\prime}$} (5, -0.1);
\draw[-|, black, thick] (Li)--(Rf);
\draw[-|, black, thick] (Rf)--(E) node [near end, above] {$R^{\prime}$};
\end{tikzpicture}
\end{center}
\item
\label{merge2}
$a^{\prime}$ and $b^{\prime}$ merge to one generalized fractional power $ab$ in $L^{\prime}R^{\prime}$.
\begin{center}
\begin{tikzpicture}
\coordinate (S) at (0,0);
\coordinate (Li) at (2, 0);
\coordinate (ai) at (3.5, 0);
%\coordinate (i) at (3.5, 1);
\coordinate (Rf) at (5, 0);
\coordinate (E) at (7, 0);
\draw[|-, black, thick] (S)--(Li) node [near start, above] {$L^{\prime}$} node[near start, below] {$U_j$};
\draw[|-,black, very thick] (Li)--(ai) node [midway, above] {$a$};
%\draw[black, thick, in=50, out=180-50, distance=60] (i) to node[midway, above] {$a_0$} (i);
%\draw[-|, black, thick] (i)--(ai) node[midway, right] {$C$};
\node at (ai) [above]{$P$};
\node[circle,fill,inner sep=1] at (ai) {};
\draw[|-|,black, very thick] (ai)--(Rf) node [midway, above, xshift=5] {$b$};
\draw[-|, black, thick] (Rf)--(E) node [near end, above] {$R^{\prime}$};
\draw [thick, decorate, decoration={brace, mirror, amplitude=10pt, raise=5pt}] (Li) to (Rf);
\node[text width=3cm, align=center, below, yshift=-10] at (3.5, 0) {\footnotesize{\baselineskip=10ptmerged generalized fractional power\par}};
\end{tikzpicture}
\end{center}
\end{enumerate}

Consider the chart of monomials $U_j = La_jR$ of type~\ref{donot_keep_structure2}. First of all notice that since $a_h$ is a maximal occurrence of a generalized fractional power in $U_h$, the monomial $U_j = La_jR$ has no cancellations. The possible configurations are as follows:
\begin{enumerate}[label=\ref{donot_keep_structure2}.\arabic*]
\item
\label{all_separated1}
$L$ does not have a terminal subword equal to some generalized fractional power and $R$ does not have an initial subword equal to some generalized fractional power. Then all members of the chart of $La_jR$ are separated from $a_j$. In this case no new members are added; hence, the chart of $U_j$ consists of occurrences that lie strictly left to $a_j$ in $L$ and strictly right to $a_j$ in $R$.
\end{enumerate}

Suppose $L = L_1a$, $R = bR_1$, $a$ and $b$ are maximal occurrences of generalized fractional powers in $L$ and $R$, respectively, possibly with $\LM$-measure less than $\tau$. We allow one of $a$ or $b$ to be equal to $1$. As above, denote by $a^{\prime}$ the maximal occurrence of a generalized fractional power that prolongs $a$ in $La_jR$, and denote by $b^{\prime}$ the maximal occurrence of a generalized fractional power that prolongs $b$ in $La_jR$. We have the possibilities similar to the above \ref{strictly_touch2}---\ref{merge2}.
\begin{enumerate}[label=\ref{donot_keep_structure2}.\arabic*]
\setcounter{enumi}{1}
\item
\label{strictly_touch1}
$a^{\prime}$ remains equal to $a$ and $b^{\prime}$ remains equal to $b$ in $U_j$. Hence, $a^{\prime}$ and $b^{\prime}$ are separated in $La_jR$.
\begin{center}
\begin{tikzpicture}
\draw[|-|, black, thick] (0,0)--(6,0);
\node at (0.3, 0) [above] {$L$};
\node at (5.7, 0) [above] {$R$};
\draw[|-|, black, very thick] (1,0.1)--(2.7,0.1) node[midway, above] {$a^{\prime}$};
\draw[|-|, black, very thick] (2.7,-0.1)--(3.4,-0.1) node[midway, below] {$a_j$};
\draw[|-|, black, very thick] (3.4,0.1)--(5.1,0.1) node[midway, above] {$b^{\prime}$};
\end{tikzpicture}
\end{center}
\item
\label{slightly_enlarge1}
$a^{\prime}$ slightly enlarges from the right side with respect to $a,$ and $b^{\prime}$ slightly enlarges from the left side with respect to $b$. Let us illustrate possible configurations of $a^{\prime}$ and $b^{\prime}$.
\begin{center}
\begin{tikzpicture}
\draw[|-|, black, thick] (0, 0)--(5, 0);
\draw[|-|, black, very thick] (0.7, 0.1)--(2.3, 0.1) node[midway, above] {$a^{\prime}$};
\draw[|-|, black, very thick] (2, -0.1)--(2.7, -0.1) node[midway, below] {$a_j$};
\draw[|-|, black, thick] (6.5,0)--(11.5,0);
\draw[|-|, black, very thick] (6.5 + 2, -0.1)--(6.5 + 2.7, -0.1) node[midway, below] {$a_j$};
\draw[|-|, black, very thick] (6.5 + 2.4, 0.1)--(11, 0.1) node[midway, above] {$b^{\prime}$};
\end{tikzpicture}

\begin{tikzpicture}
\draw[|-|, black, thick] (0, 0)--(5, 0);
\draw[|-|, black, very thick] (0.7, 0.1)--(2.7, 0.1) node[midway, above] {$a^{\prime}$};
\draw[|-|, black, very thick] (2, -0.1)--(2.7, -0.1) node[midway, below] {$a_j$};
\draw[|-|, black, thick] (6.5,0)--(11.5,0);
\draw[|-|, black, very thick] (6.5 + 2, -0.1)--(6.5 + 2.7, -0.1) node[midway, below] {$a_j$};
\draw[|-|, black, very thick] (6.5 + 2, 0.1)--(11, 0.1) node[midway, above] {$b^{\prime}$};
\end{tikzpicture}

\begin{tikzpicture}
\draw[|-|, black, thick] (0, 0)--(5, 0);
\draw[|-|, black, very thick] (0.7, 0.1)--(3, 0.1) node[midway, above] {$a^{\prime}$};
\draw[|-|, black, very thick] (2, -0.1)--(2.7, -0.1) node[midway, below] {$a_j$};
\draw[|-|, black, thick] (6.5,0)--(11.5,0);
\draw[|-|, black, very thick] (6.5 + 2, -0.1)--(6.5 + 2.7, -0.1) node[midway, below] {$a_j$};
\draw[|-|, black, very thick] (6.5 + 1.7, 0.1)--(11, 0.1) node[midway, above] {$b^{\prime}$};
\end{tikzpicture}
\end{center}

We may obtain a combination of any two configurations for $a^{\prime}$ and for $b^{\prime}$ in $La_jR$; hence, $a^{\prime}$ and $b^{\prime}$ may have any mutual position in $La_jR$ (i.e., remain either separate, or touch at a point, or have an overlap). Let us illustrate one possibility as an example.
\begin{center}
\begin{tikzpicture}
\draw[|-|, black, thick] (0, 0)--(5, 0);
\draw[|-|, black, very thick] (0.7, 0.2)--(2.3, 0.2) node[midway, above] {$a^{\prime}$};
\draw[|-|, black, very thick] (2, 0)--(2.7, 0) node[near end, above] {$a_j$};
\draw[|-|, black, very thick] (2, -0.2)--(4.5, -0.2) node[midway, below] {$b^{\prime}$};
\end{tikzpicture}
\end{center}

Since $\LM$-measure of a possible overlap of $a^{\prime}$ and $b^{\prime}$ does not exceed $\varepsilon$, the total increase of $a^{\prime}$ and $b^{\prime}$ with respect to $a$ and $b$ does not exceed $2\varepsilon$. Hence, if $\LM$-measure of both $a^{\prime}$ and $b^{\prime}$ increases, $\LM$-measure of each of them increases at most by $\varepsilon$.
\item
\label{slightly_enlarge_precisely_one}
$a^{\prime}$ enlarges with respect to $a$, $b^{\prime}$ remains the same, or $b^{\prime}$ enlarges with respect to $b$, $a^{\prime}$ remains the same (the case when both of them enlarge is treated in \ref{slightly_enlarge1}). As above, $a^{\prime}$ and $b^{\prime}$ may have any mutual position in $La_jR$ (i.e.,either remain separate, or touch at a point, or have an overlap). In this case $\LM$-measure of $a^{\prime}$ or $b^{\prime}$ increases at most by $2\varepsilon$. Let us illustrate one important possibility, the case when $\LM$-measure of $a^{\prime}$ increases precisely by $2\varepsilon$.
\begin{center}
\begin{tikzpicture}
\node at (0.3, 0) [above] {$L$};
\node at (5.7, 0) [above] {$R$};
\draw[|-|, black, thick] (0,0)--(6,0);
\draw[|-|, black, very thick] (1,0.1)--(3,0.1) node[midway, above] {$a^{\prime}$};
\draw[|-|, black, very thick] (2,-0.1)--(2.7,-0.1) node[midway, below] {$a_j$};
\draw[|-|, black, very thick] (2.7,-0.1)--(4.7,-0.1) node[midway, below] {$b^{\prime}$};
\end{tikzpicture}
\end{center}
\end{enumerate}
The most important case is the following:
\begin{enumerate}[label=\ref{donot_keep_structure2}.\arabic*]
\setcounter{enumi}{4}
\item
\label{merge1}
$a^{\prime}$ and $b^{\prime}$ merge to one generalized fractional power $aa_jb$ in $U_j$.
\begin{center}
\begin{tikzpicture}
\coordinate (S) at (0,0);
\coordinate (Li) at (2, 0);
\coordinate (ai) at (3.5, 0);
\coordinate (Ri) at (4, 0);
\coordinate (Rf) at (5.5, 0);
\coordinate (E) at (7.5, 0);
\draw[|-, black, thick] (S)--(Li) node [near start, above] {$L$};
\draw[|-,black, very thick] (Li)--(ai) node [midway, above] {$a$};
\draw[|-, black, very thick] (ai)--(Ri) node[midway, above] {$a_j$};
\draw[|-|,black, very thick] (Ri)--(Rf) node [midway, above, xshift=5] {$b$};
\draw[-|, black, thick] (Rf)--(E) node [near end, above] {$R$};
\draw [thick, decorate, decoration={brace, mirror, amplitude=10pt, raise=5pt}] (Li) to (Rf);
\node[text width=3cm, align=center, below, yshift=-10] at (3.5, 0) {\footnotesize{\baselineskip=10ptmerged generalized fractional power\par}};
\end{tikzpicture}
\end{center}
\end{enumerate}

\begin{remark}
Notice that, using the explicit description of generalized fractional powers~\eqref{path_type1}---\eqref{path_type3} and the explicit description of elementary multi-turns \eqref{mturn1}---\eqref{mturn6}, one can show that the case~\ref{merge1} may take place in at most one monomial of the corresponding sum~$\sum_{\substack{j = 1 \\ j\neq h}}^{k} La_jR$.
\end{remark}

\subsection{Linear dependencies on $\mathbb{Z}_2\Fr$ induced by multi-turns}
\label{linear_dependencies}
First we prove the important property of the supports of elementary multi-turns, that are the expressions~\eqref{mturn1} --- \eqref{mturn6}. Then we will show that all the supports of multi-turns linearly generate the ideal $\Ideal$. This fact plays the key role for the description of the linear structure of the quotient ring $\mathbb{Z}_2\Fr / \Ideal$.

\begin{proposition}[Transversality]
\label{transversality}
Let $Q_1, \ldots , Q_n \in \mathbb{Z}_2\Fr$ be linear combinations of generalized fractional powers of the form~\eqref{mturn1} --- \eqref{mturn6}. Consider $\sum_{l = 1}^n Q_l$. If in this sum, after additively cancelling out the identical monomials, we obtain non-zero, then at least one of the remaining monomials (generalized fractional powers) has $\LM$-measure $\geqslant \tau$.
\end{proposition}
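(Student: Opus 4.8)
The plan is to analyze what each support $Q_l$ looks like as a linear combination of generalized fractional powers, and in particular how the $\LM$-measure is distributed among its monomials. The key point is that each $Q_l$ has the shape $X P(v,w) Y$ where $X, Y$ are fixed prefixes/suffixes built from $v_i, v_m, v_f, w_i, w_m, w_f$ as listed in \eqref{mturn1}--\eqref{mturn6}, and $P(x_1,x_2)$ is a non-commutative Laurent polynomial with $P((1+w)^{-1},w)=0$ in $\mathbb{Z}_2(w)$. Expanding $P(v,w) = \sum_s M_s(v,w)$ into its monomials, each monomial $X M_s(v,w) Y$ is a generalized fractional power whose $\LM$-measure equals the total $\LM$-measure contributed by the occurrences of $v^{\pm 1}$ inside it (the $w$-parts contribute $0$). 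The crucial observation is that the vanishing condition $P((1+w)^{-1},w)=0$ forces $P$ to have at least two distinct monomials $M_s(v,w)$ in which $v$ (equivalently $x_1$) appears with full "degree-sum" — more precisely, a leading monomial of $P$ in a suitable term order on the $x_1$-content cannot be cancelled within a single $Q_l$ by lower-order terms. So inside any single support $Q_l$, among its monomials (after its own internal cancellations) there is at least one whose $\LM$-measure is within $O(\varepsilon)$ of the maximal possible, and in fact $\geqslant \tau$ because $P$ is a \emph{fixed} polynomial not depending on $\beta$ while $\tau$ is a small fixed multiple of $\varepsilon = \tfrac{1}{\beta-\alpha}$; any monomial containing even a single uncancelled whole copy of $v$ has $\LM$-measure $1 \gg \tau$, and one containing a proper power $v^{\pm1}$ with exponent bounded away from $0$ likewise.

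First I would make precise the claim that a monomial $X M_s(v,w) Y$ appearing in $Q_l$ either has $\LM$-measure $\geqslant \tau$ or else its "$v$-content'' has cancelled down to a bounded piece. Here I use the explicit classification \eqref{path_type1}--\eqref{path_type3}: the only way a generalized fractional power has small $\LM$-measure is that all of its $v^{\pm 1}$-arcs together traverse few $y$'s, i.e. the path in the $v$-diagram stays (mod the period) within a window of $O(\varepsilon(\beta-\alpha)) = O(1)$ letters $y$. Since the prefixes/suffixes $X,Y$ in \eqref{mturn1}--\eqref{mturn6} have $\LM$-measure at most $1$ and only shift the endpoints by a bounded amount, a monomial of $Q_l$ has $\LM$-measure $< \tau$ only if the corresponding monomial $M_s(v,w)$ of $P$, after multiplication by $X$ and $Y$ and reduction in $\Fr$, leaves a $v$-content of $\LM$-measure $< \tau + O(\varepsilon)$.

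Next — and this is the heart of the argument — I would handle the sum $\sum_{l=1}^n Q_l$. Suppose for contradiction that after additive cancellation the result is non-zero but every surviving monomial has $\LM$-measure $< \tau$. Group the monomials of all the $Q_l$ by their "type'' in the sense of the $v$-diagram endpoints $(I,F)$; cancellation can only occur between monomials with the same $I$ and $F$ (this is exactly the incidence relation from Definition~\ref{multiturn_def}). Within a fixed type, all large-measure monomials are honest subwords of powers $v^{\pm k}$ with well-defined endpoints (by the remark that for $\LM$-measure $> \varepsilon$ the position on the $v$-arc is determined), so the bookkeeping of which monomials can cancel which is controlled. The point is that each $Q_l$, being $X P(v,w) Y$ with $P$ vanishing in the field, is — up to the bounded distortion by $X,Y$ — a "copy'' of the field relation $P((1+w)^{-1},w)=0$ transported into $\mathbb{Z}_2\Fr$; its large-measure monomials are precisely the ones where $P$'s monomials retain a full copy of $v^{\pm1}$. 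If \emph{all} large-measure monomials across all $Q_l$ cancelled pairwise, then projecting onto the "large part'' (formally: applying the evaluation map that sends $v \mapsto (1+w)^{-1}$, $w \mapsto w$ on the subalgebra generated by honest $v,w$-monomials, which is well-defined modulo the small-measure error) we would get $\sum_l X_l P_l((1+w)^{-1},w) Y_l = \sum_l X_l \cdot 0 \cdot Y_l = 0$ matching, but the residual small-measure monomials cannot by themselves form the nonzero difference because they arose only from cancellations \emph{inside} individual $Q_l$ between a large monomial and its complement — and such internal cancellation, as the classification \eqref{mturn1}--\eqref{mturn6} shows, happens in at most boundedly many monomials per $Q_l$ and always leaves behind at least one large monomial in that $Q_l$. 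Carefully: if $Q_l \ne 0$ after its own internal cancellations (which I may assume, discarding the zero ones), then $Q_l$ has a large-measure monomial; so the only way for $\sum Q_l$ to have no large-measure survivor is total pairwise cancellation of large monomials between different $Q_l$'s, and then the field-evaluation argument shows the large part sums to $0$ formally, forcing the small part (which is a $\mathbb{Z}_2$-combination of bounded-length generalized fractional powers coming from genuine cancellations) to equal $\sum Q_l$; but each such small residue term is \emph{attached} to a specific large monomial of some $Q_l$, and once that large monomial is cancelled against a large monomial of $Q_{l'}$, consistency of the two expansions of the \emph{same} field element forces the attached small residues to cancel as well. Chasing this through yields $\sum Q_l = 0$, contradiction.

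The main obstacle I anticipate is precisely the last bookkeeping step: making rigorous the claim that small-measure residual monomials "travel with'' the large monomials that produce them and therefore cannot survive alone. This requires the explicit list \eqref{mturn1}--\eqref{mturn6} together with the structure of $P$ (e.g. via the elementary-fraction normal forms $P_1, P_2$ from the Example, and the commutation relation $vw+wv$), and it is where the hypothesis $\tau \geqslant 10\varepsilon$ — giving enough room that a single whole $v$-copy dwarfs all accumulated $O(\varepsilon)$ distortions and overlaps — is used. A secondary subtlety is that cancellation in $\mathbb{Z}_2\Fr$ is additive (mod $2$), so one must track multiplicities of each generalized fractional power, not just whether it appears; but over $\mathbb{Z}_2$ this only means "odd vs. even'' count, and the evaluation-into-$\mathbb{Z}_2(w)$ argument respects this. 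I would expect the rigorous write-up to proceed by first reducing to a single type $(I,F)$ in the $v$-diagram, then to a finite combinatorial statement about how the fixed polynomials $P_l$ can overlap after the bounded distortions by $X_l, Y_l$.
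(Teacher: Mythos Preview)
Your reduction to a single endpoint pair $(I,F)$ in the $v$-diagram is correct and matches the paper. After that point, however, your argument does not close.

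The ``evaluation map'' $v \mapsto (1+w)^{-1}$ you invoke is defined on the subalgebra generated by honest monomials in $v$ and $w$, but the monomials of each $Q_l$ are generalized fractional powers of the form $X\, M_s(v,w)\, Y$ with $X, Y$ built from proper fragments $v_i, v_f^{-1}, w_m$, etc.\ --- they do not lie in that subalgebra, so the evaluation does not apply to them directly. Your attempt to compensate (``small residues travel with the large monomials that produce them'') is exactly the step you yourself flag as the main obstacle, and indeed it does not become rigorous without a further idea.

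The paper's key observation, which you are missing, is a structural classification of the \emph{small} monomials rather than the large ones: once $(I,F)$ is fixed, any generalized fractional power with those endpoints and $\LM$-measure $< \tau$ is geometrically forced to be of the form $b_1 w^{k} b_2$, where $b_1, b_2$ are the fixed shortest paths from $I$ to $O$ and from $O$ to $F$ (or else there is at most one such short path). This is because a short path in the $v$-diagram must stay near $O$ and can only vary by the choice of $w$-arc it traverses. Given this, one multiplies the entire subsum $\sum_l Q_l^{(I,F)}$ on the left by $b_1^{-1}$ and on the right by $b_2^{-1}$; since each $Q_l^{(I,F)}$ is of the form \eqref{mturn1}--\eqref{mturn6}, the result is a genuine Laurent polynomial $P(v,w)$ which by construction satisfies $P((1+w)^{-1}, w) = 0$ in $\mathbb{Z}_2(w)$. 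But under the assumption that only small-measure monomials survive, this same product equals $\sum_j w^{k_j}$, a nonzero polynomial in $w$ alone --- which certainly does not vanish in $\mathbb{Z}_2(w)$. Contradiction.

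So the missing idea is not bookkeeping of cancellations among large monomials, but the converse: classify the small ones, observe they differ only in a $w$-exponent, and use a single left-right multiplication to transport the whole sum back into the $v,w$-subalgebra where the field-vanishing condition is available.
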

\begin{proof}
Monomials of $Q_l$ can be represented as paths in the corresponding $v$-diagram with the same initial and the same final point. If some initial or some final point lies on a $w$-arc, we fix it on the arc marked by the first power of $w$. Divide the sum $\sum_{l = 1}^n Q_l$ into subsums corresponding to the same initial and final points $(I, F)$ in the $v$-diagram
\begin{equation*}
\sum\limits_{l = 1}^n Q_l = \sum\limits_{(I, F)}\sum\limits_{l = 1}^{n(I, F)} Q_l^{(I, F)}.
\end{equation*}
Let $a$ be a member of $\sum_{l = 1}^n Q_l $ such that $\LM(a) \geqslant \tau$.
Notice that since $\LM(a) \geqslant \tau > \varepsilon$, $a$ has uniquely defined points $I$ and $F$. So, $a$ is a member of $\sum_{l = 1}^{n(I, F)} Q_l^{(I, F)}$ and $a$ is not a member of any $\sum_{l = 1}^{n(I^{\prime}, F^{\prime})} Q_l^{(I^{\prime}, F^{\prime})}$ with $(I^{\prime}, F^{\prime}) \neq (I, F)$. Hence, if $a$ is not additively cancelled out in the sum $\sum_{l = 1}^{n(I, F)} Q_l^{(I, F)}$, then $a$ is not additively cancelled out in the sum $\sum_{l = 1}^n Q_l$. It follows that it is enough to prove Proposition~\ref{transversality} for a sum $\sum_{l = 1}^{n(I, F)} Q_l^{(I, F)}$ with some fixed $(I, F)$.

Assume the contrary. Suppose all members of $\sum_{l = 1}^{n(I, F)} Q_l^{(I, F)}$ after additively cancelling out the identical monomials are of $\LM$-measure less than $\tau$. Let us study possible forms of monomials in this sum. Suppose $b$ is a monomial in this sum, $\LM(b) < \tau$, $I$ and $F$ are the initial and the final points of the path corresponding to $b$. There are the following possible positions of $I$ and $F$:
\begin{enumerate}
\item
\label{small_v_arc}
The points $I$ and $F$ lie on the $v$-arc.
\item
\label{small_w_arc}
The points $I$ and $F$ lie on a $w$-arc.
\item
\label{small_v_w_arc}
The point $I$ lies on the $v$-arc, the point $F$ lies on a $w$-arc.
\item
\label{small_w_v_arc}
The point $I$ lies on a $w$-arc, the point $F$ lies on the $v$-arc.
\end{enumerate}
Since $\LM(b) < \tau$, the smallest path between $I$ and $F$ is necessarily of $\LM$-measure less than $\tau$.

Consider case~\ref{small_v_arc}. Let us denote the smallest path from $I$ to $F$ by $b^{\prime}$, the smallest path from $I$ to $O$ by $b_1$, and the smallest path from $O$ to $F$ by $b_2$. Recall our notion that $v = v_iv_mv_f$. Then there are the following possibilities:
\begin{enumerate}[label=\ref{small_v_arc}.\arabic*]
\item
\label{small_v_arc1}
$b^{\prime}$ contains the point $O$;
\begin{center}
\begin{tikzpicture}
\begin{scope}
\coordinate (O) at ($(0,0)+(90:1.5 and 2.2)$);
\coordinate (Si) at ($(0,0)+(60:1.5 and 2.2)$);
\coordinate (Se) at ($(0,0)+(130:1.5 and 2.2)$);

%\path (O) arc (90:60:1.5 and 2.2) coordinate[pos= 0.5] (vi);
\coordinate (vi) at (0.39, 2.125);
%\path (Si) arc (60:-360+130:1.5 and 2.2) coordinate[pos= 0.5] (vm);
\coordinate (vm) at (0.13, -2.19);
%\path (Se) arc (130:90:1.5 and 2.2) coordinate[pos= 0.5] (vf);
\coordinate (vf) at (-0.513, 2.06);

%\draw[black, thick] (0, 0) ellipse (1.5 and 2.2);
\draw[black, thick, arrow=0.6] (O) arc (90:60:1.5 and 2.2);
\node[right, yshift=3] at (vi) {$v_i$};
\draw[black, thick, arrow=0.6] (Si) arc (60:-360+130:1.5 and 2.2);
\node[below] at (vm) {$v_m$};
\draw[black, thick, arrow=0.5] (Se) arc (130:90:1.5 and 2.2);
\node[left, yshift=3] at (vf) {$v_f$};

\node[circle,fill,inner sep=1.2] at (O) {};
\path let \p1 = (O) in node at (\x1, \y1+8) {$O$};

\node[circle,fill,inner sep=1.2] at (Si) {};
\node[right] at (Si) {$I$};
\node[circle,fill,inner sep=1.2] at (Se) {};
\node[left] at (Se) {$F$};

\draw[black, thick, reversearrow=0.3] (0, 2.2+0.8) ellipse (0.6 and 0.8) node at (0.3, 2.2+1.8) {$w^k$};
\draw[black, thick, reversearrow=0.3] (0, 2.2+0.7*0.6) ellipse (0.5*0.6 and 0.7*0.6);

\draw[black, thick, reversearrow=0.7] (0, 2.2-0.8) ellipse (0.6 and 0.8) node at (0.3, 2.2-1.7) {$w^{-k}$};
\draw[black, thick, reversearrow=0.7] (0, 2.2-0.7*0.6) ellipse (0.5*0.6 and 0.7*0.6);
\node at (0, -3.5) {$\begin{aligned}
&b_1 = v_i^{-1}, b_2 = v_f^{-1},\\
&b^{\prime} = v_i^{-1}v_f^{-1};
\end{aligned}$};
\end{scope}
\begin{scope}
\coordinate (O) at ($(5,0)+(90:1.5 and 2.2)$);
\coordinate (Si) at ($(5,0)+(60:1.5 and 2.2)$);
\coordinate (Se) at ($(5,0)+(130:1.5 and 2.2)$);

%\path (O) arc (90:60:1.5 and 2.2) coordinate[pos= 0.5] (vi);
\coordinate (vi) at (5 + 0.39, 2.125);
%\path (Si) arc (60:-360+130:1.5 and 2.2) coordinate[pos= 0.5] (vm);
\coordinate (vm) at (5 + 0.13, -2.19);
%\path (Se) arc (130:90:1.5 and 2.2) coordinate[pos= 0.5] (vf);
\coordinate (vf) at (5 + -0.513, 2.06);

%\draw[black, thick] (0, 0) ellipse (1.5 and 2.2);
\draw[black, thick, arrow=0.6] (O) arc (90:60:1.5 and 2.2);
\node[right, yshift=3] at (vi) {$v_i$};
\draw[black, thick, arrow=0.6] (Si) arc (60:-360+130:1.5 and 2.2);
\node[below] at (vm) {$v_m$};
\draw[black, thick, arrow=0.4] (Se) arc (130:90:1.5 and 2.2);
\node[left, yshift=3] at (vf) {$v_f$};

\node[circle,fill,inner sep=1.2] at (O) {};
\path let \p1 = (O) in node at (\x1, \y1+8) {$O$};

\node[circle,fill,inner sep=1.2] at (Si) {};
\node[right] at (Si) {$F$};
\node[circle,fill,inner sep=1.2] at (Se) {};
\node[left] at (Se) {$I$};

\draw[black, thick, reversearrow=0.3] (5, 2.2+0.8) ellipse (0.6 and 0.8) node at (5.3, 2.2+1.8) {$w^k$};
\draw[black, thick, reversearrow=0.3] (5, 2.2+0.7*0.6) ellipse (0.5*0.6 and 0.7*0.6);

\draw[black, thick, reversearrow=0.7] (5, 2.2-0.8) ellipse (0.6 and 0.8) node at (5.3, 2.2-1.7) {$w^{-k}$};
\draw[black, thick, reversearrow=0.7] (5, 2.2-0.7*0.6) ellipse (0.5*0.6 and 0.7*0.6);
\node at (5, -3.5) {$\begin{aligned}
&b_1 = v_f, b_2 = v_i,\\
&b^{\prime} = v_fv_i.
\end{aligned}$};
\end{scope}
\end{tikzpicture}
\end{center}
\item
\label{small_v_arc2}
$b^{\prime}$ does not contain the point $O$ and $\LM(b_1) + \LM(b_2) < \tau$
\begin{center}
\begin{tikzpicture}
\begin{scope}
\coordinate (O) at ($(0,0)+(90:1.5 and 2.2)$);
\coordinate (Si) at ($(0,0)+(55:1.5 and 2.2)$);
\coordinate (Se) at ($(0,0)+(40:1.5 and 2.2)$);

%\path (O) arc (90:55:1.5 and 2.2) coordinate[pos= 0.5] (vi);
\coordinate (vi) at (0.45, 2.1);
%\path (Si) arc (55:40:1.5 and 2.2) coordinate[pos= 0.5] (vm);
\coordinate (vm) at (1.013, 1.63);
%\path (Se) arc (40: -360 + 90:1.5 and 2.2) coordinate[pos= 0.5] (vf);
\coordinate (vf) at (-0.63, -1.99);

%\draw[black, thick] (0, 0) ellipse (1.5 and 2.2);
\draw[black, thick, arrow=0.7] (O) arc (90:55:1.5 and 2.2);
\node[right, yshift=3] at (vi) {$v_i$};
\draw[black, thick, arrow=0.7] (Si) arc (55:40:1.5 and 2.2);
\node[below, xshift=-2] at (vm) {$v_m$};
\draw[black, thick, arrow=0.5] (Se) arc (40: -360 + 90:1.5 and 2.2);
\node[below] at (vf) {$v_f$};

\node[circle,fill,inner sep=1.2] at (O) {};
\path let \p1 = (O) in node at (\x1, \y1+8) {$O$};

\node[circle,fill,inner sep=1.2] at (Si) {};
\node[right, yshift=3] at (Si) {$I$};
\node[circle,fill,inner sep=1.2] at (Se) {};
\node[right] at (Se) {$F$};

\draw[black, thick, reversearrow=0.3] (0, 2.2+0.8) ellipse (0.6 and 0.8) node at (0.3, 2.2+1.8) {$w^k$};
\draw[black, thick, reversearrow=0.3] (0, 2.2+0.7*0.6) ellipse (0.5*0.6 and 0.7*0.6);

\draw[black, thick, reversearrow=0.7] (0, 2.2-0.8) ellipse (0.6 and 0.8) node at (0.3, 2.2-1.7) {$w^{-k}$};
\draw[black, thick, reversearrow=0.7] (0, 2.2-0.7*0.6) ellipse (0.5*0.6 and 0.7*0.6);
\node at (0, -3.5) {$\begin{aligned}
&b_1 = v_i^{-1}, b_2 = v_iv_m,\\
&b^{\prime} = v_m;
\end{aligned}$};
\end{scope}
\begin{scope}
\coordinate (O) at ($(5,0)+(90:1.5 and 2.2)$);
\coordinate (Si) at ($(5,0)+(55:1.5 and 2.2)$);
\coordinate (Se) at ($(5,0)+(40:1.5 and 2.2)$);

%\path (O) arc (90:55:1.5 and 2.2) coordinate[pos= 0.5] (vi);
\coordinate (vi) at (5 + 0.45, 2.1);
%\path (Si) arc (55:40:1.5 and 2.2) coordinate[pos= 0.5] (vm);
\coordinate (vm) at (5 + 1.013, 1.63);
%\path (Se) arc (40: -360 + 90:1.5 and 2.2) coordinate[pos= 0.5] (vf);
\coordinate (vf) at (5 + -0.63, -1.99);

%\draw[black, thick] (0, 0) ellipse (1.5 and 2.2);
\draw[black, thick, arrow=0.7] (O) arc (90:55:1.5 and 2.2);
\node[right, yshift=3] at (vi) {$v_i$};
\draw[black, thick, arrow=0.7] (Si) arc (55:40:1.5 and 2.2);
\node[below, xshift=-2] at (vm) {$v_m$};
\draw[black, thick, arrow=0.5] (Se) arc (40:-360 + 90:1.5 and 2.2);
\node[below] at (vf) {$v_f$};

\node[circle,fill,inner sep=1.2] at (O) {};
\path let \p1 = (O) in node at (\x1, \y1+8) {$O$};

\node[circle,fill,inner sep=1.2] at (Si) {};
\node[right, yshift=3] at (Si) {$F$};
\node[circle,fill,inner sep=1.2] at (Se) {};
\node[right] at (Se) {$I$};

\draw[black, thick, reversearrow=0.3] (5, 2.2+0.8) ellipse (0.6 and 0.8) node at (5.3, 2.2+1.8) {$w^k$};
\draw[black, thick, reversearrow=0.3] (5, 2.2+0.7*0.6) ellipse (0.5*0.6 and 0.7*0.6);

\draw[black, thick, reversearrow=0.7] (5, 2.2-0.8) ellipse (0.6 and 0.8) node at (5.3, 2.2-1.7) {$w^{-k}$};
\draw[black, thick, reversearrow=0.7] (5, 2.2-0.7*0.6) ellipse (0.5*0.6 and 0.7*0.6);
\node at (5, -3.5) {$\begin{aligned}
&b_1 = v_m^{-1}v_i^{-1}, b_2 = v_i,\\
&b^{\prime} = v_m^{-1};
\end{aligned}$};
\end{scope}
\end{tikzpicture}
\begin{tikzpicture}
\begin{scope}
\coordinate (O) at ($(0,0)+(90:1.5 and 2.2)$);
\coordinate (Si) at ($(0,0)+(-210:1.5 and 2.2)$);
\coordinate (Se) at ($(0,0)+(-225:1.5 and 2.2)$);

%\path (O) arc (90:-210:1.5 and 2.2) coordinate[pos= 0.5] (vi);
\coordinate (vi) at (0.75, -1.9);
%\path (Si) arc (-210:-225:1.5 and 2.2) coordinate[pos= 0.5] (vm);
\coordinate (vm) at (-1.19, 1.34);
%\path (Se) arc (-225: -270:1.5 and 2.2) coordinate[pos= 0.5] (vf);
\coordinate (vf) at (-0.58, 2.0325);

%\draw[black, thick] (0, 0) ellipse (1.5 and 2.2);
\draw[black, thick, arrow=0.6] (O) arc (90:-210:1.5 and 2.2);
\node[below] at (vi) {$v_i$};
\draw[black, thick, arrow=0.7] (Si) arc (-210:-225:1.5 and 2.2);
\node[left] at (vm) {$v_m$};
\draw[black, thick, arrow=0.5] (Se) arc (-225: -270:1.5 and 2.2);
\node[left] at (vf) {$v_f$};

\node[circle,fill,inner sep=1.2] at (O) {};
\path let \p1 = (O) in node at (\x1, \y1+8) {$O$};

\node[circle,fill,inner sep=1.2] at (Si) {};
\node[right, yshift=-3] at (Si) {$I$};
\node[circle,fill,inner sep=1.2] at (Se) {};
\node[right] at (Se) {$F$};

\draw[black, thick, reversearrow=0.3] (0, 2.2+0.8) ellipse (0.6 and 0.8) node at (0.3, 2.2+1.8) {$w^k$};
\draw[black, thick, reversearrow=0.3] (0, 2.2+0.7*0.6) ellipse (0.5*0.6 and 0.7*0.6);

\draw[black, thick, reversearrow=0.7] (0, 2.2-0.8) ellipse (0.6 and 0.8) node at (0.3, 2.2-1.7) {$w^{-k}$};
\draw[black, thick, reversearrow=0.7] (0, 2.2-0.7*0.6) ellipse (0.5*0.6 and 0.7*0.6);
\node at (0, -3.5) {$\begin{aligned}
&b_1 = v_mv_f, b_2 = v_f^{-1},\\
&b^{\prime} = v_m;
\end{aligned}$};
\end{scope}
\begin{scope}
\coordinate (O) at ($(5,0)+(90:1.5 and 2.2)$);
\coordinate (Si) at ($(5,0)+(-210:1.5 and 2.2)$);
\coordinate (Se) at ($(5,0)+(-225:1.5 and 2.2)$);

%\path (O) arc (90:-210:1.5 and 2.2) coordinate[pos= 0.5] (vi);
\coordinate (vi) at (5 + 0.75, -1.9);
%\path (Si) arc (-210:-225:1.5 and 2.2) coordinate[pos= 0.5] (vm);
\coordinate (vm) at (5 + -1.19, 1.34);
%\path (Se) arc (-225: -270:1.5 and 2.2) coordinate[pos= 0.5] (vf);
\coordinate (vf) at (5 + -0.58, 2.0325);

%\draw[black, thick] (0, 0) ellipse (1.5 and 2.2);
\draw[black, thick, arrow=0.7] (O) arc (90:-210:1.5 and 2.2);
\node[below] at (vi) {$v_i$};
\draw[black, thick, arrow=0.7] (Si) arc (-210:-225:1.5 and 2.2);
\node[left] at (vm) {$v_m$};
\draw[black, thick, arrow=0.5] (Se) arc (-225: -270:1.5 and 2.2);
\node[left] at (vf) {$v_f$};

\node[circle,fill,inner sep=1.2] at (O) {};
\path let \p1 = (O) in node at (\x1, \y1+8) {$O$};

\node[circle,fill,inner sep=1.2] at (Si) {};
\node[right, yshift=-3] at (Si) {$F$};
\node[circle,fill,inner sep=1.2] at (Se) {};
\node[right] at (Se) {$I$};

\draw[black, thick, reversearrow=0.3] (5, 2.2+0.8) ellipse (0.6 and 0.8) node at (5.3, 2.2+1.8) {$w^k$};
\draw[black, thick, reversearrow=0.3] (5, 2.2+0.7*0.6) ellipse (0.5*0.6 and 0.7*0.6);

\draw[black, thick, reversearrow=0.7] (5, 2.2-0.8) ellipse (0.6 and 0.8) node at (5.3, 2.2-1.7) {$w^{-k}$};
\draw[black, thick, reversearrow=0.7] (5, 2.2-0.7*0.6) ellipse (0.5*0.6 and 0.7*0.6);
\node at (5, -3.5) {$\begin{aligned}
&b_1 = v_f, b_2 = v_f^{-1}v_m^{-1},\\
&b^{\prime} = v_m^{-1};
\end{aligned}$};
\end{scope}
\end{tikzpicture}
\end{center}
\item
\label{small_v_arc3}
$b^{\prime}$ does not contain the point $O$ and $\LM(b_1) + \LM(b_2) \geqslant \tau$.
\begin{center}
\begin{tikzpicture}
\begin{scope}
\coordinate (O) at ($(0,0)+(90:1.5 and 2.2)$);
\coordinate (Si) at ($(0,0)+(-110:1.5 and 2.2)$);
\coordinate (Se) at ($(0,0)+(-140:1.5 and 2.2)$);

%\path (O) arc (90:-110:1.5 and 2.2) coordinate[pos= 0.5] (vi);
\coordinate (vi) at (1.477, -0.382);
%\path (Si) arc (-110:-140:1.5 and 2.2) coordinate[pos= 0.5] (vm);
\coordinate (vm) at (-0.86, -1.802);
%\path (Se) arc (-140: -270:1.5 and 2.2) coordinate[pos= 0.5] (vf);
\coordinate (vf) at (-1.36, 0.93);

%\draw[black, thick] (0, 0) ellipse (1.5 and 2.2);
\draw[black, thick, arrow=0.6] (O) arc (90:-110:1.5 and 2.2);
\node[right] at (vi) {$v_i$};
\draw[black, thick, arrow=0.7] (Si) arc (-110:-140:1.5 and 2.2);
\node[left] at (vm) {$v_m$};
\draw[black, thick, arrow=0.5] (Se) arc (-140: -270:1.5 and 2.2);
\node[left] at (vf) {$v_f$};

\node[circle,fill,inner sep=1.2] at (O) {};
\path let \p1 = (O) in node at (\x1, \y1+8) {$O$};

\node[circle,fill,inner sep=1.2] at (Si) {};
\node[above] at (Si) {$I$};
\node[circle,fill,inner sep=1.2] at (Se) {};
\node[right] at (Se) {$F$};

\draw[black, thick, reversearrow=0.3] (0, 2.2+0.8) ellipse (0.6 and 0.8) node at (0.3, 2.2+1.8) {$w^k$};
\draw[black, thick, reversearrow=0.3] (0, 2.2+0.7*0.6) ellipse (0.5*0.6 and 0.7*0.6);

\draw[black, thick, reversearrow=0.7] (0, 2.2-0.8) ellipse (0.6 and 0.8) node at (0.3, 2.2-1.7) {$w^{-k}$};
\draw[black, thick, reversearrow=0.7] (0, 2.2-0.7*0.6) ellipse (0.5*0.6 and 0.7*0.6);
\node at (0, -3) {$\begin{aligned}
&b^{\prime} = v_m;
\end{aligned}$};
\end{scope}
\begin{scope}
\coordinate (O) at ($(5,0)+(90:1.5 and 2.2)$);
\coordinate (Si) at ($(5,0)+(-110:1.5 and 2.2)$);
\coordinate (Se) at ($(5,0)+(-140:1.5 and 2.2)$);

%\path (O) arc (90:-110:1.5 and 2.2) coordinate[pos= 0.5] (vi);
\coordinate (vi) at (5 + 1.477, -0.382);
%\path (Si) arc (-110:-140:1.5 and 2.2) coordinate[pos= 0.5] (vm);
\coordinate (vm) at (5 + -0.86, -1.802);
%\path (Se) arc (-140: -270:1.5 and 2.2) coordinate[pos= 0.5] (vf);
\coordinate (vf) at (5 + -1.36, 0.93);

%\draw[black, thick] (0, 0) ellipse (1.5 and 2.2);
\draw[black, thick, arrow=0.7] (O) arc (90:-110:1.5 and 2.2);
\node[right] at (vi) {$v_i$};
\draw[black, thick, arrow=0.7] (Si) arc (-110:-140:1.5 and 2.2);
\node[left] at (vm) {$v_m$};
\draw[black, thick, arrow=0.5] (Se) arc (-140: -270:1.5 and 2.2);
\node[left] at (vf) {$v_f$};

\node[circle,fill,inner sep=1.2] at (O) {};
\path let \p1 = (O) in node at (\x1, \y1+8) {$O$};

\node[circle,fill,inner sep=1.2] at (Si) {};
\node[above] at (Si) {$F$};
\node[circle,fill,inner sep=1.2] at (Se) {};
\node[right] at (Se) {$I$};

\draw[black, thick, reversearrow=0.3] (5, 2.2+0.8) ellipse (0.6 and 0.8) node at (5.3, 2.2+1.8) {$w^k$};
\draw[black, thick, reversearrow=0.3] (5, 2.2+0.7*0.6) ellipse (0.5*0.6 and 0.7*0.6);

\draw[black, thick, reversearrow=0.7] (5, 2.2-0.8) ellipse (0.6 and 0.8) node at (5.3, 2.2-1.7) {$w^{-k}$};
\draw[black, thick, reversearrow=0.7] (5, 2.2-0.7*0.6) ellipse (0.5*0.6 and 0.7*0.6);
\node at (5, -3) {$\begin{aligned}
&b^{\prime} = v_m^{-1};
\end{aligned}$};
\end{scope}
\end{tikzpicture}
\end{center}
\end{enumerate}

In case~\ref{small_v_arc3} there is only one path of $\LM$-measure $< \tau$ ($b = b^{\prime}$). Hence, if the sum $\sum_{l = 1}^{n(I, F)} Q_l^{(I, F)}$ contains several different monomials of $\LM$-measure $< \tau$, then configuration~\ref{small_v_arc3} is not possible.

Consider configurations~\ref{small_v_arc1} and~\ref{small_v_arc2}. Since all members of the sum $\sum_{l = 1}^{n(I, F)} Q_l^{(I, F)}$ correspond to paths in the $v$-diagram with the initial point $I$ and the final point $F$ and of $\LM$-measure less than $\tau$, the members are of the form
\begin{equation*}
b_1 w^{k_j} b_2.
\end{equation*}
Notice that the monomial with $k_j = 0$ may have cancellations and equals to $b^{\prime}$ after cancellations.

Let us return to the consideration of case~\ref{small_v_arc} in general. Recall that all $Q_l^{(I, F)}$ are of one of the types \eqref{mturn1} --- \eqref{mturn6}. So, if we multiply $\sum_{l = 1}^{n(I, F)} Q_l^{(I, F)}$ by $b_1^{-1}$ from the left side and by $b_2^{-1}$ from the right side, we obtain the Laurent polynomial $P(v, w) = b_1^{-1}\sum_{j = 1}^n Q_j b_2^{-1}$ such that $P(v, w)$ satisfies~\eqref{vanish_in_field}. But, on the other hand, in cases~\ref{small_v_arc1} and~\ref{small_v_arc2}
\begin{equation*}
P(v, w) = b_1^{-1}\sum\limits_{l = 1}^{n(I, F)} Q_l^{(I, F)} b_2^{-1} = \sum\limits_{j = 1}^s w^{k_l}.
\end{equation*}
In case~\ref{small_v_arc3}, the polynomial consists of one monomial
\begin{equation*}
P(v, w) = b_1^{-1}\sum\limits_{l = 1}^{n(I, F)} Q_l^{(I, F)} b_2^{-1} = b_1^{-1}b^{\prime} b_2^{-1}.
\end{equation*}
Evidently, this polynomials does not satisfy~\eqref{vanish_in_field}. This contradiction completes the proof. The remaining cases are considered in the same way.
\end{proof}

Let us define a subspace of $\mathbb{Z}_2\Fr$ of linear dependencies induced by multi-turns. For every monomial from $\Fr$, we do all possible multi-turns of all members of the chart. As a result, we obtain a set of expressions
\begin{multline}
\label{linear_dep_members}
\mathcal{T} = \Bigg\lbrace \sum\limits_{j = 1}^k U_j \mid U_h\in \Fr, U_h = La_hR, \textit{ where } a_h \textit{ is a member of the chart of } U_h, \\
 U_h \mapsto \sum\limits_{\substack{j = 1 \\ j\neq h}}^{k} U_j \textit{ is a multi-turn coming from an elementary multi-turn } a_h \mapsto \sum\limits_{\substack{j = 1 \\ j\neq h}}^{k} a_j \Bigg\rbrace.
\end{multline}
That is, $\mathcal{T}$ consists of the supports of all the multi-turns of members of the chart. We denote by $\langle\mathcal{T}\rangle$ the linear span of this set.

\begin{proposition}
\label{the_ideal_characterisation}
The linear subspace $\langle\mathcal{T} \rangle \subseteq \mathbb{Z}_2\Fr$ is equal to the ideal $\Ideal = \langle 1 + v + vw \rangle$.
\end{proposition}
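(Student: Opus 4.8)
The plan is to prove the two inclusions $\langle\mathcal{T}\rangle \subseteq \Ideal$ and $\Ideal \subseteq \langle\mathcal{T}\rangle$ separately. The first inclusion is essentially immediate from the definitions collected above: every element of $\mathcal{T}$ is a support $\sum_{j=1}^{k}U_j$ of a multi-turn, and by Definition~\ref{multiturn_def} we have $\sum_{j=1}^{k}U_j = L\bigl(\sum_{j=1}^{k}a_j\bigr)R$, where $\sum_{j=1}^{k}a_j$ is one of the expressions~\eqref{mturn1}---\eqref{mturn6}, hence of the form $M_L\,P(v,w)\,M_R$ with $P$ a non-commutative Laurent polynomial satisfying~\eqref{vanish_in_field} and $M_L, M_R$ monomials in $v,w$ (so, words in $\Fr$). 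Since $P((1+w)^{-1},w)=0$ in $\mathbb{Z}_2(w)$ forces $P(v,w)\in\Ideal$ (as noted right after~\eqref{vanish_in_field}), and $\Ideal$ is a two-sided ideal, each such support lies in $\Ideal$; therefore $\langle\mathcal{T}\rangle\subseteq\Ideal$.

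The substantial direction is $\Ideal \subseteq \langle\mathcal{T}\rangle$. Since $\Ideal$ is the two-sided ideal generated by $1+v+vw$, it is linearly spanned by the elements $g(1+v+vw)h$ with $g,h\in\Fr$; so it suffices to show each such element lies in $\langle\mathcal{T}\rangle$. Note $1+v+vw = \bigl(1+v+vw\bigr)$ is itself, up to the trivial choices $P(x_1,x_2)=1+x_1+x_1x_2$ and $v_i=v_f=1$ (equivalently $v_m=v$), the support of an elementary multi-turn of type~\eqref{mturn1} — but the occurrence "$v$" of $\LM$-measure $1$ in the monomial $v$ (and in $vw$, $vwg^{-1}$, etc.) need not be a \emph{member of the chart} of the ambient word, so one cannot directly read off $g(1+v+vw)h\in\mathcal{T}$. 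The idea is rather to write $g(1+v+vw)h$ as a $\mathbb{Z}_2$-linear combination of supports of multi-turns performed on genuine members of charts. Concretely, I would induct on some complexity of $g$ and $h$ (e.g.\ the total number of members of the chart, or the total $\LM$-measure of $v$- and $w$-syllables, of the monomials $gvh$, $gvwh$): starting from $1+v+vw$, multiply on the left and right by generators one at a time, and at each stage use a multi-turn of an actual chart member to re-express the new element modulo $\langle\mathcal{T}\rangle$ in terms of elements of strictly smaller complexity, eventually reducing to base cases where the relevant occurrences of $v$ are themselves chart members (this is where $\vert w\vert\ll\alpha\ll\beta$ and $\tau\geqslant 10\varepsilon$ are used, since $\LM(v)=1$ is enormous compared to $\tau$, so a full copy of $v$ sitting ``in the open'' is automatically a chart member). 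The key structural input is Proposition~\ref{transversality} (Transversality): it guarantees that when we rewrite an element as a sum of supports of multi-turns, the rewriting does not silently collapse — the ``large'' generalized fractional powers of $\LM$-measure $\geqslant\tau$ cannot all cancel out unless the element was already zero — which is what lets the inductive bookkeeping close without the span $\langle\mathcal{T}\rangle$ turning out to be strictly smaller than $\Ideal$.

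The main obstacle I anticipate is precisely the passage from ``$1+v+vw\in\mathcal{T}$ up to chart-membership technicalities'' to ``$g(1+v+vw)h\in\langle\mathcal{T}\rangle$ for all $g,h$''. One must handle cancellations: multiplying $v$ or $vw$ by a generator on either side can cause the word to shorten or to split a maximal occurrence, so the relevant occurrence may cease to be a chart member (types~\ref{donot_keep_structure1},~\ref{donot_keep_structure2} in Section~\ref{mt_configurations}), and one has to instead turn a \emph{neighbouring} chart member or enlarge the occurrence using the $v$-diagram pictures~\eqref{path_type1}---\eqref{path_type3} to reach a configuration of type~\ref{keep_structure}. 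Organizing this so that the induction terminates — and verifying that the multi-turns invoked are genuinely of the listed forms~\eqref{mturn1}---\eqref{mturn6} (in particular that the $w$-syllable bookkeeping in $vw$, $vw^k$ etc.\ is covered by the Laurent polynomials $P_1,P_2$ of the Example) — is the delicate part; Transversality is the tool that keeps the argument from degenerating, but assembling the reduction scheme is where the real work lies.
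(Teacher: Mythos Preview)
Your outline is pointed in the right direction but misses the organizing principle that makes the argument clean. The paper does not induct on a complexity of $g,h$; instead it proves directly that $\langle\mathcal{T}\rangle$ is a two-sided ideal, by showing the stronger fact that $zT\in\mathcal{T}$ and $Tz\in\mathcal{T}$ for every single letter $z$ and every $T\in\mathcal{T}$. Once that is established, the inclusion $\Ideal\subseteq\langle\mathcal{T}\rangle$ is immediate: $1+v+vw$ is the support of the multi-turn $vw\mapsto 1+v$ (with $\LM(vw)=1\geqslant\tau$, so $vw$ is a chart member of itself), hence $1+v+vw\in\mathcal{T}$, and any ideal containing the generator contains $\Ideal$.

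The one-step closure argument is exactly your ``multiply by a generator and see what happens'', but with no inductive bookkeeping. Given $T=\sum_j La_jR\in\mathcal{T}$ with $a_h$ a chart member of $La_hR$, one checks that $zT=\sum_j (zL)a_jR$ is again in $\mathcal{T}$: if $L\neq 1$ this is trivial; if $L=1$ there is a short case analysis on whether $z$ cancels with or prolongs $a_h$. Your description of the role of Transversality is slightly off: it is not about preventing cancellations among several supports, but is used \emph{inside a single support}. If after multiplying by $z$ the modified occurrence $za_h$ drops below $\tau$ (so can no longer serve as the chart member), Proposition~\ref{transversality} guarantees some other $za_{h'}$ in the same sum has $\LM\geqslant\tau$, and the \emph{same} sum $\sum_j za_jR$ is then viewed as the support of a multi-turn based at $za_{h'}$ instead. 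There is no reduction to ``smaller complexity'' elements and no need to invoke the polynomials $P_1,P_2$ from the Example; the argument is a direct verification that $\mathcal{T}$ is stable under one-letter multiplication.
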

\begin{proof}
First let us show that $\langle \mathcal{T} \rangle$ is an ideal in $\mathbb{Z}_2\Fr$. Assume that $T$ is the support of a multi-turn of $M_h = La_hR$ that comes from an elementary multi-turn $a_h \mapsto \sum_{\substack{j = 1 \\ j\neq h}}^{k}a_j$, that is, $T = \sum_{j = 1}^k U_j$, $U_j = La_jR$. We have to check that if $Z$ is a monomial, then $ZT \in \langle \mathcal{T} \rangle$ and $TZ \in \langle \mathcal{T} \rangle$. Clearly, it is sufficient to check this property only for a monomial $Z$ that consists of only one letter $z$. We will show an even stronger property, namely that $z(\sum_{j = 1}^k U_j) \in \mathcal{T}$, $(\sum_{j = 1}^k U_j)z \in \mathcal{T}$.

Consider $z(\sum_{j = 1}^k U_j)$, $U_j = La_jR$. Suppose $\LM(a_h) < \tau$. From Proposition~\ref{transversality}, it follows that in the sum $\sum_{j = 1}^{k} a_j$ there exists a monomial $a_{h^{\prime}}$ such that $\LM(a_{h^{\prime}}) \geqslant \tau$. Then, by definition, $a_{h^{\prime}}$ is a member of the chart of $La_{h^{\prime}}R$. Hence, the sum $\sum_{j = 1}^{k} U_j = \sum_{j = 1}^{k} La_jR$ can be considered as the support of the multi-turn $M_{h^{\prime}} = La_{h^{\prime}}R \mapsto \sum_{\substack{j = 1 \\ j\neq h^{\prime}}}^{k} La_jR$. So, in the sequel we can assume that $\LM(a_h) \geqslant \tau$.

First consider the case when $L$ is not empty. Since $\LM(a_h) \geqslant \tau$, $a_h$ is a member of the chart of $(zL)a_hR$ both when $z$ does not cancel out with $L$, or when it does. Since $z(La_jR) = (zL)a_jR$, $\sum_{j = 1}^k zU_j = \sum_{j = 1}^k z(La_jR) = \sum_{j = 1}^k (zL)a_jR$. Then, clearly, $\sum_{j = 1}^k zU_j$ is the support of the multi-turn $(zL)a_hR \mapsto \sum_{\substack{j = 1 \\ j\neq h}}^{k} (zL)a_jR$. Hence, $\sum_{j = 1}^k zU_j \in \mathcal{T}$.

Consider the case when $L$ is empty, that is, $U_j = a_jR$. If $z$ does not cancel with $a_h$ and $z$ does not prolong $a_h$ to a generalized fractional power from the left, then $a_h$ is a maximal occurrence of a generalized fractional power in $zU_h = za_hR$. Since $\LM(a_h) \geqslant \tau$, $a_h$ is a member of the chart of $za_hR$. Hence, $za_hR \mapsto \sum_{\substack{j = 1 \\ j\neq h}}^{k} za_jR$ is a multi-turn of a member of the chart and its support $\sum_{j = 1}^{k} za_jR = \sum_{j = 1}^{k} zU_j$ belongs to~$\mathcal{T}$.

Assume that $z$ does not cancel with $a_h$ and $z$ prolongs $a_h$ to a generalized fractional power from the left. Then for every $j \neq h$ either $z$ does not cancel with $a_j$ (and since $\LM(a_h) \geqslant \tau > \varepsilon$, $z$ prolongs it to a generalized fractional power), or $z$ cancels with $a_j$. Hence, $za_h \mapsto \sum_{\substack{j = 1 \\ j\neq h}}^{k} za_j$ is an elementary multi-turn (after the cancellations from the right hand side). Clearly, $za_h$ is a maximal occurrence of a generalized fractional power in $zU_h = za_hR$. Since $\LM(za_h) \geqslant \LM(a_h) \geqslant \tau$, $za_h$ is a member of the chart of $za_hR$. Therefore, $za_hR \mapsto \sum_{\substack{j\neq h \\ j = 1}}^{k} za_jR$ is a multi-turn of a member of the chart and its support $\sum_{j = 1}^{k} za_jR = \sum_{j = 1}^{k} zU_j$ belongs to~$\mathcal{T}$.

Assume that $z$ cancels with $a_h$. Then for every $j \neq h$ either $z$ does not cancel with $a_j$ (and since $\LM(a_h) \geqslant \tau > \varepsilon$, $z$ prolongs it to a generalized fractional power), or $z$ cancels with $a_j$. Hence, $za_h \mapsto \sum_{\substack{j = 1 \\ j \neq h}}^{k} za_j$ is an elementary multi-turn (after the cancellations). Since $\LM(za_h) \leqslant \LM(a_h)$, we distinguish the following two possibilities:
\begin{enumerate}
\item
$\LM(za_h) \geqslant \tau$. Then $za_h$ is a member of the chart of $za_hR$. Therefore, $za_hR \mapsto \sum_{\substack{j = 1 \\ j\neq h}}^{k} za_jR$ is a multi-turn of a virtual member of the chart and its support $\sum_{j = 1}^{k} za_jR = \sum_{j = 1}^{k} zU_j$ belongs to $\mathcal{T}$.
\item
$\LM(za_h) \geqslant \tau$. Then $za_h$ is not a member of the chart of $za_hR$. Then we argue as at the beginning of the proof. From Proposition~\ref{transversality} it follows that in the sum $\sum_{j = 1}^{k} za_j$ there exists a monomial $za_{h^{\prime}}$ such that $\LM(za_{h^{\prime}}) \geqslant \tau$. So, $za_{h^{\prime}}$ is a member of the chart of $za_{h^{\prime}}R$. Hence, the sum $\sum_{j = 1}^{k} za_jR$ can be considered as the support of the multi-turn $za_{h^{\prime}}R \mapsto \sum_{\substack{j\neq h^{\prime} \\ j = 1}}^{k} za_jR$. Thus, the sum $\sum_{j = 1}^{k} zU_j = \sum_{j = 1}^{k} za_jR$ belongs to~$\mathcal{T}$.
\end{enumerate}

Summarising all of the above, we obtain $zT = z(\sum_{j = 1}^k) \in \mathcal{T}$. Clearly, for the same reason we obtain $Tz = (\sum_{j = 1}^k)z \in \mathcal{T}$. Hence, $\langle \mathcal{T}\rangle$ is an ideal in $\mathbb{Z}_2\Fr$.

Evidently, every $\sum_{j = 1}^{k} a_j$ of the form~\eqref{mturn1}--\eqref{mturn6} belongs to $\Ideal$. Hence, $\sum_{j = 1}^{k} La_jR$ belongs to $\Ideal$ for every $\sum_{j = 1}^{k} a_j$ of the form~\eqref{mturn1}--\eqref{mturn6}. So, $\mathcal{T} \subseteq \Ideal$ and $\langle \mathcal{T} \rangle \subseteq \Ideal$. Since $vw + v + 1$ is the support of the multi-turn $vw \mapsto v + 1$ and $\LM(vw) = 1 \geqslant \tau$, $vw + v + 1 \in \mathcal{T}$. Therefore, since $\langle\mathcal{T}\rangle$ is an ideal in $\mathbb{Z}_2\Fr$, we obtain $\Ideal \subseteq \langle\mathcal{T}\rangle$. Thus, $\Ideal =\langle\mathcal{T}\rangle$.
\end{proof}

\subsection{Virtual members of the chart}
\label{virtual_members_section}
Recall that the chart of a word consists of maximal occurrences of generalized fractional powers with $\LM$-measure $\geqslant \tau$ ($\tau$ is our threshold). When we perform multi-turns, the measure of the occurrences may increase on the right or decrease on the right by at most $\varepsilon$ in the resulting monomials of type~\ref{keep_structure}. Similarly, it may increase on the left by at most $\varepsilon$ or decrease on the left by at most $\varepsilon$. It follows that occurrences with measure $\geqslant \tau$ (above the threshold) may turn into occurrences $< \tau$ (below the threshold) and vice versa. Therefore we need to modify our notion of a member of the chart to a notion with more stable properties with respect to multi-turns. We call such a notion \emph{a virtural member of the chart}. In this section, we define this notion and study its properties.

Namely, we may have the following effect. As above, let $U_h$ be a monomial, $a_h$ be a member of its chart, and $U_h = La_hR$. Let $U_h \mapsto \sum_{\substack{j = 1 \\ j\neq h}}^{k}U_j$ be a multi-turn of $a_h$ that comes from an elementary multi-turn $a_h \mapsto \sum_{\substack{j = 1 \\ j\neq h}}^{k}a_j$, $U_j = La_jR$. Assume $b_h$ is a maximal occurrence of a generalized fractional power in $U_h$ different from $a_h$, $\LM(b_h) < \tau$, that is, $b_h$ is not counted as a member of the chart of $U_h$. According to the previous section, the element $b_h$ may be prolonged in $U_j$ and the $\LM$-measure of the corresponding prolonged element $b_j$ in $U_j$ may increase and become $\geqslant \tau$. So, $b_j$ may become a member of the chart of $U_j$ and the number of members of the chart of $U_j$ may become greater than the number of members of the chart of $U_h$. We may obtain this effect for both neighbours of $a_h$ simultaneously.
\begin{center}
\begin{tikzpicture}
\node at (0.3, 0) [below] {$U_h$};
\draw[|-|, black, thick] (0,0)--(6,0);
\draw[|-|, black, very thick] (1,0.1)--(2.3,0.1) node[midway, above] {$b^{(1)}_h$};
\draw[|-|, black, very thick] (2.3,-0.1)--(4.4,-0.1) node[midway, below] {$a_h$};
\draw[|-|, black, very thick] (4.4,0.1)--(5.6,0.1) node[midway, above] {$b^{(2)}_h$};
\node at (7, 1) {$\begin{aligned} &\LM(b^{(1)}_h) < \tau \\ &\LM(b^{(2)}_h) < \tau \end{aligned}$};
\end{tikzpicture}

\begin{tikzpicture}
\node at (0.3, 0) [below] {$U_j$};
\draw[|-|, black, thick] (0,0)--(6,0);
\draw[|-|, black, very thick] (1,0.1)--(2.3,0.1) node[midway, below] {$b^{(1)}_h$};
\draw[-|, black, very thick] (2.3,0.1)--(2.7,0.1);
\draw [thick, decorate, decoration={brace, amplitude=10pt, raise=5pt}] (1,0.1) to node[midway, above, yshift=10] {$b^{(1)}_j$} (2.7,0.1);
\draw[|-|, black, very thick] (2.3,-0.1)--(4.4,-0.1) node[midway, below] {$a_j$};
\draw[|-, black, very thick] (4,0.1)--(4.4,0.1);
\draw[|-|, black, very thick] (4.4,0.1)--(5.6,0.1) node[midway, below] {$b^{(2)}_h$};
\draw [thick, decorate, decoration={brace, amplitude=10pt, raise=5pt}] (4,0.1) to node[midway, above, yshift=10] {$b^{(2)}_j$} (5.6,0.1);
\node at (7, 1) {$\begin{aligned} &\LM(b^{(1)}_j) \geqslant \tau \\ &\LM(b^{(2)}_j) \geqslant \tau \end{aligned}$};
\end{tikzpicture}
\end{center}
In this case, the number of members of the chart of $U_j$ become greater than the number of members of the chart of $U_h$ even if $\LM(a_j) < \tau$.

Assume that $U_j = La_jR$ is a resulting monomial of a multi-turn such that, roughly speaking, $a_j$ is of small $\LM$-measure (for example $\LM(a_j) < \tau$, so $a_j$ is not counted as a member of the chart). In Section~\ref{structure_calc}, we will use an inductive argument for such resulting monomials of multi-turns. It looks natural to use induction by the number of members of the chart. But the example described above shows that the number of members of the chart may increase in $U_j$; hence, it is not an appropriate parameter for the induction. In order to prove that the induction is nevertheless finite, we refine a notion of a member of the chart in this section. After that we introduce a function that guarantees finiteness of the inductive process (see Corollary~\ref{decreasing_property}).

In this section, we consider the set $\mathcal{M}(U_h)$ of all maximal occurrences of generalized fractional powers in $U_h$, regardless of their $\LM$-measure, such that they are not properly contained in other occurrences of generalized fractional powers in $U_h$. There are two types of such occurrences: occurrences that are not fully covered by other occurrences from $\mathcal{M}(U_h)$ and occurrences that are fully covered by other occurrences from $\mathcal{M}(U_h)$. Denote the first set by $\mathcal{M}^{\nfc}(U_h)$ and the second set by $\mathcal{M}^{\fc}(U_h)$. Clearly, all maximal occurrences of a generalized fractional powers in $U_h$ of $\LM$-measure greater than $\varepsilon$ are contained in the set $\mathcal{M}(U_h)$. It is also clear that elements of $\mathcal{M}^{\fc}(U_h)$ are of $\LM$-measure not greater than $2\varepsilon$.

For any maximal occurrence of a generalized fractional power in $U_h$ we define \emph{a set of its images} in a resulting monomial of a multi-turn.
\begin{definition}
\label{corresponding_occurence}
Let $U_h = La_hR$ be a monomial, and $a_h$ be a maximal occurrence of a generalized fractional power such that $\LM(a_h) \geqslant \tau - 2\varepsilon$. Let $a_j$ and $a_h$ be incident monomials. Assume $b_h$ is a maximal occurrence of a generalized fractional power in $U_h$ that is different from $a_h$ and is not properly contained in $a_h$. Since $\LM(a_h) \geqslant \tau - 2\varepsilon > \varepsilon$, $a_h$ can not be properly contained in any other occurrence, particularly in $b_h$. To be precise, assume that the end of $b_h$ lies strictly left to the end of $a_h$. We give the definition in the cases $a_j \neq 1$ and $a_j = 1$ separately.
\begin{enumerate}
\item
Assume $a_j \neq 1$, $U_j = La_jR$. Then we call an element of $\mathcal{M}(U_j)$ that contains $a_j$ we call \emph{an image of $a_h$ in $U_j$}. We call the set of all these elements \emph{the set of images of $a_h$ in $U_j$}.

Denote by $b_h^{\prime}$ a subword of $b_h$ that is an intersection of $b_h$ and $L$. We have $L = L_1^{\prime}b_h^{\prime}L_2^{\prime}$, where $L_2^{\prime}$ is possibly equal to $1$. Hence,
\begin{align*}
&U_h = La_hR = L_1^{\prime}b_h^{\prime}L_2^{\prime}a_hR,\\
&U_j = La_jR = L_1^{\prime}b_h^{\prime}L_2^{\prime}a_jR.
\end{align*}
We call an element of $\mathcal{M}(U_j)$ that contains $b_h^{\prime}$ \emph{an image of $b_h$ in $U_j$}. We call the set of all these elements \emph{the set of images of $b_h$ in $U_j$}.

\item
Assume $a_j = 1$. Then we say that \emph{the set of images of $a_h$ in $U_j$ is empty}.

Let $L = L^{\prime}C$, $R = C^{-1}R^{\prime}$ and $L^{\prime}R^{\prime}$ has no further cancellations. If there is no non-empty subword of $b_h$ that is contained in $L^{\prime}$, we say that \emph{the set of images of $b_h$ in $U_j$ is empty}. Otherwise, let $b_h^{\prime}$ be a subword of $b_h$ that is an intersection of $b_h$ and $L^{\prime}$. We have $L^{\prime} = L_1^{\prime}b_h^{\prime}L_2^{\prime}$, where $L_2^{\prime}$ is possibly equal to $1$. Hence,
\begin{align*}
&U_h = La_hR = L^{\prime}Ca_hC^{-1}R^{\prime} = L_1^{\prime}b_h^{\prime}L_2^{\prime}Ca_hC^{-1}R^{\prime},\\
&U_j = L^{\prime}R^{\prime} = L_1^{\prime}b_h^{\prime}L_2^{\prime}R^{\prime}.
\end{align*}
We call an element of $\mathcal{M}(U_j)$ that contains $b_h^{\prime}$ \emph{an image of $b_h$ in $U_j$}. We call the set of all these elements \emph{the set of images of $b_h$ in $U_j$}.
\end{enumerate}
Clearly, we have a similar correspondence for $b_h$ when its beginning lies strictly right from the beginning of $a_h$.
\end{definition}

If $\LM(b_h^{\prime}) \leqslant \varepsilon$, it can be contained in several elements of $\mathcal{M}(U_j)$ and the set of images of $b_h$ may contain several elements. If $\LM(b_h^{\prime}) > \varepsilon$, then its prolongation in $U_j$ is uniquely determined and the set of images of $b_h$ consists of one element. In fact, in the previous section we described in detail all possible images of maximal occurrences of generalized fractional powers.

\begin{example}
We observe the following effect for elements of $\mathcal{M}^{\fc}(U_j)$. Let a maximal occurrence $b_h$ touch $a_h$ at a point from the left side. Suppose $b_h = b_h^{(1)}c_h^{\prime}$, where $c_h^{\prime}$ is a maximal occurrence of a generalized fractional power, $\LM(c_h^{\prime}) \leqslant \varepsilon$. Suppose $a_j = c_j^{\prime}a_j^{(1)}$, where $c_j^{\prime}$ is a maximal occurrence of a generalized fractional power, $\LM(c_j^{\prime}) \leqslant \varepsilon$. Assume $c_j = c_h^{\prime}c_j^{\prime}$ is a maximal occurrence of a generalized fractional power in $U_j$, then we obtain a new element in $\mathcal{M}^{\fc}(U_j)$, which grows from two maximal occurrences $c_h^{\prime} \notin \mathcal{M}(U_h)$ and $c_j^{\prime} \notin \mathcal{M}(U_j)$.
\begin{center}
\begin{tikzpicture}
\node at (0.3, 0) [below] {$U_h$};
\draw[|-|, black, thick] (0,0)--(5,0);
\draw[|-, black, very thick] (1, 0)--(2, 0) node[midway, below] {$b_h$};
\draw[|-|, black, very thick] (1.9, 0)--(2.3, 0) node[midway, above] {$c_h^{\prime}$};
\draw[|-|, black, very thick] (2.3, 0)--(4.4, 0) node[midway, below] {$a_h$};
\end{tikzpicture}

\begin{tikzpicture}
\node at (0.3, 0) [below] {$U_j$};
\draw[|-|, black, thick] (0,0)--(5,0);
\draw[|-, black, very thick] (1, 0)--(1.9, 0) node[midway, below] {$b_h$};
\draw[|-|, black, very thick] (1.9, 0)--(2.3, 0) node[midway, below] {$c_h^{\prime}$};
\draw[-|, black, very thick] (2.7, 0)--(4.4, 0) node[midway, below] {$a_j$};
\draw[|-|, black, very thick] (2.3, 0)--(2.7, 0) node[midway, below] {$c_j^{\prime}$};
\draw[|-|, black, very thick] (1.9, 0.2)--(2.7, 0.2) node[midway, above] {$c_j$};
\end{tikzpicture}
\end{center}
This example shows that we can not use just the size of $\mathcal{M}(U_j)$ as an inductive parameter. We will introduce special notions in order to control the behaviour of elements both from $\mathcal{M}^{\nfc}(U_j)$ and from $\mathcal{M}^{\fc}(U_j)$.
\end{example}

All elements of $\mathcal{M}(U_h)$ can be divided into sets such that every set covers a part of $U_h$ and different parts are separated from each other. We denote them by $\mathcal{M}_1(U_h), \ldots , \mathcal{M}_{k(U_h)}(U_h)$. We consider subcoverings of all parts, that are subsets of $\mathcal{M}_1(U_h), \ldots , \mathcal{M}_{k(U_h)}(U_h),$ and call the union of this subcoverings \emph{a covering of $U_h$}. There are finitely many different coverings of $U_h$, we denote them by $\{\mathcal{C}_i(U_h) \mid i = 1, \ldots , n(U_h)\}$. If $Z$ is a subword of $U_h$ and $\mathcal{C}_i(U_h)$ is a covering of $U_h$, we consider elements of $\mathcal{C}_i(U_h)$ that have non empty intersection with $Z$. We call the set of this elements \emph{a covering of a subword $Z$} and denote it by $\mathcal{C}_i(Z, U_h)$.

Let us consider a subcovering of every set $\mathcal{M}_k(U_h)$ that consists of the smallest number of elements and denote this number by $N_k(U_h)$. Let us call the union of this subcoverings \emph{a minimal covering of $U_h$}. Clearly, a minimal covering of $U_h$ may not be uniquely defined. We define
\begin{equation*}
N(U) = \sum\limits_{k = 1}^{k(U_h)}N_k(U_h),
\end{equation*}
that is, the number of elements in a minimal covering of $U_h$. Notice that all elements of $\mathcal{M}^{\nfc}(U_h)$ are contained in any covering of $U_h$. If some element of $\mathcal{M}^{\fc}(U_h)$ is fully covered by elements of $\mathcal{M}^{\nfc}(U_h)$, it is never contained in a minimal covering of $U_h$.

Let us study a general structure of positions of elements of $\mathcal{M}^{\fc}(U_h)$. Consider $c \in \mathcal{M}^{\fc}(U_h)$, that is, $c$ is fully covered by other elements of $\mathcal{M}(U_h)$. Consider the set of all maximal occurrences $\{c_k\}$ from $\mathcal{M}(U_h)$ that have non empty intersection with $c$. Then for every $c_k$ either its beginning point or its end point belong to $c$, since $c$ is not properly contained in another occurrence. Consider $c_{k_1}$ with the leftmost beginning with respect to the beginning of $c$ and $c_{k_2}$ with the rightmost beginning with respect to the beginning of $c$.
\begin{center}
\begin{tikzpicture}
\draw[|-|, black, very thick] (0, 0)--(1.5, 0) node[midway, below] {$c$};
\draw[|-|, black, very thick] (-1, -0.2)--(0.3, -0.2) node[midway, below] {$c_{k_1}$};
\draw[|-|, black, very thick] (1.3, 0.2)--(2.5, 0.2) node[midway, below] {$c_{k_2}$};
\end{tikzpicture}
\end{center}
By the definition of $\mathcal{M}(U_h)$ neither of $c_k$ is contained in another one. Hence, if $k \neq k_1$, then $c_{k}$ begins after $c_{k_1}$, if $k \neq k_2$, then $c_{k}$ ends before $c_{k_2}$. Therefore, if $k \neq k_1$, $k \neq k_2$, then $c_{k}$ is fully covered by $c$, $c_{k_1}$, $c_{k_2}$, so, $c_k \in \mathcal{M}^{\fc}(U_h)$.
\begin{center}
\begin{tikzpicture}
\draw[|-|, black, very thick] (0, 0)--(1.5, 0) node[midway, above] {$c$};
\draw[|-|, black, very thick] (-1, -0.4)--(0.3, -0.4) node[midway, below] {$c_{k_1}$};
\draw[|-|, black!30!red, very thick] (-0.5, -0.2)--(1, -0.2) node[midway, below, near end] {$c_{k}$};
\draw[|-|, black, very thick] (1.3, 0.2)--(2.5, 0.2) node[midway, above] {$c_{k_2}$};
\end{tikzpicture}

\begin{tikzpicture}
\draw[|-|, black, very thick] (0, 0)--(1.5, 0) node[midway, below] {$c$};
\draw[|-|, black, very thick] (-1, -0.2)--(0.3, -0.2) node[midway, below] {$c_{k_1}$};
\draw[|-|, black!30!red, very thick] (0.5, 0.2)--(2, 0.2) node[midway, below, near end] {$c_{k}$};
\draw[|-|, black, very thick] (1.3, 0.4)--(2.5, 0.4) node[midway, above] {$c_{k_2}$};
\end{tikzpicture}
\end{center}
If $c_{k_1}$ is also fully covered by other elements of $\mathcal{M}(U_h)$, we consider an element of $\mathcal{M}(U_h)$ that has the end at the leftmost side of $c_{k_1}$ and repeat the argument. If $c_{k_2}$ is fully covered by other elements of $\mathcal{M}(U_h)$, we consider an element of $\mathcal{M}(U_h)$ that has non-empty intersection with $c_{k_2}$ with the rightmost beginning with respect to the beginning of $c_{k_2}$ and repeat the argument. We continue the process until we find an element of $\mathcal{M}^{\nfc}(U_h)$ that has the beginning from the left of $c$ and an element of $\mathcal{M}^{\nfc}(U_h)$ that has the end from the right of $c$. Denote the first element by $d_1$ and the second element by $d_2$. We see that all elements of $\mathcal{M}(U_h)$ that have the beginning after the beginning of $d_1$ and the end before the end of $d_2$ belong to $\mathcal{M}^{\fc}(U_h)$.

\begin{center}
\begin{tikzpicture}
\draw[|-|, black, very thick] (0, 0)--(2, 0) node[midway, above] {$d_1$};
\draw[|-|, black, very thick] (1.7, 0.2)--(2.3, 0.2);
\path[|-|, black, very thick] (2, 0.4)--(2.6, 0.4) node[midway] {$\cdots$};
\draw[|-|, black!30!red, very thick] (2.4, 0.6)--(3, 0.6) node[midway, below] {$c$};
\path[|-|, black, very thick] (2.7, 0.8)--(3.2, 0.8) node[midway] {$\cdots$};
\draw[|-|, black, very thick] (3, 1)--(3.7, 1);
\draw[|-|, black, very thick] (3.3, 1.2)--(6, 1.2) node[midway, above] {$d_2$};
\end{tikzpicture}
\end{center}

\begin{lemma}
\label{minimal_coverings_property}
Let $U_h$ be a monomial, $a_h \in \mathcal{M}^{\nfc}(U_h)$, $U_h = La_hR$. Assume $a_h$ and $a_j$ are incident monomials, $U_j = La_jR$. Then we have $N(U_j) \leqslant N(U_h)$. If, moreover, $a_j$ is fully covered by images of elements of $\mathcal{M}^{\nfc}(U_h) \setminus \{a_h\}$ (for instance, if $a_j = 1$), then $N(U_j) < N(U_h)$.

\end{lemma}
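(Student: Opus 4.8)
The plan is to transport a minimal covering of $U_h$ to a covering of $U_j$ of no larger size, using the notion of images from Definition~\ref{corresponding_occurence}; in the degenerate case one element of the transported covering turns out to be redundant, and that gives the strict inequality. So the proof splits into: (i) building the transported covering $\mathcal{C}'$ and checking $|\mathcal{C}'|\le N(U_h)$; (ii) checking that $\mathcal{C}'$ is really a covering of $U_j$; (iii) improving $\mathcal{C}'$ by one element under the extra hypothesis.

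First I would fix a minimal covering $\mathcal{C}$ of $U_h$, so $|\mathcal{C}| = N(U_h)$; since every element of $\mathcal{M}^{\nfc}(U_h)$ lies in every covering of $U_h$ and $a_h\in\mathcal{M}^{\nfc}(U_h)$, we have $a_h\in\mathcal{C}$. Let $b_h^{(1)}, b_h^{(2)}$ be the left and right neighbours of $a_h$ in $\mathcal{M}(U_h)$ (either may be absent). I would define $\mathcal{C}'\subseteq\mathcal{M}(U_j)$ by: keeping every element of $\mathcal{C}$ separated from $a_h$ unchanged (the multi-turn is local, so such an element is still a maximal occurrence in $U_j$ not properly contained in another, and lies in the same part of $U_j$); replacing $a_h$ by one of its images in $U_j$, and by nothing at all when $a_j = 1$ since then the set of images of $a_h$ is empty; and replacing each of $b_h^{(1)}, b_h^{(2)}$ that happens to lie in $\mathcal{C}$ by one of its images in $U_j$. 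Then $|\mathcal{C}'|\le|\mathcal{C}| = N(U_h)$, already with strict inequality when $a_j = 1$ or, more generally, whenever a merge (configuration~\ref{merge2} or~\ref{merge1}) forces two of these images to coincide.

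The core is to verify that $\mathcal{C}'$ is a covering of $U_j$, i.e.\ that every part of $U_j$ is covered by the elements of $\mathcal{C}'$ lying in it. Outside the influence zone of the multi-turn the words $U_j$ and $U_h$ coincide letter for letter, so there the parts and the restriction of $\mathcal{C}$ are unchanged and remain covered by the kept elements of $\mathcal{C}'$. Inside the influence zone I would use the configuration analysis of Section~\ref{mt_configurations}, which catalogues all possible images of maximal occurrences: since $\mathcal{C}$ covered $a_h$ together with the remnants $b_h^{(i)\prime}$ of its neighbours in $U_h$, and since under the multi-turn each of $a_h$, $b_h^{(1)}$, $b_h^{(2)}$ is merely replaced by an image — a slight enlargement or shortening by at most $\varepsilon$, a merge of adjacent occurrences, or, for $a_h$ when $a_j = 1$, disappearance together with a cancellation, as enumerated in \ref{overlap_appears}--\ref{overlap_decreases}, \ref{all_separated2}--\ref{merge2} and \ref{all_separated1}--\ref{merge1} — the images placed into $\mathcal{C}'$ cover the image of the subword of $U_h$ that the local elements of $\mathcal{C}$ covered, minus only the part erased by cancellation. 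Any element of $\mathcal{M}(U_j)$ created afresh by the surgery is fully covered (the Example preceding the lemma exhibits the typical case, two sub-$\varepsilon$ pieces merging), hence is covered by $\mathcal{C}'$, and, being fully covered, never belongs to a minimal covering anyway. Therefore $\mathcal{C}'$ covers $U_j$ and $N(U_j)\le|\mathcal{C}'|\le N(U_h)$.

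For the strict inequality, assume $a_j$ is fully covered by images of elements of $\mathcal{M}^{\nfc}(U_h)\setminus\{a_h\}$ (this includes the case $a_j = 1$, where there is nothing to cover). Those elements lie in $\mathcal{C}$, so their chosen images lie in $\mathcal{C}'$; hence the image of $a_h$ in $U_j$, which contains $a_j$, is already fully covered by the remaining elements of $\mathcal{C}'$, and deleting it leaves a covering $\mathcal{C}''$ of $U_j$ with $|\mathcal{C}''|\le N(U_h)-1$, so $N(U_j)<N(U_h)$. One should check here that the image of $a_h$ coincides with $a_j$ up to a slight enlargement, so that ``$a_j$ fully covered'' genuinely forces ``image of $a_h$ fully covered''; in the merge case the image of $a_h$ has in any event already been identified with an image of a neighbour in $\mathcal{C}'$. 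I expect the real obstacle to be precisely this covering verification inside the influence zone in the merge cases~\ref{merge2},~\ref{merge1} and the cancellation case $a_j = 1$: there one must argue that restricting a covering of $U_h$ to a sub-interval and then performing the local surgery still yields a covering of the modified sub-interval of $U_j$, which is exactly the bookkeeping that the exhaustive description of images of maximal occurrences in Section~\ref{mt_configurations} is meant to support.
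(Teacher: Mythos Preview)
Your approach is the same as the paper's --- transport a minimal covering of $U_h$ to a covering of $U_j$ via images --- but there is one genuine gap in the strict-inequality step, and the paper's organization sidesteps the obstacle you flag at the end.

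The gap is this. You write: ``Those elements lie in $\mathcal{C}$, so their \emph{chosen} images lie in $\mathcal{C}'$''. But the hypothesis only says $a_j$ is covered by \emph{some} images of elements of $\mathcal{M}^{\nfc}(U_h)\setminus\{a_h\}$, and an element of $\mathcal{M}(U_h)$ can a priori have several images in $U_j$ (Definition~\ref{corresponding_occurence} speaks of a \emph{set} of images). If the image you happened to choose for $\mathcal{C}'$ is not the one that covers the relevant piece of $a_j$, your deletion argument fails. The paper closes this gap by inserting a short paragraph proving that every $b_h\in\mathcal{M}^{\nfc}(U_h)\setminus\{a_h\}$ has \emph{exactly one} image in $U_j$: the key point is that since $b_h$ is not fully covered, its intersection $b_h'$ with $L$ (resp.\ $R$) is not fully covered either, so $b_h'$ cannot be prolonged on the side away from $a_j$, which forces uniqueness of the containing element of $\mathcal{M}(U_j)$. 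Once uniqueness is in hand, ``chosen image'' and ``the image that covers $a_j$'' coincide and your argument goes through.

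As for organization: rather than splitting $\mathcal{C}$ into ``separated from $a_h$'' versus ``neighbours $b_h^{(1)},b_h^{(2)}$'', the paper splits it as $\mathcal{C}_{i_1}(L,U_h)\sqcup\{a_h\}\sqcup\mathcal{C}_{i_1}(R,U_h)$ (elements intersecting $L$, the element $a_h$, elements intersecting $R$). The covering verification then becomes a one-line letter-by-letter check --- every letter of $L$ covered in $U_h$ stays covered by $\mathcal{C}'$ because each image contains the corresponding $b_h'$, and likewise for $R$ --- with no appeal to the configuration catalogue of Section~\ref{mt_configurations}. This also makes your final worry evaporate: for the strict inequality one simply observes that $\mathcal{C}'\cup\mathcal{C}''$ (without any image of $a_h$) already covers all letters of $L$ and $R$, and covers the letters of $a_j$ by hypothesis plus uniqueness, hence is a covering of $U_j$ of size $\le N(U_h)-1$. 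There is no need to compare $a_j$ with ``the image of $a_h$''.
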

\begin{proof}
Let $\mathcal{C}_{i_1}(U_h)$ be a minimal covering of $U_h$. First consider the case $a_j \neq 1$. Since $a_h \in \mathcal{M}^{\nfc}(U_h)$, it necessarily belongs to $\mathcal{C}_{i_1}(U_h)$. Hence, the covering $\mathcal{C}_{i_1}(U_h)$ can be written as
\begin{equation*}
\mathcal{C}_{i_1}(U_h) = \mathcal{C}_{i_1}(L, U_h) \sqcup \{a_h\} \sqcup \mathcal{C}_{i_1}(R, U_h),
\end{equation*}
where $\sqcup$ is a disjoint union. Denote by $a_j^{\prime}$ an image of $a_h$. Denote by $\mathcal{C}^{\prime}$ the set of images of elements of $\mathcal{C}_{i_1}(L, U_h)$, by $\mathcal{C}^{\prime\prime}$ the set of images of elements of $\mathcal{C}_{i_1}(R, U_h)$. For elements that have more than one image, we take one arbitrary image. Every letter of $L$ that is covered by $\mathcal{C}_{i_1}(U_h)$ is covered by $\mathcal{C}^{\prime}$ in $U_j$. Every letter of $R$ that is covered by $\mathcal{C}_{i_1}(U_h)$ is covered by $\mathcal{C}^{\prime\prime}$ in $U_j$. The occurrence of $a_j$ in $U_j$ is covered by $a_j^{\prime}$. Therefore,
\begin{equation*}
\mathcal{C}^{\prime} \cup \{ a_j^{\prime}\} \cup \mathcal{C}^{\prime\prime} \textit{ is a covering of } U_j.
\end{equation*}
Denote it by $\mathcal{C}_{i_2}(U_j)$.

Since we take one image for every element of $\mathcal{C}_{i_1}(U_h)$,
\begin{equation*}
\vert \mathcal{C}^{\prime}\vert = \vert \mathcal{C}_{i_1}(L, U_h)\vert, \vert \mathcal{C}^{\prime\prime}\vert = \vert \mathcal{C}_{i_1}(R, U_h)\vert,
\end{equation*}
where $\vert \cdot \vert$ is the number of elements in a set. Hence, $\vert \mathcal{C}_{i_2}(U_j)\vert \leqslant \vert \mathcal{C}_{i_1}(U_h) \vert = N(U_h)$, so we obtain $N(U_j) \leqslant N(U_h)$.

Let us show that $b_h \in \mathcal{M}^{\nfc}(U_h) \setminus \{a_h\}$ has one image in $U_j$. If $b_h$ is separated from $a_h$, it is obvious. Suppose $b_h$ is not separated from $a_h$ from the left. Let $b_h^{\prime}$ be a subword of $b_h$ that is an intersection of $b_h$ and $L$. We have $L = L_1^{\prime}b_h^{\prime}$. Since $b_h$ is not fully covered by elements of $\mathcal{M}(U_h)$, $b_h^{\prime}$ is also not fully covered by elements of $\mathcal{M}(U_h)$. This means that $b_h^{\prime}$ can not be prolonged from the left neither in $U_h$ nor in $U_j$. Hence, $b_h^{\prime}$ in $U_j$ may be fully contained only in some element of $\mathcal{M}(U_j)$ that is contained in the word $b_h^{\prime}a_jR$. Since $b_h^{\prime}$ is an initial subword of $b_h^{\prime}a_jR$, there exists only one such element of $\mathcal{M}(U_j)$ that contains $b_h^{\prime}$ (possibly $b_h^{\prime}$ itself). Hence, $b_h$ has one image in $U_j$. The case of $b_h$ being not separated from $a_h$ from the right is considered similarly.

Every element of $\mathcal{M}^{\nfc}(U_h)$ is contained in any covering of $U_h$. Therefore, every element of $\mathcal{M}^{\nfc}(U_h) \setminus \{a_h\}$ is contained either in $\mathcal{C}_{i_1}(L, U_h)$, or in $\mathcal{C}_{i_1}(R, U_h)$. Assume $a_j$ is fully covered by images of elements of $\mathcal{M}^{\nfc}(U_h) \setminus \{a_h\}$. Since every element of $\mathcal{M}^{\nfc}(U_h) \setminus \{a_h\}$ has only one image, $a_j$ is fully covered by $\mathcal{C}^{\prime} \cup \mathcal{C}^{\prime\prime}$. Therefore, $\mathcal{C}^{\prime} \cup \mathcal{C}^{\prime\prime}$ is a covering of $U_j$ that has strictly less elements than $\mathcal{C}_{i_1}(U_h)$. Thus, we obtain $N(U_j) < N(U_h)$.

Now consider the case $a_j = 1$. Assume $U_j = LR = L^{\prime}CC^{-1}R^{\prime} = L^{\prime}R^{\prime}$, where $L^{\prime}R^{\prime}$ has no further cancellations. We have $U_h = L^{\prime}Ca_hC^{-1}R^{\prime}$, hence the covering $\mathcal{C}_{i_1}(U_h)$ can be written as
\begin{align*}
\mathcal{C}_{i_1}(U_h)& = \mathcal{C}_{i_1}(L^{\prime}, U_h) \sqcup \{\mathcal{C}_{i_1}(L, U_h) \setminus \mathcal{C}_{i_1}(L^{\prime}, U_h)\} \sqcup \{a_h\} \\ &\quad \sqcup \{\mathcal{C}_{i_1}(R, U_h) \setminus \mathcal{C}_{i_1}(R^{\prime}, U_h)\} \sqcup \mathcal{C}_{i_1}(R^{\prime}, U_h).
\end{align*}
Denote by $\mathcal{C}^{\prime}$ the set of images of elements of $\mathcal{C}_{i_1}(L^{\prime}, U_h)$, by $\mathcal{C}^{\prime\prime}$ the set of images of elements of $\mathcal{C}_{i_1}(R^{\prime}, U_h)$. Again, for elements that have more than one image, we take one arbitrary image. Every letter of $L^{\prime}$ that is covered by $\mathcal{C}_{i_1}(U_h)$ is covered by $\mathcal{C}^{\prime}$ in $U_j$. Every letter of $R^{\prime}$ that is covered by $\mathcal{C}_{i_1}(U_h)$ is covered by $\mathcal{C}^{\prime\prime}$ in $U_j$. Hence,
\begin{equation*}
\mathcal{C}^{\prime} \cup \mathcal{C}^{\prime\prime} \textit{ is a covering of } U_j.
\end{equation*}
Denote it by $\mathcal{C}_{i_2}(U_j)$.

Since we take one image for every element of $\mathcal{C}_{i_1}(U_h)$, we have
\begin{equation*}
\vert \mathcal{C}^{\prime}\vert = \vert \mathcal{C}_{i_1}(L^{\prime}, U_h)\vert, \vert \mathcal{C}^{\prime\prime}\vert = \vert \mathcal{C}_{i_1}(R^{\prime}, U_h)\vert.
\end{equation*}
Hence, $\vert \mathcal{C}_{i_2}(U_j)\vert < \vert \mathcal{C}_{i_1}(U_h) \vert = N(U_h)$, so we obtain $N(U_j) < N(U_h)$.
\end{proof}

Assume $a_h\in \mathcal{M}^{\nfc}(U_h)$. Consider the set of elements of $\mathcal{M}(U_h)$ that are not separated from $a_h$ from the left side. Evidently, there exists at most one element of $\mathcal{M}^{\nfc}(U_h)$ in this set. Similarly, there exists at most one element of $\mathcal{M}^{\nfc}(U_h)$ that is not separated from $a_h$ from the right side. Let us call them \emph{the essential left neighbour of $a_h$ and the essential right neighbour of $a_h$}, respectively.

\medskip

We define a special sequence of transformations of monomials.
\begin{definition}
\label{replacements}
Let $U$ be a word, $b \in \mathcal{M}^{\nfc}(U)$. We denote $U$ by $U^{(1)}$ and $b$ by $b^{(1)}$ and define a sequence of transformations inductively. Assume monomials $U^{(i)}$, maximal occurrence of a generalized fractional powers $b^{(i)}$ in $U^{(i)}$ and transformations $r_i: U^{(i - 1)} \mapsto U^{(i)}$, $1 \leqslant i \leqslant k$, are already defined (we assume that the transformation $r_1$ is identical). Let $a^{(k)}_h$ be a maximal occurrence of a generalized fractional power in $U^{(k)} = L^{(k)}a^{(k)}_hR^{(k)}$ such that
\begin{enumerate}
\item
\label{replacement_condition1}
$a^{(k)}_h$ differs from $b^{(k)}$;
\item
\label{replacement_condition2}
$\LM(a^{(k)}_h) \geqslant \tau - 2\varepsilon$ (hence, $a^{(k)}_h \in \mathcal{M}^{\nfc}(U^{(k)})$);
\item
\label{replacement_condition3}
if the beginning of $b^{(k)}$ lies from the right of the beginning $a^{(k)}_h$, $a^{(k)}_h$ has the right essential neighbour; if the beginning $b^{(k)}$ lies from the left of the beginning $a^{(k)}_h$, $a^{(k)}_h$ has the left essential neighbour.
\end{enumerate}
Suppose $a^{(k)}_h$ and $a^{(k)}_j$ are incident monomials. If $a^{(k)}_j$ in $L^{(k)}a^{(k)}_jR^{(k)}$ is not covered by images of $\mathcal{M}^{\nfc}(U^{(k)}) \setminus \{a^{(k)}_h\}$, then we say that
\begin{align*}
&U^{(k+1)} = L^{(k)}a^{(k)}_jR^{(k)},\\
&b^{(k + 1)} \emph{ is an image of } b^{(k)} \emph{ in } U^{(k + 1)},\\
&r_{k + 1} : U^{(k)} \mapsto U^{(k + 1)} \emph{ is a replacement of } a^{(k)}_h \emph{ by } a^{(k)}_j.
\end{align*}
\end{definition}

\begin{lemma}
\label{replacements_property}
Let $U$ be a word, $b \in \mathcal{M}^{\nfc}(U)$. Assume we have a sequence of transformations defined above that starts from $U$. If $U^{(K)}$ is a monomial in this sequence, then $b^{(K)}$ can increase or decrease at most by a piece of $\LM$-measure $\varepsilon$ from the right side and at most by a piece of $\LM$-measure $\varepsilon$ from the left side with respect to $b$.
\end{lemma}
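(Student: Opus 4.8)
The plan is to prove the statement by induction on the length $K$ of the sequence $r_2,\dots,r_K$, carrying an induction hypothesis strong enough to keep the perturbations of $b^{(k)}$ from accumulating. The per-step behaviour is read off from Section~\ref{mt_configurations} and Definition~\ref{corresponding_occurence}. Fix a step $r_{k+1}\colon U^{(k)}\mapsto U^{(k+1)}$ replacing $a^{(k)}_h$ by the incident monomial $a^{(k)}_j$, and let $b^{(k+1)}$ be the image of $b^{(k)}$. On the side of $b^{(k)}$ that does not face $a^{(k)}_h$ the occurrence is preserved verbatim: that side lies in the part of the word ($L^{(k)}$, resp.\ $R^{(k)}$) that $r_{k+1}$ leaves literally unchanged, and since $b^{(k)}$ is maximal it cannot be prolonged there, so the image relation forces $b^{(k+1)}$ to reach exactly the same endpoint. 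On the facing side I claim $b^{(k+1)}$ differs from $b^{(k)}$ by a piece of $\LM$-measure $\leqslant\varepsilon$: the merge and large-enlargement possibilities \ref{merge2},~\ref{slightly_enlarge_precisely_one},~\ref{merge1} are excluded because the definition of a replacement (Definition~\ref{replacements}) requires $a^{(k)}_j$ not to be covered by images of $\mathcal{M}^{\nfc}(U^{(k)})\setminus\{a^{(k)}_h\}$ — in particular $b^{(k)}$ cannot swallow a neighbour through a short $a^{(k)}_j$ — and $b^{(k)}\neq a^{(k)}_h$ together with $\LM(a^{(k)}_h)\geqslant\tau-2\varepsilon>\varepsilon$ prevents $a^{(k)}_h$ from being absorbed into $b^{(k)}$; this leaves only cases~\ref{overlap_appears}--\ref{overlap_decreases}, together with ``no change'' (when $b^{(k)}$ is separated from $a^{(k)}_h$ or lies beyond its essential neighbour on that side).

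The naive induction ``within $\varepsilon$ at step $k$, hence within $2\varepsilon$ at step $k+1$'' is too weak, so the heart of the argument is to pin the facing end of $b^{(k)}$ to a \emph{fixed} location, not one that drifts with $k$. Here the crucial observation is that $a^{(k)}_h$ and $a^{(k)}_j$ are incident, so the paths they determine in the $v$-diagram have the same endpoints; in particular the endpoint on the side toward $b^{(k)}$ is a point $P$ of the $v$-diagram that does not depend on the chosen incident monomial, and $b^{(k+1)}$, abutting $a^{(k)}_j$, is tied to the same $P$ that $b^{(k)}$ was. I would then use the explicit lists~\eqref{path_type1}--\eqref{path_type3}, the supports~\eqref{mturn1}--\eqref{mturn6}, the bound that the $\LM$-measure of an overlap is $\leqslant\varepsilon$, and the fact that a generalized fractional power of $\LM$-measure $>\varepsilon$ has uniquely determined $v$-arc endpoints, to show that the facing end of $b^{(k)}$ can occupy only one of a few positions attached to $P$ — each obtained from the corresponding end of the original $b$ (which abuts, or $\varepsilon$-overlaps in a structurally forced way, the very first $a$ whose path ends at $P$) by exchanging a beginning of $v_m$ for an end of $v_i^{-1}$, or an analogous $w$-exchange — and hence always within $\LM$-measure $\varepsilon$ of that end of $b$.

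Accordingly, the strengthened induction hypothesis I would carry is: for each $k$ and each of the two ends, $b^{(k)}$ either ends exactly where $b$ does, or is obtained from $b$ at that end by deleting, resp.\ adjoining, a single generalized fractional power of $\LM$-measure $\leqslant\varepsilon$ occupying one of the finitely many positions dictated by the $v$-diagram point at that end; in particular neither endpoint of $b^{(k)}$ ever leaves the closed $\varepsilon$-neighbourhood, in $\LM$-measure, of the corresponding endpoint of $b$. The inductive step then combines the first paragraph (the non-facing end is untouched, and the facing end moves by $\leqslant\varepsilon$) with the pinning of the second paragraph (the moved facing end stays in the window attached to the unchanged point $P$). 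The main obstacle, absorbing essentially all the work, is the second paragraph: showing that the facing end really lands on one of a bounded set of positions attached to the fixed point $P$ rather than creeping a further $\varepsilon$ at every step. This needs the explicit descriptions of generalized fractional powers and of elementary multi-turns, the bound on overlap measures, and a case split according to whether each endpoint of $b$ lies on the $v$-arc or on a $w$-arc and whether $a^{(k)}_j=1$ — the last bringing in the cancellations and the contracted-word bookkeeping of the second clause of Definition~\ref{corresponding_occurence}, which is where the degenerate configurations~\ref{donot_keep_structure1},~\ref{donot_keep_structure2} of Section~\ref{mt_configurations} require the most care.
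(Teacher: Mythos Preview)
Your per-step analysis is fine, and you correctly identify that the naive induction fails. But the anchor-point strategy has a real gap. You argue that the $v$-diagram endpoint $P$ of $a^{(k)}_h$ on the side facing $b^{(k)}$ coincides with that of $a^{(k)}_j$; true, but this only tells you $P$ is constant \emph{within} step $k$. Across steps the replaced occurrence $a^{(k)}_h$ varies, and nothing you have said forces the successive $P$'s to agree. The honest invariant is the \emph{word position} $Q$ where the right essential neighbour of $b^{(k)}$ begins, and that is preserved only while the right essential neighbour of $b^{(k)}$ remains (the image of) the originally replaced slot. The replacement condition ``$a^{(k)}_j$ is not covered by images of $\mathcal{M}^{\nfc}(U^{(k)})\setminus\{a^{(k)}_h\}$'' does \emph{not} prevent $a^{(k)}_j$ from being covered in $U^{(k+1)}$ by images of elements of $\mathcal{M}^{\fc}(U^{(k)})$: a small occurrence, previously hidden, can surface as a new element $c\in\mathcal{M}(U^{(k+1)})$ that covers the tail of $a^{(k)}_j$ and becomes the new right essential neighbour of $b^{(k+1)}$. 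At that moment your anchor shifts, and the $v$-diagram bookkeeping you propose (case-splitting on $v$-arc versus $w$-arc, on $a^{(k)}_j=1$) does nothing to stop it.

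The paper handles exactly this bifurcation. Its induction tracks two structural configurations on the facing side: \textbf{($\alpha$)} $a^{(k)}_j\in\mathcal{M}^{\nfc}(U^{(k+1)})$, so the slot persists and the overlap with $b^{(k+1)}$ stays $\leqslant\varepsilon$ (this is where your anchor idea is implicitly at work); and \textbf{($\beta$)} $a^{(k)}_j\in\mathcal{M}^{\fc}(U^{(k+1)})$, so a new element $c$ with $\LM(c)\leqslant 4\varepsilon$ appears. In case ($\beta$) one applies the induction hypothesis \emph{to $c$}, not to $b$, to conclude that every later image of $c$ has $\LM$-measure $<\tau-2\varepsilon$ and hence can never be replaced; combined with condition~\ref{replacement_condition3} of Definition~\ref{replacements}, this makes $c$ (or its absence of essential neighbour) a permanent barrier, so $b^{(i)}$ is frozen on that side for all $i>k$. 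Your proposal lacks this barrier argument entirely, and the case analysis you plan on the explicit forms \eqref{path_type1}--\eqref{path_type3} is a red herring: the paper never uses them here, only the universal bound $\LM(\text{overlap})\leqslant\varepsilon$ and the size threshold $\tau-2\varepsilon$ in Definition~\ref{replacements}.
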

\begin{proof}
In this proof, we use notations from Definition~\ref{replacements}. Let us prove the lemma by induction on $K$.

Assume $K = 2$, that is, the sequence of transformations has only one step $r_1$. If $a_h^{(1)}$ is separated from $b^{(1)}$, then the occurrence $b^{(2)}$ in $U^{(2)}$ is equal to $b^{(1)}$. Let $a_h^{(1)}$ be not separated from $b^{(1)}$. Since $b^{(1)} = b \in \mathcal{M}^{\nfc}(U)$, $b^{(1)}$ is an essential neighbour of $a_h^{(1)}$. According to the classification given in Section~\ref{mt_configurations}, $b^{(2)}$ can decrease at most by a piece of $\LM$-measure $\varepsilon$ after the replacement $a_h^{(1)} \mapsto a_j^{(1)}$. Since $b^{(1)}$ is an essential neighbour of $a_h^{(1)}$, $ a_j^{(1)}$ is not covered by the image of $b^{(1)}$, by Definition~\ref{replacements}. Hence, according to the classification given in Section~\ref{mt_configurations}, $b^{(2)}$ can increase at most by a piece of $\LM$-measure $\varepsilon$ after the replacement $a_h^{(1)} \mapsto a_j^{(1)}$.

Let us prove the step of the induction. Consider the replacement $r_2: U^{(1)} \mapsto U^{(2)}$, $a_h^{(1)} \mapsto a_j^{(1)}$. Suppose $b^{(2)} \in \mathcal{M}^{\fc}(U)$. Then any element of $\mathcal{M}^{\nfc}(U)$ that is not separated from $b^{(2)}$ does not have the essential neighbour from the necessary side. Hence, there are no more possible replacements $U^{(i)} \mapsto U^{(i + 1)}$, $i \geqslant 2$, of occurrences that are not separated from $b^{(i)}$. So, $b^{(i)}$ remains equal to $b^{(2)}$ after any further replacement.

Further we consider only the case $b^{(2)} \in \mathcal{M}^{\nfc}(U)$. If $a_h^{(1)}$ is separated from $b^{(1)}$, then the occurrence $b^{(2)}$ in $U^{(2)}$ is equal to $b^{(1)}$. The sequence of transformations starting from $U^{(2)}$ has a fewer number of steps. Hence, $b^{(K)}$ can increase or decrease at most by a piece of $\LM$-measure $\varepsilon$ from the right and the left sides with respect to $b^{(2)}$ ($ = b^{(1)}$) by the induction hypothesis.

Suppose $a_h^{(1)}$ is not separated from $b^{(1)}$. If $b^{(2)}$ remains equal to $b^{(1)}$, then as above $b^{(K)}$ can increase or decrease at most by a piece of $\LM$-measure $\varepsilon$ from the right and the left sides with respect to $b^{(1)}$ by the induction hypothesis.

Assume $\LM(b^{(2)})$ changes with respect to $\LM(b^{(1)})$. Then there are the following cases:
\begin{enumerate}
\item
\label{change1}
The beginning of $b^{(1)}$ lies from the left of the beginning of $a_h^{(1)}$, $\LM(b^{(2)}) < \LM(b^{(1)})$.
\item
\label{change2}
The beginning of $b^{(1)}$ lies from the right of the beginning of $a_h^{(1)}$, $\LM(b^{(2)}) < \LM(b^{(1)})$.
\item
\label{change3}
The beginning of $b^{(1)}$ lies from the left of the beginning of $a_h^{(1)}$, $\LM(b^{(2)}) > \LM(b^{(1)})$.
\item
\label{change4}
The beginning of $b^{(1)}$ lies from the right of the beginning of $a_h^{(1)}$, $\LM(b^{(2)}) > \LM(b^{(1)})$.
\end{enumerate}

Let us consider case~\ref{change1}. There are the following two configurations for $b^{(2)}$:
\begin{enumerate}[label=\textbf{(\greek*)}]
\item
\label{aj_not_covered}
$a_j^{(1)} \in \mathcal{M}^{\nfc}(U^{(2)})$
\begin{center}
\begin{tikzpicture}
\node at (0.3, 0) [below] {$U^{(2)}$};
\draw[|-|, black, thick] (0,0)--(6,0);
\draw[|-|, black, very thick] (1,0.1)--(2.7,0.1) node[midway, above] {$b^{(2)}$};
\draw[|-|, black, very thick] (2.3,-0.1)--(3.7,-0.1) node[midway, below] {$a_j^{(1)}$};
\end{tikzpicture}
\end{center}
\item
\label{aj_covered}
$a_j^{(1)} \in \mathcal{M}^{\fc}(U^{(2)})$
\begin{center}
\begin{tikzpicture}
\node at (0.3, 0) [below] {$U^{(2)}$};
\draw[|-|, black, thick] (0,0)--(6,0);
\draw[|-|, black, very thick] (1,0.1)--(2.7,0.1) node[midway, above] {$b^{(2)}$};
\draw[|-|, black, very thick] (2.3,-0.1)--(3,-0.1) node[midway, below] {$a_j^{(1)}$};
\draw[|-|, black, very thick] (2.7,0.1)--(3.5,0.1) node[midway, above] {$c$};
\node at (7, 1) {$\LM(c) \leqslant 4\varepsilon$};
\end{tikzpicture}
\end{center}
If $a_j^{(1)} \in \mathcal{M}^{\fc}(U^{(2)})$, then an element $c \in \mathcal{M}(U^{(2)})$ covers its terminal subword. Since $a_j^{(1)}$ is not covered by images of $\mathcal{M}^{\nfc}(U^{(1)}) \setminus \{a^{(1)}_h\}$, $c$ is not an image of an element of $\mathcal{M}^{\nfc}(U^{(1)}) \setminus \{a^{(1)}_h\}$. Hence, $c$ is an image of some generalized fractional power of $\LM$-measure not greater than $2\varepsilon$. Then, by the classification from Section~\ref{mt_configurations}, $\LM(c) \leqslant 4\varepsilon$.
\end{enumerate}

Let us study case~\ref{aj_covered} first. Since $\LM(c) \leqslant 4\varepsilon < \tau - 2\varepsilon$, $c$ can not be used for the next replacement $r_3$. The sequence of replacements starting from $U^{(2)}$ has $K - 1$ steps. Hence, if $c\in \mathcal{M}^{\nfc}(U^{(2)})$, then by the induction hypothesis the $\LM$-measure of an image of $c$ can increase at most by $2\varepsilon$ with respect to $\LM(c)$ in every $U^{(i)}$, $3 \leqslant i \leqslant K$. So, an image of $c$ is of less $\LM$-measure than $\tau - 2\varepsilon$ and can not be used for a replacement in any $U^{(i)}$, $3 \leqslant i \leqslant K$. Hence, all the next replaced occurrences are separated from $b^{(i)}$ from the right side. So, $b^{(i)}$ has no more changes from the right side with respect to $b^{(2)}$, $3 \leqslant i \leqslant K$.

Suppose $c\in \mathcal{M}^{\fc}(U^{(2)})$. Denote the element closest to $c$ from the right such that it belongs to $\mathcal{M}^{\nfc}(U^{(2)})$ by $d$. Then $d$ does not have the left essential neighbour. By the induction hypothesis, $b^{(i)}$ can decrease at most by a piece of $\LM$-measure $\varepsilon$ from the left side. Hence, the image of $d$ can not have the left essential neighbour in every $U^{(i)}$. Thus, $d$ can not be used for a replacement in any $U^{(i)}$, $3 \leqslant i \leqslant K$. Therefore, all the next replaced occurrences are separated from the image of $c$ from the right side. Hence, $b^{(i)}$ has no more changes from the right side with respect to $b^{(2)}$, $3 \leqslant i \leqslant K$.

Consider case~\ref{aj_not_covered}. Consider the next replacement $r_3 : U^{(2)} \mapsto U^{(3)}$, $a_h^{(2)} \mapsto a_j^{(2)}$. With a slight abuse of language, we use the same indices in different incident monomials. By the induction hypothesis, $b^{(i)}$, $3 \leqslant i \leqslant K$, may increase or decrease from the left at most by a piece of $\LM$-measure $\varepsilon$ with respect to $b^{(2)}$. This means that it remains to consider only replacements from the right of $b^{(2)}$. Therefore, without loss of generality, we can assume that the beginning $a_h^{(2)}$ lies from the right of the beginning of $b^{(2)}$. Then there are the following possibilities:
\begin{enumerate}
\item
$a_h^{(2)}$ is separated from $a_j^{(1)}$. Then we again obtain configuration \ref{aj_not_covered} for $b^{(3)}$, that is, the image of $a_j^{(1)}$ belongs to $ \mathcal{M}^{\nfc}(U^{(3)})$.
\item
$a_h^{(2)}$ coincides with $a_j^{(1)}$. In this case if $a_j^{(2)} \in \mathcal{M}^{\nfc}(U^{(3)})$, we obtain configuration \ref{aj_not_covered} for $b^{(3)}$, possibly with a smaller overlap or without an overlap between $b^{(3)}$ and $a_j^{(2)}$. If $a_j^{(2)} \in \mathcal{M}^{\fc}(U^{(3)})$, we obtain configuration \ref{aj_covered} for $b^{(3)}$.
\item
$a_h^{(2)}$ is not separated from $a_j^{(1)}$. If the image of $a_j^{(1)}$ belongs to $ \mathcal{M}^{\nfc}(U^{(3)})$, we obtain configuration \ref{aj_not_covered} for $b^{(3)}$. If the image of $a_j^{(1)}$ belongs to $ \mathcal{M}^{\fc}(U^{(3)})$, we obtain configuration \ref{aj_covered} for $b^{(3)}$.
\end{enumerate}
So, we may obtain for $b^{(3)}$ either configuration~\ref{aj_covered} that was already considered, or again configuration~\ref{aj_not_covered}. If we obtain configuration~\ref{aj_not_covered}, $b^{(3)}$ increases or decreases at most by a piece of $\LM$-measure $\varepsilon$ from the right side with respect to $b^{(1)}$.

Cases~\ref{change2} --- \ref{change4} are studied in the same way as case~\ref{change1}. So, if we continue the description for the next transformations $r_i$, we observe that possible changes of $b^{(i)}$ with respect to $b^{(1)}$ are restricted by a piece of $\LM$-measure not greater than $\varepsilon$ from each side.
\end{proof}

\begin{remark}
\label{replacements_property_remark}
In fact, in Lemma~\ref{replacements_property} we also proved the following. Assume that some maximal occurrence $a \in \mathcal{M}^{\nfc}(U)$ has an overlap of measure $\varepsilon$ with its left essential neighbour. Then after a sequence of replacements from Definition~\ref{replacements}, the $\LM$-measure of the image of $a$ can not increase from the left with respect to the $\LM$-measure of $a$. Similarly, if $a$ has an overlap of measure $\varepsilon$ with its right essential neighbour, then after a sequence of replacements from Definition~\ref{replacements} the $\LM$-measure of the image of $a$ can not increase from the right with respect to the $\LM$-measure of $a$.
\end{remark}

\begin{definition}
\label{virtual_members}
Let $U$ be a word, $b \in \mathcal{M}^{\nfc}(U)$. If there exists a sequence of replacements constructed in Definition \ref{replacements} such that $U^{(K)}$ is the last monomial in the sequence and $\LM(b^{(K)}) \geqslant \tau$, we call $b$ \emph{a virtual member of the chart of $U$}. We call the set of such maximal occurrences from $\mathcal{M}(U)$ \emph{the virtual $\tau$-chart of $U$}.

We denote the number of virtual members of the chart of $U$ by $K_{\tau}(U)$.
\end{definition}

From Lemma~\ref{replacements_property} it follows that if $b$ is a virtual member of the chart of $U$, then $\LM(b) \geqslant \tau - 2\varepsilon$. Clearly, every member of the chart of $U$ is also a virtual member of the chart of $U$.

From now on, when we speak about the chart of some word, we use only virtual members of the chart even if the qualification virtual is omitted.

\medskip

Let us prove important properties of virtual members of the chart that we will use in the further argument.
\begin{lemma}
\label{virtual_members_property1}
Let $U_h$ be a monomial, $a_h$ be a virtual member of its chart, $U_h = La_hR$. Assume $a_h$ and $a_j$ are incident monomials, $a_j\neq 1$, $U_j = La_jR$. If $a_j$ is not fully covered by images of $\mathcal{M}^{\nfc}(U_h) \setminus \{a_h\}$, then $K_{\tau}(U_j) \leqslant K_{\tau}(U_h)$. If, moreover, $a_j$ is not a virtual member of the chart of $U_j$, then $K_{\tau}(U_j) < K_{\tau}(U_h)$.
\end{lemma}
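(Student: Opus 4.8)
The plan is to exhibit an injection $\phi$ from the virtual $\tau$-chart of $U_j$ into the virtual $\tau$-chart of $U_h$ and, for the strict inequality, to point to an element of the latter that is not in the image. First note that since $a_h$ is a virtual member of the chart of $U_h$ we have $\LM(a_h)\geqslant\tau-2\varepsilon>\varepsilon$, so $a_h\in\mathcal{M}^{\nfc}(U_h)$; together with $a_j\neq 1$ and the hypothesis that $a_j$ is not covered by images of $\mathcal{M}^{\nfc}(U_h)\setminus\{a_h\}$, this means that the passage $U_h\mapsto U_j$ is precisely one of the elementary replacements of Definition~\ref{replacements}. Every virtual member $c_j$ of $U_j$ satisfies $\LM(c_j)\geqslant\tau-2\varepsilon>\varepsilon$ as well, so by the image analysis of Section~\ref{mt_configurations} it has a uniquely determined preimage $c_h\in\mathcal{M}(U_h)$ (its $\LM$-measure changes by at most $\varepsilon$, hence stays $>\varepsilon$), and $c_h$ lies in $\mathcal{M}^{\nfc}(U_h)$ because a fully covered occurrence can only arise from a fully covered one under a single replacement. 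Set $\phi(c_j)=c_h$. The only element of $\mathcal{M}(U_j)$ whose preimage is $a_h$ is the element $a_j'\in\mathcal{M}(U_j)$ containing $a_j$ (when $\LM(a_j)\leqslant\varepsilon$ one first uses a covering argument as in Lemma~\ref{minimal_coverings_property} to see that $a_j$ is still carried by a single member that inherits $a_h$'s identity); hence distinct virtual members of $U_j$ have distinct preimages and $\phi$ is injective.

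Next I would check that $\phi$ is well defined, i.e. that $c_h$ is a virtual member of $U_h$ whenever $c_j$ is. If $c_j=a_j'$ then $c_h=a_h$, which is a virtual member by hypothesis, so assume $c_j\neq a_j'$, and let $U_j=W^{(1)}\mapsto\cdots\mapsto W^{(K)}$ be a sequence of replacements as in Definition~\ref{replacements}, tracking the images of $c_j$, with $\LM(c_j^{(K)})\geqslant\tau$. I want to lift this to a sequence starting at $U_h$ and tracking $c_h$. If $c_h$ is not separated from $a_h$ in $U_h$, then $c_h$, being in $\mathcal{M}^{\nfc}(U_h)$, is exactly the essential neighbour of $a_h$ on the side of $a_h$ facing $c_h$, so $a_h$ fulfils conditions \ref{replacement_condition1}--\ref{replacement_condition3} of Definition~\ref{replacements} relative to $c_h$; combined with the non-covering hypothesis this makes $U_h\mapsto U_j$ a legal replacement for $c_h$, and prepending it to the above sequence yields a witnessing sequence for $c_h$ (the sequence stays legal because, as $c_j\neq a_j'$, none of its steps treats $a_j'$ as the tracked occurrence). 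If instead $c_h$ is separated from $a_h$, then $c_j=c_h$ verbatim and the sequence $W^{(1)}\mapsto\cdots\mapsto W^{(K)}$ copies over to a sequence of replacements starting at $U_h$: every replaced occurrence disjoint from the $a$-block is available in $U_h$ with the same eligibility data, while the first time a descendant of $a_j'$ is replaced one replaces instead the corresponding descendant of $a_h$ — legitimate since $a_h$ and $a_j$ are incident — and the classification of Section~\ref{mt_configurations}, Lemma~\ref{replacements_property} and Remark~\ref{replacements_property_remark} guarantee that the tracked $\LM$-measure of $c_h$ never falls below that of $c_j$, so the threshold $\tau$ is still reached. In all cases $c_h$ is a virtual member of $U_h$.

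Finally, the counting. Since $\phi$ is an injection of the virtual $\tau$-chart of $U_j$ into that of $U_h$, $K_{\tau}(U_j)\leqslant K_{\tau}(U_h)$. If moreover $a_j$ is not a virtual member of the chart of $U_j$, then $a_j'$ is not in the virtual $\tau$-chart of $U_j$; but $a_h$ is a virtual member of $U_h$, and $a_h=\phi(c_j)$ forces $c_j=a_j'$, so $a_h$ is not in the image of $\phi$. Hence $\phi$ is not surjective and $K_{\tau}(U_j)<K_{\tau}(U_h)$. The main obstacle is the well-definedness step, specifically the lift of the witnessing replacement sequence in the case where the tracked member $c_h$ genuinely interacts with $a_h$ or with the descendants of $a_j'$: one must verify that inserting the extra replacement $a_h\mapsto a_j$ does not spoil the essential-neighbour bookkeeping of Definition~\ref{replacements}(\ref{replacement_condition3}) and does not let the tracked $\LM$-measure drop below $\tau$, and this is best organised along the same configurations \ref{change1}--\ref{change4} used in the proof of Lemma~\ref{replacements_property}.
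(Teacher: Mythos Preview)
Your overall plan—building an injection from the virtual $\tau$-chart of $U_j$ into that of $U_h$ by taking preimages, and observing that $a_h$ lies outside the image when $a_j$ is not virtual—is exactly the paper's strategy, and your prepending argument when $c_h$ is an essential neighbour of $a_h$ is also the paper's. (A minor point: your case split should really be on whether $a_h$ has an essential neighbour on the $c_h$ side, not on whether that neighbour is $c_h$ itself; condition~\ref{replacement_condition3} only asks for \emph{some} essential neighbour there, so prepending already works in more cases than you allow.)

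The genuine gap is your treatment of the remaining case, where $a_h$ has no essential neighbour on the side facing $c_h$. Your proposal to ``copy over'' the witnessing sequence from $U_j$ to $U_h$, substituting a replacement of $a_h$ for the first replacement of a descendant of $a_j'$, does not go through: the very obstruction that prevents prepending—failure of condition~\ref{replacement_condition3} for $a_h$—blocks the substituted step as well. Moreover, steps of the witnessing sequence lying on the far side of $a_j$ (in $R$) sit in a different local configuration in $U_h$ than in $U_j$, and Lemma~\ref{replacements_property} and Remark~\ref{replacements_property_remark} bound the drift of the tracked element along a \emph{single} replacement sequence; they say nothing about comparing two sequences in different ambient monomials, so they do not give the inequality on tracked measures that you claim.

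The idea you are missing is a \emph{barrier} argument. When $a_h$ lacks the relevant essential neighbour, the paper exhibits an element $g\in\mathcal{M}(U_j)$ with $\LM(g)\leqslant 5\varepsilon$, not fully covered by its essential neighbours, whose beginning lies between that of $c_j$ and that of $a_j$. By Lemma~\ref{replacements_property} every image of $g$ along a Definition~\ref{replacements} sequence has measure $\leqslant 7\varepsilon<\tau-2\varepsilon$, so $g$ is never eligible for replacement. Writing $U_j=L^{(1)}g_mR^{(1)}$ with $g_m$ the uncovered core of $g$, every step of the witnessing sequence therefore acts entirely within $L^{(i)}$ or entirely within $R^{(i)}$; only the $L$-steps can affect $c_j$, and those lift verbatim to $U_h=L^{(1)}R'$ because they never touch the $a$-region at all. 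This replaces the lift-through-$a$ manoeuvre you attempt by showing one can avoid the $a$-region entirely.
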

\begin{proof}
Let $U_h = La_hR$, $U_j = La_jR$. Suppose $b$ is a virtual member of the chart of $U_j$ different from $a_j$. Since $a_j$ is not fully covered by images of $\mathcal{M}^{\nfc}(U_h) \setminus \{a_h\}$, $a_j$ is not fully contained in $b$. To be precise, assume that the beginning of $b$ lies from the left side of the beginning of $a_j$, that is, $b$ is an occurrence in the subword $La_j$. Since $b$ is a virtual member of the chart of $U_j$, there exists a sequence of transformations
\begin{equation}
\label{virt_member_sequence}
U_j = U_j^{(1)} \mapsto \ldots \mapsto U_j^{(K)}
\end{equation}
defined above such that the image of $b$ in $U_j^{(K)}$ is of $\LM$-measure $\geqslant \tau$. Denote by $b^{\prime}$ the inverse image of $b$ in $U_h$.

First assume that $a_h$ in $U_h$ has the left essential neighbour. Since $a_j$ is not fully covered by images of $\mathcal{M}^{\nfc}(U_h) \setminus \{a_h\}$, the replacement $a_h \mapsto a_j$ can be used in the definition of a virtual member of the chart. Hence, we can add the transformation $U_h \mapsto U_j$ to the beginning of the sequence~\eqref{virt_member_sequence}. Therefore, we obtain the sequence that starts from $U_h$ and defines $b^{\prime}$ as a virtual member of the chart of $U_h$.

Now assume that $a_h$ in $U_h$ does not have the left essential neighbour. Then it is possible that there exist elements of $\mathcal{M}^{\fc}(U_h)$ that are not separated from $a_h$ from the left side. The other possibility is that there are no elements of $\mathcal{M}(U_h)$ that are not separated from $a_h$ from the left side. Let us study the first case. Since $a_h$ in $U_h$ does not have the left essential neighbour, $b^{\prime}$ is separated from $a_h$.
\begin{center}
\begin{tikzpicture}
\draw[|-|, black, thick] (-1,0) to node[at start, below] {$U_h$} (7,0);
\draw[|-|, black, very thick] (0.5,0)--(2.3,0) node[midway, above] {$b^{\prime}$};
\draw[|-|, black, very thick] (3.5,0)--(5,0) node[midway, above] {$a_h$};
\end{tikzpicture}
\end{center}

Consider the first possibility, namely, there exist elements of $\mathcal{M}^{\fc}(U_h)$ that are not separated from $a_h$ from the left side. Let us show that there exists an element of $\mathcal{M}(U_j)$ of $\LM$-measure $\leqslant 5\varepsilon$ such that its beginning lies between the beginning of $b$ and the beginning of $a_j$ and it is not fully covered by its essential neighbours. Let $d^{\prime} \in \mathcal{M}(U_h)$ be not separated from $a_h$ from the left and has the leftmost beginning among such elements. Let $d$ be an image of $d^{\prime}$ in $U_j$. Since $a_h$ does not have the left essential neighbour, $d^{\prime}\in \mathcal{M}^{\fc}(U_h)$. Hence, $\LM(d^{\prime}) \leqslant 2\varepsilon$, so, $\LM(d) \leqslant 5\varepsilon$. If $d$ is not covered by its essential neighbours, then $d$ is the necessary element.

Suppose $d$ is covered by its essential neighbours. In particular, this means that $d$ has both the left and the right essential neighbour. First assume that $a_j$ is contained in $d$. Then $a_j$ is covered by essential neighbours of $d$. Since $a_j$ is not covered by images of $\mathcal{M}^{\nfc}(U_h) \setminus \{a_h\}$, at least one essential neighbour of $d$ is not equal to an image of the element of $\mathcal{M}^{\nfc}(U_h)\setminus \{a_h\}$. Hence it is of $\LM$-measure $\leqslant 5\varepsilon$ and it satisfies necessary conditions.

Assume that $a_j$ is not contained in $d$. Since $d^{\prime}$ has the rightmost beginning among elements that are not separated from $a_h$ from the left, its image $d$ in $U_j$ has the same beginning position. Therefore, the left essential neighbour of $d$ in $U_j$ is the image of the left essential neighbour of $d^{\prime}$ in $U_h$. Hence, the left essential neighbour of $d$ is separated from $a_j$, since $a_h$ does not have the left essential neighbour. So, $d$ can not be covered by its left essential neighbour and $a_j$. Then consider the right essential neighbour of $d$ and denote it by $c$. Since $d$ is not covered by its left essential neighbour and $a_j$, the occurrence $c$ is not equal to the occurrence $a_j$. If some element of $\mathcal{M}(U_j)$ has the beginning from the right of the beginning of $d$ and the end from the left of the end of $a_j$, it is covered by $d$ and $a_j$. Hence, it does not belong to $\mathcal{M}^{\nfc}(U_j)$. Therefore, since $c\in \mathcal{M}^{\nfc}(U_j)$, $a_j$ is contained in $c$. Since $a_j$ is not covered by images of $\mathcal{M}^{\nfc}(U_h) \setminus \{a_h\}$, $c$ is not equal to an image of an element of $\mathcal{M}^{\nfc}(U_h)\setminus \{a_h\}$. Since the occurrence $c$ is not equal to the occurrence $a_j$, $\LM(c) \leqslant 5\varepsilon$ and it satisfies the necessary conditions.

We denote the obtained element by $g$. We put $g = g_lg_mg_r$, where $g_l$ is an overlap with the left essential neighbour (if there is any) and $g_r$ is an overlap with the right essential neighbour (if there is any). Since $g$ is not covered by its left and right essential neighbours, $g_m$ is not empty.

We put $U_j = U_j^{(1)} = L^{(1)}g_mR^{(1)}$. Then $b$ is an occurrence in the subword $L^{(1)}$. From Lemma~\ref{replacements_property} it follows that the $\LM$-measure of an image of $g$ in every $U_j^{(i)}$ is not greater than $7\varepsilon < \tau - 2\varepsilon$. Therefore, an image of $g$ can not be used in any replacement $U_j^{(i)} \mapsto U_j^{(i + 1)}$ in~\eqref{virt_member_sequence}. Then every replacement in~\eqref{virt_member_sequence} is of the form
\begin{gather*}
 L^{(i)}g_mR^{(i)} \mapsto L^{(i + 1)}g_mR^{(i + 1)},\\
 \textit{where either } L^{(i + 1)} = L^{(i)}, \textit{ or } R^{(i + 1)} = R^{(i)}.
\end{gather*}
The possibility of a transformation $L^{(i)}g_mR^{(i)} \mapsto L^{(i + 1)}g_mR^{(i + 1)}$, where $R^{(i + 1)} = R^{(i)}$, does not depend on $R^{(i)}$. Since $b$ is an occurrence in the subword $L^{(1)}$, it is sufficient to do only transformations $L^{(i)}g_mR^{(i)} \mapsto L^{(i + 1)}g_mR^{(i + 1)}$, where $R^{(i + 1)} = R^{(i)}$, in a sequence that defines $b$. So, we obtain the sequence
\begin{equation*}
U_j^{(1)} = L^{(1)}g_mR^{(1)} \mapsto L^{(i_1)}g_mR^{(1)} \ldots \mapsto L^{(i_s)}g_mR^{(1)}
\end{equation*}
that also defines $b$. We have $U_h = L^{(1)}R^{\prime}$. Thus, the same replacements can be done starting with $U_h$, so, we obtain the sequence
\begin{equation}
U_h = L^{(1)}R^{\prime} \mapsto L^{(i_1)}R^{\prime} \ldots \mapsto L^{(i_s)}R^{\prime}
\end{equation}
such that the image of $b^{\prime}$ in $L^{(i_s)}R^{\prime}$ is of $\LM$-measure $\geqslant \tau$. Therefore, $b^{\prime}$ is a virtual member of the chart of $U_h$.

The case when there are no elements of $\mathcal{M}(U_h)$ that are not separated from $a_h$ from the left side is considered in a similar (but easier) way.

So, we have proved that every virtual member of the chart of $U_j$ is the image of a virtual member of the chart of $U_h$. Hence, $K_{\tau}(U_j) \leqslant K_{\tau}(U_h)$. If $a_j$ is not a virtual member of the chart of $U_j$, then the image of $a_h$ is not a virtual member of the chart of $U_j$, because $a_j$ is not fully covered by images of $\mathcal{M}^{\nfc}(U_h) \setminus \{a_h\}$. Thus, in this case $K_{\tau}(U_j) < K_{\tau}(U_h)$.
\end{proof}

\begin{corollary}
\label{decreasing_property}
Let $U_h$ be a monomial, $a_h$ be a virtual member of its chart, $U_h = La_hR$. Assume $a_h$ and $a_j$ are incident monomials, $U_j = La_jR$. If $a_j$ is a virtual member of the chart of $U_j$, then $N(U_j) = N(U_h)$. If $a_j$ is not a virtual member of the chart of $U_j$, then either $N(U_j) < N(U_h)$, or $N(U_h) = N(U_j)$ and $K_{\tau}(U_j) < K_{\tau}(U_h)$.
\end{corollary}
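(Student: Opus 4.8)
The plan is to read off the Corollary from Lemma~\ref{minimal_coverings_property} and Lemma~\ref{virtual_members_property1} by a short case analysis. Two facts are used throughout. First, a virtual member of the chart of a word always belongs to $\mathcal{M}^{\nfc}$ of that word (Definition~\ref{virtual_members}), so in particular $a_h \in \mathcal{M}^{\nfc}(U_h)$; this is what is needed to invoke Lemma~\ref{minimal_coverings_property}, while Lemma~\ref{virtual_members_property1} needs only that $a_h$ is a virtual member of the chart of $U_h$, which is the hypothesis. Second, by Lemma~\ref{replacements_property} every virtual member $b$ satisfies $\LM(b) \geqslant \tau - 2\varepsilon > 0$, hence it is a non-empty word; so whenever $a_j$ turns out to be a virtual member of the chart of $U_j$ we automatically have $a_j \neq 1$.

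\emph{Case 1: $a_j$ is a virtual member of the chart of $U_j$.} Then $a_j \in \mathcal{M}^{\nfc}(U_j)$, so $a_j$ is not fully covered by any subcollection of $\mathcal{M}(U_j)$, in particular not by the images of $\mathcal{M}^{\nfc}(U_h) \setminus \{a_h\}$; also $a_j \neq 1$. The first assertion of Lemma~\ref{minimal_coverings_property}, applied to $U_h = La_hR$ with the incident pair $a_h, a_j$, gives $N(U_j) \leqslant N(U_h)$. For the reverse inequality I would invoke the symmetry of the setup: incidence is a symmetric relation on the monomials of a support of an elementary multi-turn, $U_h = La_hR$ is obtained from $U_j = La_jR$ by replacing $a_j$ with the incident monomial $a_h$ in the same contexts $L$ and $R$, and $a_j \in \mathcal{M}^{\nfc}(U_j)$; hence Lemma~\ref{minimal_coverings_property} applies verbatim with the roles of $U_h$ and $U_j$ exchanged and yields $N(U_h) \leqslant N(U_j)$. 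Therefore $N(U_j) = N(U_h)$.

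\emph{Case 2: $a_j$ is not a virtual member of the chart of $U_j$.} If $a_j = 1$, or more generally if $a_j$ is fully covered by images of elements of $\mathcal{M}^{\nfc}(U_h) \setminus \{a_h\}$, then the second assertion of Lemma~\ref{minimal_coverings_property} gives $N(U_j) < N(U_h)$, which is the first alternative. In the remaining subcase $a_j$ is not fully covered by those images, so in particular $a_j \neq 1$; the first assertion of Lemma~\ref{minimal_coverings_property} still gives $N(U_j) \leqslant N(U_h)$. If this inequality is strict we are again in the first alternative. If $N(U_j) = N(U_h)$, then $a_j \neq 1$, $a_j$ is not fully covered by images of $\mathcal{M}^{\nfc}(U_h) \setminus \{a_h\}$, and $a_j$ is not a virtual member of the chart of $U_j$, so the second assertion of Lemma~\ref{virtual_members_property1} gives $K_{\tau}(U_j) < K_{\tau}(U_h)$, the second alternative. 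This exhausts all cases.

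The computational content lies entirely in the two lemmas already proved, so no serious obstacle remains; the one point needing care is the symmetric application of Lemma~\ref{minimal_coverings_property} in Case~1, where one must check that passing from $U_j$ back to $U_h$ is again an instance of the very same operation (replacing an element of $\mathcal{M}^{\nfc}$ by an incident monomial in a fixed context $L,R$), which is immediate from the symmetry of the incidence relation and the fact that $a_j \in \mathcal{M}^{\nfc}(U_j)$.
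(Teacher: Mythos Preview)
Your argument is correct and follows essentially the same route as the paper's proof: both use Lemma~\ref{minimal_coverings_property} for the $N$-inequality (applied in both directions in Case~1 via the symmetry of incidence and $a_j \in \mathcal{M}^{\nfc}(U_j)$), and Lemma~\ref{virtual_members_property1} for the $K_\tau$-drop in Case~2 when $N(U_j)=N(U_h)$. Your write-up is in fact slightly more explicit than the paper's about the hypotheses needed (e.g.\ $a_j\neq 1$, $a_h\in\mathcal{M}^{\nfc}(U_h)$), but the logical structure is identical.
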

\begin{proof}
From Lemma~\ref{minimal_coverings_property} it follows that $N(U_j) \leqslant N(U_h)$. Let $a_j$ be a virtual member of the chart of $U_j$, then $a_j \in \mathcal{M}^{\nfc}(U_j)$. Hence, we can apply Lemma~\ref{minimal_coverings_property} to the opposite replacement $a_j\mapsto a_h$ and obtain $N(U_h) \leqslant N(U_j)$. Thus, $N(U_j) = N(U_h)$.

Assume $a_j$ is not a virtual member of the chart of $U_j$. From Lemma~\ref{minimal_coverings_property} it follows that if $a_j$ is fully covered by images of $\mathcal{M}^{\nfc}(U_h) \setminus \{a_h \}$, then $N(U_j) < N(U_h)$. Therefore, if $N(U_h) = N(U_j)$, then $a_j$ is not fully covered by images of $\mathcal{M}^{\nfc}(U_h) \setminus \{a_h \}$. Hence, from Lemma~\ref{virtual_members_property1} it follows that $K_{\tau}(U_j) < K_{\tau}(U_h)$.
\end{proof}

\begin{corollary}
\label{virtual_members_stability}
Let $U_h$ be a monomial, $a_h$ be a virtual member of its chart, $U_h = La_hR$. Assume $a_h$ and $a_j$ are incident monomials, $a_j$ is a virtual member of the chart of $U_j = La_jR$. Assume $b_h \in \mathcal{M}^{\nfc}(U_h)$, $b_h$ is different from $a_h$ and $b_j$ is the image of $b_h$ in $U_j$. Then $b_h$ is a virtual member of the chart of $U_h$ if and only if $b_j$ is a virtual member of the chart of $U_j$. In particular, in this case $K_{\tau}(U_h) = K_{\tau}(U_j)$.
\end{corollary}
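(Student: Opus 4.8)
The plan is to reduce everything to Lemma~\ref{virtual_members_property1}, applied twice: once to the replacement $a_h\mapsto a_j$ and once to the reverse replacement $a_j\mapsto a_h$. First I would check that both applications are licit. Since $a_h$ and $a_j$ are virtual members of the charts of $U_h$ and $U_j$, they lie in $\mathcal{M}^{\nfc}(U_h)$ and $\mathcal{M}^{\nfc}(U_j)$ respectively, and each has $\LM$-measure $\geqslant \tau-2\varepsilon$ (the bound recorded after Definition~\ref{virtual_members}); in particular $a_h\neq 1$ and $a_j\neq 1$. Corollary~\ref{decreasing_property}, applied with $a_j$ a virtual member of the chart of $U_j$, gives $N(U_j)=N(U_h)$. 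Feeding this equality into Lemma~\ref{minimal_coverings_property} (in its contrapositive form, and once in each direction) shows that $a_j$ is not fully covered by images of $\mathcal{M}^{\nfc}(U_h)\setminus\{a_h\}$ and, symmetrically, that $a_h$ is not fully covered by images of $\mathcal{M}^{\nfc}(U_j)\setminus\{a_j\}$. Hence the hypotheses of Lemma~\ref{virtual_members_property1} hold for $a_h\mapsto a_j$ and, with $U_h$ and $U_j$ interchanged, for $a_j\mapsto a_h$.

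Next I would exploit not just the statement but the content of the proof of Lemma~\ref{virtual_members_property1}: it shows that \emph{every} virtual member of the chart of the target monomial is the image of a virtual member of the chart of the source monomial. For $a_h\mapsto a_j$ this reads: every virtual member of the chart of $U_j$ is the image of a virtual member of the chart of $U_h$. For $a_j\mapsto a_h$ it reads: every virtual member of the chart of $U_h$ is the image, under the reverse replacement, of a virtual member of the chart of $U_j$; since the image correspondences attached to a replacement and to its reverse are mutually inverse, this says the image in $U_j$ of a virtual member of the chart of $U_h$ different from $a_h$ is again a virtual member of the chart of $U_j$. Combining the two inclusions, the single-valued image map furnished by the proof of Lemma~\ref{minimal_coverings_property} restricts to a bijection between the virtual members of the chart of $U_h$ other than $a_h$ and those of $U_j$ other than $a_j$; together with the pair $a_h\leftrightarrow a_j$ this yields $K_{\tau}(U_h)=K_{\tau}(U_j)$. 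For the explicit equivalence: if $b_j$ is a virtual member of the chart of $U_j$, then $b_j$ is the image of a virtual member of the chart of $U_h$, which by uniqueness of the inverse image must be $b_h$; conversely, if $b_h$ is a virtual member of the chart of $U_h$ then its image $b_j$ is one by the reverse application.

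The one point that needs real care rather than bookkeeping is the behaviour of the image correspondence on $\mathcal{M}^{\nfc}$: that each $b_h\in\mathcal{M}^{\nfc}(U_h)\setminus\{a_h\}$ has a unique image in $U_j$, that the correspondence for $a_j\mapsto a_h$ carries that image back to $b_h$, and that distinct such $b_h$ have distinct images. This is where the locality of a replacement enters: away from the (at most one) essential left neighbour and (at most one) essential right neighbour of $a_h$ nothing moves, and each essential neighbour keeps its outer endpoint fixed because, not being fully covered, it cannot be prolonged on that side --- so no two distinct $\mathcal{M}^{\nfc}$-occurrences can be merged or interchanged by the replacement. The single-valuedness is already inside the proof of Lemma~\ref{minimal_coverings_property}; I would package the remaining injectivity-and-invertibility assertion as a short sub-lemma, after which the counting argument above goes through.
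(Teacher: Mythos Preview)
Your proposal is correct and follows essentially the same route as the paper: apply (the content of the proof of) Lemma~\ref{virtual_members_property1} once to $a_h\mapsto a_j$ and once to the reverse replacement $a_j\mapsto a_h$, then combine. The paper dispatches the ``not fully covered'' hypothesis more directly than you do: since $a_j$ is a virtual member, $a_j\in\mathcal{M}^{\nfc}(U_j)$, so $a_j$ is not fully covered by \emph{any} collection of other elements of $\mathcal{M}(U_j)$, in particular not by images of $\mathcal{M}^{\nfc}(U_h)\setminus\{a_h\}$; your detour through $N(U_h)=N(U_j)$ via Corollary~\ref{decreasing_property} and the contrapositive of Lemma~\ref{minimal_coverings_property} works but is unnecessary. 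Your added care about the image correspondence (single-valuedness on $\mathcal{M}^{\nfc}$, mutual inverses, injectivity) makes explicit what the paper summarizes as ``obviously, the image of $b_j$ in $U_h$ is $b_h$''.
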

\begin{proof}
Since $a_j$ is a virtual member of the chart of $U_j$, $\LM(a_j) \geqslant \tau - 2\varepsilon$ and $a_j \in \mathcal{M}^{\nfc}(U_j)$. Hence, the conditions of Lemma~\ref{virtual_members_property1} hold for the replacement $U_h \mapsto U_j$, $a_h \mapsto a_j$. In the proof of Lemma~\ref{virtual_members_property1} we actually show that if $b_j$ is a virtual member of the chart of $U_j$, then $b_h$ is a virtual member of the chart of $U_h$.

Since $a_j$ is a virtual member of the chart of $U_j$, we can consider the opposite transformation $U_j \mapsto U_h$, $a_j \mapsto a_h$. Then, obviously, the image of $b_j$ in $U_h$ is $b_h$. If we apply Lemma~\ref{virtual_members_property1} in this case, we obtain that if $b_h$ is a virtual member of the chart of $U_h$, then $b_j$ is a virtual member of the chart of $U_j$.
\end{proof}

In the same way as~\eqref{linear_dep_members} we define the set of all the supports of multi-turns of virtual members of the chart
\begin{align}
\label{linear_dep_virt_members}
\mathcal{T}^{\prime}& = \Bigg\lbrace \sum\limits_{j = 1}^k U_j \mid U_h\in \Fr, U_h = La_hR, \ \textit{where} a_h \textit{ is a virtual member}\\ & \quad \textit{of the chart of } U_h, \quad U_h \mapsto \sum\limits_{\substack{j = 1 \\ j\neq h}}^{k} U_j \textit{ is a multi-turn} \nonumber \\
&\quad \textit{ that comes from an elementary multi-turn } a_h \mapsto \sum\limits_{\substack{j = 1 \\ j\neq h}}^{k} a_j \Bigg\rbrace.\nonumber
\end{align}

\begin{proposition}
\label{the_ideal_characterisation2}
The linear subspace $\langle\mathcal{T}^{\prime} \rangle \subseteq \mathbb{Z}_2\Fr$ is equal to the ideal $\Ideal = \langle 1 + v + vw \rangle$.
\end{proposition}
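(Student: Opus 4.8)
The plan is to derive both inclusions $\langle\mathcal{T}^{\prime}\rangle \subseteq \Ideal$ and $\Ideal \subseteq \langle\mathcal{T}^{\prime}\rangle$ from facts already established, so that the statement follows essentially at once from Proposition~\ref{the_ideal_characterisation}.

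For $\langle\mathcal{T}^{\prime}\rangle \subseteq \Ideal$, I would take an arbitrary generator $\sum_{j=1}^{k} U_j \in \mathcal{T}^{\prime}$, with $U_j = La_jR$, coming from an elementary multi-turn $a_h \mapsto \sum_{j \neq h} a_j$ whose support $\sum_{j=1}^{k} a_j$ is one of the expressions \eqref{mturn1}--\eqref{mturn6}. Each of those expressions has the form $A\,P(v,w)\,B$, where $A$ and $B$ are words and $P(x_1,x_2)$ is a non-commutative Laurent polynomial with $P((1+w)^{-1}, w) = 0$ in $\mathbb{Z}_2(w)$; as remarked just after \eqref{vanish_in_field}, this forces $P(v,w) \in \Ideal$, hence $\sum_{j=1}^{k} a_j = A\,P(v,w)\,B \in \Ideal$. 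Since $\Ideal$ is an ideal, $\sum_{j=1}^{k} U_j = L\bigl(\sum_{j=1}^{k} a_j\bigr)R$ lies in $\Ideal$, so $\mathcal{T}^{\prime} \subseteq \Ideal$ and $\langle\mathcal{T}^{\prime}\rangle \subseteq \Ideal$. Note that this argument uses nothing about $a_h$ being a virtual member of the chart: it holds for the support of any multi-turn of any generalized fractional power.

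For the reverse inclusion $\Ideal \subseteq \langle\mathcal{T}^{\prime}\rangle$, I would invoke the observation recorded right after Definition~\ref{virtual_members} that every member of the chart of a word is also a virtual member of its chart. Consequently every multi-turn of a member of the chart is, in particular, a multi-turn of a virtual member of the chart, which gives $\mathcal{T} \subseteq \mathcal{T}^{\prime}$ and therefore $\Ideal = \langle\mathcal{T}\rangle \subseteq \langle\mathcal{T}^{\prime}\rangle$ by Proposition~\ref{the_ideal_characterisation}. Combining the two inclusions yields $\langle\mathcal{T}^{\prime}\rangle = \Ideal$; in particular $\langle\mathcal{T}^{\prime}\rangle$ is automatically an ideal, so no analogue of the transversality-based ideal check from Proposition~\ref{the_ideal_characterisation} is needed here.

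Since both directions reduce to facts proved earlier, I do not anticipate a genuine obstacle. The only points that deserve a line of care are: that after performing cancellations in the monomials of \eqref{mturn1}--\eqref{mturn6} the sum $\sum_j a_j$ still coincides, as an element of $\mathbb{Z}_2\Fr$, with the unreduced expression $A\,P(v,w)\,B$ (cancellation is just multiplication in $\Fr$, so this is automatic); and that passing from members of the chart to virtual members only enlarges the family of admissible multi-turns, which is precisely the ``every member is a virtual member'' remark. The point of the proposition is to certify that replacing $\mathcal{T}$ by the more robust set $\mathcal{T}^{\prime}$, which is what the later stability lemmas require, does not change the linear span and in particular does not introduce anything outside $\Ideal$.
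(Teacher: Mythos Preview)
Your argument is correct, and it is close in spirit to the paper's, but the paper actually proves a slightly stronger statement by a different one-line route: it observes that Proposition~\ref{transversality} forces $\mathcal{T}^{\prime} = \mathcal{T}$ as \emph{sets}, and then invokes Proposition~\ref{the_ideal_characterisation}. The point is that if $\sum_{j} a_j$ is the support of an elementary multi-turn of a virtual member $a_h$ (so possibly $\LM(a_h) < \tau$), transversality guarantees some $a_{h'}$ in the sum with $\LM(a_{h'}) \geqslant \tau$; then $a_{h'}$ is an honest member of the chart of $La_{h'}R$, and the very same support $\sum_j La_jR$ is already in $\mathcal{T}$. This gives $\mathcal{T}^{\prime} \subseteq \mathcal{T}$, and together with the easy inclusion $\mathcal{T} \subseteq \mathcal{T}^{\prime}$ (members are virtual members) one gets equality of the sets, hence of their spans.

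Your route instead proves the two span inclusions separately: $\langle\mathcal{T}^{\prime}\rangle \subseteq \Ideal$ by the direct ``$A\,P(v,w)\,B \in \Ideal$'' argument, and $\Ideal = \langle\mathcal{T}\rangle \subseteq \langle\mathcal{T}^{\prime}\rangle$ from $\mathcal{T} \subseteq \mathcal{T}^{\prime}$. This is perfectly valid and has the minor advantage that it does not re-invoke transversality. The paper's approach, on the other hand, records the stronger and conceptually cleaner fact that passing from members to virtual members does not enlarge the set of supports at all, not merely its linear span.
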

\begin{proof}
From Proposition~\ref{transversality} it easily follows that $\mathcal{T}^{\prime} = \mathcal{T}$. Hence, by Proposition~\ref{the_ideal_characterisation}, $\langle\mathcal{T}^{\prime}\rangle = \Ideal$.
\end{proof}

\subsection{Sequences of transformations of a given monomial}
\label{monomial_transformations_section}
Let $U$ be a monomial, $a_h$ and $b_h$ be virtual members of the chart of $U = L_aa_hR_a = L_bb_hR_b$. To be precise, assume that the beginning of $a_h$ lies from the left of the beginning of $b_h$. Assume $a_h$ and $a_j$, $b_h$ and $b_j$ are incident monomials and consider transformations $a_h \mapsto a_j$, $b_h \mapsto b_j$. If $a_j$ or $b_j$ is equal to $1$ and the resulting monomial $L_aR_a$ or $L_bR_b$ have cancellations, we do not perform them right after the replacement.

Let $\widetilde{a}_h$ be the image of $a_h$ in $L_bb_jR_b$. If $b_j = 1$ and $L_bR_b$ has cancellations, we say that $\widetilde{a}_h$ is the intersection of $a_h$ and $L_b$ (notice that this slightly differs from Definition~\ref{corresponding_occurence}). Then $\widetilde{a}_h = a_hd_1$, where $d_1$ is a generalized fractional power that cancels or prolongs $a_h$ ($d_1$ may be empty). Let $\widetilde{b}_h$ be the image of $b_h$ in $L_aa_jR_a$. If $a_j = 1$ and $L_aR_a$ has cancellations, we say that $\widetilde{b}_h$ is the intersection of $b_h$ and $R_a$. Then $\widetilde{b}_h = d_2b_h$, where $d_2$ is a generalized fractional power that cancels or prolongs $b_h$ ($d_2$ may be empty).

Since the monomials $a_h$ and $a_j$ are incident, the monomials $a_hd_1 = \widetilde{a}_h$ and $a_jd_1$ are also incident. Hence, we consider the replacement $\widetilde{a}_h \mapsto \widetilde{a}_j$ in $L_bb_hR_b$, where $\widetilde{a}_j = a_jd_1$. Similarly, since the monomials $b_h$ and $b_j$ are incident, the monomials $d_2b_h = \widetilde{b}_h$ and $d_2b_j$ are also incident. So, we consider the replacement $\widetilde{b}_h \mapsto \widetilde{b}_j$ in $L_aa_hR_a$, where $\widetilde{b}_j = d_2b_j$. The next lemma states an important property of these transformations.

\begin{lemma}
\label{replacements_diamond_property}
The result of the replacement $\widetilde{a}_h \mapsto \widetilde{a}_j$ in $L_bb_jR_b$ and the result of the replacement $\widetilde{b}_h \mapsto \widetilde{b}_j$ in $L_aa_jR_a$ are equal.
\end{lemma}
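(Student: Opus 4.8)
The plan is to prove the identity by producing, in each of finitely many cases, one word --- written down \emph{before} any free reduction is performed --- through which both orders of composing the two replacements factor; since reduced words in $\Fr$ are unique, this gives the asserted equality. The cases are organized by the relative position of $a_h$ and $b_h$ in $U$, and within each by whether $a_j=1$ and whether $b_j=1$.

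First I would assemble the geometric input. By Lemma~\ref{replacements_property} (see the remark just after Definition~\ref{virtual_members}) the virtual members satisfy $\LM(a_h),\LM(b_h)\geqslant\tau-2\varepsilon>\varepsilon$, so neither is properly contained in the other, and, since overlaps of chart members have $\LM$-measure at most $\varepsilon$, the two occurrences are either separated by a non-empty word, or touch at a point, or overlap in a word $c$ with $\LM(c)\leqslant\varepsilon$. Call the intervening word --- the separator, the empty word, or $c$ respectively --- the contact word $C$. Two facts drive the proof. (i)~Incident monomials agree on $C$: the part of $a_j$ meeting $C$ equals the corresponding part of $a_h$, and likewise for $b_j$ and $b_h$; this follows from the explicit shapes \eqref{mturn1}---\eqref{mturn6} of the multi-turn expressions together with the maximality of the occurrences, and it is precisely what makes the objects $\widetilde a_h=a_hd_1$, $\widetilde b_h=d_2b_h$, $\widetilde a_j$, $\widetilde b_j$ of the preamble well defined, with $\LM(d_1),\LM(d_2)\leqslant\varepsilon$. (ii)~A replacement $a_h\mapsto a_j$ reshapes the neighbouring occurrences only inside an $\varepsilon$-collar of the two ends of $a_h$ --- this is exactly the content of the classification in Section~\ref{mt_configurations} --- and symmetrically for $b_h\mapsto b_j$. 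Because $\LM(a_h)$ and $\LM(b_h)$ are bounded away from $0$ while $C$ and all the collars have $\LM$-measure at most a fixed multiple of $\varepsilon$ and $\tau\geqslant10\varepsilon$, the stretch of $U$ that $a_h\mapsto a_j$ can touch and the stretch that $b_h\mapsto b_j$ can touch meet only inside $C$.

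I would then dispose of the generic case $a_j\neq1$, $b_j\neq1$. Here $L_aa_jR_a$ and $L_bb_jR_b$ are already reduced --- a monomial of type~\ref{keep_structure}, or a short one, carries no cancellation because $a_h$, respectively $b_h$, is maximal --- so each replacement is literally a substitution of one subword for another. By fact~(i) these substitutions change $U$ only on two stretches lying on opposite sides of $C$ and both leaving $C$ itself fixed; hence they commute as operations on words, and either order produces the same word. What is left is to match this common word with the two expressions in the statement by unwinding the definitions of $\widetilde a_h,\widetilde a_j,\widetilde b_h,\widetilde b_j$, the pieces $d_1,d_2$ absorbing the $\varepsilon$-collars (which reshape $C$ but move no core).

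The remaining cases, $a_j=1$ or $b_j=1$, are where free reduction genuinely occurs and are the reason the tilde objects were introduced; here lies the real work. Take $a_j=1$. Deleting the block $a_h$ triggers a cancellation in $L_aR_a$, and I would first show the cancellation front cannot cross $C$ toward the core of $b_h$, since the bulk of $b_h$ --- of $\LM$-measure at least $\tau-3\varepsilon$ --- stands in place to its right and blocks it. Thus the cancellation and the rewriting $\widetilde b_h\mapsto\widetilde b_j=d_2b_j$ act on zones overlapping only inside $d_2$, and the definitions of $\widetilde b_h$ and $\widetilde b_j$ are arranged precisely so that ``delete $a_h$ and reduce, then rewrite $b$'' and ``rewrite $\widetilde b$, then delete $\widetilde a_h$ and reduce'' yield the same word letter for letter. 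The case $b_j=1$ is the mirror image under reflecting left and right, and $a_j=b_j=1$ superposes the two, the independence of the two cancellation fronts following once more from the fact that the cores of $a_h$, the contact word $C$ and the core of $b_h$ fill the entire span between them. I expect the main obstacle to be exactly this bookkeeping: in the overlap sub-case one must track how the short words $c$, $d_1$, $d_2$ nest inside one another, so as to be certain that no cancellation and no re-bracketing of occurrences ever propagates out of the neighbourhood of one member into the neighbourhood of the other. The room for this is provided by $\tau\geqslant10\varepsilon$ weighed against the size, a bounded multiple of $\varepsilon$, of all the small pieces involved.
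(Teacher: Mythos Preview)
Your plan is correct in spirit and would succeed, but it is considerably more elaborate than what the paper actually does, because you have not fully exploited the convention set out in the preamble to the lemma: when $a_j=1$ or $b_j=1$, the cancellations in $L_aR_a$ or $L_bR_b$ are \emph{not} performed right after the replacement. This convention is precisely what makes the lemma a formal word identity rather than a statement about free reduction.

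The paper's proof splits only on the relative position of $a_h$ and $b_h$ (separated, touching, overlapping) and does \emph{not} subdivide further by whether $a_j$ or $b_j$ equals $1$. In each of the three cases it simply observes that the replacement $\widetilde b_h\mapsto\widetilde b_j$ in $L_aa_jR_a$ can be represented as the formal substitution $b_h\mapsto b_j$ (or $b_h'\mapsto c^{-1}b_j$ in the overlap case) followed by whatever cancellations may occur, and symmetrically for the other order. Both orders then visibly produce the same concatenated word --- for instance $L_aa_jc^{-1}b_jR_b$ in the overlap case --- and uniqueness of reduced forms finishes the argument. No mention of cancellation fronts, collars, or the inequality $\tau\geqslant 10\varepsilon$ is needed.

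Your additional case analysis on $a_j=1$ and $b_j=1$, together with the argument that ``the cancellation front cannot cross $C$ toward the core of $b_h$'', is therefore doing work that the paper's setup has already absorbed into the definitions of $\widetilde a_h$, $\widetilde b_h$ and the postponed-cancellation convention. Your approach is not wrong, and the bound on how far a cancellation can propagate is indeed controlled by the $\LM$-measures as you say; it is just that the paper has arranged matters so that this bound never has to be invoked in the proof of the lemma itself.
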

\begin{proof}
First assume that $a_h$ and $b_h$ are separated, $U = L_aa_hMb_hR_b$. Then we obtain two sequences of transformations
\begin{equation*}
U = L_aa_hMb_hR_b \mapsto L_aa_jMb_hR_b \mapsto L_aa_jMb_jR_b
\end{equation*}
and
\begin{equation*}
U = L_aa_hMb_hR_b \mapsto L_aa_hMb_jR_b \mapsto L_aa_jMb_jR_b.
\end{equation*}
Obviously, the results are equal.

Assume $a_h$ and $b_h$ touch at a point, $U = L_aa_hb_hR_b$. After the replacement $a_h \mapsto a_j$ we obtain the monomial $ L_aa_jb_hR_b$. The image of $b_h$ in $L_aa_jb_hR_b$ may have different forms depending on $a_j$ (see classification in Section~\ref{mt_configurations}). But in any case the replacement $\widetilde{b}_h \mapsto \widetilde{b}_j$ in $L_aa_jb_hR_b$ can be represented as a replacement $b_h \mapsto b_j$ and the further cancellations (if there are any). So, we obtain
\begin{equation*}
U = L_aa_hb_hR_b \mapsto L_aa_jb_hR_b \mapsto L_aa_jb_jR_b.
\end{equation*}
After the replacement $b_h \mapsto b_j$ we obtain the monomial $ L_aa_hb_jR_b$. Similarly, the replacement $\widetilde{a}_h \mapsto \widetilde{a}_j$ in $L_aa_hb_jR_b$ can be represented as a replacement $a_h \mapsto a_j$ and the further cancellations (if there are any). So, we obtain
\begin{equation*}
U = L_aa_hb_hR_b \mapsto L_aa_hb_jR_b \mapsto L_aa_jb_jR_b
\end{equation*}
and results of both sequences are equal.

Assume $a_h$ and $b_h$ have an overlap $c$. Denote by $a_h^{\prime}$ the intersection of $a_h$ and $L_b$, by $b_h^{\prime}$ the intersection of $b_h$ and $R_a$, then $U = L_aa_h^{\prime}cb_h^{\prime}R_b$. After the replacement $a_h \mapsto a_j$ we obtain the monomial $ L_aa_jb_h^{\prime}R_b$. As above, the replacement $\widetilde{b}_h \mapsto \widetilde{b}_j$ in $L_aa_jb_h^{\prime}R_b$ can be represented as a replacement $b_h^{\prime} \mapsto c^{-1}b_j$ and the further cancellations (if there are any). So, we obtain
\begin{equation*}
U = L_aa_h^{\prime}cb_h^{\prime}R_b \mapsto L_aa_jb_h^{\prime}R_b \mapsto L_aa_jc^{-1}b_jR_b.
\end{equation*}
After the replacement $b_h \mapsto b_j$ we obtain the monomial $L_aa_h^{\prime}b_jR_b$. As above, the replacement $\widetilde{a}_h \mapsto \widetilde{a}_j$ in $L_aa_h^{\prime}b_jR_b$ can be represented as a replacement $a_h^{\prime} \mapsto a_jc^{-1}$ and the further cancellations (if there are any). So, we obtain
\begin{equation*}
U = L_aa_h^{\prime}cb_h^{\prime}R_b \mapsto L_aa_h^{\prime}b_jR_b \mapsto L_aa_jc^{-1}b_jR_b
\end{equation*}
 and results of both sequences are equal.
 \end{proof}

\begin{lemma}
\label{virtual_members_replacements_stability}
Let $U$ be a monomial, $a_h$ and $b_h$ be virtual members of the chart of $U$, $U = L_aa_hR_a = L_bb_hR_b$. Let $a_h$ and $a_j$ be incident monomials, $b_h$ and $b_j$ be incident monomials, and consider transformations $a_h \mapsto a_j$, $b_h\mapsto b_j$. Assume $a_j$ is a virtual member of the chart of $L_aa_jR_a$, $b_j$ is a virtual member of the chart of $L_bb_jR_b$. Denote by $\widetilde{b}_h$ the image of $b_h$ in $L_aa_jR_a$ and consider the transformation $\widetilde{b}_h \mapsto \widetilde{b}_j$, obtained by multiplying of $b_h\mapsto b_j$ by the corresponding generalized fractional power. Let us apply this replacement to the monomial $L_aa_jR_a$ and denote the result by $U_{a, b}$. Then $\widetilde{b}_j$ is a virtual member of the chart of $U_{a, b}$.
\end{lemma}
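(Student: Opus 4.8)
The plan is to combine the ``diamond'' Lemma~\ref{replacements_diamond_property} with Corollary~\ref{virtual_members_stability} in order to reduce the assertion to a single new fact. By Lemma~\ref{replacements_diamond_property} the monomial $U_{a,b}$ has two descriptions: it is the result of $\widetilde b_h\mapsto\widetilde b_j$ applied to $L_aa_jR_a$ (its definition), and also the result of $\widetilde a_h\mapsto\widetilde a_j$ applied to $U':=L_bb_jR_b$, where $\widetilde a_h$ is the image of $a_h$ in $U'$ and $\widetilde a_j=a_jd_1$ is the matching incident monomial. Under the first description the occurrence $a_j$ of $L_aa_jR_a$ is carried to $\widetilde a_j$ and $\widetilde b_h$ to $\widetilde b_j$; under the second, $\widetilde a_h$ is carried to $\widetilde a_j$ and $b_j$ (of $U'$) to $\widetilde b_j$. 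Now since $a_h\mapsto a_j$ is a transformation between virtual members of the chart and $b_h\in\mathcal M^{\nfc}(U)$ with $b_h\neq a_h$, Corollary~\ref{virtual_members_stability} gives that $\widetilde b_h$ is a virtual member of the chart of $L_aa_jR_a$; symmetrically, applying it to $b_h\mapsto b_j$, the occurrence $\widetilde a_h$ is a virtual member of the chart of $U'$.

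This reduces the lemma to the single claim that $\widetilde a_j$ is a virtual member of the chart of $U_{a,b}$. Indeed, granting it, the replacement $\widetilde a_h\mapsto\widetilde a_j$ runs between virtual members of the chart of $U'$ and of $U_{a,b}$, so Corollary~\ref{virtual_members_stability} applied to this replacement with tracked occurrence $b_j$ (which lies in $\mathcal M^{\nfc}(U')$, is a virtual member there, and differs from $\widetilde a_h$ by the standing positional hypothesis on $a_h,b_h$) shows that its image $\widetilde b_j$ is a virtual member of the chart of $U_{a,b}$, which is the assertion. One cannot obtain the reduced claim by applying Corollary~\ref{virtual_members_stability} to $\widetilde b_h\mapsto\widetilde b_j$ directly, since that would require $\widetilde b_j$ to be already known a virtual member of the chart of $U_{a,b}$ --- exactly the goal; so the reduced claim is the genuinely new content.

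To prove it, I would argue directly with witnessing sequences. By Definition~\ref{virtual_members} there is a sequence of replacements $\Sigma$ starting at $L_aa_jR_a$ that tracks $a_j$ and whose terminal monomial carries the tracked occurrence to $\LM$-measure $\geqslant\tau$. Transport $\Sigma$ across the replacement $\widetilde b_h\mapsto\widetilde b_j$: commuting this replacement past each step of $\Sigma$ by repeated use of Lemma~\ref{replacements_diamond_property} produces a sequence $\widetilde\Sigma$ starting at $U_{a,b}$ that tracks $\widetilde a_j$. Because $\widetilde b_h$ is separated from $a_j$ on one side and alters it by $\LM$-measure at most $\varepsilon$ (Section~\ref{mt_configurations}), all interaction between the descendants of $\widetilde b_h$ and of $a_j$ is confined to one end and is described by the degeneracy list there; the expected outcome is that $\widetilde\Sigma$ is a legal sequence of replacements in the sense of Definition~\ref{replacements} and still drives $\widetilde a_j$ to $\LM$-measure $\geqslant\tau$. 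One may equivalently set this up as an induction on $\bigl(N(U_{a,b}),K_{\tau}(U_{a,b})\bigr)$, truncating $\Sigma$ via Corollary~\ref{decreasing_property} as soon as one of these parameters drops.

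The main obstacle is the $\varepsilon$-bookkeeping in this last step: the commutation introduces an $O(\varepsilon)$ perturbation at every step, which a priori could shrink a replaced occurrence below the threshold $\tau-2\varepsilon$ demanded by Definition~\ref{replacements}, or erode the tracked occurrence below $\tau$ at the very end. Keeping it under control requires the full case analysis of Section~\ref{mt_configurations} together with Lemma~\ref{replacements_property} (the total drift of a tracked occurrence over a whole sequence is at most $\varepsilon$ per side) and Remark~\ref{replacements_property_remark} (no spurious enlargement on a side that already carries an $\varepsilon$-overlap with an essential neighbour), so that the $\varepsilon$-slack built into the definitions is never overspent.
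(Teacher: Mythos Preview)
Your reduction to the claim that $\widetilde a_j$ is a virtual member of $U_{a,b}$ is correct, but it asks for strictly more than is needed, and your proposed method for establishing it --- transporting a full witnessing sequence for $a_j$ across the $b$-replacement by iterated diamond commutation --- is, as you yourself flag, delicate and left only as a sketch.

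The paper's argument sidesteps this entirely. Rather than proving $\widetilde a_j$ is a virtual member, the paper only needs the weaker fact that $\LM(\widetilde a_j)\geq\tau-2\varepsilon$, and this is a single direct computation: writing $a_j=a_j'c_1$ where $c_1$ is the overlap with $\widetilde b_h$, either $\LM(c_1)=0$ (so $\LM(a_j')=\LM(a_j)\geq\tau-2\varepsilon$) or $\LM(c_1)=\varepsilon$ (in which case Remark~\ref{replacements_property_remark} forces $\LM(a_j)\geq\tau-\varepsilon$, hence again $\LM(a_j')\geq\tau-2\varepsilon$); since $\widetilde a_j\supseteq a_j'$, the bound follows. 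This measure bound is exactly condition~(2) of Definition~\ref{replacements}, so the single replacement $\widetilde a_j\mapsto\widetilde a_h$, taking $U_{a,b}$ back to $U'=L_bb_jR_b$, is a legal first step in a witnessing sequence tracking $\widetilde b_j$. One then appends the witnessing sequence for $b_j$ that already exists from $U'$ by hypothesis; since the image of $\widetilde b_j$ in $U'$ is $b_j$, the concatenation witnesses that $\widetilde b_j$ is a virtual member of $U_{a,b}$.

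So the paper works directly at the level of Definition~\ref{virtual_members} and prepends one explicit replacement, whereas your route passes through Corollary~\ref{virtual_members_stability} at the cost of a stronger intermediate claim whose proof would require the full sequence-transport bookkeeping you identify as the obstacle. The paper's trick --- go backwards by one step rather than forwards by many --- replaces that entire case analysis with a single $\varepsilon$-check.
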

\begin{proof}
To be precise, assume that the beginning of $a_h$ lies from the left of the beginning of $b_h$. We will consider only the most interesting case when $a_h$ and $b_h$ have an overlap $c$. The cases when $a_h$ and $b_h$ are separate or touch at a point are considered similarly.

Denote by $a_h^{\prime}$ the intersection of $a_h$ and $L_b$, that is, $a_h = a_h^{\prime}c$. Denote by $b_h^{\prime}$ the intersection of $b_h$ and $R_a$, that is, $b_h = cb_h^{\prime}$.
\begin{center}
\begin{tikzpicture}
\draw[|-|, black, thick] (-1,0) to node[at start, below] {$U$} (7,0);
\draw[|-, black, very thick] (0.5,0.1)--(2,0.1) node[midway, below] {$a_h^{\prime}$};
\draw[|-|, black, very thick] (2,0.1)--(2.3,0.1);
\draw [thick, decorate, decoration={brace, amplitude=10pt}] (0.5,0.4)--(2.3,0.4) node [midway, above, yshift=7] {$a_h$};
\draw[|-, black, very thick] (2,-0.1)--(2.3,-0.1) node[midway, below] {$c$};
\draw[|-|, black, very thick] (2.3,-0.1)--(4,-0.1) node[midway, above] {$b_h^{\prime}$};
\draw [thick, decorate, decoration={brace, amplitude=10pt, mirror}] (2,-0.4)--(4,-0.4) node [midway, below, yshift=-7] {$b_h$};
\end{tikzpicture}
\end{center}
Let us perform the replacement $a_h \mapsto a_j$ in $U = L_aa_hR_a$. Then the image of $b_h$ in $L_aa_jR_a$ is equal to $c_1b_h^{\prime}$, where $c_1$ is an overlap with $a_j$ (possibly empty). We put $a_j = a_j^{\prime}c_1$ and $\widetilde{b}_h = c_1b_h^{\prime}$.
\begin{center}
\begin{tikzpicture}
\draw[|-|, black, thick] (-1,0) to node[at start, below] {$ L_aa_jR_a$} (7,0);
\draw[|-, black, very thick] (0.5,0.1)--(2,0.1) node[midway, below] {$a_j^{\prime}$};
\draw[|-|, black, very thick] (2,0.1)--(2.3,0.1);
\draw [thick, decorate, decoration={brace, amplitude=10pt}] (0.5,0.4)--(2.3,0.4) node [midway, above, yshift=7] {$a_j$};
\draw[|-, black, very thick] (2,-0.1)--(2.3,-0.1) node[midway, below] {$c_1$};
\draw[|-|, black, very thick] (2.3,-0.1)--(4,-0.1) node[midway, above] {$b_h^{\prime}$};
\draw [thick, decorate, decoration={brace, amplitude=10pt, mirror}] (2,-0.4)--(4,-0.4) node [midway, below, yshift=-7] {$\widetilde{b}_h$};
\end{tikzpicture}
\end{center}
Since $a_j$ is a virtual member of the chart of $L_aa_jR_a$, from Corollary~\ref{virtual_members_stability} it follows that $\widetilde{b}_h$ is also a virtual member of the chart of $L_aa_jR_a$. The monomials $b_h$ and $b_j$ are incident. Hence, multiplying them by $c_1c^{-1}$ from the left, we obtain incident monomials $c_1b_h^{\prime}$ and $c_1c^{-1}b_j$. So, we have the replacement $c_1b_h^{\prime} = \widetilde{b}_h \mapsto \widetilde{b}_j$, where $\widetilde{b}_j = c_1c^{-1}b_j$.

Let us perform the replacement $b_h \mapsto b_j$ in $U = L_bb_hR_b$. Then the image of $a_h$ in $L_bb_jR_b$ is equal to $a_h^{\prime}c_2$, where $c_2$ is an overlap with $b_j$ (possibly empty). We put $b_j = c_2b_j^{\prime}$ and $\widetilde{a}_h = a_h^{\prime}c_2$.
\begin{center}
\begin{tikzpicture}
\draw[|-|, black, thick] (-1,0) to node[at start, below] {$ L_bb_jR_b$} (7,0);
\draw[|-, black, very thick] (0.5,0.1)--(2,0.1) node[midway, below] {$a_h^{\prime}$};
\draw[|-|, black, very thick] (2,0.1)--(2.3,0.1);
\draw [thick, decorate, decoration={brace, amplitude=10pt}] (0.5,0.4)--(2.3,0.4) node [midway, above, yshift=7] {$\widetilde{a}_h$};
\draw[|-, black, very thick] (2,-0.1)--(2.3,-0.1) node[midway, below] {$c_2$};
\draw[|-|, black, very thick] (2.3,-0.1)--(4,-0.1) node[midway, above] {$b_j^{\prime}$};
\draw [thick, decorate, decoration={brace, amplitude=10pt, mirror}] (2,-0.4)--(4,-0.4) node [midway, below, yshift=-7] {$b_j$};
\end{tikzpicture}
\end{center}
Since $b_j$ is a virtual member of the chart of $L_bb_jR_b$, from Corollary~\ref{virtual_members_stability} it follows that $\widetilde{a}_h$ is also a virtual member of the chart of $L_bb_jR_b$. The monomials $a_h$ and $a_j$ are incident. Hence, multiplying them by $c^{-1}c_2$ from the right, we obtain incident monomials $a_h^{\prime}c_2$ and $a_jc^{-1}c_2$. So, we have the replacement $a_h^{\prime}c_2 = \widetilde{a}_h \mapsto \widetilde{a}_j$, where $\widetilde{a}_j = a_jc^{-1}c_2$.

From Lemma~\ref{replacements_diamond_property} it follows that the sequences of replacements
\begin{equation*}
U = L_aa_hR_a \mapsto L_aa_jR_a \mapsto L_aa_j^{\prime}\widetilde{b}_jR_b
\end{equation*}
and
\begin{equation*}
U = L_bb_hR_b \mapsto L_bb_jR_b \mapsto L_a\widetilde{a}_jb_j^{\prime}R_b
\end{equation*}
give the same final result. So, we have
\begin{center}
\begin{tikzpicture}
\draw[|-|, black, thick] (-1,0) to node[at start, below] {$U_{a, b}$} (7,0);
\draw[|-, black, very thick] (0.5,0.1)--(2,0.1) node[midway, below] {$a_j^{\prime}$};
\draw[|-|, black, very thick] (2,0.1)--(2.3,0.1);
\draw [thick, decorate, decoration={brace, amplitude=10pt}] (0.5,0.4)--(2.3,0.4) node [midway, above, yshift=7] {$\widetilde{a}_j$};
\draw[|-, black, very thick] (2,-0.1)--(2.3,-0.1) node[midway, below] {$c_3$};
\draw[|-|, black, very thick] (2.3,-0.1)--(4,-0.1) node[midway, above] {$b_j^{\prime}$};
\draw [thick, decorate, decoration={brace, amplitude=10pt, mirror}] (2,-0.4)--(4,-0.4) node [midway, below, yshift=-7] {$\widetilde{b}_j$};
\end{tikzpicture}
\end{center}
where the overlap $c_3$ is possibly empty. One can easily calculate that $c_3 = c_1c^{-1}c_2$.

Recall that the $\LM$-measure of any overlap of two maximal occurrences of generalized fractional powers is equal either to $\varepsilon$, or to zero. Since $a_j = a_j^{\prime}c_1$ is a virtual member of the chart of $L_aa_jR_a$, $\LM(a_j) \geqslant \tau - 2\varepsilon$. If $\LM(c_1) = \varepsilon$, then, according to Remark~\ref{replacements_property_remark}, $\LM(a_j) \geqslant \tau - \varepsilon$. Hence, $\LM(a_j^{\prime}) \geqslant \tau - 2\varepsilon$. If $\LM(c_1) = 0$, then, clearly, $\LM(a_j^{\prime}) = \LM(a_j) \geqslant \tau - 2\varepsilon$.

We have $\LM(\widetilde{a}_j) \geqslant \LM(a_j^{\prime}) \geqslant \tau - 2\varepsilon$. Since $b_j$ is a virtual member of the chart of $L_bb_jR_b$, there exists a sequence of replacements from Definition~\ref{replacements} starting from $L_bb_jR_b$ such that the $\LM$-measure of the image of $b_j$ in the last monomial of the sequence is not less than $\tau$. Since $\LM(\widetilde{a}_j) \geqslant \tau - 2\varepsilon$, we can consider the replacement $\widetilde{a}_j \mapsto a_h^{\prime}c_2 = \widetilde{a}_h$ in the monomial $U_{a, b}$. Then we obtain the transformation $U_{a, b} \mapsto L_bb_jR_b$. We add this transformation to the beginning of the sequence. Since the image of $\widetilde{b}_j$ in $L_bb_jR_b$ is equal to $b_j$, as a result we obtain the sequence of replacements starting from $U_{a, b}$ such that the $\LM$-measure of the image of $\widetilde{b}_j$ in the last monomial is not less than $\tau$. Hence, $\widetilde{b}_j$ is a virtual member of the chart of $U_{a, b}$.
\end{proof}

Applying Lemma~\ref{replacements_diamond_property} and Lemma~\ref{virtual_members_replacements_stability}, we obtain the following statement, that we will use in the next section.
\begin{corollary}
\label{virtual_members_many_replacements}
Let $U$ be a monomial.
\begin{enumerate}
\item[$(1)$]
Assume we have a sequence of replacements starting from $U$ such that every replacement transforms a virtual member of the chart into a virtual member of the chart. Then any replacement can be moved to any position in the sequence and the final result remains the same. Moreover, after changing of the order of the replacements, every replacement in the obtained sequence still transforms a virtual member of the chart into a virtual member of the chart.
\item[$(2)$]
Let $a_h^{(1)}, \ldots, a_h^{(n)}$ be virtual members of the chart of $U$. Consider a number of replacements $a_h^{(i)} \mapsto a_j^{(i)}$ in $U$ such that each $a_j^{(i)}$ is a virtual member of the chart of the resulting monomial. Suppose these transformation are applied consecutively. Then every transformation in the chain also transforms a virtual member of the chart into a virtual member of the chart. Moreover, we can apply the replacements in any order.
\end{enumerate}
\end{corollary}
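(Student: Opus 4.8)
The plan is to derive Corollary~\ref{virtual_members_many_replacements} by an induction on the length of the sequence, using Lemma~\ref{replacements_diamond_property} and Lemma~\ref{virtual_members_replacements_stability} as the two engines: the first to swap the final result of two consecutive replacements, the second to certify that a virtual member stays a virtual member when the order of two replacements is interchanged.

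For part $(1)$, I would first argue that it suffices to show that \emph{two adjacent} replacements in the sequence can be swapped while preserving both the final monomial and the property ``every replacement transforms a virtual member into a virtual member''; any permutation of a finite sequence is a composition of adjacent transpositions, so iterating this gives the general statement. So fix a sequence $U = V_0 \mapsto V_1 \mapsto \cdots \mapsto V_n$ and a position $i$; I want to swap the $i$-th and $(i+1)$-st replacements. Only the monomial $V_{i-1}$ and the two replacements acting on it matter, so write $V_{i-1} = L_a a_h R_a = L_b b_h R_b$, where $a_h \mapsto a_j$ is the $i$-th replacement and $b_h \mapsto b_j$ is the $(i+1)$-st (after passing $b_h$ through the first replacement to its image $\widetilde b_h$; I would note that if $b_h$ and $a_h$ are separated this passage is trivial, otherwise one uses the classification of Section~\ref{mt_configurations} exactly as in the setup preceding Lemma~\ref{replacements_diamond_property}). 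By hypothesis $a_j$ is a virtual member of $V_i = L_a a_j R_a$ and $\widetilde b_j$ is a virtual member of $V_{i+1} = U_{a,b}$. Now Lemma~\ref{virtual_members_replacements_stability} applies verbatim: performing $b_h \mapsto b_j$ first gives a monomial $L_b b_j R_b$ in which $b_j$ is a virtual member, and then the image $\widetilde a_h$ of $a_h$ is itself a virtual member there (by Corollary~\ref{virtual_members_stability}, since $b_j$ is a virtual member), and the replacement $\widetilde a_h \mapsto \widetilde a_j$ produces a virtual member $\widetilde a_j$ — while Lemma~\ref{replacements_diamond_property} guarantees the resulting monomial is again $V_{i+1}$. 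Hence the swapped pair of replacements still satisfies the required property and still lands on $V_{i+1}$, and the tail $V_{i+1} \mapsto \cdots \mapsto V_n$ is untouched. This proves part $(1)$.

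For part $(2)$, I would observe that it is essentially a special case of part $(1)$: the chain of replacements $a_h^{(i)} \mapsto a_j^{(i)}$, applied consecutively, is exactly a sequence of the type considered in $(1)$ — each step is required to produce a virtual member of the chart of the resulting monomial. So part $(1)$ immediately gives that the replacements may be permuted freely with the same final result and with each step still transforming a virtual member into a virtual member. The only point needing a sentence of care is that, as we permute, the occurrence being replaced at step $i$ is no longer literally $a_h^{(i)}$ in $U$ but its image under the previously-performed replacements; that this image is indeed a virtual member (so that the replacement makes sense and is of the required type) is precisely what Corollary~\ref{virtual_members_stability} and Lemma~\ref{virtual_members_replacements_stability} furnish along the way, since every intermediate replacement in the permuted sequence transforms a virtual member into a virtual member. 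I would spell this out by the same adjacent-transposition bookkeeping as in $(1)$.

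The main obstacle is purely one of careful bookkeeping rather than a new idea: one must keep precise track of how each occurrence $b_h$ is transported through a preceding replacement $a_h \mapsto a_j$ — its image $\widetilde b_h$ may be strictly longer or shorter than $b_h$ by a piece of $\LM$-measure at most $\varepsilon$, and one must confirm that the ``multiplied-up'' replacement $\widetilde b_h \mapsto \widetilde b_j$ is still legitimate and that the diamond closes on the nose. All of this is already encapsulated in Lemma~\ref{replacements_diamond_property} (the diamond/confluence statement) and Lemma~\ref{virtual_members_replacements_stability} (stability of virtual-member status under the swap), so the corollary is really just the statement that these two local lemmas, applied repeatedly, yield global order-independence; the proof is a short induction with the adjacent-transposition reduction as its only structural ingredient.
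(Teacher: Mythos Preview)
Your proposal is correct and follows exactly the paper's approach: the paper's entire proof is the sentence ``Applying Lemma~\ref{replacements_diamond_property} and Lemma~\ref{virtual_members_replacements_stability}, we obtain the following statement,'' and your adjacent-transposition induction is precisely the way to unpack that. One small caveat on the phrasing ``applies verbatim'': to deduce that $b_j$ is a virtual member of $L_b b_j R_b$ from the hypothesis that $\widetilde b_j$ is virtual in $V_{i+1}$, you should apply Lemma~\ref{virtual_members_replacements_stability} to the monomial $V_i$ (not $V_{i-1}$) with the pair of replacements $a_j \mapsto a_h$ and $\widetilde b_h \mapsto \widetilde b_j$ --- both hypotheses are then available, and the conclusion is exactly that $b_j$ is virtual in $L_b b_j R_b$.
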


\section{The structure of certain subspaces of $\mathbb{Z}_2\Fr / \Ideal$: filtration, grading and tensor products}
\label{structure_calc}
First let us introduce the notion of derived monomials.
\begin{definition}[\textbf{derived monomials}]
\label{derived_monomials}
Let $U$ be a monomial. Consider the following transformations of $U$:
\begin{enumerate}[label={(\arabic*)}]
\item
\label{repl1}
Replacements of a virtual member of the chart by an incident monomial non-equal to $1$. Recall that in this case the result is always a reduced monomial.
\item
\label{repl2}
Replacements of a virtual member of the chart by incident monomial equal to $1$ and further cancellations (in order to obtain the reduced monomial).
\end{enumerate}
Starting with a certain monomial $U$ we consecutively apply transformations \ref{repl1}, \ref{repl2}. All the monomials that we obtain after some sequence of transformations \ref{repl1}, \ref{repl2} (including the monomial $U$ itself) are called \emph{derived monomials of $U$}.
\end{definition}

\begin{definition}
Let $\lbrace U_i \rbrace$ be either finite or countable set of monomials. By $\langle U_1, \ldots, U_k, \ldots\rangle_d$, we denote a subspace of $\mathbb{Z}_2\Fr$ generated by all the derived monomials of the monomials $\lbrace U_i \rbrace$.
\end{definition}

\begin{remark}
Assume $U$ is a monomial and $U \in \langle U_1, \ldots, U_k, \ldots\rangle_d$, where $U_i$ are monomials. Then $U = \sum_{j = 1}^l \widetilde{U}_{i_j}$, where $\widetilde{U}_{i_j}$ is a derived monomial of $U_{i_j}$. Since monomials form a basis of $\mathbb{Z}_2\Fr$, we obtain $U = \widetilde{U}_{i_{j_0}}$. So, we obtain that $U \in \langle U_1, \ldots, U_k, \ldots\rangle_d$ if and only if $U$ is a derived monomial of some $U_i$.
\end{remark}

We have shown that $\Ideal = \langle\mathcal{T}^{\prime}\rangle$. Consider $\sum_{j = 1}^l U_j = T \in \mathcal{T}^{\prime}$, where $U_h \mapsto \sum_{\substack{j = 1 \\ j\neq h}}^{k}U_j$ is a multi-turn of a virtual member of the chart of $U_h$. Then, by definition of $\langle U_h\rangle_d$, we obtain $T \in \langle U_h\rangle_d$.

In this section, we show that $\langle U_1, \ldots, U_k\rangle_d \cap \Ideal$ is generated by supports of multi-turns of monomials from $\langle U_i\rangle_d$ for $i = 1, \ldots, k$. This enables us to construct a linear basis of $\langle U_1, \ldots, U_k\rangle_d / (\langle U_1, \ldots, U_k\rangle_d \cap \Ideal)$ and a linear basis of $\mathbb{Z}_2\Fr / \Ideal$ (and therefore show that $\mathbb{Z}_2\Fr / \Ideal$ is non-trivial).

Our approach can be compared with the more standard one, that uses the Diamond Lemma to control the ring relations (\cite{Bergman}). Instead, we introduce a filtration (and the corresponding grading) on $\langle U_1, \ldots, U_k\rangle_d$ and show its compatibility with the subspaces of linear dependencies (see Lemma~\ref{fall_through_linear_dep} and Theorem~\ref{structure_of_quotient_space}). This allows us to deal with linear dependencies in each graded component independently. The structure of each graded component is described in Proposition~\ref{correspondence_to_tensor_product} and Proposition~\ref{component_subspaces_structure}.

Notice that the Diamond Lemma can also be reformulated in the language of grading.

\subsection{The filtration on spaces $\langle U_1, \ldots, U_k\rangle_d$}
\label{filtration_subsection}
Let $Z$ be a monomial. We introduce the following numerical characteristics of $Z$ ($f$-characteristics of monomials):
\begin{equation}
f(Z) = (N(Z), K_{\tau}(Z)),
\end{equation}
where $N(Z)$ is the number of elements in a minimal covering of $Z$, $K_{\tau}(Z)$ is the number of virtual members of the chart of $Z$. If $Z_1$ and $Z_2$ are monomials, we say that $f(Z_1) < f(Z_2)$ if and only if $N(Z_1) < N(Z_2)$ or $N(Z_1) = N(Z_2)$ and $K_{\tau}(Z_1) < K_{\tau}(Z_2)$.

The characteristics $f$ satisfies the following property.
\begin{lemma}
\label{estimation_value_property}
Assume $U$ and $Z$ are monomials, where $Z$ is a derived monomial of $U$. Then $f(Z) \leqslant f(U)$. Moreover, $f(Z) < f(U)$ if and only if in the corresponding sequence of replacements there exists at least one replacement of the form $La_hR \to La_jR$ such that $a_h$ is a virtual member of the chart of $La_hR$ and $a_j$ is not a virtual member of the chart of $La_jR$.
\end{lemma}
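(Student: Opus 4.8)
The plan is to prove Lemma~\ref{estimation_value_property} by induction on the length of the sequence of transformations~\ref{repl1}--\ref{repl2} that produces $Z$ from $U$. The base case (empty sequence, $Z = U$) is trivial. For the inductive step, it suffices to analyze a single transformation $U' = La_hR \to La_jR = U''$, where $a_h$ is a virtual member of the chart of $U'$ and $a_j$ is an incident monomial (with cancellations performed if $a_j = 1$), and show that $f(U'') \leqslant f(U')$ with equality failing precisely when $a_j$ is not a virtual member of the chart of $U''$. The lemma then follows by composing: $f$ is non-increasing along the whole sequence, and it strictly drops somewhere if and only if some step has the described form.

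The heart of the argument is the single-step analysis, and here I would invoke the machinery already assembled. First, if $a_j$ is a virtual member of the chart of $U''$, then Corollary~\ref{decreasing_property} gives $N(U'') = N(U')$, and Corollary~\ref{virtual_members_stability} gives $K_{\tau}(U'') = K_{\tau}(U')$ (the other virtual members of the chart of $U'$ are in bijection, via images, with those of $U''$, and $a_h \leftrightarrow a_j$ contributes one to each count). Hence $f(U'') = f(U')$ in this case. Second, if $a_j$ is \emph{not} a virtual member of the chart of $U''$, then Corollary~\ref{decreasing_property} tells us that either $N(U'') < N(U')$, or else $N(U'') = N(U')$ and $K_{\tau}(U'') < K_{\tau}(U')$; in both situations $f(U'') < f(U')$ by the definition of the order on $f$-characteristics. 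This establishes the single-step claim in both directions: equality holds exactly when $a_j$ remains a virtual member of the chart, and strict inequality holds exactly otherwise.

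To finish, I would assemble the induction. Write the sequence producing $Z$ from $U$ as $U = V_0 \to V_1 \to \cdots \to V_n = Z$. By the single-step claim, $f(V_{i+1}) \leqslant f(V_i)$ for all $i$, so $f(Z) \leqslant f(U)$ since the order is transitive. For the "moreover" part: if some step $V_i \to V_{i+1}$ is of the form $La_hR \to La_jR$ with $a_h$ a virtual member of the chart of $V_i$ and $a_j$ not a virtual member of the chart of $V_{i+1}$, then $f(V_{i+1}) < f(V_i)$, and since $f$ is non-increasing on the rest of the chain and the order is transitive, $f(Z) \leqslant f(V_{i+1}) < f(V_i) \leqslant f(U)$, so $f(Z) < f(U)$. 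Conversely, if every step keeps the replaced virtual member a virtual member of the chart, then $f$ is constant along the whole chain by the single-step claim, so $f(Z) = f(U)$; contrapositively, $f(Z) < f(U)$ forces at least one step of the stated form.

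The main obstacle is not the induction itself — which is routine once the single-step statement is in hand — but making sure the single-step case is genuinely covered by Corollary~\ref{decreasing_property} and Corollary~\ref{virtual_members_stability} in every configuration, including the degenerate ones where $a_j = 1$ and cancellations occur in $La_jR$. One should check that "$a_h$ is a virtual member of the chart of $La_hR$" supplies the hypothesis $a_h \in \mathcal{M}^{\nfc}(La_hR)$ with $\LM(a_h) \geqslant \tau - 2\varepsilon$ needed by those corollaries (this is exactly the content of the remark following Definition~\ref{virtual_members}), and that the notion of "the corresponding sequence of replacements" in the lemma statement refers to transformations of the type in Definition~\ref{derived_monomials}, each of which is a replacement of a virtual member of the chart by an incident monomial — matching the hypotheses of the corollaries verbatim. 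No new estimates are required; the work is entirely in citing the right structural lemma for each branch.
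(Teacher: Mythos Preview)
Your proposal is correct and is exactly the argument the paper has in mind: the paper's proof simply reads ``It follows directly from Corollary~\ref{decreasing_property} and Corollary~\ref{virtual_members_stability},'' and you have unpacked this into the single-step dichotomy plus induction on the length of the sequence. The case $a_j = 1$ is indeed absorbed, since then $a_j$ cannot be a virtual member of the chart and Lemma~\ref{minimal_coverings_property} already gives $N(U'') < N(U')$, so Corollary~\ref{decreasing_property} applies without needing the $a_j \neq 1$ hypothesis of Lemma~\ref{virtual_members_property1}.
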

\begin{proof}
It follows directly from Corollary~\ref{decreasing_property} and Corollary~\ref{virtual_members_stability}.
\end{proof}

In this section, let us set
\begin{equation*}
V = \langle U_1, \ldots, U_k\rangle_d, U_1, \ldots, U_k \in \Fr.
\end{equation*}
We will define a decreasing filtration on $V$.

First consider a space generated by one monomial and its derived monomials, namely, $W= \langle U\rangle_d$, $U\in \Fr$. Let us define a subspace $\Low(W) \subseteq W$. We put
\begin{equation}
\label{lower_level_subspace_one}
\Low(W) = \langle Z \in \Fr \mid Z \textit{ is a derived monomial of } U \textit{ such that } f(Z) < f(U) \rangle.
\end{equation}
If the set of monomials with strictly smaller $f$-characteristics than $f(U)$ is empty, by definition, we put $\Low(W) = 0$.

Let a monomial $Z^{\prime} \in \Low(W)$, $Z^{\prime\prime}$ be a derived monomial of $Z^{\prime}$. Then, by definition of $\Low(W)$ and Lemma~\ref{estimation_value_property}, $f(U) > f(Z^{\prime}) \geqslant f(Z^{\prime\prime})$. Hence, $Z^{\prime\prime}$ also belongs to $\Low(W)$, that is, $\Low(W)$ is closed under taking derived monomials.

Now consider a space $Y = \langle Z_1, \ldots, Z_k, \ldots\rangle_d$, where $\lbrace Z_i \rbrace$ is either a finite or infinite set of monomials. By definition, we put
\begin{equation}
\label{lower_level_subspace}
\Low(Y) = \sum\limits_{i} \Low(\langle Z_i \rangle_d).
\end{equation}
Since every $\Low(\langle Z_i \rangle_d)$ is generated by monomials and closed under taking derived monomials, the space $\Low(Y)$ is generated by monomials and closed under taking derived monomials as well. Hence,
\begin{equation}
\label{lower_level_form}
\Low(Y) = \langle Z_1^{\prime}, \ldots, Z_k^{\prime}, \ldots\rangle_d \textit{ for some } Z_i^{\prime} \in \Fr.
\end{equation}

Notice that one generalized fractional power may have an infinite number of incident monomials of $\LM$-measure less than $\tau - 2\varepsilon$, because they may contain different powers of $w$. Recall that such generalized fractional powers are never counted as a virtual members of the chart. Hence, even if the space $Y$ is generated by derived monomials of finite number of monomials, the space $\Low(Y) $ might be generated by derived monomials of countably many monomials.

\medskip

In the sequel, we will widely use the following simple properties of derived monomials.
\begin{lemma}
\label{derived_spaces_equality}
Let $Z_1$ be a monomial, $Z$ be a derived monomial of $Z_1$, $Y \subseteq \mathbb{Z}_2\Fr$ be a space generated by monomials and closed under taking derived monomials. Then the following statements hold:
\begin{enumerate}
\item[$(1)$]
If $Z \in \langle Z_1 \rangle_d \setminus \Low(\langle Z_1 \rangle_d)$, then $\langle Z_1\rangle_d = \langle Z\rangle_d$.
\item[$(2)$]
If $Z \in Y$ and $Z \in \langle Z_1 \rangle_d \setminus \Low(\langle Z_1 \rangle_d)$, then $\langle Z_1 \rangle_d \subseteq Y$.
\end{enumerate}
\end{lemma}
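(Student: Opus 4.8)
The plan is to reduce both parts to a single observation: the hypothesis $Z\notin\Low(\langle Z_1\rangle_d)$ forces the sequence of transformations taking $Z_1$ to $Z$ to consist entirely of \emph{reversible} steps, so that $Z_1$ is itself a derived monomial of $Z$.

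First I would unwind the hypothesis. Because $Z$ is a monomial, $Z\in\langle Z_1\rangle_d$ simply means $Z$ is a derived monomial of $Z_1$; and since $\Low(\langle Z_1\rangle_d)$ is spanned by monomials and the monomials of $\Fr$ are linearly independent in $\mathbb{Z}_2\Fr$, a monomial $Z$ lies in $\Low(\langle Z_1\rangle_d)$ if and only if it is one of its generating monomials, i.e. a derived monomial of $Z_1$ with $f(Z)<f(Z_1)$. Hence $Z\in\langle Z_1\rangle_d\setminus\Low(\langle Z_1\rangle_d)$ says exactly that $Z$ is a derived monomial of $Z_1$ with $f(Z)\not< f(Z_1)$, which by Lemma~\ref{estimation_value_property} (using $f(Z)\leqslant f(Z_1)$) means $f(Z)=f(Z_1)$. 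Now fix a sequence of transformations \ref{repl1}--\ref{repl2} realizing $Z_1\mapsto\cdots\mapsto Z$. Applying the ``moreover'' part of Lemma~\ref{estimation_value_property}, since $f(Z)=f(Z_1)$ there is no step of this sequence that replaces a virtual member of the chart by a monomial which is not a virtual member of the chart. In particular no step is of type~\ref{repl2}: a type-\ref{repl2} step replaces a virtual member by $1$, and $1$ has $\LM$-measure $0<\tau-2\varepsilon$, so $1$ is never a virtual member. Thus every step is of type~\ref{repl1} and carries a virtual member $a_h$ of the chart of $La_hR$ to an incident monomial $a_j\neq 1$ which is again a virtual member of the chart of $La_jR$.

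Next I would note that each such step $La_hR\mapsto La_jR$ is reversible: incidence is symmetric and $a_h\neq 1$, so $La_jR\mapsto La_hR$ is itself a type-\ref{repl1} transformation taking the virtual member $a_j$ to the virtual member $a_h$. Reversing the whole sequence therefore exhibits $Z_1$ as a derived monomial of $Z$. Since also $Z$ is a derived monomial of $Z_1$, and ``being a derived monomial of'' is transitive (concatenating a sequence of transformations of $Z$ onto one producing $Z$ from $Z_1$ yields a sequence producing the same monomial from $Z_1$), the derived monomials of $Z_1$ and of $Z$ coincide. Hence $\langle Z_1\rangle_d=\langle Z\rangle_d$, which is part~$(1)$. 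For part~$(2)$, part~$(1)$ gives $\langle Z_1\rangle_d=\langle Z\rangle_d$, so it suffices to show $\langle Z\rangle_d\subseteq Y$; since $Y$ is spanned by monomials and $Z$ is a monomial in $Y$, linear independence of the monomials forces $Z$ to be one of those spanning monomials, and as $Y$ is closed under taking derived monomials, every derived monomial of $Z$ lies in $Y$, i.e. $\langle Z\rangle_d\subseteq Y$.

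The only delicate point is the translation between the linear-algebraic statement $Z\notin\Low(\langle Z_1\rangle_d)$ and the combinatorial statement $f(Z)=f(Z_1)$ together with ``the realizing sequence has only reversible steps''; this rests on the fact, recorded in the remarks of this section, that membership of a monomial in a span of monomials is membership in the spanning set, combined with Lemma~\ref{estimation_value_property}. Once this is established, reversibility of type-\ref{repl1} steps between virtual members and transitivity of the derived-monomial relation make the rest routine; I do not expect any genuine obstacle.
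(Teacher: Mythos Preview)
Your proof is correct and follows essentially the same route as the paper's: both argue that each step in a sequence realizing $Z$ from $Z_1$ is invertible, hence $Z_1$ is a derived monomial of $Z$, giving $\langle Z_1\rangle_d=\langle Z\rangle_d$, from which part~(2) is immediate. Your version is more explicit---you spell out the translation $Z\notin\Low(\langle Z_1\rangle_d)\Leftrightarrow f(Z)=f(Z_1)$ via Lemma~\ref{estimation_value_property}, rule out type~\ref{repl2} steps directly, and justify reversibility by symmetry of incidence---whereas the paper compresses all this into the single phrase ``this transformation is invertible''; but the substance is identical.
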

\begin{proof}
Suppose $Z_1^{\prime}$ is a result of transformation~\ref{repl1} (Definition \ref{derived_monomials}) such that $Z_1^{\prime}$ is contained in $\langle Z_1 \rangle_d \setminus \Low(\langle Z_1 \rangle_d)$. Then this transformation is invertible, hence $Z_1$ is also a derived monomial of $Z_1^{\prime}$. Repeating this argument for every transformation in a sequence that connects $Z_1$ and $Z$, we obtain that $Z_1$ is a derived monomial of $Z$. Now assume $U$ is a derived monomial of $Z_1$. Since $Z_1$ is a derived monomial of $Z$, $U$ is also a derived monomial of $Z$. Hence $\langle Z_1\rangle_d = \langle Z\rangle_d$. The first statement of the lemma is proved.

Now let us prove the second statement. Since $Z \in Y$ and $Y$ is generated by monomials and closed under taking derived monomials, we obtain $\langle Z\rangle_d \subseteq Y$. But above we proved $\langle Z\rangle_d = \langle Z_1\rangle_d$, hence, $\langle Z_1\rangle_d \subseteq Y$.
\end{proof}

We defined the subspace $\Low(Y)$ using a set of generators of $Y$ (see formula~\eqref{lower_level_subspace}). Let us show that, in fact, $\Low(Y)$ does not depend on the set of generators of $Y$.
\begin{proposition}
\label{lower_level_gen_independence}
Assume
\begin{equation*}
Y = \langle Z_1, \ldots, Z_k, \ldots\rangle_d = \langle Z_1^{\prime}, \ldots, Z_k^{\prime}, \ldots\rangle_d,
\end{equation*}
where $\lbrace Z_i\rbrace$ and $\lbrace Z_i^{\prime}\rbrace$ are either finite or infinite sets of monomials. Then
\begin{equation*}
\sum\limits_{i}\Low(\langle Z_i\rangle_d) = \sum\limits_{i}\Low(\langle Z_i^{\prime}\rangle_d).
\end{equation*}
\end{proposition}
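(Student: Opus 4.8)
The plan is to show both inclusions, and by symmetry it suffices to prove
$\sum_i \Low(\langle Z_i\rangle_d) \subseteq \sum_i \Low(\langle Z_i'\rangle_d)$.
Since each $\Low(\langle Z_i\rangle_d)$ is, by~\eqref{lower_level_form}, generated by monomials and closed under taking derived monomials, it is enough to take an arbitrary monomial $Z \in \Low(\langle Z_i\rangle_d)$ for some fixed $i$ and to show that $Z \in \sum_j \Low(\langle Z_j'\rangle_d)$. By definition~\eqref{lower_level_subspace_one}, $Z$ is a derived monomial of $Z_i$ with $f(Z) < f(Z_i)$.

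The key step is to relate $Z_i$ to the second generating set. Since $Z_i \in Y = \langle Z_1', \ldots, Z_k', \ldots\rangle_d$, the Remark following the definition of $\langle \cdot \rangle_d$ shows that $Z_i$ is a derived monomial of some $Z_j'$. Now I distinguish two cases according to whether $Z_i \in \Low(\langle Z_j'\rangle_d)$ or not. If $Z_i \in \Low(\langle Z_j'\rangle_d)$, then since $\Low(\langle Z_j'\rangle_d)$ is closed under taking derived monomials and $Z$ is a derived monomial of $Z_i$, we get $Z \in \Low(\langle Z_j'\rangle_d) \subseteq \sum_j \Low(\langle Z_j'\rangle_d)$, as desired. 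If $Z_i \notin \Low(\langle Z_j'\rangle_d)$, i.e. $Z_i \in \langle Z_j'\rangle_d \setminus \Low(\langle Z_j'\rangle_d)$, then by Lemma~\ref{derived_spaces_equality}(1) we have $\langle Z_j'\rangle_d = \langle Z_i\rangle_d$; moreover Lemma~\ref{estimation_value_property} gives $f(Z_i) \leqslant f(Z_j')$, and in fact the equality $\langle Z_j'\rangle_d = \langle Z_i\rangle_d$ combined with $Z_j' \notin \Low(\langle Z_j'\rangle_d) $ forces $f(Z_i) = f(Z_j')$ (if $f(Z_i) < f(Z_j')$ then $Z_i \in \Low(\langle Z_j'\rangle_d)$, a contradiction). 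Hence $\Low(\langle Z_i\rangle_d) = \Low(\langle Z_j'\rangle_d)$, and therefore $Z \in \Low(\langle Z_i\rangle_d) = \Low(\langle Z_j'\rangle_d) \subseteq \sum_j \Low(\langle Z_j'\rangle_d)$.

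This settles the inclusion $\sum_i \Low(\langle Z_i\rangle_d) \subseteq \sum_i \Low(\langle Z_i'\rangle_d)$; exchanging the roles of $\{Z_i\}$ and $\{Z_i'\}$ gives the reverse inclusion, and hence equality. The main obstacle is the bookkeeping in the case $Z_i \notin \Low(\langle Z_j'\rangle_d)$: one must be careful that the filtration value $f$ is genuinely constant on the ``top level'' $\langle Z_j'\rangle_d \setminus \Low(\langle Z_j'\rangle_d)$, which is where Lemma~\ref{estimation_value_property} (monotonicity of $f$ under derived monomials, with equality characterized) and Lemma~\ref{derived_spaces_equality}(1) (mutual derivability on the top level) are used in tandem. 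Everything else is a direct unwinding of the definitions of $\langle\cdot\rangle_d$, $\Low$, and the Remark identifying membership in $\langle\cdot\rangle_d$ with being a derived monomial.
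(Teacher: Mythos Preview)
Your proof is correct. The core idea---trace $Z$ back through $Z_i$ to some $Z_j'$ via transitivity of the derived-monomial relation, and use monotonicity of $f$---is the same as the paper's.

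The paper packages the argument slightly differently: it introduces the ``universal'' generating set $\{X_j\}$ of \emph{all} monomials in $Y$ and shows $\sum_i \Low(\langle Z_i\rangle_d) = \sum_j \Low(\langle X_j\rangle_d)$ for any generating set $\{Z_i\}$; one inclusion is then trivial since $\{Z_i\}\subseteq\{X_j\}$, and the other is obtained by concatenating a sequence $Z_{i_0}\to\cdots\to X_{j_0}$ with the $f$-decreasing sequence $X_{j_0}\to\cdots\to Z$ and invoking Lemma~\ref{estimation_value_property}. Your direct symmetry argument avoids the auxiliary set $\{X_j\}$, which is a mild simplification. On the other hand, your case split in Case~2 (invoking Lemma~\ref{derived_spaces_equality}(1) and arguing $f(Z_i)=f(Z_j')$) is more work than needed: since $Z$ is a derived monomial of $Z_j'$ by transitivity and $f(Z)<f(Z_i)\leqslant f(Z_j')$ by Lemma~\ref{estimation_value_property}, you get $Z\in\Low(\langle Z_j'\rangle_d)$ immediately, without splitting into cases.
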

\begin{proof}
Let us enumerate all monomials from $Y$. Let $\lbrace X_j\rbrace$ be the set of all monomials of the space $Y$. Then, evidently, we obtain the following description of~$Y$:
\begin{equation*}
Y = \langle X_1, \ldots, X_k, \ldots\rangle = \langle X_1, \ldots, X_k, \ldots\rangle_d.
\end{equation*}
Clearly, it is sufficient to show that
\begin{equation*}
\sum\limits_{i}\Low(\langle Z_i\rangle_d) = \sum\limits_{j}\Low(\langle X_j\rangle_d)
\end{equation*}
for an arbitrary set of monomials $\lbrace Z_i \rbrace$ such that $Y = \langle Z_1, \ldots, Z_k, \ldots\rangle_d$.

Since $\lbrace Z_1, \ldots, Z_k, \ldots\rbrace \subseteq \lbrace X_1, \ldots, X_k, \ldots\rbrace$, clearly,
\begin{equation*}
\sum\limits_{i}\Low(\langle Z_i\rangle_d) \subseteq \sum\limits_{j}\Low(\langle X_j\rangle_d).
\end{equation*}

Assume $Z \in \sum_{j}\Low(\langle X_j\rangle_d)$. Monomials form a basis of $\mathbb{Z}_2\Fr$ and every $\Low(\langle X_j\rangle_d)$ is generated by monomials. Hence, $Z$ is a derived monomial of some $X_{j_0}$ such that we have $Z\in \Low(\langle X_{j_0}\rangle_d)$ for some $j_0$. Therefore, there exists a sequence of transformations $\alpha_1, \ldots, \alpha_{s_1}$ of type~\ref{repl1}, \ref{repl2} such that
\begin{equation}
\label{decr_seq}
X_{j_0} \overset{\alpha_1}{\longmapsto} \ldots \overset{\alpha_{s_1}}{\longmapsto} Z.
\end{equation}
Moreover, in the sequence, there exists at least one transformation $\alpha_l : Y_{l-1} \to Y_l$ such that $f(Y_{l}) < f(Y_{l - 1})$.

Since $X_{j_0} \in Y = \langle Z_1, \ldots, Z_k, \ldots\rangle_d$, we have $X_{j_0} \in \langle Z_{i_0}\rangle_d$ for some $i_0$. Hence, there exists a sequence of transformations $\beta_1, \ldots, \beta_{s_2}$ of type~\ref{repl1}, \ref{repl2} such that
\begin{equation}
\label{arbitr_seq}
Z_{i_0} \overset{\beta_1}{\longmapsto} \ldots \overset{\beta_{s_2}}{\longmapsto} X_{j_0}.
\end{equation}
Gluing~\eqref{arbitr_seq} and~\eqref{decr_seq}, we obtain
\begin{equation*}
Z_{i_0} \overset{\beta_1}{\longmapsto} \ldots \overset{\beta_{s_2}}{\longmapsto} X_{j_0} \overset{\alpha_1}{\longmapsto} \ldots \overset{\alpha_{s_1}}{\longmapsto} Z.
\end{equation*}
Consequently, since there exists $\alpha_l$ that decreases the value of the function $f$, we have $Z\in \Low(\langle Z_{i_0}\rangle_d)$. Hence, $Z \in \sum_{i}\Low(\langle Z_i\rangle_d)$ and
\begin{equation*}
\sum\limits_{j}\Low(\langle X_j\rangle_d) \subseteq \sum\limits_{i}\Low(\langle Z_i\rangle_d).
\end{equation*}
This completes the proof.
\end{proof}

Now we define a decreasing filtration on $V = \langle U_1, \ldots, U_k\rangle_d$ in the following way. By definition, put
\begin{align}
\begin{split}
\label{filtration_def}
 \Ft_0V &= V,\\
\Ft_{n + 1}V& = \Low(\Ft_nV).
\end{split}
\end{align}
Since for every $Y = \langle Z_1, \ldots, Z_k, \ldots\rangle_d$, where $\lbrace Z_i \rbrace$ is either a finite or infinite set of monomials, we have description~\eqref{lower_level_form}, therefore formula~\eqref{filtration_def} is applicable for every $n \geqslant 0$. Here we mean $\Low(0) = 0$.

\begin{proposition}
\label{filtration_finite_property}
The filtration defined above has finitely many levels, that is, there exists a number $N$ such that $\Ft_NV = 0$. Moreover, we never have a situation $\Ft_{n + 1}V = \Ft_nV$ for $\Ft_nV \neq 0$.
\end{proposition}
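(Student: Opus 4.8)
The plan is to equip every nonzero subspace $Y\subseteq\mathbb{Z}_2\Fr$ that is spanned by monomials and closed under taking derived monomials with a single complexity invariant $\mu(Y)=\max\{f(X)\mid X\text{ a monomial of }Y\}\in\mathbb{N}^2$, where $\mathbb{N}^2$ carries the lexicographic order induced by $f$, and then to prove that $\mu(\Low(Y))<\mu(Y)$ whenever $\Low(Y)\neq 0$. Since the lexicographic order on $\mathbb{N}^2$ admits no infinite strictly descending chain, the sequence $\mu(\Ft_0V)>\mu(\Ft_1V)>\cdots$ must terminate, and this yields both claims: there is an $N$ with $\Ft_NV=0$, and whenever $\Ft_nV\neq 0$ the invariant strictly drops, so $\Ft_{n+1}V$ (a subspace of $\Ft_nV$) cannot coincide with $\Ft_nV$.

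The first point to settle is that $\mu(Y)$ is well defined, i.e.\ that the set of $f$-values of monomials of $Y$ is a nonempty \emph{finite} subset of $\mathbb{N}^2$, so the maximum is attained. The crucial elementary inequality here is $K_{\tau}(Z)\leqslant N(Z)$ for every monomial $Z$: every element of $\mathcal{M}^{\nfc}(Z)$ belongs to every covering of $Z$, the virtual members of the chart of $Z$ form a subset of $\mathcal{M}^{\nfc}(Z)$, hence their number does not exceed the size of a minimal covering. For $V=\langle U_1,\ldots,U_k\rangle_d$ with finitely many $U_i$, every monomial of $V$ is a derived monomial of some $U_i$ by the Remark following the definition of $\langle\cdot\rangle_d$, so by Lemma~\ref{estimation_value_property} its $f$-value is $\leqslant\max_i f(U_i)$; together with $K_{\tau}\leqslant N$ this confines all such values to the finite set $\{(N,K)\mid 0\leqslant K\leqslant N\leqslant\max_i N(U_i)\}$. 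Hence $\mu(V)$ exists and equals $\max_i f(U_i)$.

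Next I would run the inductive step. Suppose $\Ft_nV\neq 0$ and $\mu(\Ft_nV)=(N^{\ast},K^{\ast})$ has been shown to be well defined. By~\eqref{lower_level_form} we may write $\Ft_nV=\langle Z_1,Z_2,\ldots\rangle_d$ with all $Z_i$ monomials, hence monomials of $\Ft_nV$, so $f(Z_i)\leqslant\mu(\Ft_nV)$. Then $\Ft_{n+1}V=\Low(\Ft_nV)=\sum_i\Low(\langle Z_i\rangle_d)$ is, by the definition of $\Low$, the $\langle\cdot\rangle_d$-span of the monomials $Z$ that are derived monomials of some $Z_i$ with $f(Z)<f(Z_i)$; each such generator therefore satisfies $f(Z)<f(Z_i)\leqslant\mu(\Ft_nV)$. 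Every monomial of $\Ft_{n+1}V$ is, again by the Remark, a derived monomial of one of these generators, so by Lemma~\ref{estimation_value_property} its $f$-value is $<(N^{\ast},K^{\ast})$; invoking $K_{\tau}\leqslant N$ once more, all these values lie in the finite box $\{(N,K)\mid 0\leqslant K\leqslant N\leqslant N^{\ast}\}$. A finite set lying strictly below $(N^{\ast},K^{\ast})$ has its maximum attained and strictly below $(N^{\ast},K^{\ast})$; hence $\mu(\Ft_{n+1}V)$ is well defined and $\mu(\Ft_{n+1}V)<\mu(\Ft_nV)$ (when $\Ft_{n+1}V\neq 0$).

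Finally I would invoke well-foundedness of the lexicographic order on $\mathbb{N}^2$ (the first coordinates are non-increasing, hence eventually constant, after which the second coordinates strictly decrease in $\mathbb{N}$, impossible forever) to conclude that the descending chain of $\mu$-values stops, i.e.\ $\Ft_NV=0$ for some $N$; and the strictness $\mu(\Ft_{n+1}V)<\mu(\Ft_nV)$ for $\Ft_nV\neq 0$ shows that $\Ft_{n+1}V$ lacks a monomial of $f$-value $\mu(\Ft_nV)$ that $\Ft_nV$ possesses, so $\Ft_{n+1}V\neq\Ft_nV$. The main obstacle — and the reason a naive induction ``on the number of virtual members'' fails — is that $\Ft_nV$ is in general generated by derived monomials of \emph{countably} many monomials, because a single generalized fractional power has infinitely many incident monomials differing only by powers of $w$; the bound $K_{\tau}(Z)\leqslant N(Z)$ is exactly what upgrades ``the relevant $f$-values are bounded above in lex order'' to ``they lie in a finite box'', which is what makes $\mu$ behave like a genuine, strictly decreasing invariant.
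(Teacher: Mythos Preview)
Your proof is correct and follows essentially the same approach as the paper: both define the invariant $\mu(\Ft_nV)=\max\{f(X):X\text{ a monomial of }\Ft_nV\}$ (the paper calls it $m^{(n)}$), use the inequality $K_\tau(Z)\leqslant N(Z)$ together with Lemma~\ref{estimation_value_property} to confine all $f$-values to a finite box, and observe that the definition of $\Low$ forces $\mu$ to strictly decrease along the filtration. The only cosmetic difference is that for the second claim the paper argues ``$\Ft_{n+1}V=\Ft_nV$ would make the filtration eventually constant and nonzero, contradicting termination,'' whereas you deduce $\Ft_{n+1}V\neq\Ft_nV$ directly from the strict drop in $\mu$.
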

\begin{proof}
Recall that $V = \langle U_1, \ldots, U_k\rangle_d$, $U_1, \ldots, U_k \in \Fr$. We put
\begin{equation*}
N_{\max} = \max\limits_{i \in \lbrace 1, \ldots, k\rbrace}(N(U_i)).
\end{equation*}
Consider $f$-characteristics of monomials in $V$. Let $U$ be an arbitrary monomial from $V$. From Lemma~\ref{estimation_value_property} it follows that $N(U) \leqslant N_{\max}$. Since every virtual member of the chart is of $\LM$-measure not less than $\tau - 2\varepsilon$, it is contained in every covering of $U$. Hence, we evidently obtain $K_{\tau}(U) \leqslant N(U) \leqslant N_{\max}$. Therefore, there are finitely many different values of $f$-characteristics of monomials in $V$. Then
\begin{equation*}
m^{(n)} = \max\limits_{U \in \Ft_nV} f(U)
\end{equation*}
is finite for any $n$ such that $\Ft_nV \neq 0$.

Recall that, by definition, $\Ft_{n + 1}V = \Low(\Ft_nV)$. Then from~\eqref{lower_level_subspace_one} and~\eqref{lower_level_subspace} it follows that $m^{(n)} > m^{(n + 1)}$ if $\Ft_{n + 1}V \neq 0$. So, we have a strictly decreasing sequence of $f$-characteristics
\begin{equation}
\label{decreasing_maximums}
m^{(0)} > \ldots > m^{(n)} > \ldots
\end{equation}
that corresponds to the decreasing sequence of non-trivial spaces
\begin{equation*}
\Ft_0V \supseteq \ldots \supseteq \Ft_nV\supseteq \ldots\, .
\end{equation*}
The sequence~\eqref{decreasing_maximums} can not be infinite, so, it ends up at some step $n_0$. This means that $\Ft_{n_0}V$ is the last non-trivial subspace of the filtration. That is, the filtration has finitely many non-zero levels.

Assume $\Ft_{n + 1}V = \Ft_nV$ for $\Ft_nV \neq 0$. That is, $\Ft_nV = \Low(\Ft_nV)$. Then by induction we have $\Ft_{n + k}V = \Ft_nV \neq 0$ for any $k \in \mathbb{N}$. But we already proved that the filtration has finitely many non-zero levels. This contradiction completes the proof.
\end{proof}

\begin{definition}
\label{subspace_of_dependencies}
Suppose $Y$ is a subspace of $\mathbb{Z}_2\Fr$ linearly generated by an arbitrary set of monomials and closed under taking derived monomials. Every linear dependence from $\mathcal{T}^{\prime}$ is, in fact, a linear dependence between a monomial $U$ and its derived monomials. Hence, any multi-turn of a virtual member of the chart of a monomial from $Y$ generates a linear dependence between the monomials from $Y$, because $Y$ is closed under taking derived monomials. We consider the subspace of $Y$
\begin{align*}
\Dp(Y) = &\left\langle\sum\limits_{j = 1}^k U_j \mid U_h \textit{ is a monomial from }Y,\right.\\
&U_h = La_hR, \textit{ where } a_h \textit{ is a virtual member of the chart of } U_h,\\
&U_h \mapsto \sum\limits_{\substack{j = 1 \\ j\neq h}}^{k} U_j \textit{ is a multi-turn that comes from}\\
&\left.\textit{ an elementary multi-turn } a_h \mapsto \sum\limits_{\substack{j = 1 \\ j\neq h}}^{k} a_j \right\rangle
\end{align*}
and call this subspace \emph{the subspace of dependencies on $Y$}. Using this notion, $\Ideal = \langle \mathcal{T}^{\prime}\rangle = \Dp(\mathbb{Z}_2\Fr)$.

Note that if a monomial $U\in W$ has the empty chart, then there are no multi-turns of $U$. Hence, when $W$ consists only of monomials with the empty chart, by definition we put $\Dp(W) = 0$.
\end{definition}

Since $V$ is closed under taking derived monomials, we consider its subspace $\Dp(V)$. Evidently, for every monomial $U$ the space $\Low(\langle U\rangle_d)$ is linearly generated by monomials and closed under taking derived monomials. Therefore, by the definition, every $\Ft_nV$ is generated by monomials and closed under taking derived monomials. Hence, we can consider its subspace $\Dp(\Ft_nV)$ and define the filtration on $\Dp(V)$ in the following way
\begin{equation*}
\Ft_n\Dp(V) = \Dp(\Ft_nV),
\end{equation*}
that is, $\Ft_n\Dp(V)$ is the vector space generated by linear dependencies coming from monomials of $\Ft_nV$.

\begin{lemma}
\label{intersection_with_low}
Suppose $U$ is a monomial, $U\in \Ft_nV$. If $U \notin \Ft_{n + 1}V$, then
\begin{equation*}
\Ft_{n + 1}V \cap \langle U\rangle_d = \Low(\langle U\rangle_d).
\end{equation*}
\end{lemma}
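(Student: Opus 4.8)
The plan is to prove the two inclusions $\Low(\langle U\rangle_d)\subseteq \Ft_{n+1}V\cap\langle U\rangle_d$ and $\Ft_{n+1}V\cap\langle U\rangle_d\subseteq\Low(\langle U\rangle_d)$ separately, reducing in each case to a statement about monomials, since every space in sight is spanned by monomials and monomials form a basis of $\mathbb{Z}_2\Fr$. The inclusion $\Low(\langle U\rangle_d)\subseteq\langle U\rangle_d$ is immediate. For $\Low(\langle U\rangle_d)\subseteq\Ft_{n+1}V$, I would use that $U$ is a monomial of $\Ft_nV$, so that $U$ may be taken to be one of the generators in the description of $\Ft_nV$ as a span of monomials closed under derived monomials; then by~\eqref{lower_level_subspace} we get $\Ft_{n+1}V=\Low(\Ft_nV)\supseteq\Low(\langle U\rangle_d)$, invoking Proposition~\ref{lower_level_gen_independence} to make $\Low(\Ft_nV)$ independent of the chosen generating set.

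For the reverse inclusion it suffices to take a \emph{monomial} $Z$ with $Z\in\langle U\rangle_d$ and $Z\in\Ft_{n+1}V$ and show $Z\in\Low(\langle U\rangle_d)$. By the remark that a monomial lies in $\langle U\rangle_d$ exactly when it is a derived monomial of $U$, $Z$ is a derived monomial of $U$, and hence $f(Z)\leqslant f(U)$ by Lemma~\ref{estimation_value_property}. Since $\Low(\langle U\rangle_d)$ is spanned by the derived monomials $Z'$ of $U$ with $f(Z')<f(U)$ and $Z$ is a single monomial, it is enough to exclude $f(Z)=f(U)$. Assume $f(Z)=f(U)$; then $Z\notin\Low(\langle U\rangle_d)$, so Lemma~\ref{derived_spaces_equality}(1) applied with $Z_1=U$ gives $\langle U\rangle_d=\langle Z\rangle_d$, so $U$ is a derived monomial of $Z$. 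On the other hand, describing $\Ft_nV$ via the set of all its monomials $\{X_j\}$ as generators, we have $\Ft_{n+1}V=\Low(\Ft_nV)=\sum_j\Low(\langle X_j\rangle_d)$, and since $Z$ is a monomial in this sum and each summand is spanned by monomials, $Z\in\Low(\langle X_{j_0}\rangle_d)$ for some $j_0$, i.e. $Z$ is a derived monomial of $X_{j_0}$ with $f(Z)<f(X_{j_0})$. Composing the two chains, $U$ is a derived monomial of $X_{j_0}$ with $f(U)=f(Z)<f(X_{j_0})$, hence $U\in\Low(\langle X_{j_0}\rangle_d)\subseteq\Ft_{n+1}V$, contradicting $U\notin\Ft_{n+1}V$. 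Therefore $f(Z)<f(U)$, so $Z\in\Low(\langle U\rangle_d)$, which finishes this inclusion.

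The hard part will be precisely this exclusion of $f(Z)=f(U)$: a priori $Z$ is only known to be a derived monomial of $U$, so the only tool pushing in the opposite direction is the reversibility furnished by Lemma~\ref{derived_spaces_equality}(1), which upgrades the bound $f(Z)\leqslant f(U)$ into the fact that $U$ and $Z$ are derived monomials of each other; this is what allows the two chains of derived monomials (one ending at $Z$, one witnessing $Z\in\Ft_{n+1}V$) to be spliced together and pulled back to $U$. The one bookkeeping subtlety is that $\Low$ was defined through a choice of generators, so each time I rewrite $\Ft_nV$ with a convenient generating set (containing $U$, or consisting of all its monomials) I must cite Proposition~\ref{lower_level_gen_independence}. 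No new estimate on charts or $\LM$-measures is needed; the finiteness and monotonicity facts used here are already contained in Corollary~\ref{decreasing_property}, Lemma~\ref{estimation_value_property}, and Proposition~\ref{filtration_finite_property}.
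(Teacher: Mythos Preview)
Your proposal is correct and follows essentially the same route as the paper. The paper argues the reverse inclusion by taking a monomial $Z\in\Ft_{n+1}V\cap\langle U\rangle_d$ with $Z\notin\Low(\langle U\rangle_d)$, locating $Z\in\Low(\langle Z_{i_0}\rangle_d)$ for some generator $Z_{i_0}$, and then invoking Lemma~\ref{derived_spaces_equality}(2) directly to conclude $\langle U\rangle_d\subseteq\Low(\langle Z_{i_0}\rangle_d)$; you reach the same contradiction via part~(1) and an explicit comparison of $f$-values, which is just unpacking what part~(2) says.
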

\begin{proof} Let $\Ft_nV = \langle Z_1, \ldots, Z_k, \ldots\rangle_d$, where $\lbrace Z_i\rbrace$ is either a finite or infinite set of monomials.
Since $U \in \Ft_{n}V$ and $\Ft_{n}V$ is closed under taking derived monomials, we can assume that $\Ft_nV = \langle U, Z_1, \ldots, Z_k, \ldots\rangle_d$. Since the definition of $\Ft_{n + 1}V = \Low(\Ft_nV)$ does not depend on the set of generators of $\Ft_nV$ (see Proposition~\ref{lower_level_gen_independence}), we have
\begin{equation*}
\Ft_{n + 1}V = \Low(\Ft_nV) = \Low(\langle U\rangle_d) + \sum\limits_{i}\Low(\langle Z_i\rangle_d).
\end{equation*}
Let $Z \in \Low(\langle U\rangle_d)$. Then, using the last equality, we immediately obtain $Z \in \Ft_{n + 1}V$. So, $\Low(\langle U\rangle_d) \subseteq \Ft_{n + 1}V \cap \langle U\rangle_d$.

 Assume a monomial $Z \in \Ft_{n + 1}V \cap \langle U\rangle_d$ and $Z \notin \Low(\langle U\rangle_d)$. Since $U \notin \Ft_{n + 1}V$, we have $U \notin \Low(\langle Z_i\rangle_d)$ for all $i$. Monomials form a basis of $\mathbb{Z}_2\Fr$ and every $\Low(\langle Z_{i} \rangle_d)$ is generated by monomials. So, since $Z \in \Ft_{n + 1}V = \sum_{i} \Low(\langle Z_i \rangle_d)$, we have $Z \in \Low(\langle Z_{i_0} \rangle_d)$ for some $i_0$. We assumed that $Z \in \langle U\rangle_d \setminus \Low(\langle U\rangle_d)$, hence, by Lemma~\ref{derived_spaces_equality}, $\langle U\rangle_d \subseteq \Low(\langle Z_{i_0} \rangle_d)$. But that contradicts the assumption that $U \notin \Ft_{n + 1}V$. Therefore, $\Ft_{n + 1}V \cap \langle U\rangle_d \subseteq \Low(\langle U\rangle_d)$.

Thus, finally we obtain $\Ft_{n + 1}V \cap \langle U\rangle_d = \Low(\langle U\rangle_d)$.
\end{proof}

In Definition~\ref{derived_monomials}. we defined derived monomials with the use of a special set of transformations~\ref{repl1} and~\ref{repl2}. However, in the next lemma we will use a slightly wider class of transformations. Let us prove that, using this wider class of transformations of a given monomial, we still will have its derived monomials.

Namely, let $U$ be a monomial, $a_h$ and $b_h$ be virtual members of the chart of $U$. We consider two replacements $a_h \mapsto a_j$, $b_h \mapsto b_j$ ($a_h$ and $a_j$ are incident monomials, $b_h$ and $b_j$ are incident monomials). Assume that first we apply the transformation $a_h \mapsto a_j$, namely, $U = L_aa_hR_a \mapsto L_aa_jR_a$. To be precise, we suppose that the beginning of $b_h$ lies from the right of the beginning of $a_h$. Denote by $b_h^{\prime}$ the intersection of $b_h$ and $R_a$. If $a_h$ and $b_h$ are separated or touch at a point, $b_h^{\prime} = b_h$.
\begin{center}
\begin{tikzpicture}
\draw[|-|, black, thick] (-1,0) to node[at start, below] {$U$} (7,0);
\draw[|-|, black, very thick] (0.5,0)--(2.3,0) node[midway, above] {$a_h$};
\draw[|-|, black, very thick] (3.5,0)--(5,0) node[midway, above] {$b_h^{\prime} = b_h$};
\draw [thick, decorate, decoration={brace, amplitude=10pt, mirror}] (2.3,-0.1)--(7,-0.1) node [midway, below, yshift=-7] {$R_a$};
\end{tikzpicture}
\end{center}
\begin{center}
\begin{tikzpicture}
\draw[|-|, black, thick] (-1,0) to node[at start, below] {$U$} (7,0);
\draw[|-|, black, very thick] (0.5,0)--(2.3,0) node[midway, above] {$a_h$};
\draw[|-|, black, very thick] (2.3,0)--(3.8,0) node[midway, above] {$b_h^{\prime} = b_h$};
\draw [thick, decorate, decoration={brace, amplitude=10pt, mirror}] (2.3,-0.1)--(7,-0.1) node [midway, below, yshift=-7] {$R_a$};
\end{tikzpicture}
\end{center}
If $a_h$ and $b_h$ has an overlap $c$, then $b_h = cb_h^{\prime}$.
\begin{center}
\begin{tikzpicture}
\draw[|-|, black, thick] (-1,0) to node[at start, below] {$U$} (7,0);
\draw[|-|, black, very thick] (0.5,0.1)--(2.3,0.1) node[midway, above] {$a_h$};
\draw[|-, black, very thick] (2,-0.1)--(2.3,-0.1) node[midway, below] {$c$};
\draw[|-|, black, very thick] (2.3,-0.1)--(4,-0.1) node[midway, above] {$b_h^{\prime}$};
\draw [thick, decorate, decoration={brace, amplitude=10pt, mirror}] (2.3,-0.2)--(7,-0.2) node [midway, below, yshift=-7] {$R_a$};
\end{tikzpicture}
\end{center}
In any case, we can apply the transformation $b_h^{\prime} \mapsto b_j^{\prime}$ to $L_aa_jR_a$, where either $b_j^{\prime} = b_j$ if $a_h$ and $b_h$ are separated or touch at a point, or $b_j^{\prime} = c^{-1}b_j$ if $a_h$ and $b_h$ has an overlap $c$. Here we mean that if $a_j = 1$ and the monomial $L_aR_a$ has cancellations, we do not perform them. Instead, we perform a replacement $b_h^{\prime} \mapsto b_j^{\prime}$.

Assume $a_j \neq 1$ or $a_j = 1$ and $L_aR_a$ has no further cancellations. The replacement of the image of $b_h^{\prime}$ in $L_aa_jR_a$ (see Definition~\ref{corresponding_occurence}) can be represented as the replacement $b_h^{\prime} \mapsto b_j^{\prime}$ and the further cancellations if there are any (they may occur when $b_h^{\prime}$ is not a maximal occurrence of a generalized fractional power in $L_aa_jR_a$). So, if the image of $b_h^{\prime}$ is a virtual member of $L_aa_jR_a$, as a result of the transformation $b_h^{\prime} \mapsto b_j^{\prime}$ in $L_aa_jR_a$ and the further cancellations, we obtain a derived monomial of $U$.

If $a_j$ is a virtual member of the chart of $L_aa_jR_a$, then, by Corollary~\ref{virtual_members_stability}, the image of $b_h^{\prime}$ is always a virtual member of $L_aa_jR_a$. If $a_j$ is not virtual member of the chart of $La_jR$, then the image of $b_h^{\prime}$ may not be a virtual member of $L_aa_jR_a$. The following lemma states that if we apply the replacement $b_h^{\prime} \mapsto b_j^{\prime}$ to $L_aa_jR_a$ in this case, we, nevertheless, obtain a derived monomial of $U$.
\begin{lemma}
\label{additional_derived monomials}
Let $U$ be a monomial, $a_h$ and $b_h$ be virtual members of the chart of $U$. We consider two replacements $a_h \mapsto a_j$, $b_h \mapsto b_j$ ($a_h$ and $a_j$ are incident monomials, $b_h$ and $b_j$ are incident monomials). Assume $U = L_aa_hR_a$, $a_j$ is not a virtual member of the chart of $L_aa_jR_a$, $b_h^{\prime}$ is an intersection of $R_a$ and $b_h$. Then the result of the replacement $b_h^{\prime} \mapsto b_j^{\prime}$ (corresponding to $b_h \mapsto b_j$) in $L_aa_jR_a$ is a derived monomial of $U$, possibly after the further cancellations. Moreover, its $f$-characteristics is strictly less than $f(U)$.
\end{lemma}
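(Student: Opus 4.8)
The plan is to reach $W$ not by the route suggested by the statement (which is $U \xrightarrow{a_h\mapsto a_j} L_aa_jR_a \xrightarrow{b_h^{\prime}\mapsto b_j^{\prime}} W$), but by the ``opposite'' route: first perform $b_h\mapsto b_j$ in $U$ and only then correct $a_h$. Write $U=L_bb_hR_b$ and let $U^{\prime}=L_bb_jR_b$ be the result of $b_h\mapsto b_j$; since $b_h$ is a virtual member of the chart of $U$, this is a transformation of type~\ref{repl1} or~\ref{repl2}, so $U^{\prime}$ is a derived monomial of $U$. Let $\widetilde{a}_h$ be the image of $a_h$ in $U^{\prime}$ and $\widetilde{a}_j$ the incident monomial obtained from $a_j$ by multiplying it, on the appropriate side, by the overlap generalized fractional power of $a_h$ and $b_h$ (empty if $a_h$ and $b_h$ are separated or touch at a point), exactly as in the paragraphs preceding Lemma~\ref{replacements_diamond_property}. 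By Lemma~\ref{replacements_diamond_property} the result of the replacement $\widetilde{a}_h\mapsto\widetilde{a}_j$ in $U^{\prime}$ coincides with $W$ (the cancellations that are postponed when $a_j=1$ being absorbed on the appropriate side, in accordance with the convention fixed before the statement). Hence it suffices to show that $\widetilde{a}_h$ is a virtual member of the chart of $U^{\prime}$: then $\widetilde{a}_h\mapsto\widetilde{a}_j$ is again a transformation of type~\ref{repl1} or~\ref{repl2}, so $W$ is a derived monomial of $U^{\prime}$, hence of $U$.

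To prove that $\widetilde{a}_h$ is a virtual member of the chart of $U^{\prime}$, first note $\LM(\widetilde{a}_h)\geqslant\tau-2\varepsilon$: indeed $\widetilde{a}_h$ contains the intersection $a_h^{\prime}$ of $a_h$ with $L_b$, and either $a_h$ and $b_h$ do not overlap, so $a_h^{\prime}=a_h$ and $\LM(a_h^{\prime})\geqslant\tau-2\varepsilon$, or they overlap in a piece of measure $\varepsilon$; in the latter case $b_h$ (being in $\mathcal{M}^{\nfc}(U)$ and not separated from $a_h$ on the right) is the right essential neighbour of $a_h$, so Remark~\ref{replacements_property_remark} forbids the image of $a_h$ from growing on the right along any witnessing sequence, forcing $\LM(a_h)\geqslant\tau-\varepsilon$ and hence $\LM(a_h^{\prime})\geqslant\tau-2\varepsilon$. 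Now split into cases. If $b_j$ is a virtual member of the chart of $U^{\prime}$, apply Corollary~\ref{virtual_members_stability} to the transformation $b_h\mapsto b_j$ of $U$, taking $a_h$ itself as the tracked occurrence (it lies in $\mathcal{M}^{\nfc}(U)$, is distinct from $b_h$, and has image $\widetilde{a}_h$): since $a_h$ is a virtual member of the chart of $U$, so is $\widetilde{a}_h$ of $U^{\prime}$. If $b_j$ is \emph{not} a virtual member of the chart of $U^{\prime}$, Corollary~\ref{virtual_members_stability} is unavailable, and one argues directly, following the scheme of the proof of Lemma~\ref{virtual_members_property1}: distinguish whether $a_h$ and $b_h$ are separated, touch, or overlap, and prepend to a sequence of replacements witnessing $a_h$ as a virtual member of the chart of $U$ the transformation $U^{\prime}\mapsto U$ (legitimate in the sense of Definition~\ref{replacements} because $\LM(\widetilde{a}_h)\geqslant\tau-2\varepsilon$ and the essential-neighbour condition can be arranged), tracking the image of $a_h$ throughout; the bookkeeping of measures is again controlled by Remark~\ref{replacements_property_remark} and the bound $\varepsilon$ on overlaps. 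I expect this last case to be the main technical obstacle, since there neither $a_j$ is virtual in $L_aa_jR_a$ nor $b_j$ in $U^{\prime}$, so both applications of Corollary~\ref{virtual_members_stability} fail and the stability has to be re-derived by hand.

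It remains to show $f(W)<f(U)$. The monomial $L_aa_jR_a$ is a derived monomial of $U$ obtained by the single transformation $a_h\mapsto a_j$; since $a_h$ is a virtual member of the chart of $U$ while $a_j$ is not one of $L_aa_jR_a$, Lemma~\ref{estimation_value_property} gives $f(L_aa_jR_a)<f(U)$. The monomial $W$ is obtained from $L_aa_jR_a$ by replacing the image $\widetilde{b}_h$ of $b_h^{\prime}$, and by the symmetric use of Remark~\ref{replacements_property_remark} (now on the side of $b_h$, whose left essential neighbour in $U$ is $a_h$) one gets $\LM(\widetilde{b}_h)\geqslant\tau-2\varepsilon>\varepsilon$, so $\widetilde{b}_h\in\mathcal{M}^{\nfc}(L_aa_jR_a)$. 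Then Lemma~\ref{minimal_coverings_property} yields $N(W)\leqslant N(L_aa_jR_a)$, and in case of equality the argument of Lemma~\ref{virtual_members_property1} (which uses only that the replaced occurrence lies in $\mathcal{M}^{\nfc}$ with $\LM$-measure $\geqslant\tau-2\varepsilon$, not that it is itself virtual) gives $K_{\tau}(W)\leqslant K_{\tau}(L_aa_jR_a)$. Hence $f(W)\leqslant f(L_aa_jR_a)<f(U)$, completing the plan.
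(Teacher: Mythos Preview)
Your overall strategy (reach $W$ via the opposite route $U\to U'=L_bb_jR_b\to W$ and then invoke Lemma~\ref{replacements_diamond_property}) is sound and is in fact the backbone of the paper's argument. The gap is in the ``hard'' subcase you yourself flag: when $b_j$ is \emph{not} a virtual member of the chart of $U'$, your proposed fix does not work. You want to prepend the step $U'\mapsto U$ to a witnessing sequence for $a_h$, but in Definition~\ref{replacements} the measure condition $\LM(\cdot)\geqslant\tau-2\varepsilon$ is imposed on the \emph{replaced} occurrence, not on the tracked one. Here the replaced occurrence is (the image of) $b_j$, and nothing prevents $\LM(b_j)<\tau-2\varepsilon$; indeed $b_j$ may have measure $\leqslant\varepsilon$ or be $1$. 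So the prepended step is not a legal replacement in the sense of Definition~\ref{replacements}, and you cannot conclude that $\widetilde{a}_h$ is virtual in $U'$ this way. The invocation of ``$\LM(\widetilde{a}_h)\geqslant\tau-2\varepsilon$'' is the wrong hypothesis at this point.

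The paper's proof supplies the missing idea: rather than going through $b_j$ directly, it passes through an auxiliary incident monomial $b_{k_0}$ with $\LM(b_{k_0})\geqslant\tau+\varepsilon$ (obtained from a slight strengthening of Proposition~\ref{transversality}). Since $b_{k_0}$ is then automatically virtual in $L_bb_{k_0}R_b$, Corollary~\ref{virtual_members_stability} applies and the image of $a_h$ there \emph{is} virtual; one then replaces it by the analogue of $a_j$, and finally replaces $b_{k_0}'$ (whose measure is still $\geqslant\tau$) by the analogue of $b_j$. A short computation shows the end result is exactly $W$. This detour through a ``large'' $b_{k_0}$ is what makes every step a replacement of a virtual member, and it also yields the strict drop in $f$ directly via Lemma~\ref{virtual_members_replacements_stability} (showing $\widetilde{a}_j$ is not virtual in the intermediate monomial), so one does not need the separate $f(W)\leqslant f(L_aa_jR_a)$ argument that you sketch at the end. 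Your $f$-argument, incidentally, also has a loose end: invoking ``the argument of Lemma~\ref{virtual_members_property1}'' for $K_\tau(W)\leqslant K_\tau(L_aa_jR_a)$ requires the non-covering hypothesis on $\widetilde{b}_j$, which you have not checked.
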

\begin{proof}
We will consider only the most interesting case when $a_h$ and $b_h$ has an overlap $c$. Two other cases are similar.
\begin{center}
\begin{tikzpicture}
\draw[|-|, black, thick] (-1,0) to node[at start, below] {$U$} (7,0);
\draw[|-|, black, very thick] (0.5,0.1)--(2.3,0.1) node[midway, above] {$a_h$};
\draw[|-, black, very thick] (2,-0.1)--(2.3,-0.1) node[midway, below] {$c$};
\draw[|-|, black, very thick] (2.3,-0.1)--(4,-0.1) node[midway, above] {$b_h^{\prime}$};
\end{tikzpicture}
\end{center}
First we consider the case when $a_j \neq 1$. Then the resulting monomial $L_aa_jR_a$ is reduced. We have the transformation $b_h \mapsto b_j$. Since $b_h = cb_h^{\prime}$, we also have the transformation $b_h^{\prime} \mapsto c^{-1}b_j$. So, we have a sequence of replacements
\begin{equation}
\label{two_replacements_res}
U = L_aa_hR_a \mapsto L_aa_jR_a = L_aa_jb_h^{\prime}R_b \mapsto L_aa_jc^{-1}b_jR_b.
\end{equation}
If $\LM(b_h) \geqslant \tau + \varepsilon$, then $\LM(b_h^{\prime}) \geqslant \tau$. The image of $b_h^{\prime}$ in $L_aa_jR_a$ prolongs $b_h^{\prime}$. Therefore, its $\LM$-measure is not less than the $\LM$-measure of $b_h^{\prime}$. So, the image of $b_h^{\prime}$ in $L_aa_jR_a$ is of $\LM$-measure $\geqslant \tau$. Therefore, it is a virtual member of the chart of $L_aa_jR_a$ and the result of~\eqref{two_replacements_res} is a derived monomial of $U$, possibly after the further cancellations.

Suppose $\LM(b_h) < \tau + \varepsilon$. Consider an elementary multi-turn $b_h \mapsto \sum_{\substack{k = 1 \\ k \neq h}}^{s}b_k$, where $b_j$ is one of the resulting monomials. In Proposition~\ref{transversality}, we proved that there exists $b_{k_0}$ such that $\LM(b_{k_0}) \geqslant \tau$. However, using the same argument, one can easily prove a slightly stronger statement, namely, that there exists $b_{k_0}$ such that $\LM(b_{k_0}) \geqslant \tau + \varepsilon$ (and, hence, ${k_0} \neq h$).

Assume $U = L_bb_hR_b$. Consider the transformation
\begin{equation*}
U = L_bb_hR_b \mapsto L_bb_{k_0}R_b.
\end{equation*}
Denote by $a_h^{\prime}$ the intersection of $a_h$ and $L_b$ in $L_bb_hR_b$. Then $a_h = a_h^{\prime}c$. Let the image of $a_h$ in $L_bb_{k_0}R_b$ be equal to $a_h^{\prime}c^{\prime}$, where $c^{\prime}$ is possibly empty. Also let us put $b_{k_0} = c^{\prime}b_{k_0}^{\prime}$. Since $b_{k_0}$ is a virtual member of the chart of $L_bb_{k_0}R_b$, from Corollary~\ref{virtual_members_stability} it follows that $a_h^{\prime}c^{\prime}$ is a virtual member of the chart of $L_bb_{k_0}R_b$.
\begin{center}
\begin{tikzpicture}
\draw[|-|, black, thick] (-1,0)--(7,0);
\draw[|-, black, very thick] (0.5,-0.1)--(2,-0.1) node[midway, below] {$a_h^{\prime}$};
\draw[|-|, black, very thick] (2,-0.1)--(2.3,-0.1) node[midway, below] {$c^{\prime}$};
\draw[|-|, black, very thick] (2,0.1)--(2.3,0.1);
\draw[|-|, black, very thick] (2,0.1)--(4,0.1) node[midway, above] {$b_{k_0}^{\prime}$};
\draw [thick, decorate, decoration={brace, amplitude=10pt}] (-1,0.2)--(2,0.2) node [midway, above, yshift=7] {$L_b$};
\draw [thick, decorate, decoration={brace, amplitude=10pt}] (4,0.2)--(7,0.2) node [midway, above, yshift=7] {$R_b$};
\draw [thick, decorate, decoration={brace, amplitude=10pt, mirror}] (-1,-0.2)--(0.5,-0.2) node [midway, below, yshift=-7] {$L_a$};
\end{tikzpicture}
\end{center}

We have a transformation $a_h \mapsto a_j$. Multiplying it by $c^{-1}c^{\prime}$ from the right, we obtain $a_h^{\prime}c^{\prime} \mapsto a_j^{\prime}$, where $a_j^{\prime} = a_jc^{-1}c^{\prime}$. The monomials $b_{k_0}$ and $b_j$ are incident, so, we have a transformation $b_{k_0} \mapsto b_j$. Multiplying it by ${c^{\prime}}^{-1}$ from the left, we obtain $b_{k_0}^{\prime} \mapsto b_j^{\prime}$, where $b_j^{\prime} = {c^{\prime}}^{-1}b_j$. Since $\LM(b_{k_0}) \geqslant \tau + \varepsilon$, clearly, $\LM(b_{k_0}^{\prime}) \geqslant \tau$. Therefore, a prolongation of $b_{k_0}^{\prime}$ is always a virtual member of the chart. Hence, we obtain a sequence of replacements of virtual members of the chart
\begin{align}
\label{two_replacements_transformed_res}
L_bb_{k_0}R_b& = L_aa_h^{\prime}c^{\prime}b_{k_0}^{\prime}R_b \mapsto L_aa_j^{\prime}b_{k_0}^{\prime}R_b \\
&\quad\mapsto L_aa_j^{\prime}b_j^{\prime}R_b = L_aa_jc^{-1}c^{\prime}{c^{\prime}}^{-1}b_jR_b = L_aa_jc^{-1}b_jR_b.\nonumber
\end{align}
So, the results of~\eqref{two_replacements_res} and of~\eqref{two_replacements_transformed_res} are equal. But the result of sequence~\eqref{two_replacements_transformed_res} is a derived monomial of $U$, by definition. Since $a_j$ is not a virtual member of the chart of $L_aa_jR_a$, from Lemma~\ref{virtual_members_replacements_stability} it follows that $a_j^{\prime}$ is not a virtual member of the chart of $L_aa_j^{\prime}b_{k_0}^{\prime}R_b$. Hence, $f(L_aa_jc^{-1}b_jR_b) < f(U)$.

Consider the case when $a_j = 1$. If there are no cancellations in $L_aR_a$, then we argue as above. Suppose there are cancellations in $L_aR_a$, namely, $b_h = cb_{h, m}b_{h, f}$, $L_a = L^{\prime}_ab_{h,m}^{-1}$, where $b_{h, f}$ is possibly empty. For simplicity, assume that $L^{\prime}_ab_{h, f}R_b$ has no further cancellations.
\begin{center}
\begin{tikzpicture}
\draw[|-|, black, thick] (-1,0) to node[at start, below] {$U$} (7,0);
\draw[|-|, black, very thick] (2.5,0.1)--(4.3,0.1) node[midway, above] {$a_h$};
\draw[|-, black, very thick] (4,-0.1)--(4.3,-0.1) node[midway, below] {$c$};
\draw[|-|, black, very thick] (4.3,-0.1)--(5.5,-0.1) node[midway, above] {$b_{h, m}$};
\draw[-|, black, very thick] (5.5,-0.1)--(6,-0.1) node[midway, above] {$b_{h, f}$};
\draw[|-|, black, very thick] (1.3,-0.1)--(2.5,-0.1) node[midway, below] {$b_{h, m}^{-1}$};
\draw [thick, decorate, decoration={brace, amplitude=10pt}] (-1,0.1)--(1.3,0.1) node [midway, above, yshift=7] {$L_a^{\prime}$};
\end{tikzpicture}
\end{center}
As above, let $b_h^{\prime}$ be an intersection of $b_h$ and $R_a$, that is, $b_h = cb_h^{\prime}$, $b_h^{\prime} = b_{h, m}b_{h, f}$. We have a replacement $b_h \mapsto b_j$ and the corresponding replacement $b_h^{\prime} = c^{-1}b_h \mapsto c^{-1}b_j$. After the replacement $L_aa_hR_a \mapsto L_aR_a$ there are cancellations of the resulting monomial. Let us not perform them, instead, we perform the replacement $b_h^{\prime} \mapsto c^{-1}b_j$. So, we have a sequence of replacements
\begin{equation}
\label{two_replacements_res_1}
U = L_aa_hR_a \mapsto L_aR_a = L_ab_h^{\prime}R_b = L_ac^{-1}b_jR_b.
\end{equation}

Again consider an elementary multi-turn $b_h \mapsto \sum_{\substack{k = 1 \\ k \neq h}}^{s}b_k$, where $b_j$ is one of the resulting monomials. Multiplying it by $b_{h, m}^{-1}c^{-1}$ from the left side, we obtain an elementary multi-turn $b_{h, f} \mapsto \sum_{\substack{k = 1 \\ k \neq h}}^{s}b_{h, m}^{-1}c^{-1}b_k$. If $\LM(b_{h, f}) \geqslant \tau$, then its image in $L_a^{\prime}b_{h, f}R_b$ is a virtual member or the chart. Then we again can argue as in the beginning of the proof. Assume $\LM(b_{h, f}) < \tau$. Then, by Proposition~\ref{transversality}, there exists $b_{k_1}$ such that $\LM(b_{h, m}^{-1}c^{-1}b_{k_1}) \geqslant \tau$.

We have the replacement $a_h \mapsto 1$. As above, let $a_h^{\prime}$ be an intersection of $a_h$ and $L_b$, that is, $a_h = a_h^{\prime}c$. Let $a_h^{\prime}c^{\prime\prime}$ be the image of $a_h$ in $L_bb_{k_1}R_b$ ($c^{\prime\prime}$ is possibly empty). That is, $c^{\prime\prime}$ is an overlap of $a_h^{\prime}c^{\prime\prime}$ and $b_{k_1}$, $b_{k_1} = c^{\prime\prime}b_{k_1}^{\prime}$. From Corollary~\ref{virtual_members_stability} it follows that $a_h^{\prime}c^{\prime\prime}$ is a virtual member of the chart of $L_bb_{k_1}R_b$.
\begin{center}
\begin{tikzpicture}
\draw[|-|, black, thick] (-1,0) to node[at start, below] {$U$} (7,0);
\draw[|-, black, very thick] (2.5,0.1)--(4,0.1) node[midway, above] {$a_h^{\prime}$};
\draw[|-|, black, very thick] (4,0.1)--(4.3,0.1);
\draw[|-, black, very thick] (4,-0.1)--(4.3,-0.1) node[midway, below] {$c^{\prime\prime}$};
\draw[|-|, black, very thick] (4.3,-0.1)--(5.6,-0.1) node[midway, below] {$b_{k_1}^{\prime}$};
\draw[|-|, black, very thick] (1.3,-0.1)--(2.5,-0.1) node[midway, below] {$b_{h, m}^{-1}$};
\draw [thick, decorate, decoration={brace, amplitude=10pt}] (-1,0.1)--(1.3,0.1) node [midway, above, yshift=7] {$L_a^{\prime}$};
\end{tikzpicture}
\end{center}
So, multiplying $a_h \mapsto 1$ by $c^{-1}c^{\prime\prime}$ from the right side, we obtain the replacement $a_h^{\prime}c^{\prime\prime} \mapsto c^{-1}c^{\prime\prime}$.

The monomials $b_{k_1}$ and $b_j$ are incident, hence, we have a transformation $b_{k_1} \mapsto b_j$. So, we also have $b_{h, m}^{-1}c^{-1}b_{k_1} \mapsto b_{h, m}^{-1}c^{-1}b_j$. Since $\LM(b_{h, m}^{-1}c^{-1}b_{k_1}) \geqslant \tau$ its prolongation is always a virtual member of the chart. Therefore, we obtain the sequence of replacements of virtual members of the chart:
\begin{multline}
\label{two_replacements_transformed_res_1}
U = L_bb_hR_b \mapsto L_bb_{k_1}R_b = L_aa_h^{\prime}c^{\prime\prime}b_{k_1}^{\prime}R_b \mapsto\\
\mapsto L_ac^{-1}c^{\prime\prime}b_{k_1}^{\prime}R_b = L_a^{\prime}b_{h, m}^{-1}c^{-1}b_{k_1}R_b \mapsto L_a^{\prime}b_{h, m}^{-1}c^{-1}b_jR_b = L_ac^{-1}b_jR_b.
\end{multline}
So, the result of~\eqref{two_replacements_res_1} is equal to the result of~\eqref{two_replacements_transformed_res_1}. But the result of sequence~\eqref{two_replacements_transformed_res_1} is a derived monomial of $U$, by definition. In~\eqref{two_replacements_transformed_res_1} we had the replacement $a_h^{\prime}c^{\prime\prime} \mapsto c^{-1}c^{\prime\prime}$, where $\LM(c^{-1}c^{\prime\prime}) \leqslant 2\varepsilon$. Hence, $c^{-1}c^{\prime\prime}$ can not be a virtual member of the chart. Therefore, $f(L_ac^{-1}b_jR_b) < f(U)$.
\end{proof}

\begin{lemma}[\textbf{Main Lemma}]
\label{fall_through_linear_dep}
Let $V = \langle U_1, \ldots, U_k\rangle_d, U_1, \ldots, U_k\in \Fr$. Then we have
\begin{equation*}
\Dp(\Ft_nV) \cap \Ft_{n + 1}V = \Dp(\Ft_{n + 1}V).
\end{equation*}
\end{lemma}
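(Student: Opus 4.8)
The inclusion $\Dp(\Ft_{n+1}V)\subseteq\Dp(\Ft_nV)\cap\Ft_{n+1}V$ is immediate. Since $\Low(Y)\subseteq Y$ for every $Y$, we have $\Ft_{n+1}V\subseteq\Ft_nV$, hence $\Dp(\Ft_{n+1}V)\subseteq\Dp(\Ft_nV)$ (a multi-turn of a virtual member of a monomial of $\Ft_{n+1}V$ is in particular a multi-turn of a virtual member of a monomial of $\Ft_nV$); and $\Dp(Y)\subseteq Y$ always, because every monomial $U_j$ occurring in a support $\sum_j U_j$ coming from a monomial $U_h\in Y$ is a derived monomial of $U_h$, so it lies in $Y$ when $Y$ is closed under derived monomials, which $\Ft_{n+1}V$ is. The content of the lemma is the reverse inclusion.

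So let $x\in\Dp(\Ft_nV)\cap\Ft_{n+1}V$ and write $x=\sum_l T_l$, each $T_l=\sum_j U^{(l)}_j$ being the support of a multi-turn of a virtual member $a^{(l)}_h$ of the chart of a monomial $U^{(l)}_h\in\Ft_nV$ (with $U^{(l)}_h$ one of the $U^{(l)}_j$). If $U^{(l)}_h\in\Ft_{n+1}V$ then $T_l\in\Dp(\Ft_{n+1}V)$ already, since every $U^{(l)}_j$ is then a derived monomial of a monomial of $\Ft_{n+1}V$; subtracting off all such supports (an element of $\Dp(\Ft_{n+1}V)$) we may assume $U^{(l)}_h\notin\Ft_{n+1}V$ for all $l$, while still $\sum_l T_l\in\Ft_{n+1}V$. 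Split each support as $T_l=T^{=}_l+T^{<}_l$, where $T^{=}_l$ collects those $U^{(l)}_j$ (including $U^{(l)}_h$) for which $a^{(l)}_j$ is again a virtual member of the chart of $U^{(l)}_j$ — equivalently, by Lemma~\ref{estimation_value_property} and Corollary~\ref{decreasing_property}, for which $f(U^{(l)}_j)=f(U^{(l)}_h)$ — and $T^{<}_l$ collects the rest, for which $f(U^{(l)}_j)<f(U^{(l)}_h)$. The monomials of $T^{<}_l$ are derived monomials of $U^{(l)}_h$ of strictly smaller $f$-characteristic, hence lie in $\Low(\langle U^{(l)}_h\rangle_d)$, which is contained in $\Ft_{n+1}V$ because $\Ft_{n+1}V=\Low(\Ft_nV)$ does not depend on the generating set of $\Ft_nV$ (Proposition~\ref{lower_level_gen_independence}) and $U^{(l)}_h\in\Ft_nV$. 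On the other hand each monomial $M$ of $T^{=}_l$ is a derived monomial of $U^{(l)}_h$ with $f(M)=f(U^{(l)}_h)$, so $M\in\langle U^{(l)}_h\rangle_d\setminus\Low(\langle U^{(l)}_h\rangle_d)$, and Lemma~\ref{derived_spaces_equality}(2) then forbids $M\in\Ft_{n+1}V$ (it would force $U^{(l)}_h\in\Ft_{n+1}V$). Now $\sum_l T^{=}_l=\sum_l T_l-\sum_l T^{<}_l\in\Ft_{n+1}V$, while $\sum_l T^{=}_l$ is a $\mathbb{Z}_2$-combination of monomials lying outside $\Ft_{n+1}V$; since $\Ft_{n+1}V$ is spanned by monomials, this forces $\sum_l T^{=}_l=0$, and therefore $\sum_l T_l=\sum_l T^{<}_l$.

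It remains to show that this sum $\sum_l T^{<}_l$ — now a $\mathbb{Z}_2$-combination of monomials of $\Ft_{n+1}V$, subject to the cancellation $\sum_l T^{=}_l=0$ of the top parts — actually lies in $\Dp(\Ft_{n+1}V)$. The plan here is a descending induction on the largest $f$-characteristic $\mu$ occurring among the base monomials $U^{(l)}_h$. Grading $\sum_l T^{=}_l=0$ by $f$-value, the $\mu$-homogeneous part gives $\sum_{l:\,f(U^{(l)}_h)=\mu}T^{=}_l=0$, since top parts of supports based below $\mu$ contribute no monomials of $f$-characteristic $\mu$. The claim to be established is that the block $\sum_{l:\,f(U^{(l)}_h)=\mu}T_l$ can be rewritten as an element of $\Dp(\Ft_{n+1}V)$ plus a sum of supports of multi-turns based at monomials of $f$-characteristic strictly below $\mu$; substituting this and repeating lowers the top level, and the induction terminates when no supports remain. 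The two tools for the claim are, first, that the support of a multi-turn depends only on the underlying elementary multi-turn and its context and not on which of its top-level terms is taken as the base — turning the image of the relevant occurrence in any monomial of $T^{=}_l$ by the \emph{same} elementary multi-turn reproduces $T_l$ exactly (the transition rule in Definition~\ref{multiturn_def} applies verbatim to any term of the expression), which is what makes the cancellation $\sum_{l:\,f(U^{(l)}_h)=\mu}T^{=}_l=0$ usable to pair up or eliminate the level-$\mu$ supports; and, second, the commutation and stability statements of Section~\ref{monomial_transformations_section} — Lemma~\ref{replacements_diamond_property}, Lemma~\ref{virtual_members_replacements_stability}, Corollary~\ref{virtual_members_many_replacements} and Lemma~\ref{additional_derived monomials} — which allow one to commute an $f$-dropping replacement past a same-level replacement and thereby re-base a level-$\mu$ support at a monomial of lower $f$-characteristic at the cost of a support of a multi-turn of a monomial of $\Ft_{n+1}V$.

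The main obstacle is exactly this last step: promoting the cancellation of the top parts, which a priori is only a relation among monomials, to a relation among \emph{supports} of multi-turns of monomials of $\Ft_{n+1}V$. The delicate point is that two elementary multi-turns based at different level-$\mu$ monomials of the same $d$-class may share top-level terms and overlap in intricate ways, so the matching of cancelling terms has to be carried out compatibly with the $d$-space structure of $\Ft_{n+1}V$; this is precisely the situation the stability lemmas of Section~\ref{monomial_transformations_section} were built for, and running the induction with them in hand is where the real work lies. (Lemma~\ref{intersection_with_low}, identifying $\Ft_{n+1}V\cap\langle U\rangle_d$ with $\Low(\langle U\rangle_d)$, is convenient for recognizing at each stage which monomials have dropped below the threshold.)
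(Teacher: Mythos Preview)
Your reduction is fine through the point where you conclude $\sum_l T_l=\sum_l T^{<}_l$ with each $T^{<}_l$ a sum of monomials in $\Ft_{n+1}V$. But from here on there is a genuine gap, and you essentially say so yourself: ``running the induction with them in hand is where the real work lies.'' The problem is that $T^{<}_l$ is \emph{not} the support of a multi-turn, only the low-level part of one, so there is no reason $\sum_l T^{<}_l$ should lie in $\Dp(\Ft_{n+1}V)$ without a further argument. Your proposed mechanism --- use the monomial cancellation $\sum_{l:\,f=\mu}T^{=}_l=0$ to ``re-base'' the level-$\mu$ supports at lower monomials --- is not carried out, and it is not clear it can be: that equation is a relation among \emph{monomials}, and different supports $T_l,T_{l'}$ at the same level (e.g.\ multi-turns of two different virtual members of the same monomial) typically share some top monomials but not others, so pairwise cancellation of supports does not follow. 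The commutation lemmas you cite control what happens to a \emph{single} pair of replacements; they do not by themselves promote a linear cancellation of top monomials to a decomposition of $\sum_l T^{<}_l$ into genuine supports.

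The paper's proof takes a different route that supplies exactly the missing rewriting. First it reduces to $\Ft_nV=\langle U\rangle_d$ for a single monomial $U$ (your grading by $f$-level is replaced by grouping into $d$-classes, via Lemmas~\ref{derived_spaces_equality} and~\ref{intersection_with_low}). Then it builds a map $\mu[U]\colon A_1\otimes\cdots\otimes A_k\to\langle U\rangle_d$, one tensor factor per virtual member of the chart of $U$, giving a bijection between basis tensors with all components outside $\Low_i$ and monomials in $\langle U\rangle_d\setminus\Low(\langle U\rangle_d)$. A support $T_s$ based at such a monomial is $\mu[U]$ of $b^{(1)}\otimes\cdots\otimes t^{(i_s)}\otimes\cdots\otimes b^{(k)}$ with $t^{(i_s)}\in\Dp(A_{i_s})$. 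The crucial step is an \emph{abstract} tensor product lemma (Lemma~\ref{tensor_product_filtration}): since $\Dp(A_i)\cap\Low_i=0$ by transversality (Proposition~\ref{transversality}), one has
\[
\Bigl(\sum_i A_1\otimes\cdots\otimes\Dp(A_i)\otimes\cdots\otimes A_k\Bigr)\cap\Low(A_1\otimes\cdots\otimes A_k)=\sum_i\Low\bigl(A_1\otimes\cdots\otimes\Dp(A_i)\otimes\cdots\otimes A_k\bigr).
\]
This is what converts ``the sum lies in $\Low$'' into ``the sum is a combination of tensors with both a $\Dp(A_{i_0})$ factor and a $\Low_{i_0'}$ factor.'' Each such tensor maps under $\mu[U]$ to the support of a multi-turn of a monomial already in $\Low(\langle U\rangle_d)$, which is exactly membership in $\Dp(\Low(\langle U\rangle_d))$. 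In short, the missing ingredient in your argument is this tensor decomposition: it is the device that turns a cancellation of top monomials into a rewriting as supports based lower down, and without it (or an equivalent) the induction you sketch does not close.
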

\begin{proof}
We have
\begin{equation*}
\Dp(\Ft_nV) = \Dp(\Ft_{n + 1}V) + \langle T^{(n)}_1, \ldots, T^{(n)}_k, \ldots \rangle,
\end{equation*}
where $T^{(n)}_i$ are linear dependencies coming from monomials of $\Ft_nV \setminus \Ft_{n + 1}V$. Evidently, $\Dp(\Ft_nV) \cap \Ft_{n + 1}V \supseteq \Dp(\Ft_{n + 1}V)$. So, we need to show that $\Dp(\Ft_nV) \cap \Ft_{n + 1}V \subseteq \Dp(\Ft_{n + 1}V)$. Since $\Dp(\Ft_{n + 1}V) \subseteq \Ft_{n + 1}V$, it is sufficient to prove that
\begin{equation*}
\langle T^{(n)}_1, \ldots, T^{(n)}_k, \ldots \rangle \cap \Ft_{n + 1}V \subseteq \Dp(\Ft_{n + 1}V).
\end{equation*}

First let us show that it is sufficient to prove Lemma~\ref{fall_through_linear_dep} only for the case when $\Ft_nV$ is generated by one monomial and its derived monomials. Indeed, suppose Lemma~\ref{fall_through_linear_dep} is proved for this case, then let us prove it for the general case. Assume $T_1, \ldots T_l$ are linear dependencies coming from monomials of $\Ft_nV \setminus \Ft_{n + 1}V$ and $\sum_{i = 1}^l T_i \in \Ft_{n + 1}V$. So, all monomials from $\Ft_nV \setminus \Ft_{n + 1}V$ have to cancel out in this sum.

Let $T_i$ be generated by a multi-turn of a monomial $Z_i \in \Ft_nV \setminus \Ft_{n + 1}V$, $i = 1, \ldots, l$. We choose a subset $\lbrace Z_{i_1}, \ldots, Z_{i_k}\rbrace \subseteq \lbrace Z_1, \ldots, Z_l\rbrace$ such that $\langle Z_{i_1}, \ldots, Z_{i_k}\rangle_d = \langle Z_1, \ldots, Z_l\rangle_d$ and $\langle Z_{i_j} \rangle_d \nsubseteq \langle Z_{i_{j^{\prime}}} \rangle_d$ for $j \neq j^{\prime}$. The dependencies $\lbrace T_1, \ldots T_l\rbrace$ can be split into groups $\lbrace T_1^j, \ldots T_{l_j}^j\rbrace$, $j = 1, \dots k$, of dependencies coming from monomials of $\langle Z_{i_j} \rangle_d$. So, we have
\begin{equation}
\label{dep_sum}
\sum\limits_{i = 1}^l T_i = \sum\limits_{j = 1}^k\sum\limits_{i = 1}^{l_j} T_i^j \in \Ft_{n + 1}V.
\end{equation}
Every linear dependence $T_i$ comes from a monomial that belongs to $\Ft_nV \setminus \Ft_{n + 1}V$; hence, by the definition of $\Ft_{n + 1}V$, every $T_i^j$ comes from a monomial that belongs to $\langle Z_{i_j} \rangle_d \setminus \Low(\langle Z_{i_j} \rangle_d)$. From Lemma~\ref{intersection_with_low} it follows that all monomials from $\langle Z_{i_j} \rangle_d \setminus \Low(\langle Z_{i_j} \rangle_d)$ are contained in $\Ft_nV \setminus \Ft_{n + 1}V$ and $\Low(\langle Z_{i_j} \rangle_d) \subseteq \Ft_{n + 1}V$. Hence, the monomials from every $\langle Z_{i_j} \rangle_d \setminus \Low(\langle Z_{i_j} \rangle_d)$, $j = 1, \ldots, k$, have to cancel out in the sum~\eqref{dep_sum}.

Since $\langle Z_{i_j} \rangle_d \nsubseteq \langle Z_{i_{j^{\prime}}} \rangle_d$ for $j \neq j^{\prime}$, from Lemma~\ref{derived_spaces_equality} it follows that if a monomial $Z$ belongs to $\langle Z_{i_j} \rangle_d \setminus \Low(\langle Z_{i_j} \rangle_d)$, then $Z \notin \langle Z_{i_{j^{\prime}}} \rangle_d$ for $j \neq j^{\prime}$. Therefore, the monomials from $\Ft_nV \setminus \Ft_{n + 1}V$ in the sum~\eqref{dep_sum} have to cancel out in every sum $\sum_{i = 1}^{l_j} T_i^j$ separately. Hence, we obtain $\sum_{i = 1}^{l_j} T_i^j \in \Low(\langle Z_{i_j} \rangle_d)$. Since we assumed that the statement of the lemma holds for the spaces $\langle Z_{i_j}\rangle_d$, we have
\begin{equation*}
\sum\limits_{i = 1}^{l_j} T_i^j \in \Dp(\Low(\langle Z_{i_j} \rangle_d)) \subseteq \Dp(\Ft_{n + 1}V).
\end{equation*}
Thus,
\begin{equation*}
\sum\limits_{i = 1}^l T_i = \sum\limits_{j = 1}^k\sum\limits_{i = 1}^{l_j} T_i^j \in \Dp(\Ft_{n + 1}V).
\end{equation*}
So, it remains to prove the lemma only for the case when $\Ft_nV$ is generated by one monomial and its derived monomials.

We start with proving the following statement.
\begin{lemma}
\label{tensor_product_filtration}
Suppose $A_1, \ldots, A_k$ are vector spaces, $\Low(A_i) \subseteq A_i$ is a subspace of $A_i$. Suppose $D_i$ is a subspace of $A_i$ such that $D_i \cap \Low(A_i) = 0$. Consider $A_1\otimes \ldots \otimes A_k$ and its subspaces $A_1\otimes \ldots \otimes D_i \otimes \ldots \otimes A_k$. Define the subspaces
\begin{equation*}
\Low(A_1\otimes \ldots \otimes A_k) = \sum\limits_{i = 1}^{k} A_1\otimes \ldots \otimes \Low(A_i) \otimes \ldots \otimes A_k,
\end{equation*}
and
\begin{equation*}
\Low(A_1\otimes \ldots \otimes D_i \otimes \ldots \otimes A_k) = \sum\limits_{\substack{i^{\prime} = 1 \\ i \neq i^{\prime}}}^{k} A_1\otimes \ldots \otimes D_i \otimes \ldots \otimes \Low(A_{i^{\prime}}) \otimes \ldots \otimes A_k, i = 1, \dots, k.
\end{equation*}

Then
\begin{equation*}
\left(\sum\limits_{i = 1}^{k} A_1\otimes \ldots \otimes D_i \otimes \ldots \otimes A_k \right) \bigcap \Low(A_1\otimes \ldots \otimes A_k) = \sum\limits_{i = 1}^{k} \Low(A_1\otimes \ldots \otimes D_i \otimes \ldots \otimes A_k).
\end{equation*}
\end{lemma}
\begin{proof}
From the condition $D_i \cap \Low(A_i) = 0$ it follows that the sum of this subspaces is a direct sum $D_i \oplus \Low(A_i) \subseteq A_i$. Since $A_i$ is a vector space, the subspace $D_i \oplus \Low(A_i)$ has a direct complement $J_i$ and we obtain $A_i = J_i \oplus D_i \oplus \Low(A_i)$. Hence,
\begin{align*}
\Low(A_1\otimes \ldots \otimes A_k)& = \sum\limits_{i = 1}^{k} (J_1 \oplus D_1 \oplus \Low(A_1))\otimes \ldots \otimes \Low(A_i) \otimes \ldots \otimes (J_k \oplus D_k \oplus \Low(A_k)) \\ &= \sum\limits_{i = 1}^{k} (\bigoplus\limits_{j} B_1^j\otimes \ldots \otimes B_{i - 1}^j \otimes \Low(A_i) \otimes B_{i + 1}^j \otimes \ldots \otimes B_k^j),
\end{align*}
where every $B^j_i$ is either $J_i$, or $D_i$, or $\Low(A_i)$ and we encounter all possible combinations in the sum. Tensor products that contain more then one member $\Low(A_i)$ repeat in the sum. If we take every repeating space only one time, we obtain the direct sum $\Low(A_1\otimes \ldots \otimes A_k) = \bigoplus _{j} B_1^j\otimes \ldots \otimes B_k^j$, where every $B^j_i$ is either $J_i$, or $D_i$, or $\Low(A_i)$, we encounter all combinations in the sum, such that at least one $B_i^j = \Low(A_i)$. Similarly,
\begin{align*}
\sum\limits_{i = 1}^{k} A_1\otimes \ldots &\otimes D_i \otimes \ldots \otimes A_k\\ & = \sum\limits_{i = 1}^{k} (J_1 \oplus D_1 \oplus \Low(A_1))\otimes \ldots \otimes D_i \otimes \ldots \otimes (J_k \oplus D_k \oplus \Low(A_k)) \\ &= \sum\limits_{i = 1}^{k} (\bigoplus\limits_{j} C_1^j\otimes \ldots \otimes C_{i - 1}^j \otimes D_i \otimes C_{i + 1}^j \otimes \ldots \otimes C_k^j),
\end{align*}
where every $C^j_i$ is either $J_i$, or $D_i$, or $\Low(A_i)$ and we encounter all possible combinations in the sum. Tensor products that contain more then one member $D_i$ repeat in the sum. If we take every repeating space only one time, we obtain the direct sum $\sum_{i = 1}^{k} A_1\otimes \ldots \otimes D_i \otimes \ldots \otimes A_k = \bigoplus_{j} C_1^j\otimes \ldots \otimes C_k^j$, where every $C^j_i$ is either $J_i$, or $D_i$, or $\Low(A_i)$, we encounter all combinations in the sum, such that at least one $C_i^j = D_i$. Hence,
\begin{equation*}
\left(\sum\limits_{i = 1}^{k} A_1\otimes \ldots \otimes D_i \otimes \ldots \otimes A_k) \cap \Low(A_1\otimes \ldots \otimes A_k\right) = \bigoplus\limits_{j} K_1^j\otimes \ldots \otimes K_k^j ,
\end{equation*}
where every $K^j_i$ is either $J_i$, or $D_i$, or $\Low(A_i)$, we encounter all combinations in the sum, such that at least one $K_i^j = D_i$ and at least one $K_i^j = \Low(A_i)$. Therefore,
\begin{align*}
&\left(\sum\limits_{i = 1}^{k} A_1\otimes \ldots \otimes D_i \otimes \ldots \otimes A_k) \cap \Low(A_1\otimes \ldots \otimes A_k\right) \\ &\qquad\qquad = \sum\limits_{i = 1}^{k}\sum\limits_{\substack{i^{\prime} = 1 \\ i \neq i^{\prime}}}^{k} A_1\otimes \ldots \otimes D_i \otimes \ldots \otimes \Low(A_{i^{\prime}}) \otimes \ldots \otimes A_k\\ &\qquad\qquad = \sum\limits_{i = 1}^{k} \Low(A_1\otimes \ldots \otimes D_i \otimes \ldots \otimes A_k).
\end{align*}
\end{proof}

Using Lemma~\ref{tensor_product_filtration}, we continue the proof of Lemma~\ref{fall_through_linear_dep}. Assume $U$ is a monomial. Derived monomials of $U$ are defined with the use of certain sequences of replacements of virtual members of the chart. When we perform replacements that preserve $f$-characteristics of monomials, they preserve, roughly speaking, the structure of the chart. Moreover, there is no interaction between the replaced occurrence and the separated virtual members of the chart and there is a very small interaction between the replaced occurrence and its neighbours. This kind of behaviour provides the idea to consider a tensor product of linear spaces that correspond to each place of the chart of $U$.

Assume the monomial $U$ has $k$ virtual members of the chart, that is, $K_{\tau}(U) = k$. Let $a^{(i)}$ be the virtual member of the chart of $U$ placed on the $i$-th position from the beginning of $U$. Let $A_i[U]$ be a subspace of $\mathbb{Z}_2\Fr$ such that
\begin{equation}
\label{tens_prod_components}
A_i[U] = \left\langle a^{(i)}_j \mid a^{(i)} \textit{ and } a^{(i)}_j \textit{ are incident monomials}, j\in \mathbb{N}\right\rangle.
\end{equation}
Suppose $U = L_ia^{(i)}R_i$. We define a subspace $\Low_i[U] \subseteq A_i[U]$ by the following rule:
\begin{align}
\label{tens_prod_components_low}
\Low_i[U] = &\left\langle a^{(i)}_j \mid a^{(i)} \textit{ and } a^{(i)}_j \textit{ are incident monomials}, j\in \mathbb{N},\right.\\
&\left.L_ia^{(i)}_jR_i \in \Low(\langle U\rangle_d)\right\rangle.\nonumber
\end{align}

\begin{remark}
Suppose $U^{\prime}$ is a derived monomial of $U$ such that $U^{\prime} \in \langle U\rangle_d \setminus \Low(\langle U\rangle_d)$. Let us construct the spaces $A_i[U^{\prime}]$ and $\Low_i[U^{\prime}] \subseteq A_i[U^{\prime}]$ corresponding to $U^{\prime}$ as above. Then, obviously, the spaces $A_i[U^{\prime}]$ are generated by the same sets of monomials as the spaces $A_i[U]$ up to shifts of the initial and the final points in the corresponding $v$-diagram by $\varepsilon$. Moreover, using Lemma~\ref{virtual_members_replacements_stability}, one can prove that the spaces $\Low_i[U^{\prime}]$ are also generated by the same sets of monomials as the spaces $\Low_i[U]$ up to shifts of the initial and the final points in the corresponding $v$-diagram by $\varepsilon$.
\end{remark}

Although the precise forms of the spaces $A_i[U^{\prime}]$ and $\Low_i[U^{\prime}] \subseteq A_i[U^{\prime}]$ depend on the monomial $U$, in the sequel, we will omit $[U]$ in the denotation of this spaces when it does not lead to ambiguity.

We construct a linear mapping
\begin{equation*}
\label{mu_def}
\mu[U] : A_1 \otimes \ldots \otimes A_k \to \langle U\rangle_d.
\end{equation*}
Elements $b^{(1)}\otimes \ldots \otimes b^{(k)} \in A_1\otimes \ldots \otimes A_k$, where $b^{(i)} \in A_i$ are generalized fractional powers incident to $a^{(i)}$, form a basis of $A_1\otimes \ldots \otimes A_k$, because generalized fractional powers incident to $a^{(i)}$ form a basis of $A_i$. We will define $\mu[U]$ on these basis elements.

We distinguish between four possibilities:
\begin{enumerate}[label=\textbf{Case~\arabic*}, ref=Case~\arabic*]
\item
\label{mu_case1}
First we define $\mu[U]$ on elements $b^{(1)}\otimes \ldots \otimes b^{(k)}$ such that all $b^{(i)} \in A_i \setminus \Low_i$. It encodes a sequence of replacements that starts from the monomial $U$. Recall that $a^{(1)}, \ldots, a^{(k)}$ are the virtual members of the chart of $U$ enumerated from left to right. Let $U = L_1a^{(1)}R_1$. Then we start with the transformation $L_1a^{(1)}R_1 \mapsto L_1b^{(1)}R_1$. Denote by $\widehat{a}^{(2)}$ the image of $\widehat{a}^{(2)}$ in $L_1b^{(1)}R_1$. Since $\LM(b^{(1)}) > \varepsilon$, either $\widehat{a}^{(2)} = a^{(2)}$, or $\widehat{a}^{(2)} = c^{(2)}a^{(2)}$, or $\widehat{a}^{(2)} = {c^{(2)}}^{-1}a^{(2)}$, where $\LM(c^{(2)}) \leqslant \varepsilon$. The next step is the replacement of $\widehat{a}^{(2)}$ in $L_1b^{(1)}R_1$ corresponding to the transformation $a^{(2)} \mapsto b^{(2)}$. That is, the replacement $\widehat{a}^{(2)} \mapsto \widehat{b}^{(2)}$ in $L_1b^{(1)}R_1$, where either $\widehat{b}^{(2)} = b^{(2)}$ if $\widehat{a}^{(2)} = a^{(2)}$, or $\widehat{b}^{(2)} = c^{(2)}b^{(2)}$ if $\widehat{a}^{(2)} = c^{(2)}a^{(2)}$, or $\widehat{b}^{(2)} = {c^{(2)}}^{-1}b^{(2)}$ if $\widehat{a}^{(2)} = {c^{(2)}}^{-1}a^{(2)}$. In the same way, we continue transformations for every position in the chart. Notice that, by Corollary~\ref{virtual_members_many_replacements}, we actually can perform the replacements in any order.

\item
\label{mu_case2}
Assume $1 \leqslant i_0 \leqslant k$ is a place in the chart of $U$. We define $\mu[U]$ on elements $b^{(1)}\otimes \ldots \otimes b^{(k)}$ such that $b^{(i_0)} \in \Low_{i_0}$ and $b^{(i)} \in A_i \setminus \Low_i$ for $i \neq i_0$. First we replace virtual members of the chart of $U$ in positions different from $i_0$ as we described above. Then we perform a corresponding replacement in the position $i_0$ and all the further cancellations if necessary.

\item
\label{mu_case3}
Assume $1 \leqslant i_0^{\prime} < i_0 \leqslant n$ are two places in the chart of $U$. We define $\mu[U]$ on elements $b^{(1)}\otimes \ldots \otimes b^{(k)}$ such that $b^{(i)} \in A_i \setminus \Low_i$ for $i \neq i_0^{\prime}, i_0$ and $b^{(i_0^{\prime})} \in \Low_{i_0^{\prime}}$, $b^{(i_0)} \in \Low_{i_0}$. First we replace virtual members of the chart of $U$ in positions different from $i_0^{\prime}, i_0$ as we described above. Then we perform the corresponding replacement in the position $i_0$. Assume that as a result we obtain a monomial $L\widehat{b}^{(i_0)}R$. If there are any further cancellations in $L\widehat{b}^{(i_0)}R$ (when $\widehat{b}^{(i_0)} = 1$), we do not perform them right after the replacement. Instead we perform the corresponding replacement in the subword $L$ or $R$ in the place corresponding to $i_0^{\prime}$. Then we perform the cancellations if there are any.

From Lemma~\ref{replacements_diamond_property} it follows that we can perform two last transformations starting from any position $i_0^{\prime}$ or $i_0$ and obtain the same final result. From Lemma~\ref{additional_derived monomials} it follows that the resulting monomial is a derived monomial $U$.

\item
\label{mu_case4}
For elements $b^{(1)}\otimes \ldots \otimes b^{(k)}$ such that there are more than two $b^{(i)} \in \Low_i$, we could continue in a similar way. But, in fact, we do not need to preserve full information about these elements. So, by definition, we put $\mu[U](b^{(1)}\otimes \ldots \otimes b^{(k)}) = 0$ in this case.
\end{enumerate}

From Corollary~\ref{virtual_members_many_replacements}, it follows that in \ref{mu_case1} an element $\mu[U](b^{(1)}\otimes \ldots \otimes b^{(k)}) \in \langle U\rangle_d \setminus \Low(\langle U\rangle_d)$. From Lemma~\ref{estimation_value_property} and Lemma~\ref{additional_derived monomials}, it follows that in \ref{mu_case2} and \ref{mu_case3} an element $\mu[U](b^{(1)}\otimes \ldots \otimes b^{(k)}) \in \Low(\langle U\rangle_d)$.

Let $Z$ be a derived monomial of $U$ such that $Z$ does not belong to $\Low(\langle U\rangle_d)$. By Lemma~\ref{estimation_value_property}, this means that there exists a sequence of replacements of virtual members of the chart such that every replacement preserves $f$-characteristics of monomials. From Corollary~\ref{virtual_members_many_replacements}, it follows that the replacements can be performed in any order and after changing the order every replacement still preserves $f$-characteristics of monomials. Namely, we can first perform all replacements in the first position of the chart, then in the second position, etc, in the $k$-th position. Assume $\widehat{b}^{(1)}, \ldots, \widehat{b}^{(k)}$ are the virtual members of the chart of $Z$ enumerated from left to right. Then the element $\widehat{b}^{(i)}$ may not be incident to $a^{(i)}$ (where $a^{(i)}$ is the corresponding member of the chart of $U$). But, clearly, its ends differ only by a shift by $\varepsilon$. Let $b^{(i)}$ be an incident monomial of $a^{(i)}$, corresponding to $\widehat{b}^{(i)}$. Then
\begin{equation*}
Z = \mu[U](b^{(1)}\otimes \ldots \otimes b^{(k)}).
\end{equation*}
From Corollary~\ref{virtual_members_many_replacements} it follows that every replacement can be moved to the beginning of the sequence, which starts from $U$, and it still preserves the $f$-characteristic of the monomials. That is, every replacement $a^{(i)} \mapsto b^{(i)}$ in the monomial $U$ does not decrease the $f$-characteristic of the resulting monomial. Hence, $b^{(i)} \in A_i \setminus \Low_i$.

Suppose $b^{(1)}\otimes \ldots \otimes b^{(k)}$ and $d^{(1)}\otimes \ldots \otimes d^{(k)}$ are different elements such that $b^{(i)}, d^{(i)}$ are generalized fractional powers, $b^{(i)}, d^{(i)}\in A_i \setminus \Low_i$. Since there exists $b^{(i_0)} \neq d^{(i_0)}$ and two different incident generalized fractional powers can not differ only by a piece equal to a possible overlap, we obtain
\begin{equation*}
\mu[U] (b^{(1)}\otimes \ldots \otimes b^{(k)}) \neq \mu[U](d^{(1)}\otimes \ldots \otimes d^{(k)}).
\end{equation*}
Thus, we have showed that $\mu[U]$ gives a bijective correspondence between elements $b^{(1)}\otimes \ldots \otimes b^{(k)}$ such that generalized fractional powers $b^{(i)} \in A_i \setminus \Low_i$ and monomials from $\langle U\rangle_d \setminus \Low(\langle U\rangle_d)$.

As above, we denote by $\Dp(A_i)$ the subspace of $A_i$ generated by supports of corresponding elementary multi-turns (see Definition~\ref{subspace_of_dependencies}). Recall that $A_i$ is generated by generalized fractional powers incident to one generalized fractional power $a^{(i)}$. Therefore, from the definition of an elementary multi-turn, it follows that a sum of two supports from $\Dp(A_i)$ is again the support of elementary multi-turn. So, actually $\Dp(A_i)$ consists precisely of supports of elementary multi-turns. Recall that a maximal occurrence of a generalized fractional power of $\LM$-measure $\geqslant \tau$ is always a virtual member of the chart. Hence, applying together~\eqref{tens_prod_components_low} and Proposition~\ref{transversality}, we obtain $\Dp(A_i) \cap \Low_i = 0$.

In order to prove Lemma~\ref{fall_through_linear_dep} it remains to show that
\begin{equation}
\label{fall_through_linear_dep_one}
\Dp(\langle U\rangle_d) \cap \Low(\langle U\rangle_d) = \Dp(\Low(\langle U\rangle_d)).
\end{equation}
We already proved that $\Dp(\langle U\rangle_d) \cap \Low(\langle U\rangle_d) \supseteq \Dp(\Low(\langle U\rangle_d))$. So, we will show that $\Dp(\langle U\rangle_d) \cap \Low(\langle U\rangle_d) \subseteq \Dp(\Low(\langle U\rangle_d))$.

Suppose $T_s$, $s = 1, \ldots, m$, are the supports of multi-turns coming from monomials of $\langle U\rangle_d \setminus \Low(\langle U\rangle_d)$ and
\begin{equation*}
\sum\limits_{s= 1}^{m} T_s \in \Low(\langle U\rangle_d).
\end{equation*}
Assume $T_s$ comes from a monomial $U^{(s)}_h \in \langle U\rangle_d \setminus \Low(\langle U\rangle_d)$. Then there exists an element $b^{(1)}_h\otimes \ldots \otimes b^{(k)}_h \in A_1\otimes \ldots \otimes A_n$ such that
\begin{equation*}
\mu[U](b^{(1)}_{h_s}\otimes \ldots \otimes b^{(k)}_{h_s}) = U^{(s)}_h,
\end{equation*}
where $b^{(i)}_{h_s}$ are generalized fractional powers such that $b^{(i)}_{h_s}\notin \Low_i$. Assume this multi-turn comes from an elementary multi-turn of the virtual member of the chart of $U^{(s)}_h$ placed on the $i_s$-th position. The element $b^{(i_s)}_{h_s} \in A_{i_s}$ corresponds to this virtual member of the chart. By the definition of $\mu[U]$ in \ref{mu_case1} and~\ref{mu_case2}, we obtain
\begin{equation*}
T_s = \mu[U](b^{(1)}_{h_s}\otimes \ldots \otimes t^{(i_s)}_s \otimes \ldots \otimes b^{(k)}_{h_s}),
\end{equation*}
where $t^{(i_s)}_s \in \Dp(A_{i_s})$ is the support of the corresponding elementary multi-turn of $b^{(i_s)}_{h_s}$.

So, we have
\begin{equation*}
\sum\limits_{s = 1}^{m} T_s = \mu[U]\left(\sum\limits_{s = 1}^{m} b^{(1)}_{h_s}\otimes \ldots \otimes t^{(i_s)}_s \otimes \ldots \otimes b^{(k)}_{h_s}\right).
\end{equation*}
By the assumption, all monomials in the left-hand sum that do not belong to $\Low(\langle U\rangle_d)$ cancel out. Since $\mu[U]$ gives a bijective correspondence between the elements $b^{(1)}\otimes \ldots \otimes b^{(k)}$ such that generalized fractional powers $b^{(i)} \in A_i \setminus \Low_i$ and the monomials from $\langle U\rangle_d \setminus \Low(\langle U\rangle_d)$, we obtain that all such elements cancel out in the right-hand sum as well. Then from Lemma~\ref{tensor_product_filtration} it follows that
\begin{equation*}
\sum\limits_{s = 1}^{m} b^{(1)}_{h_s}\otimes \ldots \otimes t^{(i_s)}_s \otimes \ldots \otimes b^{(k)}_{h_s} \in \sum\limits_{i = 1}^{k}\sum\limits_{\substack{i^{\prime} = 1 \\ i \neq i^{\prime}}}^{k} A_1\otimes \ldots \otimes \Dp(A_i) \otimes \ldots \otimes \Low_{i^{\prime}} \otimes \ldots \otimes A_k,
\end{equation*}
that is, the sum $\sum_{s = 1}^{m} b^{(1)}_{h_s}\otimes \ldots \otimes t^{(i_s)}_s \otimes \ldots \otimes b^{(k)}_{h_s}$ is equal to a sum of elements of the form
\begin{align*}
&d^{(1)}\otimes \ldots \otimes d^{(i^{\prime}_0 - 1)} \otimes \widetilde{d}^{(i^{\prime}_0)} \otimes d^{(i^{\prime}_0 +1)} \otimes \ldots \otimes d^{(i_0 - 1)} \otimes \widetilde{t}^{(i_0)} \otimes d^{(i_0 + 1)} \otimes \ldots \otimes d^{(k)},\\
&d^{(1)}\otimes \ldots \otimes d^{(i_0 - 1)} \otimes \widetilde{d}^{(i_0)} \otimes d^{(i_0 +1)} \otimes \ldots \otimes d^{(i^{\prime}_0 - 1)} \otimes \widetilde{t}^{(i^{\prime}_0)} \otimes d^{(i^{\prime}_0 + 1)} \otimes \ldots \otimes d^{(k)}
\end{align*}
for different $i_0 \neq i_0^{\prime}$, where generalized fractional powers $d^{(i)} \in A_i \setminus \Low_i$, a generalized fractional power $\widetilde{d}^{(i^{\prime}_0)} \in \Low_{i^{\prime}_0}$, and the support of an elementary multi-turn $\widetilde{t}^{(i_0)} \in \Dp(A_{i_0})$. To be precise, assume $i_0^{\prime} < i_0$.

Assume $\widetilde{t}^{(i_0)} = \sum_{j = 1}^{m_0} e^{(i_0)}_j$. Let us calculate
\begin{equation*}
\mu[U](d^{(1)}\otimes \ldots \otimes \widetilde{d}^{(i^{\prime}_0)} \otimes \ldots \otimes \sum\limits_{j = 1}^{m_0} e^{(i_0)}_j \otimes \ldots \otimes d^{(k)}).
\end{equation*}
From Lemma~\ref{replacements_diamond_property} and Corollary~\ref{virtual_members_many_replacements}, it follows that in the definition of $\mu[U]$ first we can do replacements of all positions of the chart except $i_0$ and $i_0^{\prime}$, then a replacement in the position $i_0$ and then a replacement in the position corresponding to $i_0^{\prime}$ (the exact position may shift if $e^{(i_0)}_j \in \Low_{i_0}$). Suppose $Z$ is a result after the replacement in the position $i_0$. Then $Z = L\widehat{e}^{(i_0)}R$, where $\widehat{e}^{(i_0)}$ may differ from $e^{(i_0)}$ by shifts of its ends by $\varepsilon$. For simplicity, we illustrate the case when the virtual members of the chart on the positions $i^{\prime}_0$ and $i_0$ are separated, but the other cases are analogous to this one.
\begin{center}
\begin{tikzpicture}
\draw[|-|, black, thick] (-3,0)--(6,0);
\draw[|-|, black, very thick] (-2,0.1)--(0.6,0.1) node[midway, above] {$\widehat{a}^{(i_0^{\prime})}$};
\draw[|-|, black, very thick] (2, 0.1)--(4,0.1) node[midway, above] {$\widehat{e}^{(i_0)}$};
\draw [thick, decorate, decoration={brace, amplitude=10pt, mirror}] (-3,-0.1)--(2,-0.1) node [midway, below, yshift=-7] {$L$};
\draw [thick, decorate, decoration={brace, amplitude=10pt, mirror}] (4,-0.1)--(6,-0.1) node [midway, below, yshift=-7] {$R$};
\end{tikzpicture}
\end{center}
Then the last replacement in $Z$ can be represented as a replacement of the corresponding maximal occurrence in $L$ and the further cancellations if there are any. Denote the result of the transformation of $L$ by $L^{\prime}$.
\begin{center}
\begin{tikzpicture}
\draw[|-|, black, thick] (-3,0)--(5,0);
\draw[|-|, black, very thick] (-2,0.1)--(-0.4,0.1) node[midway, above] {$\widehat{\widetilde{d}}^{(i^{\prime}_0)}$};
\draw[|-|, black, very thick] (1, 0.1)--(3,0.1) node[midway, above] {$\widehat{e}^{(i_0)}$};
\draw [thick, decorate, decoration={brace, amplitude=10pt, mirror}] (-3,-0.1)--(1,-0.1) node [midway, below, yshift=-7] {$L^{\prime}$};
\draw [thick, decorate, decoration={brace, amplitude=10pt, mirror}] (3,-0.1)--(5,-0.1) node [midway, below, yshift=-7] {$R$};
\end{tikzpicture}
\end{center}
Hence, we obtain
\begin{equation*}
\mu[U](d^{(1)}\otimes \ldots \otimes \widetilde{d}^{(i^{\prime}_0)} \otimes \ldots \otimes e^{(i_0)}_j \otimes \ldots \otimes d^{(k)}) = L^{\prime}\widehat{e_j}^{(i_0)}R,
\end{equation*}
where the word $L^{\prime}\widehat{e_j}^{(i_0)}R$ is possibly non-reduced. So,
\begin{align*}
\mu[U](d^{(1)}\otimes \ldots \otimes \widetilde{d}^{(i^{\prime}_0)} \otimes \ldots \otimes \sum\limits_{j = 1}^{m_0} e^{(i_0)}_j \otimes \ldots \otimes d^{(k)})& = \sum\limits_{j = 1}^{m_0} L^{\prime}\widehat{e}_j^{(i_0)}R\\ & = L^{\prime}\left(\sum\limits_{j = 1}^{m_0} \widehat{e}^{(i_0)}_j \right)R.
\end{align*}
Since $\sum_{j = 1}^{m_0} e^{(i_0)}_j$ is the support of an elementary multi-turn, the sum $\sum_{j = 1}^{m_0} \widehat{e}^{(i_0)}_j$ is the support of an elementary multi-turn as well (possibly with shifts of the ends of the monomials by $\varepsilon$). Therefore, from Proposition~\ref{the_ideal_characterisation} it follows that $\sum_{j = 1}^{m_0} L^{\prime}\widehat{e}^{(i_0)}_j R$ is the support of a multi-turn (after the cancellation in the monomials).

Since $\widetilde{d}^{(i^{\prime}_0)} \in \Low(A_i)$, we have
\begin{equation*}
\mu[U](d^{(1)}\otimes \ldots \otimes \widetilde{d}^{(i^{\prime}_0)} \otimes \ldots \otimes e^{(i_0)}_j \otimes \ldots \otimes d^{(k)}) \in \Low(\langle U\rangle_d).
\end{equation*}
Then, since $\mu[U](d^{(1)}\otimes \ldots \otimes \widetilde{d}^{(i^{\prime}_0)} \otimes \ldots \otimes \sum\limits_{j = 1}^{m_0} e^{(i_0)}_j \otimes \ldots \otimes d^{(k)})$ is the support of a multi-turn, we have
\begin{equation*}
\mu[U](d^{(1)}\otimes \ldots \otimes \widetilde{d}^{(i^{\prime}_0)} \otimes \ldots \otimes \sum\limits_{j = 1}^{m_0} e^{(i_0)}_j \otimes \ldots \otimes d^{(k)}) \in \Dp(\Low(\langle U\rangle_d)).
\end{equation*}
Thus,
\begin{equation*}
\sum\limits_{s = 1}^{m} T_s \in \Dp(\Low(\langle U\rangle_d))
\end{equation*}
and the equality~\eqref{fall_through_linear_dep_one} holds. This concludes the proof of Lemma~\ref{fall_through_linear_dep}.
\end{proof}

Using Lemma~\ref{fall_through_linear_dep}, we obtain the following proposition.
\begin{proposition}
\label{fall_to_smaller_subspace}
Suppose $X, Y$ are subspaces of $\mathbb{Z}_2\Fr$ generated by monomials and closed under taking derived monomials, $Y \subseteq X$. Then $\Dp(X)\cap Y = \Dp(Y)$.
\end{proposition}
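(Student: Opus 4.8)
The plan is to reduce to a finitely generated situation and then induct on the $f$-characteristic. The inclusion $\Dp(Y)\subseteq\Dp(X)\cap Y$ is immediate: every multi-turn of a virtual member of the chart of a monomial of $Y$ is also such a multi-turn of a monomial of $X$ (since $Y\subseteq X$ as monomial sets), so $\Dp(Y)\subseteq\Dp(X)$, while $\Dp(Y)\subseteq Y$ holds by Definition~\ref{subspace_of_dependencies}. For the converse, take $t\in\Dp(X)\cap Y$; since $\Dp(X)$ is spanned by supports of multi-turns of virtual members of charts of monomials of $X$, we may write $t=\sum_{s=1}^m T_s$, where $T_s$ is such a support coming from a monomial $Z_s\in X$. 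Put $V=\langle Z_1,\dots,Z_m\rangle_d$; then $V\subseteq X$, $t\in\Dp(V)$, and $t\in V\cap Y$, the latter again generated by monomials and closed under derived monomials. Hence it is enough to prove the statement when $V=\langle U_1,\dots,U_k\rangle_d$ is finitely generated and $Y'\subseteq V$: this gives $t\in\Dp(V\cap Y)\subseteq\Dp(Y)$.

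For the special case I would induct on $m(V)=\max_{U\in V}f(U)$, which exists and takes finitely many values by the bounds $K_\tau(U)\leq N(U)\leq N_{\max}$ established in the proof of Proposition~\ref{filtration_finite_property}. Let $t\in\Dp(V)\cap Y'$ with $t=\sum_s T_s$, $T_s$ the support of a multi-turn of a virtual member of the chart of $Z_s\in V$. Split off the indices $s$ with $Z_s\in\Ft_1V$: for those $T_s$ lies entirely inside $\Ft_1V$ (which is closed under derived monomials), so they contribute a term $q\in\Ft_1V$. Group the remaining indices according to the space $\langle Z_s\rangle_d$; here $Z_s\notin\Ft_1V$, so by Lemma~\ref{intersection_with_low} $\langle Z_s\rangle_d\cap\Ft_1V=\Low(\langle Z_s\rangle_d)$, and by Lemma~\ref{derived_spaces_equality}(1) distinct such spaces have disjoint ``top strata'' $\langle\cdot\rangle_d\setminus\Low(\langle\cdot\rangle_d)$. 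Letting $U^{(1)},\dots,U^{(p)}$ be representatives and $T^{(l)}$ the sum of the $T_s$ in the $l$-th group, split $T^{(l)}=(T^{(l)})^{\mathrm{top}}+(T^{(l)})^{\mathrm{low}}$ by whether a monomial lies in the top stratum of $\langle U^{(l)}\rangle_d$ or in $\Low(\langle U^{(l)}\rangle_d)\subseteq\Ft_1V$; since the top strata are disjoint from $\Ft_1V$ and from one another, the part of $t$ supported off $\Ft_1V$ is exactly $\sum_l(T^{(l)})^{\mathrm{top}}$, a sum with pairwise disjoint monomial supports.

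The crux is an all-or-nothing property of $Y'$: if a monomial $M$ of the top stratum of $\langle U^{(l)}\rangle_d$ lies in $Y'$, then $\langle M\rangle_d=\langle U^{(l)}\rangle_d$ (Lemma~\ref{derived_spaces_equality}(1)) and $\langle U^{(l)}\rangle_d=\langle M\rangle_d\subseteq Y'$ (Lemma~\ref{derived_spaces_equality}(2)). Since $t\in Y'$ and $Y'$ is spanned by monomials, every monomial of $t$, hence every monomial of every $(T^{(l)})^{\mathrm{top}}$, lies in $Y'$; so for each $l$ either $(T^{(l)})^{\mathrm{top}}=0$ or $\langle U^{(l)}\rangle_d\subseteq Y'$. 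Let $L$ collect the indices of the second kind. For $l\in L$ all $Z_s$ in the $l$-th group lie in $Y'$, whence $T^{(l)}\in\Dp(Y')$; for $l\notin L$ we get $T^{(l)}=(T^{(l)})^{\mathrm{low}}\in\Ft_1V$. Therefore $t_2:=t-\sum_{l\in L}T^{(l)}$ belongs to $\Dp(V)\cap\Ft_1V\cap Y'$, and by the Main Lemma (Lemma~\ref{fall_through_linear_dep} with $n=0$) $\Dp(V)\cap\Ft_1V=\Dp(\Ft_1V)$. Re-reducing $t_2$ to a finitely generated subspace $V'\subseteq\Ft_1V$ as in the first paragraph, every monomial $Z$ of $V'\subseteq\Low(V)$ satisfies $f(Z)<f(U)\le m(V)$ for a suitable $U\in V$, so $m(V')<m(V)$ and the induction hypothesis applies, giving $t_2\in\Dp(V'\cap Y')\subseteq\Dp(Y')$. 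Hence $t=\sum_{l\in L}T^{(l)}+t_2\in\Dp(Y')$; when $\Ft_1V=0$ one simply has $t_2=0$.

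The main obstacle is the treatment of the off-$\Ft_1V$ part $\sum_l(T^{(l)})^{\mathrm{top}}$: it is not in general a combination of multi-turn supports, so it cannot be fed directly into $\Dp(Y')$. The remedy is to organize the multi-turns by the spaces $\langle Z_s\rangle_d$ and exploit that $Y'$, being closed under derived monomials, is all-or-nothing on each top stratum; once this is set up the rest is bookkeeping with coordinate subspaces, invoking only the Main Lemma, Lemma~\ref{intersection_with_low}, and Lemma~\ref{derived_spaces_equality}. The only point requiring a little care is the finiteness of the induction, which is why one works with $m(V)$ (finite by Proposition~\ref{filtration_finite_property}) and re-reduces $\Ft_1V$ to a finitely generated subspace at each step.
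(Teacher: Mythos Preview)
Your proof is correct and follows essentially the same approach as the paper's: reduce to a finitely generated space, use the all-or-nothing behaviour of $Y$ on top strata via Lemma~\ref{derived_spaces_equality}, apply the Main Lemma (Lemma~\ref{fall_through_linear_dep}) to push the remainder into $\Low$, and induct. The only cosmetic differences are that the paper inducts on the number of filtration levels rather than on $m(V)$, and assumes without loss of generality that every $T_i\notin Y$ rather than grouping by $\langle Z_s\rangle_d$ as you do.
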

\begin{proof}
Clearly, $\Dp(Y) \subseteq \Dp(X)\cap Y$. Let us show that $\Dp(X)\cap Y \subseteq \Dp(Y)$. Suppose $T_1 +\ldots + T_m \in \Dp(X)\cap Y$, where every $T_i$, $i = 1, \ldots m,$ belongs to the set of generating supports of multi-turns of $\Dp(X)$. Consider $T_{i_0} \in Y$. Since $Y$ is generated by monomials, we obtain that every monomial of $T_{i_0}$ belongs to $Y$. So, $T_{i_0}$ is a linear dependence generated by a monomial from $Y$, that is, $T_{i_0} \in \Dp(Y)$.

So, we may suppose that every $T_i\notin Y$, $i = 1, \ldots, m$. Denote by $X^{\prime}$ the subspace of $X$ generated by the monomials of $T_i$, $i = 1, \ldots, m$, and their derived monomials. Suppose the filtration on $X^{\prime}$ has $N$ non-zero levels. Let us prove that $T_1 +\ldots + T_m \in \Dp(Y)$ by induction on $N$.

First let us do the step of induction. Assume $T_i$ is generated by a multi-turn of a monomial $Z_i$. If $Z_i \in \Low(X^{\prime})$, then all the monomials of $T_i$ belong to $\Low(X^{\prime})$ because $\Low(X^{\prime})$ is closed under taking derived monomials. Assume $Z_i \in X^{\prime} \setminus \Low(X^{\prime})$. Let $Z$ be an arbitrary monomial of $T_i$ such that $Z \in X^{\prime} \setminus \Low(X^{\prime})$. Since $Z \in X^{\prime} \setminus \Low(X^{\prime})$, we have $Z \in \langle Z_i\rangle_d \setminus \Low(\langle Z_i\rangle_d)$. Therefore, from Lemma~\ref{derived_spaces_equality} it follows that $Z_i$ is a derived monomial of $Z$. Assume $Z \in Y$, then all its derived monomials belong to $Y$ because $Y$ is closed under taking derived monomials. So, we have $Z_i \in Y$; hence, all the monomials of $T_i$ are contained in $Y$. This contradicts our assumption that $T_i \notin Y$. Therefore, the monomials of $T_i$ that are contained in $X^{\prime} \setminus \Low(X^{\prime})$ are not contained in $Y$.

Since $T_1 +\ldots + T_m \in Y$, the monomials that are contained in $X^{\prime} \setminus \Low(X^{\prime})$ cancel in the sum $T_1 +\ldots + T_m$; hence, $T_1 +\ldots + T_m \in \Low(X^{\prime})$. Then from Lemma~\ref{fall_through_linear_dep} it follows that
\begin{equation*}
T_1 +\ldots + T_m\in \Dp(\Low(X^{\prime})).
\end{equation*}
Therefore, $T_1 +\ldots + T_m = T_1^{\prime} +\ldots + T_{m^{\prime}}^{\prime}$, where $T_i^{\prime}\in \Low(U^{\prime})$, $i = 1, \ldots, m^{\prime}$, are supports of multi-turns that come from monomials of $\Low(X^{\prime})$. We have
\begin{equation*}
T_1 +\ldots + T_m = T_1^{\prime} +\ldots + T_{m^{\prime}}^{\prime} = \sum\limits_{T^{\prime}_i \in Y} T^{\prime}_i + \sum\limits_{T^{\prime}_i \notin Y} T^{\prime}_i.
\end{equation*}
As above, every element of the first sum $\sum_{T^{\prime}_i \in Y} T^{\prime}_i $ belongs to $\Dp(Y)$. The space $\Low(X^{\prime})$ has $N - 1$ non-zero levels of the filtration; hence the second sum $\sum_{T^{\prime}_i \notin Y} T^{\prime}_i$ belongs to $\Dp(Y)$ by the induction hypothesis, and therefore,
\begin{equation*}
T_1 +\ldots + T_m \in \Dp(W).
\end{equation*}

Let us prove the basis of induction. Consider $N = 1$. As above, we obtain $T_1 +\ldots + T_m\in \Low(X^{\prime})$. But since $N = 1$, we have $\Low(X^{\prime}) = 0$; therefore $T_1 +\ldots + T_m = 0$, and so, it belongs to $\Dp(Y)$.

Thus, we obtain $\Dp(X) \cap Y \subseteq \Dp(Y)$. This concludes the proof.
\end{proof}

Using the mapping $\mu[U]$ that was constructed in the second part of the proof of Lemma~\ref{fall_through_linear_dep}, we obtain the following statement.
\begin{proposition}
\label{correspondence_to_tensor_product}
Let $U$ be a monomial with $k$ virtual members of the chart. Suppose $A_i$ and $\Low_i \subseteq A_i$, $i = 1, \ldots k$, are defined above by~\eqref{tens_prod_components} and~\eqref{tens_prod_components_low} subspaces of $\mathbb{Z}_2\Fr$ corresponding to $U$. Then we have
\begin{equation*}
\langle U\rangle_d / \Low(\langle U\rangle_d) \cong A_1/\Low_1 \otimes \ldots \otimes A_k/\Low_k.
\end{equation*}
Moreover,
\begin{equation*}
\langle U\rangle_d / (\Dp(\langle U\rangle_d ) + \Low(\langle U\rangle_d )) \cong A_1/ (\Dp(A_1) + \Low_1) \otimes \ldots \otimes A_k / (\Dp(A_k) + \Low_k).
\end{equation*}
\end{proposition}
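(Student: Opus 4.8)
\emph{Proof strategy.} The whole argument rests on the linear map $\mu[U]\colon A_1\otimes\cdots\otimes A_k\to\langle U\rangle_d$ built in the proof of Lemma~\ref{fall_through_linear_dep}, together with the elementary fact that for vector spaces $B_i$ with subspaces $C_i\subseteq B_i$ one has $(B_1\otimes\cdots\otimes B_k)\big/\sum_{i}B_1\otimes\cdots\otimes C_i\otimes\cdots\otimes B_k\cong B_1/C_1\otimes\cdots\otimes B_k/C_k$; this is proved by the direct-sum decomposition $B_i=C_i\oplus B_i'$ and distributivity of $\otimes$ over $\oplus$, exactly as in the proof of Lemma~\ref{tensor_product_filtration}. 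Write $\Low(A_1\otimes\cdots\otimes A_k):=\sum_{i}A_1\otimes\cdots\otimes\Low_i\otimes\cdots\otimes A_k$; the tensors $b^{(1)}\otimes\cdots\otimes b^{(k)}$ with every $b^{(i)}\notin\Low_i$ span a complement to it inside the monomial basis of $A_1\otimes\cdots\otimes A_k$, so $(A_1\otimes\cdots\otimes A_k)/\Low(A_1\otimes\cdots\otimes A_k)\cong\bigotimes_i A_i/\Low_i$.

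\emph{First isomorphism.} I would use the two facts about $\mu[U]$ established in the proof of Lemma~\ref{fall_through_linear_dep}: it sends $\{\,b^{(1)}\otimes\cdots\otimes b^{(k)}:b^{(i)}\notin\Low_i\,\}$ bijectively onto the set of derived monomials of $U$ lying in $\langle U\rangle_d\setminus\Low(\langle U\rangle_d)$ (\ref{mu_case1} together with the injectivity statement), while on every other basis tensor it takes values in $\Low(\langle U\rangle_d)$ (\ref{mu_case2}--\ref{mu_case4}). Hence $\mu[U]\bigl(\Low(A_1\otimes\cdots\otimes A_k)\bigr)\subseteq\Low(\langle U\rangle_d)$ and $\mu[U]$ descends to $\bar\mu\colon(A_1\otimes\cdots\otimes A_k)/\Low(A_1\otimes\cdots\otimes A_k)\to\langle U\rangle_d/\Low(\langle U\rangle_d)$. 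Since $\Low(\langle U\rangle_d)$ is spanned by a subset of the monomial basis (the derived monomials $Z$ with $f(Z)<f(U)$), the target has basis the classes of the derived monomials with $f$-characteristic equal to $f(U)$, and $\bar\mu$ sends the distinguished basis of the source bijectively onto it; thus $\bar\mu$ is an isomorphism, and composing with $\bigotimes_i A_i/\Low_i\cong(A_1\otimes\cdots\otimes A_k)/\Low(A_1\otimes\cdots\otimes A_k)$ proves the first claim.

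\emph{Second isomorphism.} Next I would show that under $\bar\mu$ the image of $\sum_i A_1\otimes\cdots\otimes\Dp(A_i)\otimes\cdots\otimes A_k$ in the source quotient corresponds precisely to the image of $\Dp(\langle U\rangle_d)$ in $\langle U\rangle_d/\Low(\langle U\rangle_d)$. A support of a multi-turn coming from a monomial of $\Low(\langle U\rangle_d)$ is a sum of derived monomials of that monomial, hence itself lies in $\Low(\langle U\rangle_d)$; so modulo $\Low$ only supports coming from $U_h\in\langle U\rangle_d\setminus\Low(\langle U\rangle_d)$ matter, and for these the proof of Lemma~\ref{fall_through_linear_dep} gives $T=\mu[U]\bigl(b^{(1)}_h\otimes\cdots\otimes t^{(i_0)}\otimes\cdots\otimes b^{(k)}_h\bigr)$ with $t^{(i_0)}\in\Dp(A_{i_0})$ and all $b^{(i)}_h\notin\Low_i$ — one inclusion. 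Conversely, a generator $d^{(1)}\otimes\cdots\otimes t^{(i_0)}\otimes\cdots\otimes d^{(k)}$ with $t^{(i_0)}\in\Dp(A_{i_0})$ is sent by $\mu[U]$ into $\Low(\langle U\rangle_d)$ if some $d^{(j)}\in\Low_j$; and if all $d^{(j)}\notin\Low_j$, then, $\Dp(A_{i_0})$ consisting precisely of supports of elementary multi-turns, $t^{(i_0)}$ contains by Proposition~\ref{transversality} a monomial $b$ with $\LM(b)\geqslant\tau>\varepsilon$, so $b$ stays a virtual member of the chart after the replacement $a^{(i_0)}\mapsto b$ and hence $U_h:=\mu[U](d^{(1)}\otimes\cdots\otimes b\otimes\cdots\otimes d^{(k)})$ lies outside $\Low(\langle U\rangle_d)$ by Lemma~\ref{estimation_value_property}, while $\mu[U](d^{(1)}\otimes\cdots\otimes t^{(i_0)}\otimes\cdots\otimes d^{(k)})$ is (up to the usual $\varepsilon$-shifts, handled as in the proof of Lemma~\ref{fall_through_linear_dep}) exactly the support of the multi-turn of the position-$i_0$ virtual member of $U_h$, hence lies in $\Dp(\langle U\rangle_d)$. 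Quotienting the first isomorphism by these matched subspaces then yields
\begin{equation*}
\langle U\rangle_d\big/\bigl(\Dp(\langle U\rangle_d)+\Low(\langle U\rangle_d)\bigr)\cong(A_1\otimes\cdots\otimes A_k)\Big/\sum_{i}A_1\otimes\cdots\otimes(\Dp(A_i)+\Low_i)\otimes\cdots\otimes A_k,
\end{equation*}
where I use $\Low(A_1\otimes\cdots\otimes A_k)+\sum_i A_1\otimes\cdots\otimes\Dp(A_i)\otimes\cdots\otimes A_k=\sum_i A_1\otimes\cdots\otimes(\Dp(A_i)+\Low_i)\otimes\cdots\otimes A_k$; one final application of the standard tensor-quotient fact gives $\bigotimes_i A_i/(\Dp(A_i)+\Low_i)$, as required. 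The one step needing real care is the identification of the $\Dp$-subspaces across $\bar\mu$ — the two inclusions just sketched — but each is a direct repackaging of computations already carried out in the proof of Lemma~\ref{fall_through_linear_dep}, so I do not expect a genuinely new obstacle.
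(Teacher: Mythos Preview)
Your proposal is correct and follows essentially the same route as the paper. The paper's proof defines the induced maps $\overline{\mu}_1[U]$ and $\overline{\mu}_2[U]$ directly on the tensor products of quotients and checks well-definedness and bijectivity using exactly the properties of $\mu[U]$ from Lemma~\ref{fall_through_linear_dep} that you invoke; the Remark immediately following the proposition rephrases this via the kernels of $\pi_1\circ\mu[U]$ and $\pi_2\circ\mu[U]$, which is precisely your formulation. Your treatment of the second isomorphism is in fact more explicit than the paper's, which disposes of the bijectivity of $\overline{\mu}_2[U]$ with ``one can show''; your two-inclusion argument for matching the $\Dp$-subspaces across $\bar\mu$ fills that in (one minor tightening: to conclude $U_h\notin\Low(\langle U\rangle_d)$, the cleanest route is that $\LM(b)\geqslant\tau$ forces $b\notin\Low_{i_0}$ via Corollaries~\ref{decreasing_property} and~\ref{virtual_members_stability}, and then the already-established bijection in \ref{mu_case1} applies directly).
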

\begin{proof}
Recall that in the proof of Lemma~\ref{fall_through_linear_dep} we constructed a linear mapping
\begin{equation*}
\mu[U]: A_1\otimes \ldots \otimes A_k \to \langle U\rangle_d
\end{equation*}
(see page~\pageref{mu_def}). We define a linear mapping
\begin{equation*}
\overline{\mu}_1[U]: A_1 / \Low_1\otimes \ldots \otimes A_k / \Low_k \to \langle U\rangle_d / \Low(\langle U\rangle_d)
\end{equation*}
by the following rule:
\begin{equation*}
\overline{\mu}_1[U]((b^{(1)} + \Low_1)\otimes \ldots \otimes (b^{(k)} + \Low_k)) = \mu[U](b^{(1)} \otimes \ldots \otimes b^{(k)}) + \Low(\langle U\rangle_d),
\end{equation*}
where $b^{(i)} \in A_i$ are generalized fractional powers. In Lemma~\ref{fall_through_linear_dep}, we proved (using Lemma~\ref{estimation_value_property} and Lemma~\ref{additional_derived monomials}) that
\begin{equation}
\label{mu_low_image}
\mu[U]\left(\sum\limits_{i = 1}^{k} A_1\otimes\ldots \otimes \Low_i\otimes \ldots \otimes A_k\right) \subseteq \Low(\langle U\rangle_d).
\end{equation}
Hence, the mapping $\overline{\mu}_1[U]$ is well-defined. Since $\mu[U]$ gives a bijective correspondence between elements $b^{(1)}\otimes\ldots\otimes b^{(k)}$, where $b^{(i)}$ are generalized fractional powers such that $b^{(i)} \in A_i \setminus \Low_i$, and the monomials from $\langle U\rangle_d \setminus \Low(\langle U\rangle_d)$, the mapping $\overline{\mu}_1[U]$ is bijective. So, $\overline{\mu}_1[U]$ is an isomorphism of linear spaces.

In the same way, we define a linear mapping
\begin{equation*}
\overline{\mu}_2[U]: A_1 / (\Dp(A_1) + \Low_1)\otimes \ldots \otimes A_k / (\Dp(A_k) + \Low_k) \to \langle U\rangle_d / (\Dp(\langle U\rangle_d) + \Low(\langle U\rangle_d)
\end{equation*}
by the following rule:
\begin{align}
\label{isom_to_tensot_prod_dep}
\overline{\mu}_2[U]((b^{(1)}& + \Dp(A_1) + \Low_1)\otimes \ldots \otimes (b^{(k)} + \Dp(A_k) + \Low_k)) \\
&= \mu[U](b^{(1)} \otimes \ldots \otimes b^{(k)}) + \Dp(\langle U\rangle_d) + \Low(\langle U\rangle_d),\nonumber
\end{align}
where $b^{(i)} \in A_i$ are generalized fractional powers. Arguing in the same way as in Lemma~\ref{fall_through_linear_dep}, it is easy to show that
\begin{equation*}
\mu[U]\left(\sum\limits_{i = 1}^{k} A_1\otimes\ldots \otimes \Dp(A_i)\otimes \ldots \otimes A_k\right) \subseteq \Dp(\langle U\rangle_d) + \Low(\langle U\rangle_d).
\end{equation*}
Using this together with~\eqref{mu_low_image}, we see that the mapping $\overline{\mu}_2[U]$ is well-defined. Since $\mu[U]$ gives a bijective correspondence between elements $b^{(1)}\otimes\ldots\otimes b^{(k)}$, where $b^{(i)}$ are generalized fractional powers such that $b^{(i)} \in A_i \setminus \Low_i$, and the monomials from $\langle U\rangle_d \setminus \Low(\langle U\rangle_d)$, one can show that the mapping $\overline{\mu}_2[U]$ is bijective. So, $\overline{\mu}_2[U]$ is an isomorphism of linear spaces.
\end{proof}

\begin{remark}
In Proposition~\ref{correspondence_to_tensor_product}, in fact, we used the following construction. We consider a composition of linear mappings
\begin{align*}
&A_1\otimes \ldots \otimes A_k \overset{\mu[U]}{\longrightarrow} \langle U\rangle_d \overset{\pi_1}{\longrightarrow} \langle U\rangle_d / \Low(\langle U\rangle_d ),\\
&A_1\otimes \ldots \otimes A_k \overset{\mu[U]}{\longrightarrow} \langle U\rangle_d \overset{\pi_2}{\longrightarrow} \langle U\rangle_d / (\Dp(\langle U\rangle_d) + \Low(\langle U\rangle_d)),
\end{align*}
where $\pi_1$ and $\pi_2$ are the canonical homomorphisms. Then, by properties of $\mu[U]$ established in Lemma~\ref{fall_through_linear_dep},
\begin{align*}
&\ker (\pi_1\circ \mu[U]) = \sum\limits_{i = 1}^{k} A_1\otimes\ldots \otimes \Low_i\otimes \ldots \otimes A_k, \\
&\ker (\pi_2\circ\mu[U]) = \sum\limits_{i = 1}^{k} A_1\otimes\ldots \otimes \Low_i\otimes \ldots \otimes A_k + \sum\limits_{i = 1}^{k} A_1\otimes\ldots \otimes \Dp(A_i) \otimes \ldots \otimes A_k.
\end{align*}
Then, using the isomorphism theorem and the definition of a tensor product of vector spaces, we obtain the result of Proposition~\ref{correspondence_to_tensor_product}.
\end{remark}

\subsection{The structure of quotient spaces $\langle U_1, \ldots , U_k\rangle_d / \Dp(\langle U_1, \ldots , U_k\rangle_d)$}
\label{fin_quotient_spaces_section}
The following statement easily follows from Proposition~\ref{fall_to_smaller_subspace}.
\begin{corollary}
\label{finite_gen_subspaces_structure}
Let $V = \langle U_1, \ldots , U_k\rangle_d$, where $U_1, \ldots, U_k$ are monomials. Let $\widehat{V}$ be the corresponding subspace in $\mathbb{Z}_2\Fr / \langle \mathcal{T}^{\prime}\rangle$, that is, $\widehat{V} = (V + \langle\mathcal{T}^{\prime}\rangle) / \langle\mathcal{T}^{\prime}\rangle$. Then $\widehat{V} \cong V / \Dp(V)$.
\end{corollary}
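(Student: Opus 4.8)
The plan is to obtain the statement as a short formal consequence of Proposition~\ref{fall_to_smaller_subspace}, after recording the identity $\Ideal = \langle\mathcal{T}^{\prime}\rangle = \Dp(\mathbb{Z}_2\Fr)$ already established in Definition~\ref{subspace_of_dependencies} and Proposition~\ref{the_ideal_characterisation2}. First I would apply the second isomorphism theorem for vector spaces to write
\[
\widehat{V} = (V + \Ideal)/\Ideal \cong V/(V\cap \Ideal),
\]
so that the entire content of the corollary reduces to the equality $V\cap \Ideal = \Dp(V)$.

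The key step is then to invoke Proposition~\ref{fall_to_smaller_subspace} with $Y = V$ and $X = \mathbb{Z}_2\Fr$. Its two hypotheses hold verbatim: $\mathbb{Z}_2\Fr$ is spanned by the monomials of $\Fr$ and is trivially closed under taking derived monomials, since a derived monomial of a monomial is again an element of $\Fr$; and $V = \langle U_1, \ldots, U_k\rangle_d$ is, by the very definition of $\langle\,\cdot\,\rangle_d$, spanned by monomials (the derived monomials of the $U_i$) and closed under taking derived monomials, because a derived monomial of a derived monomial of $U_i$ is a derived monomial of $U_i$. Proposition~\ref{fall_to_smaller_subspace} therefore yields $\Dp(\mathbb{Z}_2\Fr)\cap V = \Dp(V)$, i.e. $\Ideal\cap V = \Dp(V)$, and substituting this into the displayed isomorphism gives $\widehat{V}\cong V/\Dp(V)$, as required.

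I do not expect any real obstacle at this stage: the argument is entirely formal. The genuine difficulty — ruling out that a nonzero element of $V$ could lie in $\Ideal$ without being a combination of supports of multi-turns performed \emph{inside} $V$ — has already been absorbed into Proposition~\ref{fall_to_smaller_subspace}, which in turn rests on the Main Lemma (Lemma~\ref{fall_through_linear_dep}) and the compatibility of the filtration $\Ft_{\bullet}V$ with the subspaces $\Dp(\Ft_{\bullet}V)$. Consequently, the only point that needs genuine attention here is the bookkeeping check that the ``generated by monomials and closed under derived monomials'' hypotheses of Proposition~\ref{fall_to_smaller_subspace} apply to the pair $V\subseteq\mathbb{Z}_2\Fr$, which is immediate from the definitions.
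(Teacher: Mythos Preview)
Your proposal is correct and follows exactly the paper's own argument: apply the second isomorphism theorem to get $\widehat{V}\cong V/(V\cap\langle\mathcal{T}'\rangle)$, identify $\langle\mathcal{T}'\rangle=\Dp(\mathbb{Z}_2\Fr)$, and invoke Proposition~\ref{fall_to_smaller_subspace} with $X=\mathbb{Z}_2\Fr$, $Y=V$ to conclude $V\cap\Dp(\mathbb{Z}_2\Fr)=\Dp(V)$. The only difference is that you spell out the verification of the hypotheses of Proposition~\ref{fall_to_smaller_subspace}, which the paper leaves implicit.
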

\begin{proof}
From the Isomorphism Theorem, it follows that $\widehat{V} \cong V / (V\cap \langle\mathcal{T}^{\prime}\rangle)$. According to Definition~\ref{subspace_of_dependencies}, we have $\langle\mathcal{T}^{\prime}\rangle = \Dp(\mathbb{Z}_2\Fr)$. Therefore, from Proposition~\ref{fall_to_smaller_subspace} it follows that $V\cap \langle\mathcal{T}^{\prime}\rangle = V\cap \Dp(\mathbb{Z}_2\Fr) = \Dp(V)$; hence, $\widehat{V} \cong V / \Dp(V)$.
\end{proof}

The quotient space $V / \Dp(V)$ inherits the filtration from $V$,
\begin{equation*}
\Ft_n (V / \Dp(V)) = (\Ft_nV + \Dp(V)) / \Dp(V).
\end{equation*}
Suppose $V$ has $N$ non-zero levels of the filtration
\begin{equation*}
V = \Ft_0V \supseteq \Ft_1V \supseteq \ldots \supseteq \Ft_{N-1}V \supseteq \Ft_{N}V = 0.
\end{equation*}
We have the corresponding graded space
\begin{equation*}
\Gr(V / \Dp(V)) = \bigoplus\limits_{n = 0}^{N - 1}(\Ft_n (V / \Dp(V)) / \Ft_{n + 1} (V / \Dp(V))).
\end{equation*}
For any $n = 0, \ldots, N - 1$, we have the mapping $\gr_n: \Ft_nV \to \Ft_nV / \Ft_{n + 1}V$.

\medskip

The following theorem establishes the compatibility of the filtration and the corresponding grading with the linear dependencies on the space $V$.
\begin{theorem}
\label{structure_of_quotient_space}
Let $V = \langle U_1, \ldots, U_k\rangle_d$, $U_1, \ldots, U_k\in \Fr$. Then
\begin{equation*}
\Gr(V / \Dp(V)) \cong \bigoplus\limits_{n = 0}^{N - 1} \Gr_n(V) / \gr_n(\Dp(\Ft_nV)),
\end{equation*}
where
\begin{align*}
&\Gr_n(V) = \Ft_nV / \Ft_{n + 1}V,\\
&\gr_n(\Dp(\Ft_nV)) = (\Dp(\Ft_nV) + \Ft_{n + 1}V) / \Ft_{n + 1}V.
\end{align*}
\end{theorem}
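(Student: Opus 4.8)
The plan is to derive the theorem as a formal consequence of Proposition~\ref{fall_to_smaller_subspace} (which itself rests on the Main Lemma~\ref{fall_through_linear_dep}), using only the standard isomorphism theorems for vector spaces. First I would record that the filtration $\Ft_\bullet V$ induces the filtration $\Ft_n(V/\Dp(V)) = (\Ft_nV + \Dp(V))/\Dp(V)$ on the quotient, that this induced filtration is finite by Proposition~\ref{filtration_finite_property}, and hence that
\[
\Gr(V/\Dp(V)) = \bigoplus_{n = 0}^{N-1} \Ft_n(V/\Dp(V))\big/\Ft_{n+1}(V/\Dp(V)).
\]
Thus it suffices to exhibit, for each $n$, an isomorphism $\Ft_n(V/\Dp(V))/\Ft_{n+1}(V/\Dp(V)) \cong \Gr_n(V)/\gr_n(\Dp(\Ft_nV))$ and then pass to the direct sum.

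Next I would unwind the $n$-th graded piece. From the description of the induced filtration,
\[
\Ft_n(V/\Dp(V))\big/\Ft_{n+1}(V/\Dp(V)) \cong (\Ft_nV + \Dp(V))\big/(\Ft_{n+1}V + \Dp(V)),
\]
and the isomorphism theorem rewrites the right-hand side as $\Ft_nV/\big(\Ft_nV \cap (\Ft_{n+1}V + \Dp(V))\big)$. The entire content of the theorem is therefore concentrated in identifying this denominator, namely in the identity
\[
\Ft_nV \cap (\Ft_{n+1}V + \Dp(V)) = \Ft_{n+1}V + \Dp(\Ft_nV).
\]
For the inclusion $\supseteq$ I would note that $\Ft_{n+1}V \subseteq \Ft_nV$, that $\Dp(\Ft_nV) \subseteq \Ft_nV$ (each monomial $U_j$ of a generating multi-turn support is a derived monomial of a monomial of $\Ft_nV$, and $\Ft_nV$ is closed under taking derived monomials), and that $\Dp(\Ft_nV) \subseteq \Dp(V)$ (that same support is the support of a multi-turn of a monomial of $V$); hence $\Dp(\Ft_nV) \subseteq \Ft_nV \cap (\Ft_{n+1}V + \Dp(V))$. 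For the inclusion $\subseteq$, given $x = y + z$ with $x \in \Ft_nV$, $y \in \Ft_{n+1}V$, $z \in \Dp(V)$, I would observe $z = x - y \in \Ft_nV$, so $z \in \Dp(V) \cap \Ft_nV$, which equals $\Dp(\Ft_nV)$ by Proposition~\ref{fall_to_smaller_subspace} applied with $X = V$ and $Y = \Ft_nV$ (both generated by monomials and closed under taking derived monomials); therefore $x \in \Ft_{n+1}V + \Dp(\Ft_nV)$.

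Combining these, one further application of the isomorphism theorem gives
\[
\Ft_nV\big/(\Ft_{n+1}V + \Dp(\Ft_nV)) \cong \Gr_n(V)\big/\gr_n(\Dp(\Ft_nV)),
\]
and taking the direct sum over $n = 0, \ldots, N-1$ completes the argument. I do not expect a genuine obstacle in this theorem: it is the bookkeeping step that packages the real work, which lies in the Main Lemma~\ref{fall_through_linear_dep} and Proposition~\ref{fall_to_smaller_subspace}. The only points deserving care are the two monotonicity-type inclusions $\Dp(\Ft_nV) \subseteq \Dp(V)$ and $\Dp(\Ft_nV) \subseteq \Ft_nV$ used in the key identity, and verifying the hypotheses of Proposition~\ref{fall_to_smaller_subspace} for the pair $\Ft_nV \subseteq V$ — both are routine, and both hold because every $\Ft_nV$ is generated by monomials and closed under taking derived monomials.
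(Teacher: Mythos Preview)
Your proposal is correct and follows essentially the same route as the paper: both reduce each graded piece via the isomorphism theorems to $\Ft_nV/(\Ft_{n+1}V + \Ft_nV \cap \Dp(V))$, then invoke Proposition~\ref{fall_to_smaller_subspace} to identify $\Ft_nV \cap \Dp(V) = \Dp(\Ft_nV)$. Your write-up is in fact slightly more explicit than the paper's in verifying the easy inclusions and the hypotheses of Proposition~\ref{fall_to_smaller_subspace}, but the argument is the same.
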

\begin{proof}
Using isomorphism theorems, we obtain
\begin{align*}
\Ft_n (V / \Dp(V)) / \Ft_{n + 1} (V / \Dp(V))& \cong (\Ft_nV + \Dp(V)) / (\Ft_{n + 1}V + \Dp(V)) \\ & = (\Ft_nV + \Ft_{n + 1}V + \Dp(V)) / (\Ft_{n + 1}V + \Dp(V))\\ & \cong \Ft_nV / (\Ft_nV \cap (\Ft_{n + 1}V + \Dp(V)))\nonumber\\ & = \Ft_nV / (\Ft_{n + 1}V + \Ft_nV \cap \Dp(V)).
\end{align*}
Hence,
\begin{equation*}
\Gr(V / \Dp(V)) = \bigoplus\limits_{n = 0}^{N - 1} \Ft_nV / (\Ft_{n + 1}V + \Ft_nV \cap \Dp(V)).
\end{equation*}
On the other hand, we have
\begin{align*}
\bigoplus\limits_{n = 0}^{N - 1}\Gr_nV / \gr_n(\Dp(\Ft_nV)) & = \bigoplus\limits_{n = 0}^{N - 1}(\Ft_nV / \Ft_{n + 1}V) / ((\Dp(\Ft_nV) + \Ft_{n + 1}V) / \Ft_{n + 1}V) \\ &\cong \bigoplus\limits_{n = 0}^{N - 1} \Ft_nV / (\Dp(\Ft_nV) + \Ft_{n + 1}V).
\end{align*}
From Corollary~\ref{fall_to_smaller_subspace} it follows that $\Ft_nV \cap \Dp(V) = \Dp(\Ft_nV)$. Consequently,
\begin{equation*}
\bigoplus\limits_{n = 0}^{N - 1} \Gr_nV / \gr_n(\Dp(\Ft_nV)) \cong \Gr(V / \Dp(V)).
\end{equation*}
\end{proof}

\begin{proposition}
\label{component_subspaces_structure}
Let $V = \langle U_1, \ldots, U_k\rangle_d$, $U_1, \ldots, U_k\in \Fr$. Assume $\Ft_nV$ is non-zero subspace of the filtration, $\Ft_nV = \langle Z_1, \ldots, Z_s, \ldots\rangle_d$, where $\lbrace Z_i\rbrace_{i\in I}$ is either a finite or infinite set of monomials, and let $V_i = \langle Z_i\rangle_d$. Then
\begin{enumerate}[label={(\arabic*)}]
\item[$(1)$]
\label{component_subspaces_structure_1}
there exists a subset of indices $I^{\prime} \subseteq I$ such that
\begin{equation*}
\Ft_nV / (\Dp(\Ft_nV) + \Ft_{n + 1}V) \cong \bigoplus\limits_{i \in I^{\prime}}V_i/(\Dp(V_i) + \Low(V_i)),
\end{equation*}
where $V_i \nsubseteq \Ft_{n + 1}V$ and the spaces $V_i$, $i \in I^{\prime}$ are pairwise different;
\item[$(2)$]
\label{component_subspaces_structure_2}
if $Z \in \Ft_nV$ is a monomial such that $Z \notin \Ft_{n + 1}V$, then $Z$ belongs to precisely one space $V_i$, $i \in I^{\prime}$, and the isomorphism acts as follows
\begin{equation*}
Z + \Dp(\Ft_nV) + \Ft_{n + 1}V \mapsto (0, \ldots, 0, Z + \Dp(V_i) + \Low(V_i), 0, \ldots).
\end{equation*}
\end{enumerate}
\end{proposition}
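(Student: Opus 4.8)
The plan is to analyze the $n$-th graded component $\Ft_nV/(\Dp(\Ft_nV)+\Ft_{n+1}V)$ directly (this is exactly the summand appearing in Theorem~\ref{structure_of_quotient_space}), exploiting the monomial basis of $\Ft_nV/\Ft_{n+1}V$ to break it into blocks indexed by the generators $Z_i$. First I would record how monomials are distributed among the spaces $V_i=\langle Z_i\rangle_d$. Since $\Ft_{n+1}V=\Low(\Ft_nV)=\sum_{i\in I}\Low(\langle Z_i\rangle_d)$ by the definition of $\Low$ of a generated space together with Proposition~\ref{lower_level_gen_independence}, and each $\Low(\langle Z_i\rangle_d)$ is spanned by monomials, a monomial lies in $\Ft_{n+1}V$ if and only if it lies in some $\Low(\langle Z_i\rangle_d)$. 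Take a monomial $Z\in\Ft_nV\setminus\Ft_{n+1}V$; then $Z$ is a derived monomial of some $Z_{i_0}$ and $Z\notin\Low(\langle Z_{i_0}\rangle_d)$, so Lemma~\ref{derived_spaces_equality}(1) gives $\langle Z\rangle_d=V_{i_0}$, and $Z_{i_0}\notin\Ft_{n+1}V$ (otherwise $Z$, being a derived monomial of $Z_{i_0}$, would lie in $\Ft_{n+1}V$ by closedness), so $V_{i_0}\nsubseteq\Ft_{n+1}V$. I then let $I'\subseteq I$ be a set of representatives: from each class of $\{i\in I\mid V_i\nsubseteq\Ft_{n+1}V\}$ under the relation $V_i=V_j$ pick one index. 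Thus the $V_i$, $i\in I'$, are pairwise distinct, each $\nsubseteq\Ft_{n+1}V$, and every monomial of $\Ft_nV\setminus\Ft_{n+1}V$ has $\langle\cdot\rangle_d$ equal to $V_i$ for a unique $i\in I'$.

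Next I would establish the intersection identities and the splitting of the graded component. For $i\in I'$ the generator $Z_i$ is not in $\Ft_{n+1}V$ (same closedness argument), so Lemma~\ref{intersection_with_low} gives $\Ft_{n+1}V\cap V_i=\Low(V_i)$, and Proposition~\ref{fall_to_smaller_subspace} applied to $V_i\subseteq\Ft_nV$ gives $\Dp(\Ft_nV)\cap V_i=\Dp(V_i)$; in particular $\Low(V_i)\subseteq\Ft_{n+1}V$ (it is one of the summands of $\Ft_{n+1}V$) and $\Dp(V_i)\subseteq\Dp(\Ft_nV)$. A monomial of $V_i$ lying outside $\Low(V_i)$ is private to $V_i$: if it also lies in $V_j$ for some $j\in I'$, then it is not in $\Low(V_j)\subseteq\Ft_{n+1}V$ either, hence its $\langle\cdot\rangle_d$ equals both $V_i$ and $V_j$, forcing $j=i$. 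Therefore the monomials of $\Ft_nV\setminus\Ft_{n+1}V$ split into disjoint blocks, the $i$-th block ($i\in I'$) being the monomials of $V_i\setminus\Low(V_i)$, and consequently
\[
\Ft_nV/\Ft_{n+1}V=\bigoplus_{i\in I'}\overline{V_i},\qquad \overline{V_i}:=(V_i+\Ft_{n+1}V)/\Ft_{n+1}V\cong V_i/\Low(V_i),
\]
an internal direct sum.

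Finally I would define and check the isomorphism. Let $\Phi$ send $(x_i+\Dp(V_i)+\Low(V_i))_{i\in I'}$ to $\sum_{i\in I'}x_i+\Dp(\Ft_nV)+\Ft_{n+1}V$; it is well defined since $\Dp(V_i)+\Low(V_i)\subseteq\Dp(\Ft_nV)+\Ft_{n+1}V$, and surjective because each monomial of $\Ft_nV$ either lies in $\Ft_{n+1}V$, mapping to $0$, or lies in some $V_i$, $i\in I'$. For injectivity, suppose $\sum_{i\in I'}x_i\in\Dp(\Ft_nV)+\Ft_{n+1}V$ and write the $\Dp(\Ft_nV)$-part as a sum of generating supports $T_s$ of multi-turns, each coming from a single monomial $W_s\in\Ft_nV$, so $T_s\in\Dp(\langle W_s\rangle_d)\subseteq\langle W_s\rangle_d$; if $W_s\in\Ft_{n+1}V$ then $\langle W_s\rangle_d\subseteq\Ft_{n+1}V$, hence $T_s\in\Ft_{n+1}V$, while if $W_s\notin\Ft_{n+1}V$ then $\langle W_s\rangle_d=V_{i_s}$ for a unique $i_s\in I'$ and $T_s\in\Dp(V_{i_s})$. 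Collecting terms, $\sum_{i\in I'}x_i\equiv\sum_{i\in I'}D_i\pmod{\Ft_{n+1}V}$ with $D_i\in\Dp(V_i)$; comparing components in the internal direct sum $\Ft_nV/\Ft_{n+1}V=\bigoplus_{i\in I'}\overline{V_i}$ yields $x_i-D_i\in\Ft_{n+1}V\cap V_i=\Low(V_i)$, so $x_i\in\Dp(V_i)+\Low(V_i)$. Hence $\Phi$ is an isomorphism, its inverse is the map in statement (1); and since a monomial $Z\notin\Ft_{n+1}V$ lies in $V_i$ for precisely the unique $i\in I'$ with $\langle Z\rangle_d=V_i$, the description of $\Phi^{-1}$ on such $Z$ is exactly statement (2). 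The main obstacle is this injectivity step: one must decompose an element of $\Dp(\Ft_nV)$ across the blocks $V_i$ — which succeeds because each generating support of a multi-turn comes from one monomial and so lives in a single $\langle W_s\rangle_d$ — and then match it against the monomial-basis direct sum decomposition of the graded component, combining Lemma~\ref{intersection_with_low}, Lemma~\ref{derived_spaces_equality} and Proposition~\ref{fall_to_smaller_subspace}.
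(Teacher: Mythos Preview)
Your proof is correct and follows essentially the same approach as the paper: both arguments select $I'$ by taking distinct $V_i\nsubseteq\Ft_{n+1}V$, use Lemma~\ref{derived_spaces_equality} to show that monomials of $\Ft_nV\setminus\Ft_{n+1}V$ split into disjoint blocks indexed by $I'$, observe that each generating support of a multi-turn lies in a single $V_i$ (or in $\Ft_{n+1}V$), and finish via Lemma~\ref{intersection_with_low}. The only cosmetic difference is packaging: the paper runs a chain of isomorphism-theorem identifications $(\Ft_nV/\Ft_{n+1}V)/((\Dp(\Ft_nV)+\Ft_{n+1}V)/\Ft_{n+1}V)\cong\bigoplus V_i/(\Dp(V_i)+\Low(V_i))$, whereas you write down an explicit map $\Phi$ and verify bijectivity directly; your citation of Proposition~\ref{fall_to_smaller_subspace} is harmless but unnecessary, since only the trivial inclusion $\Dp(V_i)\subseteq\Dp(\Ft_nV)$ is used.
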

\begin{proof}
Since $\Ft_nV = \langle Z_1, \ldots, Z_s, \ldots\rangle_d$, $\Ft_nV = \sum_{i = 1}^{\infty} V_i$ if there are infinitely many $V_i$ and $\Ft_nV = \sum_{i = 1}^{m} V_i$ otherwise. In what following we only discuss the case of infinitely many $V_i$, $i \in I$. The other case is similar. So, we have
\begin{equation*}
\Ft_nV / \Ft_{n + 1}V = \sum\limits_{i = 1}^{\infty} \left((V_i + \Ft_{n + 1}V) / \Ft_{n + 1}V\right).
\end{equation*}

Let us choose a special subset of $\lbrace V_i\rbrace_{i \in I}$. We take all the different spaces from $\lbrace V_i\rbrace_{i \in I}$ and remove $V_i$ such that $V_i \nsubseteq \Ft_{n + 1}V$. Since $\Ft_nV \neq 0$, from Proposition~\ref{filtration_finite_property} it follows that $\Ft_nV \neq \Ft_{n + 1}V$. Hence, we do not remove all $V_i$ in this process and obtain a non-empty set $\lbrace V_i\rbrace_{i \in I^{\prime}}$ such that
\begin{equation*}
\Ft_nV / \Ft_{n + 1}V = \sum\limits_{i \in I^{\prime}} \left( (V_i + \Ft_{n + 1}V) / \Ft_{n + 1}V\right).
\end{equation*}

Assume $V_j$ is one of the spaces $\lbrace V_i\rbrace_{i \in I^{\prime}}$, that is, $j \in I^{\prime}$, and
\begin{equation}
\label{spaces_intersection}
\left((V_j + \Ft_{n + 1}V) / \Ft_{n + 1}V \right) \bigcap \sum\limits_{\substack{i \neq j \\ i \in I^{\prime}}} \left((V_i + \Ft_{n + 1}V) / \Ft_{n + 1}V\right) \neq 0.
\end{equation}
Since every $V_i$, $i \in I^{\prime}$, and $\Ft_{n + 1}V$ are generated by monomials, it follows from~\eqref{spaces_intersection} that there exist monomials $Z_j^{\prime} \in V_j$ such that $Z_j^{\prime} \notin \Ft_{n + 1}V$ and a space $V_{i_0}$, $i_0\in I^{\prime}$, $i_0 \neq j$, such that $Z_j^{\prime} \in V_{i_0}$. Since $Z_j^{\prime} \notin \Ft_{n + 1}V$, we have $Z_j^{\prime} \notin \Low(V_{j})$ and $Z_j^{\prime} \notin \Low(V_{i_0})$. Then from Lemma~\ref{derived_spaces_equality} it follows that $V_j = V_{i_0}$. But this contradicts the assumption that all spaces $V_i$, $i \in I^{\prime}$, are pairwise different. Hence,
\begin{equation*}
\left((V_j + \Ft_{n + 1}V) / \Ft_{n + 1}V\right) \bigcap \sum\limits_{\substack{i \neq j \\ i \in I^{\prime}}} \left((V_i + \Ft_{n + 1}V) / \Ft_{n + 1}V\right) = 0.
\end{equation*}
Thus, we obtain a direct sum
\begin{equation}
\label{direct_sum}
\Ft_nV / \Ft_{n + 1}V = \bigoplus\limits_{i \in I^{\prime}} \left((V_i + \Ft_{n + 1}V) / \Ft_{n + 1}V\right).
\end{equation}

Let $Z \in \Ft_nV$ be a monomial, $Z\notin \Ft_{n+1}V$. Notice that, by the same argument as above, we obtain that $Z$ belongs to precisely one of the spaces $V_i$, $i \in I^{\prime}$.

Let $T$ be the support of a multi-turn $U_h \mapsto \sum_{\substack{j = 1 \\ j \neq h}}^k U_j$, where $U_h \in \Ft_nV \setminus \Ft_{n + 1}V$. Hence, $U_h$ belongs to a space $V_{i_0}$, $i_0 \in I^{\prime}$. Then every $U_j$ also belongs to $V_{i_0}$ and $T \in \Dp(V_{i_0})$. Therefore, we have
\begin{equation*}
(\Dp(\Ft_nV) + \Ft_{n + 1}V) / \Ft_{n + 1}V = \bigoplus\limits_{i \in I^{\prime}} \left((\Dp(V_i) + \Ft_{n + 1}V) / \Ft_{n + 1}V\right).
\end{equation*}
Hence,
\begin{align}
\label{direct_sum_components}
\Ft_nV /& (\Dp(\Ft_nV) + \Ft_{n + 1}V) \cong (\Ft_nV / \Ft_{n + 1}V) / \left((\Dp(\Ft_nV) + \Ft_{n + 1}V) / \Ft_{n + 1}V\right) \\
&= \left(\bigoplus\limits_{i \in I^{\prime}} \left((V_i + \Ft_{n + 1}V) / \Ft_{n + 1}V\right) \right) / \left(\bigoplus\limits_{i \in I^{\prime}} \left((\Dp(V_i) + \Ft_{n + 1}V) / \Ft_{n + 1}V\right) \right) \nonumber \\
&\cong \bigoplus\limits_{i \in I^{\prime}} ((V_i + \Ft_{n + 1}V) / \Ft_{n + 1}V) / ((\Dp(V_i) + \Ft_{n + 1}V) / \Ft_{n + 1}V) \nonumber \\
&\cong \bigoplus\limits_{i \in I^{\prime}} (V_i + \Ft_{n + 1}V) / (\Dp(V_i) + \Ft_{n + 1}V).\nonumber
\end{align}
By the Isomorphism Theorem, we obtain
\begin{align}
\label{single_component}
(V_i + \Ft_{n + 1}V) / &(\Dp(V_i) + \Ft_{n + 1}V) = (V_i + \Dp(V_i) + \Ft_{n + 1}V) / (\Dp(V_i) + \Ft_{n + 1}V) \\
&\cong V_i / ((\Dp(V_i) + \Ft_{n + 1}V) \cap V_i) = V_i / (\Dp(V_i) + \Ft_{n + 1}V \cap V_i)\nonumber
\end{align}
Since $V_i$, $i \in I^{\prime}$, is not contained in $\Ft_{n + 1}V$, from Lemma~\ref{intersection_with_low} it follows that $\Ft_{n + 1}V \cap V_i = \Low(V_i)$. Thus, from~\eqref{direct_sum_components} and~\eqref{single_component} it follows that
\begin{equation*}
\Ft_nV / (\Dp(\Ft_nV) + \Ft_{n + 1}V) \cong \bigoplus\limits_{i \in I^{\prime}} V_i/(\Dp(V_i) + \Low(V_i)).
\end{equation*}
So, the first statement is proved.

Assume $Z \in \Ft_nV$ is a monomial, $Z\notin \Ft_{n+1}V$. We noticed above that $Z$ belongs to one space $V_{i_0}$, $i_0 \in I^{\prime}$. Then from the definition of the canonical isomorphisms in~\eqref{direct_sum_components} and~\eqref{single_component}, it easily follows that the final isomorphism maps $Z + \Dp(\Ft_nV) + \Ft_{n + 1}V$ to the corresponding quotient space $V_{i_0}/(\Dp(V_{i_0}) + \Low(V_{i_0}))$, namely,
\begin{equation*}
Z + \Dp(\Ft_nV) + \Ft_{n + 1}V \mapsto (0, \ldots, 0, Z + \Dp(V_{i_0}) + \Low(V_{i_0}), 0, \ldots).
\end{equation*}
So, the second statement is proved.
\end{proof}

\section{Description of a basis in $\mathbb{Z}_2\Fr / \Ideal$}
\label{basis_descr_section}
Let us enumerate all the monomials from $\Fr$, namely, $\Fr = \lbrace U_1, U_2, \ldots, U_k, \ldots\rbrace$. Consider an increasing sequence of subspaces of $\mathbb{Z}_2\Fr$
\begin{equation*}
\langle U_1\rangle_d \subseteq \langle U_1, U_2\rangle_d \subseteq \ldots \subseteq \langle U_1, \ldots, U_k\rangle_d \subseteq \ldots
\end{equation*}
Clearly, the union of all these subspaces gives the whole ring $\mathbb{Z}_2\Fr$. Consider the corresponding increasing sequence of subspaces of $\mathbb{Z}_2\Fr / \langle \mathcal{T}^{\prime} \rangle = \mathbb{Z}_2\Fr / \Ideal$
\begin{align}
\label{gen_incr_subspaces}
(\langle U_1\rangle_d + \langle \mathcal{T}^{\prime}) / \langle \mathcal{T}^{\prime} \rangle \rangle& \subseteq (\langle U_1, U_2\rangle_d + \langle\mathcal{T}^{\prime}\rangle) / \langle \mathcal{T}^{\prime} \rangle \subseteq \ldots \\
&\subseteq (\langle U_1, \ldots, U_k\rangle_d + \langle \mathcal{T}^{\prime} \rangle) / \langle \mathcal{T}^{\prime} \rangle \subseteq \ldots\, .\nonumber
\end{align}
The union of all these subspaces gives the whole ring $\mathbb{Z}_2\Fr / \langle \mathcal{T}^{\prime} \rangle$. By Corollary~\ref{finite_gen_subspaces_structure}, we obtain
\begin{equation*}
(\langle U_1, \ldots, U_k\rangle_d + \langle \mathcal{T}^{\prime} \rangle) / \langle \mathcal{T}^{\prime} \rangle \cong \langle U_1, \ldots, U_k\rangle_d / \Dp(\langle U_1, \ldots, U_k\rangle_d).
\end{equation*}

In Section~\ref{calculate_basis_section}, we will construct a basis $\mathcal{B}_k$ in every subspace $(\langle U_1, \ldots, U_k\rangle + \langle \mathcal{T}^{\prime} \rangle) / \langle \mathcal{T}^{\prime} \rangle$ such that we obtain the increasing sequence
\begin{equation*}
\mathcal{B}_1 \subseteq \mathcal{B}_2 \subseteq \ldots \subseteq \mathcal{B}_k \subseteq \ldots\, .
\end{equation*}
Since the union of the subspaces~\eqref{gen_incr_subspaces} gives the whole ring $\mathbb{Z}_2\Fr / \langle \mathcal{T}^{\prime} \rangle$, the union $\bigcup_{k} \mathcal{B}_k$ is a basis of $\mathbb{Z}_2\Fr / \langle \mathcal{T}^{\prime} \rangle = \mathbb{Z}_2\Fr / \Ideal$.

As above, assume $V = \langle U_1, \ldots, U_k\rangle_d$, $U_1, \ldots, U_k \in \Fr$. Let the filtration on the space $V$ have $N$ non-zero levels. Putting together the results from Section~\ref{fin_quotient_spaces_section}, we obtain
\begin{equation*}
(V + \langle\mathcal{T}^{\prime}\rangle) / \langle\mathcal{T}^{\prime}\rangle \cong \bigoplus\limits_{n = 0}^{N - 1}\bigoplus\limits_{i \in I^{\prime}_n} V_i^{(n)}/(\Dp(V_i^{(n)}) + \Low(V_i^{(n)})),
\end{equation*}
where $V_i^{(n)} = \langle Z_i^{(n)}\rangle_d$, $Z_i^{(n)}$ is a monomial from $\Ft_nV$. In Section~\ref{semicanonical_words_section} we will characterize non-trivial quotient spaces $V_i^{(n)}/(\Dp(V_i^{(n)}) + \Low(V_i^{(n)})$. In Section~\ref{calculate_basis_section} we will construct a basis in every space $V_i^{(n)} / (\Dp(V_i^{(n)}) + \Low(V_i^{(n)}))$ and, using this, we will explicitly describe a basis in the whole ring $\mathbb{Z}_2\Fr / \Ideal$.

\subsection{$\lambda$-semicanonical words}
\label{semicanonical_words_section}
Fix a constant $\frac{1}{2} \ll \lambda \ll 1$ ($\ll$ on the $\varepsilon$-scale). For example, one can use $\lambda \geqslant \frac{2}{3}$. We introduce \emph{$\lambda$-forbidden words}. Recall our notation $v = v_iv_mv_f$, where $v_i$ is some initial part, $v_m$ is some middle part and $v_f$ is some final part of $v$ (any part is allowed to be empty). A word of the form $v_m^{-1}$ is called \emph{$\lambda$-forbidden} if $\LM(v_m) > \lambda$.
\begin{equation}
\label{forbidden_words}
v_m^{-1}, \LM(v_m) > \lambda.
\end{equation}

\begin{definition}
A word is called \emph{$\lambda$-semicanonical} if it does not contain occurrences of $\lambda$-forbidden words.
\end{definition}

We are motivated by an informal analogue for small cancellation groups. Let $G = \langle X \mid \mathcal{R} \rangle$, where $\mathcal{R}$ satisfies small cancellation conditions \cite{LyndonSchupp} with a certain measure $\LM$ on the relators $R_i$. That is $\LM(R_i) = 1$ and $\LM(P) \leqslant \varepsilon$ for any small piece $P$. We call a subword $S$ of $R_i^{\pm 1}$ \emph{$\lambda$-forbidden} if $\LM(S) > \lambda$, where $\lambda$ as above is between $\frac{1}{2}$ and $1$. A word $U$ is \emph{$\lambda$-semicanonical} if it does not contain $\lambda$-forbidden words. In the Cayley graph of $G$ any $\lambda$-semicanonical word is a quasigeodesic \cite{CoornaertDelzantPapadopoulos}, \cite{GhysHarpe}.

A small cancellation group (with appropriate constants) is hyperbolic. Recall that if the elements of a hyperbolic group are represented by quasigeodesics, then their product takes the form of a thin triangle (\cite{Gromov}). In our case, first we will construct a special basis of $\mathbb{Z}_2\Fr / \Ideal$ with the use of $\lambda$-semicanonical words. Then in Section~\ref{multiplication_geometry_section}, we will construct a special set of linear generators of $\mathbb{Z}_2\Fr / \Ideal$ (not linearly independent) such that it contains the basis and we can express the product of two elements of this set as a sum of elements of this set that form thin triangles with the factors.

\begin{proposition}
\label{reduct_power}
Let $a_h$ be a generalized fractional power, $W = \langle a_h\rangle_d$. Then $a_h$ is either $\lambda$-semicanonical, or is equal modulo $\Dp(W)$ to a sum of $\lambda$-semicanonical generalized fractional powers from $W$. Namely, either $a_h$ is $\lambda$-semicanonical, or there exists an elementary multi-turn $a_h \mapsto \sum_{\substack{j = 1 \\ j\neq h}}^k a_j$ such that $a_j$, $j = 1, \ldots, k$, $j\neq h$, are $\lambda$-semicanonical generalized fractional powers and they have shorter word length in $\Fr$ and less $\LM$-measure than $a_h$.
\end{proposition}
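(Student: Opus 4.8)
The plan is to argue by induction on a combined complexity parameter of the generalized fractional power $a_h$ --- say, the pair consisting of the word length $\vert a_h \vert$ in $\Fr$ and the $\LM$-measure $\LM(a_h)$ (ordered lexicographically, or even just $\vert a_h\vert$). If $a_h$ is already $\lambda$-semicanonical, there is nothing to prove. Otherwise $a_h$ contains an occurrence of a $\lambda$-forbidden word $v_m^{-1}$ with $\LM(v_m) > \lambda$. The key observation is that, since $\lambda$ is close to $1$ (on the $\varepsilon$-scale) and in particular $\lambda > \tfrac12$, this occurrence of $v_m^{-1}$ is so long that it forces $a_h$ itself to be essentially a piece of $v^{-1}$: using the uniqueness of subwords of $v^{\pm l}$ containing at least two letters $y$ (the small cancellation property recalled after Definition~\ref{v_diagram}), the occurrence of $v_m^{-1}$ determines a well-defined position on the $v$-arc of the $v$-diagram, and $a_h$ is (up to a bounded piece) of one of the forms appearing in picture~\eqref{path_type1} --- indeed the relevant case is $b^{\prime} = v_m^{-1}$, i.e. configuration~\ref{small_v_arc3} in the proof of Proposition~\ref{transversality}.

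First I would pin down exactly which of the elementary multi-turn types \eqref{mturn1}--\eqref{mturn6} applies. Once $a_h$ is identified as, up to multiplication by short words, the monomial $v_m^{-1}$ (with $\LM(v_m) > \lambda$), I would use the polynomial relation coming from the reduction to elementary fractions --- concretely the identity $w^{-k}/(1+w) = w^{-k} + w^{-k+1} + \cdots + w^{-1} + 1/(1+w)$, or rather the complementary decomposition of $v$ itself. Writing $v = v_i v_m v_f$ and recalling that in $\mathbb{Z}_2\Fr/\Ideal$ we have $v = (1+w)^{-1}$, one gets a multi-turn that replaces $v_m^{-1}$ by a sum of monomials each of which is either $v_i^{-1}$, $v_f^{-1}$, $v_i v_m$, $v_f v_m$, or a short power of $w$ times such a factor --- exactly the shapes listed in \eqref{mturn1}. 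Since $\LM(v_m) > \lambda > \tfrac12$, each complement $v_i$, $v_f$ has $\LM < 1 - \lambda \ll \lambda$, so none of the resulting monomials $a_j$ contains a subword of $\LM$-measure exceeding $\lambda$; hence they are already $\lambda$-semicanonical, and they are strictly shorter in $\Fr$ than $a_h$ because $v_m$ is a large chunk of $v$ while $v_i v_m$, $v_f v_m$ omit that chunk entirely (and the bookkeeping with $w$-powers only removes letters). This gives the "$k=1$ application of a single multi-turn" version of the statement directly; the general statement then follows because we may keep applying the procedure to any $a_j$ that still contains a $\lambda$-forbidden word, and the strict decrease in $\vert\cdot\vert$ (equivalently, in the complexity parameter) guarantees termination, with all final monomials $\lambda$-semicanonical and of smaller length and smaller $\LM$-measure than $a_h$.

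The main obstacle I expect is the careful case analysis needed to handle the "short words" wrapping $a_h$, i.e. the interaction of the forbidden occurrence $v_m^{-1}$ with whatever lies to its left and right inside $a_h$, together with the possible $w$-arc excursions. Concretely: a generalized fractional power need not literally be $v_m^{-1}$; it may be $v_m^{-1} w^{k} \cdots$ or $\cdots v_m^{-1}$, and one must verify that in all configurations of picture~\eqref{path_type1} the relevant elementary multi-turn from \eqref{mturn1}--\eqref{mturn2} is available and produces only $\lambda$-semicanonical, strictly shorter outputs. A secondary point to check is that the outputs live in $W = \langle a_h\rangle_d$ --- but this is automatic, since a support of an elementary multi-turn of $a_h$ consists precisely of derived monomials of $a_h$ (as noted in Section~\ref{structure_calc}), and the modular equality $a_h \equiv \sum_{j\neq h} a_j \pmod{\Dp(W)}$ is just the definition of $\Dp(W)$. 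I would also want to double-check the claim "$a_j$ has less $\LM$-measure than $a_h$": this needs $\LM(v_i) < \LM(v_m)$ and $\LM(v_f) < \LM(v_m)$, which follows from $\LM(v_m) > \lambda > \tfrac12$ and $\LM(v_i) + \LM(v_m) + \LM(v_f) = 1$; and for the monomials $v_i v_m$, $v_f v_m$ one uses $\LM(v_i v_m) = \LM(v) - \LM(v_f) = 1 - \LM(v_f)$, which can exceed $\LM(v_m)$ only if $\LM(v_f) < 1 - \LM(v_m) < 1-\lambda$, so one should arrange the multi-turn (choosing the appropriate form among \eqref{mturn1}--\eqref{mturn2}) so that the complement written down is the shorter of $v_i$, $v_f$ rather than $v_i v_m$ or $v_f v_m$ --- this is the one genuinely delicate bookkeeping step, and it is what pins down which of the four formulas in \eqref{mturn1} (versus \eqref{mturn2}) one invokes.
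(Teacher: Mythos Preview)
Your overall shape --- locate a $\lambda$-forbidden subword, replace it via an elementary multi-turn, iterate, terminate by strict decrease of word length --- matches the paper. But the execution has a real gap.

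You assert that the long occurrence of $v_m^{-1}$ ``forces $a_h$ itself to be essentially a piece of $v^{-1}$'', and then try to classify $a_h$ by the case analysis of Proposition~\ref{transversality}. This is a misconception: the forbidden word is merely a \emph{subword} of $a_h$, say $a_h = L b_h R$ with $b_h = v_m^{-1}$, and $L, R$ can be arbitrary (e.g.\ $a_h$ could be $w_f v^{-3} w_i$, with endpoints on $w$-arcs and several forbidden subwords). So neither the endpoint type of $a_h$ nor the list~\eqref{mturn1}--\eqref{mturn2} is determined by the presence of $v_m^{-1}$, and your proposed outputs $v_i^{-1}, v_f^{-1}, v_iv_m, v_fv_m$ have no clear provenance.

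The paper sidesteps all of this with one clean identity. Multiplying the generating relation $v^{-1} = 1 + w$ on the left by $v_f$ and on the right by $v_i$ yields
\[
v_m^{-1} \;=\; v_f w v_i \;+\; v_f v_i,
\]
an elementary multi-turn of the subword $b_h$ with exactly two outputs. Since $a_h = L b_h R$ is a generalized fractional power and $b_h$ has well-defined endpoints on the $v$-arc, each $L b_j R$ (with $b_j \in \{v_f w v_i,\, v_f v_i\}$) is again a generalized fractional power with the \emph{same} endpoints $I,F$ as $a_h$; thus $a_h \mapsto L(v_f w v_i)R + L(v_f v_i)R$ is an elementary multi-turn of $a_h$. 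Both outputs satisfy $\LM(v_f v_i) = \LM(v_f w v_i) = 1 - \LM(v_m) < 1-\lambda < \lambda$, and since $\vert w\vert \ll \vert v\vert$ they are strictly shorter than $v_m^{-1}$ in $\Fr$; hence each $L b_j R$ is strictly shorter than $a_h$ in both senses. Iterating (and using that a composition of elementary multi-turns with common endpoints is again one) terminates in the desired sum. Your ``delicate bookkeeping step'' about choosing between~\eqref{mturn1} and~\eqref{mturn2}, and the worry about $v_iv_m, v_fv_m$, simply do not arise once you use this identity.
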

\begin{proof}
Let $a_h$ contain a $\lambda$-forbidden subword $b_h$, $a_h = Lb_hR$. Returning to forbidden words~\eqref{forbidden_words}, we consider the following relation in $\mathbb{Z}_2\Fr / \Ideal$:
\begin{equation}
\label{eliminate_vinv}
v_m^{-1} = v_fwv_i + v_fv_i.
\end{equation}
Multiplying the relation $v^{-1} = 1 + w$ by $v_f$ on the left side and by $v_i$ on the right side, we get \eqref{eliminate_vinv}. By definition, the words $v_fwv_i$ and $v_fv_i$ are non $\lambda$-forbidden. If $v_m^{-1}$ is a $\lambda$-forbidden word, that is, $\LM(v_m^{-1}) > \lambda$, then we obtain
\begin{equation*}
\LM(v_fv_i) = \LM(v_fwv_i) = \LM(v_f) + \LM(v_i) < 1 - \lambda < \lambda < \LM(v_m^{-1}).
\end{equation*}
Moreover, since $\vert w\vert \ll \vert v\vert $, we obtain $\vert v_fv_i\vert < \vert v_fwv_i \vert < \vert v_m^{-1}\vert$, where $\vert\cdot\vert$ is word length in $\Fr$.

By Definition~\ref{multiturn_def}, the transformation $v_m^{-1} \mapsto v_fwv_i + v_fv_i$ is an elementary multi-turn and according to the above, it gives the reduction of $\lambda$-forbidden words to sums of shorter (in word length in $\Fr$ and in $\LM$-measure) $\lambda$-semicanonical words.

So, $b_h$ can be substituted by a sum of $\lambda$-semicanonical generalized fractional powers $\sum_{\substack{j = 1 \\ j\neq h}}^{n} b_j = v_fwv_i + v_fv_i$. Since $\LM(b_h) > \varepsilon$, it appears in $v^{-1}$ only once and, therefore, has fixed initial and fixed final point in the corresponding $v$-diagram. Since $a_h = Lb_hR$ is itself a generalized fractional power, we obviously obtain that every word $Lb_jR$ is a generalized fractional power, possibly after cancellations, with the same initial point and the same final point in the corresponding $v$-diagram as the word $a_h$ has. So, since $v_m^{-1} \mapsto v_fwv_i + v_fv_i$ is an elementary multi-turn, $a_h \mapsto \sum_{\substack{j = 1 \\ j\neq h}}^{n}Lb_jR$ is an elementary multi-turn as well. Hence, $Lb_jR$ are derived monomials of $a_h$ and $a_h = \sum_{\substack{j = 1 \\ j\neq h}}^{n} Lb_jR$ modulo $\Dp(W)$.

If the obtained word $Lb_jR$ contains a $\lambda$-forbidden subword, we repeat the process and reduce it using the transformation \eqref{eliminate_vinv}. Successive elementary multi-turns match to addition of the corresponding expressions \eqref{mturn1} --- \eqref{mturn6}. Thus, applying several elementary multi-turns consecutively, we obtain an elementary multi-turn as a result. So, we obtain an elementary multi-turn of $a_h$ after each step of the reduction process.

From~\eqref{eliminate_vinv} it follows that after every transformation that reduces $\lambda$-forbidden subword of a monomial, the word length in $\Fr$ of the resulting monomials is strictly smaller than of the initial monomial. Thus, the reduction process finishes and we obtain the result as an elementary multi-turn $a_h \mapsto \sum_{\substack{j = 1 \\ j\neq h}}^k a_j$, where $j = 1, \ldots, k$, $j\neq h$, are $\lambda$-semicanonical generalized fractional powers with shorter word length in $\Fr$ and less $\LM$-measure than $a_h$.
\end{proof}

\begin{corollary}
\label{lemma_about_generating}
Assume $V = \langle U_1, \ldots, U_k\rangle_d$, $U_1, \ldots, U_k \in \Fr$. Let $U \in V$ be a monomial. Then $U$ is equal to a sum of $\lambda$-semicanonical words from $V$ modulo $\Dp(V)$. In particular, $U$ is equal to a sum of $\lambda$-semicanonical words from $\langle U\rangle_d$ modulo $\Dp(\langle U\rangle_d)$.
\end{corollary}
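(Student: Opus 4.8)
The plan is to prove this by strong induction on the word length $|U|$ of $U$ in $\Fr$, using Proposition~\ref{reduct_power} as the engine that strips off one $\lambda$-forbidden subword at a time while strictly decreasing $|U|$. The base of the induction and the trivial case coincide: if $U$ is already $\lambda$-semicanonical, then $U$ itself is the required (one-term) sum of $\lambda$-semicanonical words from $V$, and there is nothing to do.

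So I would assume that $U$ contains an occurrence of a $\lambda$-forbidden word, i.e.\ of a subword of the form $v_m^{-1}$ with $\LM(v_m) > \lambda$. Let $a_h$ be a maximal occurrence of a generalized fractional power in $U$ containing this occurrence of $v_m^{-1}$ --- such an $a_h$ exists, by extending $v_m^{-1}$ maximally to the left and to the right inside $U$ --- and write $U = L a_h R$. Since $\LM(a_h) \geqslant \LM(v_m^{-1}) > \lambda > \tau$ (recall $\lambda$ is close to $1$ while $\tau$ is small on the $\varepsilon$-scale), $a_h$ has $\LM$-measure $\geqslant \tau$ and is therefore a member of the chart of $U$, hence in particular a virtual member of the chart of $U$ (by the remark following Definition~\ref{virtual_members}).

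Now I would apply Proposition~\ref{reduct_power} to $a_h$, which contains the $\lambda$-forbidden subword $v_m^{-1}$. This gives an elementary multi-turn $a_h \mapsto \sum_{\substack{j=1\\ j\neq h}}^{k} a_j$ whose monomials $a_j$ are $\lambda$-semicanonical generalized fractional powers with $|a_j| < |a_h|$ and $\LM(a_j) < \LM(a_h)$. By Definition~\ref{multiturn_def}, this lifts to a multi-turn $U = L a_h R \mapsto \sum_{\substack{j=1\\ j\neq h}}^{k} L a_j R$ of the virtual member of the chart $a_h$ of the monomial $U \in V$, so its support $\sum_{j=1}^{k} L a_j R$ lies in $\Dp(V)$ by the definition of $\Dp$ (Definition~\ref{subspace_of_dependencies}); over $\mathbb{Z}_2$ this is exactly $U \equiv \sum_{\substack{j=1\\ j\neq h}}^{k} L a_j R \pmod{\Dp(V)}$. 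Each $L a_j R$ is obtained from $U$ by a replacement of the virtual member of the chart $a_h$ by an incident monomial $a_j$ (transformation~\ref{repl1} or~\ref{repl2} of Definition~\ref{derived_monomials}), hence is a derived monomial of $U$ and lies in $V$ (which is closed under taking derived monomials); and $|L a_j R| \leqslant |L| + |a_j| + |R| < |U|$. By the induction hypothesis each $L a_j R$ is congruent modulo $\Dp(V)$ to a sum of $\lambda$-semicanonical words from $V$, and summing these congruences finishes the inductive step. The ``in particular'' clause is the special case $V = \langle U\rangle_d$, which has the required form $\langle U_1, \ldots, U_k\rangle_d$ with $k=1$, $U_1 = U \in \Fr$, and $U \in \langle U\rangle_d$.

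The only real point of care --- and what forces the induction on $|U|$ rather than a single application of Proposition~\ref{reduct_power} --- is that after substituting, the words $L a_j R$ may again contain $\lambda$-forbidden subwords straddling a junction $L$--$a_j$ or $a_j$--$R$, so the $\lambda$-semicanonicity of the $a_j$ as generalized fractional powers does not by itself terminate the process. What rescues the argument is the strict drop of $|U|$ at every step (guaranteed by $|a_j| < |a_h|$, which is the content of Proposition~\ref{reduct_power}); everything else is bookkeeping: checking that the occurrence absorbing the forbidden word is a virtual member of the chart, that the multi-turn support lands in $\Dp(V)$, and that the resulting monomials stay inside $V$.
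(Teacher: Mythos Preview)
Your proposal is correct and follows essentially the same approach as the paper: identify a maximal occurrence $a_h$ containing a $\lambda$-forbidden subword, observe that $\LM(a_h) > \lambda \geqslant \tau$ makes it a (virtual) member of the chart, apply Proposition~\ref{reduct_power} to get a multi-turn to shorter $\lambda$-semicanonical $a_j$'s, and iterate using the strict drop in word length for termination. The only cosmetic difference is that the paper first proves the special case modulo $\Dp(\langle U\rangle_d)$ and then deduces the general case from the inclusion $\Dp(\langle U\rangle_d) \subseteq \Dp(V)$, whereas you argue directly in $V$ and specialize at the end; both orderings are fine.
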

\begin{proof}
Suppose $U\in \Fr$ and $a_h$ is a maximal occurrence of a generalized fractional power in $U$, $U = La_hR$. Suppose $a_h$ contains a $\lambda$-forbidden word. Then, clearly, $a_h$ is a virtual member of the chart of $U$. From Proposition~\ref{reduct_power}, it follows that there exists an elementary multi-turn $a_h \mapsto \sum_{\substack{j = 1 \\ j\neq h}}^k a_i$, where $j = 1, \ldots, k$, $j\neq h$, are $\lambda$-semicanonical generalized fractional powers with the word length in $\Fr$ strictly smaller than the word length of $a_h$. Then $La_jR$ are derived monomials of $U$ and
\begin{equation*}
U = La_hR = \sum\limits_{\substack{j = 1 \\ j\neq h}}^k La_jR \mod \Dp(\langle U\rangle_d),
\end{equation*}
where the word length in $\Fr$ of all monomials appearing in the right-hand side is strictly smaller than the word length of $U$. We continue the reduction process for the obtained monomials if necessary. Since word length of the monomials strictly decreases after each step of the reduction, the process finishes and we obtain the result as a sum of $\lambda$-semicanonical words.

Assume $U \in V = \langle U_1, \ldots, U_k\rangle_d$. Since $V$ is generated by monomials and closed under taking derived monomials, $\Dp(\langle U\rangle_d) \subseteq \Dp(V)$. Hence, $U$ is equal to the same resulting sum of $\lambda$-semicanonical monomials as above modulo $\Dp(V)$.
\end{proof}

The most important property of $\lambda$-semicanonical word for us is the following.
\begin{proposition}[Non-degeneracy]
\label{non_degeneracy}
Suppose $a_h$ is a $\lambda$-semicanonical generalized fractional power, $a_h \mapsto \sum_{\substack{j = 1 \\ j\neq h}}^{l} a_j$ is an elementary multi-turn. Then there exists $a_j$, $j\neq h$, such that $\LM(a_j) \geqslant \tau$.
\end{proposition}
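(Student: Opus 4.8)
The plan is to contrast the behaviour of a $\lambda$-semicanonical generalized fractional power under an elementary multi-turn with the ``degenerate'' outcome that Proposition~\ref{transversality} already forbids. Recall from the proof of Proposition~\ref{transversality} that if \emph{all} monomials $a_j$, $j \neq h$, in an elementary multi-turn had $\LM$-measure $< \tau$, then (since each $a_j$ has the same initial and final points $I,F$ in the $v$-diagram) after multiplying on the left by $b_1^{-1}$ and on the right by $b_2^{-1}$ — where $b_1, b_2$ are the shortest paths from $I$ to $O$ and from $O$ to $F$ — we would obtain a Laurent polynomial $P(v,w) = b_1^{-1}\bigl(\sum_{j \neq h} a_j\bigr) b_2^{-1}$ satisfying~\eqref{vanish_in_field}, yet of the shape $\sum_l w^{k_l}$ (cases~\ref{small_v_arc1},~\ref{small_v_arc2}) or a single monomial $b_1^{-1}b' b_2^{-1}$ (case~\ref{small_v_arc3}). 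Neither form vanishes in $\mathbb{Z}_2(w)$. So I would first argue that the same dichotomy applies to the \emph{full} elementary multi-turn $\sum_{j=1}^l a_j$: since $a_h$ also has the same $(I,F)$, the element $b_1^{-1}\bigl(\sum_{j=1}^l a_j\bigr)b_2^{-1}$ is again a Laurent polynomial $P(v,w)$ with $P((1+w)^{-1},w) = 0$; if every $a_j$ (now including $j=h$) had $\LM$-measure $< \tau$, the same computation shows $P$ is a nonzero sum of powers of $w$ or a single monomial, a contradiction. Hence at least one $a_j$, $j \in \{1,\dots,l\}$, has $\LM(a_j) \geqslant \tau$.

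The remaining — and genuinely new — content is to rule out the possibility that the \emph{only} such large $a_j$ is $a_h$ itself, i.e. that $\LM(a_h) \geqslant \tau$ but $\LM(a_j) < \tau$ for all $j \neq h$. This is exactly where $\lambda$-semicanonicity enters. First I would observe that since $a_h$ is a member of an elementary multi-turn of one of the types~\eqref{mturn1}--\eqref{mturn6}, $a_h$ is (after cancellations) of one of the explicit shapes listed there; being $\lambda$-semicanonical means $a_h$ contains no subword $v_m^{-1}$ with $\LM(v_m) > \lambda$. The assumption ``$\LM(a_j) < \tau$ for all $j\neq h$'' together with the previous paragraph's analysis forces the configuration to be exactly one of~\ref{small_v_arc1},~\ref{small_v_arc2},~\ref{small_v_arc3}: all the $a_j$ with $j \neq h$ are of the form $b_1 w^{k_j} b_2$ with $\LM(b_1) + \LM(b_2) < \tau$, or there is a unique such short path. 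But then, reading off the four lines of the corresponding expression~\eqref{mturn1}--\eqref{mturn6}, the ``long'' term $a_h$ is forced to contain a factor $v_m^{-1}$ whose complementary part $v_i v_f$ has $\LM$-measure $< \tau \ll 1-\lambda$ — so $\LM(v_m) = 1 - \LM(v_i) - \LM(v_f) > \lambda$. (Concretely: in the configurations of~\eqref{path_type1} where $I,F$ both lie on the $v$-arc with the short path avoiding or barely touching $O$, one of the four algebraic forms of $a_h$ is $v_m^{-1}v_i^{-1}P(v,w)v_iv_m$-type, and when $P$ has a constant-like term this contributes precisely $v_m^{-1}$-words of large measure.) This contradicts $a_h$ being $\lambda$-semicanonical. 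Therefore the large term cannot be $a_h$, and some $a_j$ with $j \neq h$ has $\LM(a_j) \geqslant \tau$ — which, reading the statement literally, is already covered, but the non-degeneracy content is that this holds \emph{regardless} of $a_h$'s size.

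Concretely the steps, in order, would be: (1) fix the initial and final points $I,F$ of $a_h$ in the $v$-diagram, noting $\LM(a_h) > \varepsilon$ so they are well-defined; (2) reduce to the case where all $a_j$ ($j \neq h$) share these points and are short, quoting verbatim the path-classification~\ref{small_v_arc1}--\ref{small_v_arc3} from the proof of Proposition~\ref{transversality}; (3) extract the Laurent polynomial $P(v,w) = b_1^{-1}(\sum_j a_j)b_2^{-1}$ and use~\eqref{vanish_in_field} to see $P = 0$ in $\mathbb{Z}_2(w)$; (4) compute $P$ explicitly from the short forms and observe it is a nonzero sum of powers of $w$ (or one monomial), contradicting $P=0$ unless some $a_j$, $j\neq h$, is long; (5) separately, to establish non-degeneracy in the stated form, rule out the degenerate branch ``only $a_h$ is long'' by showing that this branch forces $a_h \supseteq v_m^{-1}$ with $\LM(v_m) > \lambda$, contradicting $\lambda$-semicanonicity. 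The main obstacle I anticipate is step (5): one has to go through all six families~\eqref{mturn1}--\eqref{mturn6} and all the sub-configurations of~\eqref{path_type1}--\eqref{path_type3}, and in each verify that when the complement of $a_h$ inside the multi-turn is short (measure $< \tau$), $a_h$ itself necessarily exhibits a $\lambda$-forbidden segment. This is a finite but somewhat tedious case check; the key uniform point making it work is that $\tau \geqslant 10\varepsilon$ is tiny compared to $1-\lambda$, so ``short complement'' immediately yields ``$v_m$-part of measure close to $1$''.
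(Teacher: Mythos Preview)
Your overall plan is in the right spirit and parallels the paper: assume $\LM(a_j) < \tau$ for all $j\neq h$, classify the short $a_j$ as $b_1 w^{k_j} b_2$ exactly as in the proof of Proposition~\ref{transversality}, and seek a contradiction with~\eqref{vanish_in_field}. The gap is in your step~(5). You claim that ``all $a_j$, $j\neq h$, short'' by itself forces $a_h$ to contain a $\lambda$-forbidden subword $v_m^{-1}$, by ``reading off the four lines'' of the relevant type. This is not a valid combinatorial deduction. In the very configuration the paper treats in detail (case~1.1, first picture), the four admissible shapes of $a_h$ are $v_m v_f M_h(v,w) v_f^{-1}$, $v_m v_f M_h(v,w) v_i v_m$, $v_i^{-1} M_h(v,w) v_i v_m$, $v_i^{-1} M_h(v,w) v_f^{-1}$: none of them carries a $v_m^{-1}$ at the boundary, and nothing prevents $M_h$ from being, say, $v^5$, which gives a perfectly $\lambda$-semicanonical $a_h$ with no negative $v$-segments whatsoever. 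So the assumption on the $a_j$, $j\neq h$, does not by itself force a forbidden subword in $a_h$; your parenthetical about a ``$v_m^{-1}v_i^{-1}P(v,w)v_iv_m$-type'' form does not match the actual list. (Your step~(4) is also slightly off: the polynomial $P$ you form includes the $a_h$-term, so it is \emph{not} a sum of powers of $w$; the contradiction cannot come from that observation alone.)

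What actually closes the argument is using $\lambda$-semicanonicity in the opposite direction, as an \emph{input} constraining $M_h$ rather than as an \emph{output} to be contradicted. Since $a_h$ is $\lambda$-semicanonical and the boundary pieces $v_i^{\pm 1}, v_f^{\pm 1}$ are short, the monomial $M_h(v,w)$ cannot contain any negative power of $v$ (a single $v^{-1}$ inside $M_h$ would already be $\lambda$-forbidden). Multiplying the full support $\sum_{j=1}^{l} a_j$ by $b_1^{-1}$ on the left and $b_2^{-1}$ on the right as in your step~(3) yields $\widetilde{M}_h(v,w) + \sum_{j\neq h} w^{k_j}$, where $\widetilde{M}_h$ is one of $vM_h$, $vM_hv$, $M_hv$, $M_h$ and hence still has only non-negative $v$-powers. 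By Proposition~\ref{transversality} applied to this sum, $\LM(\widetilde{M}_h)\geqslant\tau$, so $\widetilde{M}_h$ contains at least one \emph{positive} power of $v$. But then $\widetilde{M}_h((1+w)^{-1},w)$ is $w^{m}(1+w)^{-n}$ with $n\geqslant 1$, which cannot equal a Laurent polynomial $\sum_{j\neq h} w^{k_j}$ in $\mathbb{Z}_2(w)$; this contradicts~\eqref{vanish_in_field}. In short, the role of $\lambda$-semicanonicity is to rule out negative $v$-powers in $M_h$ \emph{a priori}, after which non-vanishing is a statement about rational functions --- not to produce a forbidden subword \emph{a posteriori} from the path picture.
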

\begin{proof}
Let $I$ and $F$ be the initial and the final points of the paths corresponding to $a_j$, $j = 1, \ldots, l$, in the $v$-diagram. There are the following possible positions of $I$ and $F$:
\begin{enumerate}[label=\textbf{Case~\arabic*}, ref=Case~\arabic*]
\item
\label{non_degeneracy_small_v_arc}
The points $I$ and $F$ lie on the $v$-arc.
\item
\label{non_degeneracy_small_w_arc}
The points $I$ and $F$ lie on a $w$-arc.
\item
\label{non_degeneracy_small_v_w_arc}
The point $I$ lies on the $v$-arc, the point $F$ lies on a $w$-arc.
\item
\label{non_degeneracy_small_w_v_arc}
The point $I$ lies on a $w$-arc, the point $F$ lies on the $v$-arc.
\end{enumerate}

Consider \ref{non_degeneracy_small_v_arc} for $a_1, \ldots, a_l$. Assume the contrary, that is, $\LM(a_j) < \tau$, $j = 1, \ldots, l$, $j \neq h$. Recall possible forms of monomials of $\LM$-measure less than $\tau$ (we enumerated them in the proof of Proposition~\ref{transversality}). We denote the smallest path from $I$ to $F$ by $b^{\prime}$, the smallest path from $I$ to $O$ by $b_1$, and the smallest path from $O$ to $F$ by $b_2$. Since $\LM(a_j) < \tau$ for $j \neq h$, the smallest path between $I$ and $F$ is necessarily of $\LM$-measure less than $\tau$. Then there are the following possibilities (for a graphical illustration see Proposition~\ref{transversality}).
\begin{enumerate}[label=\ref{small_v_arc}.\arabic*]
\item
\label{non_degeneracy_small_v_arc1}
$b^{\prime}$ contains the point $O$;
\item
\label{non_degeneracy_small_v_arc2}
$b^{\prime}$ does not contain the point $O$ and $\LM(b_1) + \LM(b_2) < \tau$;
\item
\label{non_degeneracy_small_v_arc3}
$b^{\prime}$ does not contain the point $O$ and $\LM(b_1) + \LM(b_2) \geqslant \tau$.
\end{enumerate}

We will study the following particular configuration in~\ref{non_degeneracy_small_v_arc1} in detail. Recall that we use the notation $v = v_iv_mv_f$.
\begin{center}
\begin{tikzpicture}
\begin{scope}
\coordinate (O) at ($(0,0)+(90:1.5 and 2.2)$);
\coordinate (Si) at ($(0,0)+(60:1.5 and 2.2)$);
\coordinate (Se) at ($(0,0)+(130:1.5 and 2.2)$);

%\path (O) arc (90:60:1.5 and 2.2) coordinate[pos= 0.5] (vi);
\coordinate (vi) at (0.39, 2.125);
%\path (Si) arc (60:-360+130:1.5 and 2.2) coordinate[pos= 0.5] (vm);
\coordinate (vm) at (0.13, -2.19);
%\path (Se) arc (130:90:1.5 and 2.2) coordinate[pos= 0.5] (vf);
\coordinate (vf) at (-0.513, 2.06);

%\draw[black, thick] (0, 0) ellipse (1.5 and 2.2);
\draw[black, thick, arrow=0.6] (O) arc (90:60:1.5 and 2.2);
\node[right, yshift=3] at (vi) {$v_i$};
\draw[black, thick, arrow=0.6] (Si) arc (60:-360+130:1.5 and 2.2);
\node[below] at (vm) {$v_m$};
\draw[black, thick, arrow=0.5] (Se) arc (130:90:1.5 and 2.2);
\node[left, yshift=3] at (vf) {$v_f$};

\node[circle,fill,inner sep=1.2] at (O) {};
\path let \p1 = (O) in node at (\x1, \y1+8) {$O$};

\node[circle,fill,inner sep=1.2] at (Si) {};
\node[right] at (Si) {$I$};
\node[circle,fill,inner sep=1.2] at (Se) {};
\node[left] at (Se) {$F$};

\draw[black, thick, reversearrow=0.3] (0, 2.2+0.8) ellipse (0.6 and 0.8) node at (0.3, 2.2+1.8) {$w^k$};
\draw[black, thick, reversearrow=0.3] (0, 2.2+0.7*0.6) ellipse (0.5*0.6 and 0.7*0.6);

\draw[black, thick, reversearrow=0.7] (0, 2.2-0.8) ellipse (0.6 and 0.8) node at (0.3, 2.2-1.7) {$w^{-k}$};
\draw[black, thick, reversearrow=0.7] (0, 2.2-0.7*0.6) ellipse (0.5*0.6 and 0.7*0.6);
\node at (0, -3.5) {$\begin{aligned}
&b_1 = v_i^{-1}, b_2 = v_f^{-1},\\
&b^{\prime} = v_i^{-1}v_f^{-1};
\end{aligned}$};
\end{scope}
\end{tikzpicture}
\end{center}
Returning to pictures~\eqref{path_type1}, we obtain that the following list of generalized fractional powers corresponds to the above picture:
\begin{align*}
&v_mv_fM(v, w)v_f^{-1},\\
&v_mv_fM(v, w)v_iv_m,\\
&v_i^{-1}M(v, w)v_iv_m,\\
&v_i^{-1}M(v, w)v_f^{-1}.
\end{align*}
So, every $a_j$, $j = 1, \ldots, l$, is of the above form. Denote possible forms of $a_h$ by
\begin{align*}
&v_mv_fM_h(v, w)v_f^{-1},\\
&v_mv_fM_h(v, w)v_iv_m,\\
&v_i^{-1}M_h(v, w)v_iv_m,\\
&v_i^{-1}M_h(v, w)v_f^{-1}.
\end{align*}
Since $a_h$ is $\lambda$-semicanonical, the monomial $M_h(v, w)$ does not contain negative powers of $v$. By the assumption, $\LM(a_j) < \tau$ for $j\neq h$; hence,
\begin{equation*}
a_j = v_i^{-1}w^{k_j}v_f^{-1}.
\end{equation*}
So, there are the following possible forms of the elementary multi-turn $a_h \mapsto \sum_{\substack{j = 1 \\ j\neq h}}^{l} a_j$ depending on the form of $a_h$:
\begin{align*}
&v_mv_fM_h(v, w)v_f^{-1} \mapsto \sum\limits_{j = 1}^{l} v_i^{-1}w^{k_j}v_f^{-1},\\
&v_mv_fM_h(v, w)v_iv_m \mapsto \sum\limits_{j = 1}^{l} v_i^{-1}w^{k_j}v_f^{-1},\\
&v_i^{-1}M_h(v, w)v_iv_m \mapsto \sum\limits_{j = 1}^{l} v_i^{-1}w^{k_j}v_f^{-1},\\
&v_i^{-1}M_h(v, w)v_f^{-1} \mapsto \sum\limits_{j = 1}^{l} v_i^{-1}w^{k_j}v_f^{-1}.
\end{align*}

We transform every elementary multi-turn above to an elementary multi-turn of a monomial over $v, w$, multiplying every expression above by corresponding generalized fractional powers:
\begin{align*}
&vM_h(v, w) \mapsto \sum\limits_{j = 1}^{l} w^{k_j},\\
&vM_h(v, w)v \mapsto \sum\limits_{j = 1}^{l} w^{k_j},\\
&M_h(v, w)v \mapsto \sum\limits_{j = 1}^{l} w^{k_j},\\
&M_h(v, w) \mapsto \sum\limits_{j = 1}^{l} w^{k_j}.
\end{align*}
That is, we obtain an elementary multi-turn of the form
\begin{equation}
\label{final_multi_turn}
\widetilde{M}_h(v, w) \mapsto \sum\limits_{j = 1}^{l} w^{k_j},
\end{equation}
where the monomial $\widetilde{M}_h(v, w)$ does not contain negative powers of $v$. Since all monomials on the right-hand side are of zero $\LM$-measure, by Proposition~\ref{transversality}, $\LM(\widetilde{M}_h(v, w)) \geqslant \tau$. Hence, the monomial $\widetilde{M}_h(v, w)$ contains at least one positive power of $v$. Then, clearly, the polynomial
\begin{equation*}
P(v, w) = \widetilde{M}(v, w) + \sum\limits_{j = 1}^{l} w^{k_j}
\end{equation*}
does not satisfy condition~\eqref{vanish_in_field}, so, \eqref{final_multi_turn} can not be a multi-turn. We obtain a contradiction.

All the rest of the configurations in \ref{non_degeneracy_small_v_arc} --- \ref{non_degeneracy_small_w_v_arc} are processed in the same way.
\end{proof}

Using Corollary~\ref{lemma_about_generating}, Proposition~\ref{correspondence_to_tensor_product} and Proposition~\ref{non_degeneracy}, we obtain the following important statement.
\begin{corollary}
\label{non_trivial_spaces}
Assume $W = \langle Z\rangle_d$, where $Z$ is a monomial. Then the space $W / (\Dp(W) + \Low(W))$ is non-trivial if and only if there exists a $\lambda$-semicanonical monomial $\widetilde{Z} \in W$ such that $W = \langle \widetilde{Z}\rangle_d$. Moreover, if $X$ is a $\lambda$-semicanonical monomial such that $X \in W \setminus \Low(W)$ (and then $W = \langle X\rangle_d$), then $X + \Dp(W) + \Low(W) \neq 0$ as an element of $W / (\Dp(W) + \Low(W))$.
\end{corollary}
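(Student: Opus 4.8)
The plan is to prove the two directions of the equivalence together with the ``moreover'' clause, leveraging the tensor-product decomposition of Proposition~\ref{correspondence_to_tensor_product} and the non-degeneracy statement of Proposition~\ref{non_degeneracy}. First I would dispatch the easy implication: if $W/(\Dp(W) + \Low(W))$ is non-trivial, pick any monomial $Z' \in W \setminus (\Dp(W) + \Low(W))$; since $Z'$ is a monomial and $\Low(W)$ is generated by monomials, $Z' \notin \Low(W)$, so by Lemma~\ref{derived_spaces_equality}(1) we get $W = \langle Z'\rangle_d$. Applying Corollary~\ref{lemma_about_generating} to $Z'$, it equals a sum of $\lambda$-semicanonical monomials from $W$ modulo $\Dp(W)$; at least one summand $\widetilde Z$ must avoid $\Low(W)$ (otherwise $Z' \in \Dp(W) + \Low(W)$), and then again by Lemma~\ref{derived_spaces_equality}(1), $W = \langle \widetilde Z\rangle_d$ with $\widetilde Z$ $\lambda$-semicanonical.

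For the converse and the ``moreover'' clause — which I expect to be the heart of the matter — suppose $X$ is a $\lambda$-semicanonical monomial with $X \in W \setminus \Low(W)$. By Lemma~\ref{derived_spaces_equality}(1), $W = \langle X\rangle_d$, so we may as well compute in $\langle X\rangle_d$. Let $k = K_\tau(X)$ and form the spaces $A_i$, $\Low_i$ attached to $X$ as in~\eqref{tens_prod_components}, \eqref{tens_prod_components_low}, with virtual members of the chart $a^{(1)}, \ldots, a^{(k)}$. Proposition~\ref{correspondence_to_tensor_product} gives
\begin{equation*}
\langle X\rangle_d / (\Dp(\langle X\rangle_d) + \Low(\langle X\rangle_d)) \cong \bigotimes_{i=1}^k A_i/(\Dp(A_i) + \Low_i),
\end{equation*}
and under this isomorphism the class of $X$ corresponds to $\bigotimes_{i=1}^k (a^{(i)} + \Dp(A_i) + \Low_i)$. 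So it suffices to show each tensor factor $a^{(i)} + \Dp(A_i) + \Low_i$ is non-zero in $A_i/(\Dp(A_i) + \Low_i)$. Here is where $\lambda$-semicanonicity is used: each $a^{(i)}$ is a subword of $X$, hence itself $\lambda$-semicanonical. I would argue that if $a^{(i)} \in \Dp(A_i) + \Low_i$, then — since $\Dp(A_i)$ consists precisely of supports of elementary multi-turns (as established in the proof of Lemma~\ref{fall_through_linear_dep}) and $\Low_i$ is spanned by incident monomials giving derived monomials of strictly smaller $f$-characteristic — writing $a^{(i)}$ as such a combination and additively cancelling produces an elementary multi-turn $a^{(i)} \mapsto \sum_{j\neq h} a^{(i)}_j$ whose support lands entirely in $\Low_i$, i.e. every $a^{(i)}_j$ has $\LM$-measure $< \tau$ (a maximal occurrence of $\LM$-measure $\geq \tau$ is always a virtual member of the chart, so it could not lie in $\Low_i$). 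But Proposition~\ref{non_degeneracy} says a multi-turn of a $\lambda$-semicanonical generalized fractional power always produces a summand of $\LM$-measure $\geq \tau$ — contradiction. Hence $a^{(i)} \notin \Dp(A_i) + \Low_i$ for every $i$, the tensor product is non-zero, and both the non-triviality of $W/(\Dp(W)+\Low(W))$ and $X + \Dp(W) + \Low(W) \neq 0$ follow.

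The main obstacle I anticipate is the careful bookkeeping in the step ``$a^{(i)} \in \Dp(A_i) + \Low_i$ forces an elementary multi-turn with support inside $\Low_i$'': one must use that $\Dp(A_i)$ is closed under addition of supports (so a sum of supports is a single support), that $a^{(i)}$ itself is not in $\Low_i$ (as $\LM(a^{(i)}) \geq \tau$ since $a^{(i)}$ is a virtual member, hence actually a member, of the chart of $X$ — this needs a word of justification, e.g. that $X \notin \Low(X)$ guarantees the chart position is ``at full measure''), and that the remaining monomials of the support, being incident to $a^{(i)}$, are precisely governed by $\Low_i$ via~\eqref{tens_prod_components_low}. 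Once these are pinned down, the contradiction with Proposition~\ref{non_degeneracy} is immediate. A minor secondary point is making sure the passage between $W$ and $\langle X\rangle_d$ is legitimate, which is exactly Lemma~\ref{derived_spaces_equality}(1), and that ``$W = \langle X\rangle_d$'' is recorded explicitly since the statement asserts it parenthetically.
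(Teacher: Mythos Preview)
Your proposal is correct and follows essentially the same route as the paper: both directions proceed exactly as you outline, with the converse and the ``moreover'' clause handled via the tensor decomposition of Proposition~\ref{correspondence_to_tensor_product} together with Proposition~\ref{non_degeneracy} applied to each $\lambda$-semicanonical virtual member $a^{(i)}$. The paper compresses your ``main obstacle'' paragraph into a single sentence (``by Proposition~\ref{non_degeneracy}, we obtain $a^{(i)} \notin \Dp(A_i) + \Low_i$''), but the unpacking you give is exactly what is implicit there. One small correction to your parenthetical: the cleanest reason $a^{(i)} \notin \Low_i$ is simply that $L_i a^{(i)} R_i = X \notin \Low(W)$, directly from definition~\eqref{tens_prod_components_low}; you do not need (and should not claim) that a virtual member is automatically a member of the chart, since virtual members need only have $\LM$-measure $\geq \tau - 2\varepsilon$.
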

\begin{proof}
Assume the quotient space $W / (\Dp(W) + \Low(W))$ is non-trivial. Consider the set of all monomials from $W \setminus \Low(W)$. If every monomial from this set belongs to $\Dp(W) + \Low(W)$, then the quotient space $W / (\Dp(W) + \Low(W))$ is trivial. Hence, there exists a monomial $Z^{\prime} \in W \setminus \Low(W)$ such that $Z^{\prime} \notin \Dp(W) + \Low(W)$. From Corollary~\ref{lemma_about_generating} it follows that
\begin{equation*}
Z^{\prime} = \sum\limits_{i = 1}^l Z_i \mod \Dp(W),
\end{equation*}
where $Z_i \in W$ are $\lambda$-semicanonical monomials. Since $Z^{\prime} \notin \Dp(W) + \Low(W)$, there exists a monomial $Z_{i_0} \in W \setminus \Low(W)$ in this sum. Hence, from Lemma~\ref{derived_spaces_equality} it follows that $W = \langle Z_{i_0}\rangle_d$.

Assume $W = \langle \widetilde{Z}\rangle_d$, where $\widetilde{Z}$ is $\lambda$-semicanonical monomial. Then, by Proposition~\ref{correspondence_to_tensor_product}, we have
\begin{equation*}
W / (\Dp(W) + \Low(W)) \cong A_1 / (\Dp(A_1) + \Low_1)\otimes \ldots \otimes A_k / (\Dp(A_k) + \Low_k),
\end{equation*}
where $A_1, \ldots, A_k$ are spaces generated by generalized fractional powers and corresponding to each place in the chart of $\widetilde{Z}$ by formula~\eqref{tens_prod_components}. In particular, if $a^{(1)}, \ldots, a^{(k)}$ are all virtual members of the chart of $\widetilde{Z}$ enumerated from left to right, then $a^{(1)} \in A_1, \ldots, a^{(k)} \in A_k$.

Recall that $\Dp(A_i)$ consists of supports of elementary multi-turns. Therefore, by Proposition~\ref{non_degeneracy}, we obtain $a^{(i)} \notin \Dp(A_i) + \Low_i, i = 1, \ldots, k$. Hence,
\begin{equation}
\label{non_degenerated_tensor}
(a^{(1)} + \Dp(A_1) + \Low_1) \otimes \ldots \otimes (a^{(k)} + \Dp(A_k) + \Low_k) \neq 0.
\end{equation}
Then, by definition of the isomorphism
\begin{equation*}
\overline{\mu}_2[\widetilde{Z}] : A_1 / (\Dp(A_1) + \Low_1)\otimes \ldots \otimes A_k / (\Dp(A_k) + \Low_k) \to W / (\Dp(W) + \Low(W)),
\end{equation*}
we have
\begin{multline*}
\overline{\mu}_2[\widetilde{Z}]((a^{(1)} + \Dp(A_1) + \Low_1) \otimes \ldots \otimes (a^{(k)} + \Dp(A_k) + \Low_k)) \\
= \mu[\widetilde{Z}](a^{(1)}\otimes \ldots \otimes a^{(k)}) + \Dp(W) + \Low(W) = \widetilde{Z} + \Dp(W) + \Low(W).
\end{multline*}
Then, by~\eqref{non_degenerated_tensor}, we obtain that $\widetilde{Z} + \Dp(W) + \Low(W) \neq 0$. Thus, the quotient space $W / (\Dp(W) + \Low(W))$ is non-trivial.

Let $X$ be a $\lambda$-semicanonical monomial from $W \setminus \Low(W)$. Then, by Lemma~\ref{derived_spaces_equality}, we have $W = \langle X\rangle_d$. Therefore, by the same argument as above, we obtain that $X + \Dp(W) + \Low(W) \neq 0$. This completes the proof.
\end{proof}

\subsection{How to calculate a basis in $\mathbb{Z}_2\Fr / \Ideal$ using $\lambda$-semicanonical words}
\label{calculate_basis_section}
As before, we denote the space $\langle U_1, \ldots, U_k\rangle_d$, where $U_1, \ldots, U_k$ are monomials, by $V$. Putting together the results of Corollary~\ref{finite_gen_subspaces_structure}, Theorem~\ref{structure_of_quotient_space} and Proposition~\ref{component_subspaces_structure}, we obtain
\begin{equation}
\label{fin_gen_space_final}
(V + \langle\mathcal{T}^{\prime}\rangle) / \langle\mathcal{T}^{\prime}\rangle \cong \bigoplus\limits_{n = 0}^{N - 1}\bigoplus\limits_{i \in I^{\prime}_n} V_i^{(n)}/(\Dp(V_i^{(n)}) + \Low(V_i^{(n)})),
\end{equation}
where $V_i^{(n)} = \langle Z_i^{(n)}\rangle_d$, $Z_i^{(n)}$ is a monomial from $\Ft_nV$, $V_i^{(n)} \nsubseteq \Ft_{n + 1}V$ and the spaces $V_i^{(n)}$ are pairwise different.

\begin{proposition}
\label{basis_correspondence}
Assume
\begin{equation*}
\left\lbrace \overline{W}^{(i, n)}_j \mid \overline{W}^{(i, n)}_j \in V_i^{(n)}/(\Dp(V_i^{(n)}) + \Low(V_i^{(n)})), j\in \mathbb{N}\right\rbrace,
\end{equation*}
is a basis of $V_i^{(n)}/(\Dp(V_i^{(n)}) + \Low(V_i^{(n)}))$, where $n = 0, \ldots, N - 1$ and $i \in I_n^{\prime}$. Let $W^{(i, n)}_j \in V_i^{(n)}$ be an arbitrary representative of the coset $\overline{W}^{(i, n)}_j$. Then
\begin{equation*}
\bigcup\limits_{n = 0}^{N - 1}\bigcup\limits_{i \in I^{\prime}_n}\left\lbrace W^{(i, n)}_j + \langle\mathcal{T}^{\prime}\rangle \mid j \in \mathbb{N}\right\rbrace
\end{equation*}
is a basis of $(V + \langle\mathcal{T}^{\prime}\rangle) / \langle\mathcal{T}^{\prime}\rangle$.
\end{proposition}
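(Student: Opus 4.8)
The plan is to reduce to $V/\Dp(V)$ by Corollary~\ref{finite_gen_subspaces_structure}, use the induced filtration, and lift a basis from the associated graded space. Concretely, the isomorphism of Corollary~\ref{finite_gen_subspaces_structure} sends $W + \langle\mathcal{T}^{\prime}\rangle$ to $W + \Dp(V)$, so it suffices to show that $\{\,W^{(i,n)}_j + \Dp(V)\,\}$ is a basis of $M := V/\Dp(V)$. The space $M$ carries the filtration $\Ft_n M = (\Ft_nV + \Dp(V))/\Dp(V)$, which by Proposition~\ref{filtration_finite_property} has finitely many levels with $\Ft_0 M = M$ and $\Ft_N M = 0$. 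I would then invoke the elementary fact that for such a finite, exhaustive, separated filtration a family $\{b_\alpha\}$ with $b_\alpha\in\Ft_{n(\alpha)}M$ is a basis of $M$ as soon as, for each $n$, the classes of those $b_\alpha$ with $n(\alpha)=n$ form a basis of $\Ft_n M/\Ft_{n+1}M$; this is proved by a short downward induction on $n$. Each $W^{(i,n)}_j$ lies in $V_i^{(n)}\subseteq\Ft_nV$, hence its class lies in $\Ft_n M$, and the index of the relevant summand assigns to it the level $n$.

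The main technical point is to identify $\Ft_n M/\Ft_{n+1}M$ with $\bigoplus_{i\in I_n^{\prime}} V_i^{(n)}/(\Dp(V_i^{(n)}) + \Low(V_i^{(n)}))$ via Theorem~\ref{structure_of_quotient_space} together with Proposition~\ref{component_subspaces_structure}, and to check that under this identification the class of $W^{(i,n)}_j$ goes to the tuple with $\overline{W}^{(i,n)}_j$ in the $i$-th coordinate and $0$ elsewhere. Since $W^{(i,n)}_j\in V_i^{(n)}$ is a $\mathbb{Z}_2$-sum of monomials, I would split these monomials into the ones lying in $\Ft_{n+1}V$, which vanish both in $\Ft_n M/\Ft_{n+1}M$ and in $V_i^{(n)}/(\Dp(V_i^{(n)})+\Low(V_i^{(n)}))$ (because $\Ft_{n+1}V\cap V_i^{(n)}=\Low(V_i^{(n)})$ by Lemma~\ref{intersection_with_low}), and the ones $Z\notin\Ft_{n+1}V$. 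For the latter, $Z\in V_i^{(n)}\setminus\Low(V_i^{(n)})$ forces $\langle Z\rangle_d = V_i^{(n)}$ by Lemma~\ref{derived_spaces_equality}, so $Z$ sits in that single summand and Proposition~\ref{component_subspaces_structure}$(2)$ places the class of $Z + \Dp(\Ft_nV) + \Ft_{n+1}V$ in the $i$-th coordinate. Summing over the monomials of $W^{(i,n)}_j$ and using linearity of the isomorphism of Proposition~\ref{component_subspaces_structure}$(1)$ yields the desired tuple, which is nonzero precisely because $\overline{W}^{(i,n)}_j\neq 0$. Running the argument through the associated graded, rather than trying to track the composite isomorphism~\eqref{fin_gen_space_final} directly on representatives, is exactly what makes this bookkeeping manageable.

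Finally, since $\{\overline{W}^{(i,n)}_j\}_j$ is by hypothesis a basis of each summand $V_i^{(n)}/(\Dp(V_i^{(n)})+\Low(V_i^{(n)}))$, the family $\{(0,\dots,\overline{W}^{(i,n)}_j,\dots,0) : i\in I_n^{\prime},\, j\in\mathbb{N}\}$ is a basis of $\bigoplus_{i\in I_n^{\prime}} V_i^{(n)}/(\Dp(V_i^{(n)})+\Low(V_i^{(n)})) \cong \Ft_n M/\Ft_{n+1}M$. Applying the filtration lemma shows that $\bigcup_{n}\bigcup_{i\in I_n^{\prime}}\{\,W^{(i,n)}_j + \Dp(V) : j\in\mathbb{N}\,\}$ is a basis of $M$, and transporting along Corollary~\ref{finite_gen_subspaces_structure} gives the statement for $(V+\langle\mathcal{T}^{\prime}\rangle)/\langle\mathcal{T}^{\prime}\rangle$. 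Linear independence of this family also ensures that the listed cosets are pairwise distinct, so the union is genuinely indexed as written. The only real obstacle is the middle step: verifying that each \emph{chosen representative} — not merely each monomial at the top filtration level — is carried into the expected coordinate of the graded decomposition.
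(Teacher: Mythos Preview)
Your proposal is correct and follows essentially the same route as the paper: reduce to $V/\Dp(V)$ via Corollary~\ref{finite_gen_subspaces_structure}, identify each graded piece $\Ft_nM/\Ft_{n+1}M$ with $\bigoplus_{i\in I_n'} V_i^{(n)}/(\Dp(V_i^{(n)})+\Low(V_i^{(n)}))$ using Proposition~\ref{component_subspaces_structure} (and the equality $\Dp(V)\cap\Ft_nV=\Dp(\Ft_nV)$), and then lift the basis from the associated graded via the standard filtration argument. Your explicit decomposition of an arbitrary representative $W^{(i,n)}_j$ into monomials, together with Lemma~\ref{intersection_with_low} and Lemma~\ref{derived_spaces_equality}, makes rigorous the step the paper records as ``from~\eqref{isom_action} it easily follows''; the paper instead tracks a general element through the canonical isomorphisms $\pi_1,\pi_2$, but the content is the same.
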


\begin{proof}
While the statement is pretty obvious, we prefer to give a proof to recollect the previously stated facts.

In Proposition~\ref{component_subspaces_structure} we proved that there exists an isomorphism
\begin{equation*}
\varphi : \Ft_nV / (\Dp(\Ft_nV) + \Ft_{n + 1}V) \to \bigoplus\limits_{i \in I^{\prime}_n} V_i^{(n)}/(\Dp(V_i^{(n)}) + \Low(V_i^{(n)}))
\end{equation*}
that acts as follows
\begin{equation}
\label{isom_action}
\varphi(U + \Dp(\Ft_nV) + \Ft_{n + 1}V) = (0, \ldots, 0, U + \Dp(V_i^{(n)}) + \Low(V_i^{(n)}), 0, \ldots),
\end{equation}
where $U$ is a monomial, $U \in \Ft_nV \setminus \Ft_{n + 1}V$. Suppose
\begin{equation*}
\left\lbrace \overline{W}^{(i, n)}_j \mid \overline{W}^{(i, n)}_j \in V_i^{(n)}/(\Dp(V_i^{(n)}) + \Low(V_i^{(n)})), j\in \mathbb{N}\right\rbrace
\end{equation*}
is a basis of $V_i^{(n)}/(\Dp(V_i^{(n)}) + \Low(V_i^{(n)}))$. Then from~\eqref{isom_action} it easily follows that
\begin{equation*}
\bigcup\limits_{i \in I^{\prime}}\left\lbrace W^{(i, n)}_j + \Dp(\Ft_nV) + \Ft_{n + 1}V \mid j\in \mathbb{N}\right\rbrace,
\end{equation*}
where $W^{(i, n)}_j \in \Ft_nV$ is an arbitrary representative of the coset $\overline{W}^{(i, n)}_j$, is a basis of $\Ft_nV / (\Dp(\Ft_nV) + \Ft_{n + 1}V)$.

Recall that
\begin{equation*}
\Ft_n(V / \Dp(V)) = (\Ft_nV + \Dp(V)) / \Dp(V)
\end{equation*}
and, by the isomorphism theorems, we have the following canonical isomorphisms:
\begin{align*}
&\pi_1 : \Ft_n(V / \Dp(V)) / \Ft_{n + 1}(V / \Dp(V)) \to (\Ft_nV + \Dp(V)) / (\Ft_{n + 1}V + \Dp(V)),\\
&\pi_2 : (\Ft_nV + \Dp(V)) / (\Ft_{n + 1}V + \Dp(V)) \to \Ft_{n}V / (\Ft_{n + 1}V + \Dp(V) \cap \Ft_nV).
\end{align*}
Let us recall that from Proposition~\ref{fall_to_smaller_subspace} it follows that $\Dp(V) \cap \Ft_nV = \Dp(\Ft_{n}V)$. Therefore, we have\begin{align*}
\Ft_n(V / \Dp(V)) / \Ft_{n + 1}(V / \Dp(V))& \overset{\pi_1} {\longrightarrow} (\Ft_nV + \Dp(V)) / (\Ft_{n + 1}V + \Dp(V)) \\ &\overset{\pi_2}{\longrightarrow}
\Ft_{n}V / (\Ft_{n + 1}V + \Dp(\Ft_nV)).
\end{align*}
Let $W$ be an arbitrary element of $\Ft_nV$ (not necessarily a monomial). Then, by the well known construction of the canonical isomorphisms in the isomorphism theorems, we have
\begin{align*}
\pi_2 \circ \pi_1(W& + \Dp(V) + \Ft_{n + 1}(V / \Dp(V))) \\
&= \pi_2(W + \Dp(V) + \Ft_{n + 1}V) = W + \Ft_{n + 1}V + \Dp(\Ft_nV).
\end{align*}
So, since
\begin{equation*}
\bigcup\limits_{i \in I^{\prime}}\left\lbrace W^{(i, n)}_j + \Dp(\Ft_nV) + \Ft_{n + 1}V \mid j\in \mathbb{N}\right\rbrace
\end{equation*}
is a basis of $\Ft_{n}V / (\Ft_{n + 1}V + \Dp(\Ft_nV))$, we obtain that
\begin{equation}
\label{basis_in_component}
\bigcup\limits_{i \in I^{\prime}}\left\lbrace W^{(i, n)}_j + \Dp(V) + \Ft_{n + 1}(V / \Dp(V)) \mid j\in \mathbb{N}\right\rbrace
\end{equation}
is a basis of $\Ft_n(V / \Dp(V)) / \Ft_{n + 1}(V / \Dp(V))$.

We have the graded space
\begin{equation*}
\Gr(V / \Dp(V)) = \bigoplus\limits_{n = 0}^{N - 1}\Ft_n(V / \Dp(V)) / \Ft_{n + 1}(V / \Dp(V)),
\end{equation*}
where $\Ft_n(V / \Dp(V)) = (\Ft_nV + \Dp(V)) / \Dp(V)$. Assume that
\begin{equation*}
\left\lbrace \overline{Z}^{(n)}_j \mid \overline{Z}^{(n)}_j \in \Ft_n(V / \Dp(V)) / \Ft_{n + 1}(V / \Dp(V)), j\in \mathbb{N}\right\rbrace
\end{equation*}
is a basis of $\Ft_n(V / \Dp(V)) / \Ft_{n + 1}(V / \Dp(V))$, $n = 0, 1, \ldots, N - 1$. Then one can easily show that
\begin{equation*}
\bigcup\limits_{n = 0}^{N - 1}\left\lbrace Z^{(n)}_j \mid Z^{(n)}_j \in \Ft_n(V / \Dp(V)), j\in \mathbb{N}\right\rbrace,
\end{equation*}
where $Z^{(n)}_j$ is an arbitrary representative of the coset $\overline{Z}^{(n)}_j$, forms a basis of $V / \Dp(V)$. Hence, using this observation together with~\eqref{basis_in_component} and Corollary~\ref{finite_gen_subspaces_structure}, we obtain that
\begin{equation*}
\bigcup\limits_{n = 0}^{N - 1}\bigcup\limits_{i \in I^{\prime}_n}\left\lbrace W^{(i, n)}_j + \langle\mathcal{T}^{\prime}\rangle \mid j \in \mathbb{N}\right\rbrace
\end{equation*}
is a basis of $(V + \langle\mathcal{T}^{\prime}\rangle) / \langle\mathcal{T}^{\prime}\rangle$.
\end{proof}

\begin{proposition}
\label{spaces_monomials_correspondence}
Let $\lbrace X_j\rbrace$ be all the $\lambda$-semicanonical monomials of $V$. Then there exists one-to-one correspondence between all the different spaces $\langle X_j\rangle_d$ and all the spaces $V_i^{(n)}$ from~\eqref{fin_gen_space_final} such that $V_i^{(n)} / (\Dp(V_i^{(n)}) + \Low(V_i^{(n)}))$ is non-trivial. Namely,
\begin{enumerate}%[label={(\arabic*)}]
\item [$(1)$]
every space $V_i^{(n)}$ such that $V_i^{(n)} / (\Dp(V_i^{(n)}) + \Low(V_i^{(n)})) \neq 0$ is equal to some space $\langle X_j\rangle_d$;
\item[$(2)$]
every space $\langle X_j\rangle_d$ is equal to some space $V_i^{(n)}$ such that $V_i^{(n)} / (\Dp(V_i^{(n)}) + \Low(V_i^{(n)})) \neq 0$.
\end{enumerate}
\end{proposition}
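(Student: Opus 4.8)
The plan is to establish the two inclusions separately, using the structural results already in place. Recall from~\eqref{fin_gen_space_final} that the spaces $V_i^{(n)} = \langle Z_i^{(n)}\rangle_d$ are pairwise different, $V_i^{(n)} \nsubseteq \Ft_{n+1}V$, and by Proposition~\ref{component_subspaces_structure} every monomial $Z \in \Ft_nV \setminus \Ft_{n+1}V$ belongs to precisely one of the $V_i^{(n)}$. The main tool for both directions is Corollary~\ref{non_trivial_spaces}, which says that for a monomial $Z$ the space $\langle Z\rangle_d / (\Dp(\langle Z\rangle_d) + \Low(\langle Z\rangle_d))$ is non-trivial if and only if $\langle Z\rangle_d = \langle \widetilde{Z}\rangle_d$ for some $\lambda$-semicanonical monomial $\widetilde{Z}$, together with Lemma~\ref{derived_spaces_equality}, which guarantees that if $Z$ is a derived monomial of $Z_1$ lying in $\langle Z_1\rangle_d \setminus \Low(\langle Z_1\rangle_d)$ then $\langle Z_1\rangle_d = \langle Z\rangle_d$.

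First I would prove~$(1)$. Fix a space $V_i^{(n)} = \langle Z_i^{(n)}\rangle_d$ with $V_i^{(n)} / (\Dp(V_i^{(n)}) + \Low(V_i^{(n)})) \neq 0$. By Corollary~\ref{non_trivial_spaces} there is a $\lambda$-semicanonical monomial $\widetilde{Z}$ with $V_i^{(n)} = \langle \widetilde{Z}\rangle_d$. It remains to see that $\widetilde{Z}$ is one of the $X_j$, i.e.\ that $\widetilde{Z} \in V$; but $\widetilde{Z} \in \langle \widetilde{Z}\rangle_d = V_i^{(n)} \subseteq \Ft_nV \subseteq V$, so $\widetilde{Z}$ is indeed a $\lambda$-semicanonical monomial of $V$, and $V_i^{(n)} = \langle \widetilde{Z}\rangle_d$ is of the required form.

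Next I would prove~$(2)$. Let $X_j$ be a $\lambda$-semicanonical monomial of $V$. Since the filtration $\Ft_0 V \supseteq \Ft_1 V \supseteq \cdots$ exhausts at level $N$ (Proposition~\ref{filtration_finite_property}), there is a unique $n$ with $X_j \in \Ft_nV \setminus \Ft_{n+1}V$ (here using $\Ft_0 V = V$). By Proposition~\ref{component_subspaces_structure}$(2)$, $X_j$ belongs to precisely one of the spaces $V_i^{(n)} = \langle Z_i^{(n)}\rangle_d$, $i \in I'_n$. Because $X_j \notin \Ft_{n+1}V$ and $\Ft_{n+1}V \cap V_i^{(n)} = \Low(V_i^{(n)})$ by Lemma~\ref{intersection_with_low}, we get $X_j \in V_i^{(n)} \setminus \Low(V_i^{(n)})$; hence by Lemma~\ref{derived_spaces_equality}$(1)$, $\langle X_j\rangle_d = V_i^{(n)}$. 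Finally, since $X_j$ is a $\lambda$-semicanonical monomial with $X_j \in \langle X_j\rangle_d \setminus \Low(\langle X_j\rangle_d)$, the last assertion of Corollary~\ref{non_trivial_spaces} gives $X_j + \Dp(V_i^{(n)}) + \Low(V_i^{(n)}) \neq 0$, so $V_i^{(n)} / (\Dp(V_i^{(n)}) + \Low(V_i^{(n)})) \neq 0$, as required.

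Once both inclusions are in place, the correspondence is one-to-one because, as noted, the spaces $V_i^{(n)}$ appearing in~\eqref{fin_gen_space_final} are pairwise distinct, and two distinct $\lambda$-semicanonical monomials $X_j, X_{j'}$ with $\langle X_j\rangle_d = \langle X_{j'}\rangle_d$ simply index the same space. I do not expect a serious obstacle here: all the hard work (the tensor-product decomposition, non-degeneracy, and the filtration machinery) is already done, and the proof is essentially an exercise in matching up these results. The one point that needs a little care is the bookkeeping with the filtration level $n$ in~$(2)$ --- making sure that the $n$ for which $X_j \in \Ft_nV \setminus \Ft_{n+1}V$ is exactly the level at which the space $V_i^{(n)} = \langle X_j\rangle_d$ occurs in~\eqref{fin_gen_space_final}, which follows because $\Ft_nV = \langle \ldots, Z_i^{(n)}, \ldots\rangle_d$ is generated by monomials and closed under derived monomials, so $X_j \in \Ft_nV$ forces $X_j$ into some $V_i^{(n)}$ at that same level.
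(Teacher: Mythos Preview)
Your proof is correct and follows essentially the same approach as the paper's own proof: both directions invoke Corollary~\ref{non_trivial_spaces}, and for $(2)$ you locate the filtration level of $X_j$, place it in some $V_i^{(n)}$ via Proposition~\ref{component_subspaces_structure}, and then use Lemma~\ref{derived_spaces_equality} to identify $\langle X_j\rangle_d = V_i^{(n)}$. The only difference is that you cite Lemma~\ref{intersection_with_low} explicitly to get $X_j \notin \Low(V_i^{(n)})$, whereas the paper states this directly (since $\Low(V_i^{(n)}) \subseteq \Ft_{n+1}V$ by construction); your version is slightly more detailed but otherwise identical.
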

\begin{proof}
Consider a space $V_i^{(n)}$ such that $V_i^{(n)} / (\Dp(V_i^{(n)}) + \Low(V_i^{(n)})) \neq 0$. Then from Corollary~\ref{non_trivial_spaces} it follows that there exists a $\lambda$-semicanonical monomial $X_{j_0} \in V_i^{(n)}$ such that $V_i^{(n)} = \langle X_{j_0}\rangle_d$.

Let $X_j \in V$ be a $\lambda$-semicanonical monomial. Since $\Ft_NV = 0$, there exists a number $n_0$ such that $X_j \in \Ft_{n_0}V$ and $X_j \notin \Ft_{n_0 + 1}V$. Then, by Proposition~\ref{component_subspaces_structure}, $X_j \in V_{i}^{(n_0)}$ for some $i \in I^{\prime}_{n_0}$. Since $X_j \notin \Ft_{n_0 + 1}V$, we have $X_j \notin \Low(V_{i}^{(n_0)})$. Then, by Lemma~\ref{derived_spaces_equality}, $V_{i}^{(n_0)} = \langle X_j\rangle_d$. It remains to notice that, by Corollary~\ref{non_trivial_spaces}, we have $V_i^{(n_0)} / (\Dp(V_i^{(n_0)}) + \Low(V_i^{(n_0)})) \neq 0$.
\end{proof}

The next theorem follows from Proposition~\ref{basis_correspondence} and Proposition~\ref{spaces_monomials_correspondence}.
\begin{theorem}
\label{whole_quotient_ring_structure}
Let $\lbrace X_j\rbrace_{j \in \mathbb{N}}$ be all $\lambda$-semicanonical monomials from $\Fr$. Let $\lbrace V_i\rbrace_{i \in \mathbb{N}}$ be all the different spaces $\lbrace\langle X_j\rangle_d\rbrace_{j \in \mathbb{N}}$ ($V_{i_1} \neq V_{i_2}$ for $i_1 \neq i_2$). If
\begin{equation*}
\left\lbrace \overline{W}^{(i)}_j \mid \overline{W}^{(i)}_j \in V_i/(\Dp(V_i) + \Low(V_i)), j \in \mathbb{N}\right\rbrace
\end{equation*}
is a basis of $V_i/(\Dp(V_i) + \Low(V_i))$, then
\begin{equation*}
\bigcup\limits_{i \in \mathbb{N}} \left\lbrace W^{(i)}_j + \Ideal \mid j \in \mathbb{N}\right\rbrace
\end{equation*}
is a basis of $\mathbb{Z}_2\Fr / \Ideal$, where $W^{(i)}_j$ is an arbitrary representative of the coset $\overline{W}^{(i)}_j $. In particular,
\begin{equation*}
\mathbb{Z}_2\Fr / \Ideal \cong \bigoplus\limits_{i \in \mathbb{N}} V_i/(\Dp(V_i) + \Low(V_i))
\end{equation*}
as vector spaces, and the right-hand side is explicitly described in Proposition~\ref{correspondence_to_tensor_product}.
\end{theorem}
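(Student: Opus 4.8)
The plan is to run the exhaustion argument set up at the start of Section~\ref{basis_descr_section}, feeding into it the two structural results Proposition~\ref{basis_correspondence} and Proposition~\ref{spaces_monomials_correspondence}, and then to check that the finite-stage bases can be chosen coherently so that their union is a basis of the whole quotient ring. First I would fix, once and for all, for each distinct space $V_i$ among the spaces $\{\langle X_j\rangle_d\}_{j\in\mathbb{N}}$ attached to $\lambda$-semicanonical monomials, a basis $\{\overline{W}^{(i)}_j\}_{j\in\mathbb{N}}$ of $V_i/(\Dp(V_i)+\Low(V_i))$ together with a lift $W^{(i)}_j\in V_i$ of each basis vector. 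This is legitimate because $V_i=\langle X\rangle_d$, $\Dp(V_i)$ and $\Low(V_i)$ are intrinsic to the monomial $X$: $\langle X\rangle_d$ depends only on $X$, $\Dp$ is defined for any space generated by monomials and closed under taking derived monomials, and $\Low(\langle X\rangle_d)$ is independent of the chosen generators by Proposition~\ref{lower_level_gen_independence}. Hence the very same objects $V_i$, $\Dp(V_i)$, $\Low(V_i)$, and the same representatives $W^{(i)}_j$, occur verbatim whenever $V_i$ is a subspace of one of the approximating spaces $V^{(k)}:=\langle U_1,\ldots,U_k\rangle_d$.

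Next I would analyse a single finite stage $V^{(k)}$. Combining Corollary~\ref{finite_gen_subspaces_structure}, Theorem~\ref{structure_of_quotient_space} and Proposition~\ref{component_subspaces_structure} produces the decomposition~\eqref{fin_gen_space_final} of $(V^{(k)}+\langle\mathcal{T}^{\prime}\rangle)/\langle\mathcal{T}^{\prime}\rangle$ into summands $V^{(n)}_i/(\Dp(V^{(n)}_i)+\Low(V^{(n)}_i))$. The summands that vanish may be discarded, and by Proposition~\ref{spaces_monomials_correspondence} the surviving ones are precisely the spaces $\langle X\rangle_d$ with $X$ a $\lambda$-semicanonical monomial lying in $V^{(k)}$; moreover these are pairwise distinct across all filtration levels $n$, since if $\langle X\rangle_d=V^{(n)}_i=V^{(n^{\prime})}_{i^{\prime}}$ with $n<n^{\prime}$ then $V^{(n)}_i\subseteq \Ft_{n^{\prime}}V^{(k)}\subseteq \Ft_{n+1}V^{(k)}$, contradicting $V^{(n)}_i\nsubseteq\Ft_{n+1}V^{(k)}$. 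Applying Proposition~\ref{basis_correspondence} with the already-fixed bases of these summands, we obtain a basis $\mathcal{B}_k$ of $(V^{(k)}+\langle\mathcal{T}^{\prime}\rangle)/\langle\mathcal{T}^{\prime}\rangle$ consisting exactly of the cosets $W^{(i)}_j+\Ideal$ for those $i$ with $V_i=\langle X\rangle_d$ for some $\lambda$-semicanonical $X\in V^{(k)}$.

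Finally I would pass to the limit. Since $V^{(k)}\subseteq V^{(k+1)}$ and both are closed under taking derived monomials, a $\lambda$-semicanonical monomial lying in $V^{(k)}$ also lies in $V^{(k+1)}$, so $\mathcal{B}_k\subseteq\mathcal{B}_{k+1}$; and every $\lambda$-semicanonical monomial is some $U_m$, hence eventually enters the chain, whence $\bigcup_k\mathcal{B}_k=\bigcup_{i\in\mathbb{N}}\{W^{(i)}_j+\Ideal\mid j\in\mathbb{N}\}$. Because $\bigcup_k(V^{(k)}+\langle\mathcal{T}^{\prime}\rangle)/\langle\mathcal{T}^{\prime}\rangle=\mathbb{Z}_2\Fr/\Ideal$ and each $\mathcal{B}_k$ is a basis of the $k$-th term of this increasing chain, their union is a basis of $\mathbb{Z}_2\Fr/\Ideal$. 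Partitioning this basis according to the index $i$ and recalling the explicit description of each $V_i/(\Dp(V_i)+\Low(V_i))$ from Proposition~\ref{correspondence_to_tensor_product} gives the asserted vector space isomorphism $\mathbb{Z}_2\Fr/\Ideal\cong\bigoplus_{i}V_i/(\Dp(V_i)+\Low(V_i))$.

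The one genuinely delicate point is the coherence used in the third paragraph: one must ensure that the finite-stage decompositions are not merely abstractly isomorphic to a sum of such quotients, but literally built from the same intrinsic pieces $V_i$, with the same $\Dp(V_i)$ and $\Low(V_i)$, so that a single global choice of representatives works simultaneously at every stage and the inclusions $\mathcal{B}_k\subseteq\mathcal{B}_{k+1}$ hold on the nose. This is exactly what the intrinsic characterizations (Proposition~\ref{lower_level_gen_independence} for $\Low$, Proposition~\ref{spaces_monomials_correspondence} for the identification of the nonzero summands) are for; everything else in the argument is bookkeeping with direct limits of vector spaces.
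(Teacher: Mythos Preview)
Your proposal is correct and follows essentially the same approach as the paper: exhaust $\mathbb{Z}_2\Fr/\Ideal$ by the chain $(V^{(k)}+\langle\mathcal{T}^{\prime}\rangle)/\langle\mathcal{T}^{\prime}\rangle$, invoke the decomposition~\eqref{fin_gen_space_final} together with Propositions~\ref{basis_correspondence} and~\ref{spaces_monomials_correspondence} at each finite stage, and verify that the stage-$k$ bases nest. The only presentational difference is that the paper argues coherence by explicitly writing the stage-$(k{+}1)$ decomposition as the stage-$k$ decomposition plus the new summands coming from the additional $\lambda$-semicanonical monomials in $\langle U_1,\ldots,U_{k+1}\rangle_d\setminus\langle U_1,\ldots,U_k\rangle_d$, whereas you fix the lifts $W^{(i)}_j$ globally in advance and then observe they serve at every stage; your explicit check that the nonzero summands are pairwise distinct across filtration levels is a detail the paper leaves implicit in the statement of Proposition~\ref{spaces_monomials_correspondence}.
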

\begin{proof}
Let us enumerate all the monomials from $\Fr$, namely, $\Fr = \lbrace U_1, U_2, \ldots, U_k, \ldots\rbrace$. Then, evidently,
\begin{equation*}
\mathbb{Z}_2\Fr / \Ideal = \bigcup\limits_{k = 1}^{\infty} \left((\langle U_1, \ldots, U_k\rangle_d + \langle\mathcal{T}^{\prime}\rangle) / \langle\mathcal{T}^{\prime}\rangle\right).
\end{equation*}
Consider spaces $\langle U_1, \ldots, U_k\rangle_d \subseteq \langle U_1, \ldots, U_k, U_{k + 1}\rangle_d$. For the quotient space $(\langle U_1, \ldots, U_k\rangle_d + \langle\mathcal{T}^{\prime}\rangle) / \langle\mathcal{T}^{\prime}\rangle$ we have decomposition~\eqref{fin_gen_space_final}, that is,
\begin{equation*}
(\langle U_1, \ldots, U_k\rangle_d + \langle\mathcal{T}^{\prime}\rangle) / \langle\mathcal{T}^{\prime}\rangle \cong \bigoplus\limits_{n = 0}^{N_k - 1}\bigoplus\limits_{i \in I^{\prime}_n} V_i^{(n)}/(\Dp(V_i^{(n)}) + \Low(V_i^{(n)})),
\end{equation*}
where $N_k$ is the number of non-zero levels of the filtration on $\langle U_1, \ldots, U_k\rangle_d$.

Let $\lbrace X_j \rbrace$ be all the $\lambda$-semicanonical monomials from $\langle U_1, \ldots, U_k\rangle_d$. Then, obviously, the set of all $\lambda$-semicanonical monomials from $\langle U_1, \ldots, U_{k + 1}\rangle_d$ can be represented as
\begin{equation*}
\lbrace X_j \rbrace \cup \lbrace \widetilde{X}_j\rbrace,
\end{equation*}
where $\lbrace \widetilde{X}_j\rbrace$ collects the additional $\lambda$-semicanonical monomials that contain in $\langle U_1, \ldots, U_k, U_{k + 1}\rangle_d$ but do not contain in $\langle U_1, \ldots, U_k\rangle_d$ (in particular, the set $\lbrace \widetilde{X}_j\rbrace$ may be empty). Then from Proposition~\ref{spaces_monomials_correspondence}, it follows that decomposition~\eqref{fin_gen_space_final} for $(\langle U_1, \ldots, U_{k + 1}\rangle_d + \langle\mathcal{T}^{\prime}\rangle) / \langle\mathcal{T}^{\prime}\rangle$ can be written as
\begin{align*}
(\langle U_1, \ldots, U_{k + 1}\rangle_d + \langle\mathcal{T}^{\prime}\rangle) / \langle\mathcal{T}^{\prime}\rangle &
\cong \left(\bigoplus\limits_{n = 0}^{N_k - 1}\bigoplus\limits_{i \in I^{\prime}_n} V_i^{(n)}/(\Dp(V_i^{(n)}) + \Low(V_i^{(n)}))\right)\\ & \bigoplus \left(\bigoplus\limits_{n = 0}^{N_{k + 1} - 1}\bigoplus\limits_{h \in I^{\prime\prime}_n} \widetilde{V}_h^{(n)}/(\Dp(\widetilde{V}_h^{(n)}) + \Low(\widetilde{V}_h^{(n)}))\right),
\end{align*}
where $N_{k + 1}$ is the number of non-zero levels of the filtration on $\langle U_1, \ldots, U_k, U_{k + 1}\rangle_d$. The set $I_n^{\prime\prime}$ collects the additional spaces $\widetilde{V}_h^{(n)}$, not appearing in $I_n^{\prime}$, that correspond to additional monomials $\lbrace \widetilde{X}_j\rbrace$.

From Corollary~\ref{non_trivial_spaces}, it follows that the spaces $V_i^{(n)}/(\Dp(V_i^{(n)}) + \Low(V_i^{(n)}))$ and $\widetilde{V}_h^{(n)}/(\Dp(\widetilde{V}_h^{(n)}) + \Low(\widetilde{V}_h^{(n)}))$ are non-trivial. We will choose a basis in every direct summand $V_i^{(n)}/(\Dp(V_i^{(n)}) + \Low(V_i^{(n)}))$ and $\widetilde{V}_h^{(n)}/(\Dp(\widetilde{V}_h^{(n)}) + \Low(\widetilde{V}_h^{(n)}))$ independently. Then from Proposition~\ref{basis_correspondence}, it easily follows that there exists a basis $\mathcal{B}_{k + 1}$ of $(\langle U_1, \ldots, U_{k + 1}\rangle_d + \langle\mathcal{T}^{\prime} \rangle) / \langle\mathcal{T}^{\prime} \rangle$ such that it expands the previously chosen basis $\mathcal{B}_{k}$ of $(\langle U_1, \ldots, U_k\rangle_d + \langle\mathcal{T}^{\prime} \rangle) / \langle\mathcal{T}^{\prime} \rangle$. Therefore, finally, we will construct the increasing sequence of basis
\begin{equation*}
\mathcal{B}_1 \subseteq \mathcal{B}_2 \subseteq \ldots \subseteq \mathcal{B}_k \subseteq \ldots\, .
\end{equation*}
Since the ring $\mathbb{Z}_2\Fr / \langle \mathcal{T}^{\prime} \rangle$ is a union of the subspaces $(\langle U_1, \ldots, U_k\rangle_d + \langle\mathcal{T}^{\prime}\rangle) / \langle\mathcal{T}^{\prime}\rangle$, the union $\bigcup_{k = 1}^{\infty} \mathcal{B}_k$ is a basis of $\mathbb{Z}_2\Fr / \langle \mathcal{T}^{\prime} \rangle = \mathbb{Z}_2\Fr / \Ideal$.
\end{proof}

From Theorem~\ref{whole_quotient_ring_structure} we obtain the claim that served as the main motivation.
\begin{corollary}
The quotient ring $\mathbb{Z}_2\Fr / \Ideal$ is non-trivial.
\end{corollary}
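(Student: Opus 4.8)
The claim is the headline consequence of Theorem~\ref{whole_quotient_ring_structure}, so the plan is to read it off from that theorem with essentially no extra work. Theorem~\ref{whole_quotient_ring_structure} gives a linear isomorphism
\[
\mathbb{Z}_2\Fr / \Ideal \;\cong\; \bigoplus_{i \in \mathbb{N}} V_i/(\Dp(V_i) + \Low(V_i)),
\]
where $\{V_i\}$ enumerates the pairwise distinct spaces $\langle X_j\rangle_d$ attached to the $\lambda$-semicanonical monomials $X_j \in \Fr$. Hence it suffices to exhibit a single index $i$ for which $V_i/(\Dp(V_i) + \Low(V_i)) \neq 0$; this forces the direct sum, and therefore $\mathbb{Z}_2\Fr / \Ideal$, to be non-zero.

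First I would take the cheapest possible witness, the empty word $1 \in \Fr$. It is vacuously $\lambda$-semicanonical, having no subword at all and in particular no $\lambda$-forbidden one, so $\langle 1\rangle_d$ occurs among the $V_i$. Moreover its chart is empty: there is no generalized fractional power of positive $\LM$-measure inside it, so no virtual member of the chart and no multi-turn is available, and the only derived monomial of $1$ is $1$ itself. Thus $\langle 1\rangle_d = \mathbb{Z}_2\cdot 1$, and from the convention in Definition~\ref{subspace_of_dependencies} (a space of monomials with empty chart has $\Dp = 0$) together with the defining formula~\eqref{lower_level_subspace_one} for $\Low$ (there is no derived monomial of strictly smaller $f$-characteristic), we get $\Dp(\langle 1\rangle_d) = 0$ and $\Low(\langle 1\rangle_d) = 0$. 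Consequently $\langle 1\rangle_d/(\Dp(\langle 1\rangle_d) + \Low(\langle 1\rangle_d)) \cong \mathbb{Z}_2 \neq 0$.

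Feeding this summand into the decomposition of Theorem~\ref{whole_quotient_ring_structure} yields $\mathbb{Z}_2\Fr/\Ideal \neq 0$; equivalently, choosing $1$ as the representative of the corresponding basis coset, $1 + \Ideal$ is one of the basis vectors produced by Theorem~\ref{whole_quotient_ring_structure}, so $1 \notin \Ideal$ and $\Ideal \neq \mathbb{Z}_2\Fr$. One may also obtain the non-vanishing $1 + \Dp(\langle 1\rangle_d) + \Low(\langle 1\rangle_d) \neq 0$ directly from the ``moreover'' clause of Corollary~\ref{non_trivial_spaces}, applied to the $\lambda$-semicanonical monomial $1 \in \langle 1\rangle_d \setminus \Low(\langle 1\rangle_d)$.

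I do not expect a genuine obstacle: all the real content has already been absorbed into Theorem~\ref{whole_quotient_ring_structure} (ultimately into Proposition~\ref{non_degeneracy} and the filtration machinery of Section~\ref{structure_calc}). The only point needing a moment's care is the harmless bookkeeping that the empty word legitimately counts as a $\lambda$-semicanonical monomial with empty chart and vanishing $\Dp$ and $\Low$. If one prefers a non-trivial witness, the identical argument applies verbatim to any single free generator distinct from $x^{\pm 1}$ and $y^{\pm 1}$, which likewise has empty chart, trivial $\Dp$ and trivial $\Low$.
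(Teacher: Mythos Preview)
Your proposal is correct and follows the same route as the paper: the corollary is stated immediately after Theorem~\ref{whole_quotient_ring_structure} and is simply read off from it, with no further argument given. You supply the explicit witness (the empty word, with $\langle 1\rangle_d = \mathbb{Z}_2\cdot 1$, $\Dp=\Low=0$) that the paper leaves implicit, which is a perfectly legitimate way to make the deduction concrete.
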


\begin{remark}
It is possible to choose a basis in every direct summand $V_i /(\Dp(V_i) + \Low(V_i))$, where $V_i = \langle Z_i\rangle_d$, $Z_i$ is a $\lambda$-semicanonical monomial, in the following way. In Proposition~\ref{correspondence_to_tensor_product}, we have proved that $V_i/(\Dp(V_i) + \Low(V_i))$ is isomorphic to a tensor product of spaces that corresponds to each virtual member of the chart of $Z_i$. Namely, there exist spaces $A_1, \ldots, A_{k_i}$ defined by~\eqref{tens_prod_components} and the isomorphism
\begin{align*}
\overline{\mu}_2[Z_i] :& A_1 / (\Dp(A_1) + \Low(A_1)) \otimes \ldots \otimes A_{k_i}/ (\Dp(A_{k_i}) + \Low(A_{k_i})) \\
&\to V_i/(\Dp(V_i) + \Low(V_i))
\end{align*}
defined by~\eqref{isom_to_tensot_prod_dep}. So, if we choose a basis in all the different spaces $A_j / (\Dp(A_j) + \Low(A_j))$, this enables us to construct the corresponding basis in every direct summand $V_i /(\Dp(V_i) + \Low(V_i))$.

From Proposition~\ref{lemma_about_generating} it follows that $A_j / (\Dp(A_j) + \Low(A_j))$ is generated by cosets of $\lambda$-semicanonical generalized fractional powers that belong to $A_j$. We choose a basis of $A_j / (\Dp(A_j) + \Low(A_j))$ that consists of such cosets. Let
\begin{equation*}
(b_l^{(1)} + \Dp(A_1) + \Low(A_1))\otimes \ldots \otimes (b_l^{(k_i)} + \Dp(A_{k_i}) + \Low(A_{k_i})), l \in \mathbb{N},
\end{equation*}
be the basis elements, where $b_l^{(1)}, \ldots, b_l^{(k_i)}$ are $\lambda$-semicanonical generalized fractional powers. Then their images
\begin{align*}
\overline{\mu}_2[Z_i](b_l^{(1)} &+ \Dp(A_1) + \Low(A_1))\otimes \ldots \otimes (b_l^{(k_i)} + \Dp(A_{k_i}) + \Low(A_{k_i})) \\ &=
\mu[Z_i](b_l^{(1)} \otimes \ldots \otimes b_l^{(k_i)}) + \Dp(V_i) + \Low(V_i)
\end{align*}
form a basis of $V_i /(\Dp(V_i + \Low(V_i))$.

We put
\begin{equation}
\label{basis_representatives}
W_l^{(i)} = \mu[Z_i](b_l^{(1)} \otimes \ldots \otimes b_l^{(k_i)}) .
\end{equation}
Then from Theorem~\ref{whole_quotient_ring_structure} it follows that
\begin{equation}
\label{basis_from_tensor_product}
\bigcup\limits_{i = 1}^{\infty}\lbrace W_l^{(i)} + \Ideal \mid l\in \mathbb{N}\rbrace
\end{equation}
is a basis of $\mathbb{Z}_2\Fr / \Ideal$. Notice that, by definition of $\mu[Z_i]$, $W_l^{(i)}$ are monomials. From the description given in Section~\ref{mt_configurations}, it follows that the monomials $W_l^{(i)}$ are $(\lambda + 2\varepsilon)$-semicanonical.

Notice that the constructed basis satisfies the following property. Assume $U$ is a monomial and there is the base decomposition
\begin{equation*}
U + \Ideal = \sum\limits_{i, l} W_l^{(i)} + \Ideal.
\end{equation*}
Then every $W_l^{(i)}$ in this decomposition is a derived monomial of $U$, in particular, $f(W_l^{(i)}) \leqslant f(U)$.
\end{remark}

\section{Geometry of the multiplication}
\label{multiplication_geometry_section}
The linear span of all $\lambda$-semicanonical words is denoted by $S_{\lambda}$. However, it turns out that that it is more convenient to define a subspace $\widetilde{S}_{\lambda}$ for which $S_{\lambda} \subseteq \widetilde{S}_{\lambda} \subseteq S_{\lambda + 2\varepsilon}$ that contains all the elements of the basis of $\mathbb{Z}_2\Fr / \Ideal$. The cosets of elements of $\widetilde{S}_{\lambda}$ linearly generate $\mathbb{Z}_2\Fr / \Ideal$, and they display nice multiplication properties. Namely, the result of the multiplication can be represented as a sum of monomials from $\widetilde{S}_{\lambda}$ that form thin triangles with the factors.

\subsection{Definition of the space $\widetilde{S}_{\lambda}$}
\label{stilde_def_section}
Let $U$ be a $(\lambda + 2\varepsilon)$-semicanonical word and $a$ be a virtual member of its chart. Assume $l$ and $r$ are virtual members of the chart of $U$ such that $l$ and $r$ are the left and the right neighbour of $a$ in the chart, respectively. Let $a_l$ be an overlap between $a$ and $l$ (possibly empty), $a_r$ be an overlap between $a$ and $r$ (possibly empty). Then $a = a_la_ma_r$.
\begin{center}
\begin{tikzpicture}
\draw[|-|, black, thick] (0,0)--(6,0);
\node at (0.5, 0) [below] {$U$};
\draw[|-|, black, very thick] (1,0.1)--(2.7,0.1) node[near start, below] {$l$};
\draw[|-, black, very thick] (2.3,-0.1)--(2.7,-0.1) node[midway, below] {$a_l$};
\draw[|-|, black, very thick] (2.7,-0.1)--(4,-0.1) node[midway, below] {$a_m$};
\draw[-|, black, very thick] (4,-0.1)--(4.5,-0.1) node[midway, below] {$a_r$};
\draw[|-|, black, very thick] (4,0.1)--(5.5,0.1) node[near end, below] {$r$};
\end{tikzpicture}
\end{center}
\begin{center}
\begin{tikzpicture}
\draw[|-|, black, thick] (0,0)--(6,0);
\node at (0.5, 0) [below] {$U$};
\draw[|-|, black, very thick] (1,0.1)--(2.7,0.1) node[near start, below] {$l$};
\draw[|-, black, very thick] (2.3,-0.1)--(2.7,-0.1) node[midway, below] {$a_l$};
\draw[|-|, black, very thick] (2.7,-0.1)--(4.5,-0.1) node[midway, below] {$a_m$};
\node[text width=3cm, align=left, below, yshift=1] at (6, -0.3) {\baselineskip=10pt $a_r$ is empty\par};
\end{tikzpicture}
\end{center}
\begin{center}
\begin{tikzpicture}
\draw[|-|, black, thick] (0,0)--(6,0);
\node at (0.5, 0) [below] {$U$};
\draw[|-|, black, very thick] (2.3,-0.1)--(4,-0.1) node[midway, below] {$a_m$};
\draw[-|, black, very thick] (4,-0.1)--(4.5,-0.1) node[midway, below] {$a_r$};
\draw[|-|, black, very thick] (4,0.1)--(5.5,0.1) node[near end, below] {$r$};
\node[text width=3cm, align=left, below, yshift=1] at (6, -0.3) {\baselineskip=10pt $a_l$ is empty\par};
\end{tikzpicture}
\end{center}
\begin{center}
\begin{tikzpicture}
\draw[|-|, black, thick] (0,0)--(6,0);
\node at (0.5, 0) [below] {$U$};
\draw[|-|, black, very thick] (2.3,-0.1)--(4.5,-0.1) node[midway, below] {$a_m$};
\node[text width=3cm, align=left, below, yshift=1] at (6, -0.3) {\baselineskip=10pt both $a_r$ and $a_l$ are empty\par};
\end{tikzpicture}
\end{center}
\begin{definition}[$\widetilde{S}_{\lambda}$-condition]
\label{stilde_def}
We say that a virtual member of the chart $a$ \emph{satisfies $\widetilde{S}_{\lambda}$-condition} if the subword $a_m$ of $a$ defined above is $\lambda$-semicanonical.

We denote by $\widetilde{S}_{\lambda}$ a linear span of words such that each virtual member of the chart of a word satisfies the $\widetilde{S}_{\lambda}$-condition. Clearly, every monomial that belongs to $\widetilde{S}_{\lambda}$ is $(\lambda + 2\varepsilon)$-semicanonical.
\end{definition}

Notice that unlike words from $S_\lambda$, words from $\widetilde{S}_{\lambda}$ may contain subwords that do not belong to $\widetilde{S}_{\lambda}$.

Recall that in the previous section we constructed a linear basis in $\mathbb{Z}_2\Fr / \Ideal$, see~\eqref{basis_from_tensor_product}. By formula~\eqref{basis_representatives} (since $b^{(1)}_l, \ldots, b^{(k_i)}_l$ are $\lambda$-semicanonical), we obtain that the monomials $W_l^{(i)}$ (the fixed representatives of the basis elements) belong to $\widetilde{S}_{\lambda}$.

Assume a monomial $U_h = La_hR \in \widetilde{S}_{\lambda}$ and let $a_h$ be a virtual member of its chart. Let $U_h \mapsto \sum_{\substack{j = 1 \\ j\neq h}}^{m}U_j$ be a multi-turn that comes from an elementary multi-turn $a_h \mapsto \sum_{\substack{j = 1 \\ j\neq h}}^{m}a_j$ such that $a_j$ are $\lambda$-semicanonical generalized fractional powers. Consider monomials $U_j = La_jR$ such that $a_j$ is a virtual member of the chart of $U_j$. Then, by Corollary~\ref{virtual_members_stability}, we obtain that the images of all virtual members of the chart of $U$ are virtual members of the chart of $U_j$. Hence, by the definition of the space $\widetilde{S}_{\lambda}$, we immediately obtain that these monomials belong to $\widetilde{S}_{\lambda}$ even if the neighbours of $a_h$ are prolonged in $U_j$.

Assume a monomial $U_h = La_hR$ is a $\lambda$-semicanonical word and we perform a multi-turn of $a_h$ as above. If again we consider monomials $U_j = La_jR$ such that $a_j$ is a virtual member of the chart of $U_j$, then $U_j$ is not necessarily $\lambda$-semicanonical, because the neighbours of $a_h$ may prolong in $U_j$ and become $\lambda$-forbidden. That is why the space $\widetilde{S}_{\lambda}$ is more convenient in our further proofs.

\subsection{Behaviour of $\widetilde{S}_{\lambda}$ with respect to multi-turns}
\label{stilde_mt}
As above, assume $U_h = La_hR \in \widetilde{S}_{\lambda}$, $a_h$ is a virtual member of its chart, $U_h \mapsto \sum_{\substack{j = 1 \\ j\neq h}}^{m}U_j$ is a multi-turn that comes from an elementary multi-turn $a_h \mapsto \sum_{\substack{j = 1 \\ j\neq h}}^{m}a_j$ such that $a_j$ are $\lambda$-semicanonical generalized fractional powers. Let us describe in detail monomials $U_j$ such that $a_j$ is not a virtual member of the chart of $U_j$. We split these monomials into three groups in the same way as we did in Section~\ref{mt_configurations}.
\begin{enumerate}
\item
\label{stilde_lower1}
$La_jR$, where $\LM(a_j) > \varepsilon$;
\item
\label{stilde_lower2}
$La_jR$, where $a_j = 1$;
\item
\label{stilde_lower3}
$La_jR$, where $\LM(a_j) \leqslant \varepsilon$, $a_j \neq 1$.
\end{enumerate}

Being a $\lambda$-semicanonical word depends only on the $\LM$-measure of every virtual member of the chart. Hence, a resulting monomial of a multi-turn of a $\lambda$-semicanonical word may become non-$\lambda$-semicanonical only if the $\LM$-measure of some virtual members of the chart increases and $\lambda$-forbidden subwords appear in this monomial. One can see that the $\widetilde{S}_{\lambda}$-condition depends on a virtual member of the chart itself and on its neighbours and the notion of virtual member of the chart depends on the whole monomial. Hence, the $\widetilde{S}_{\lambda}$-condition on a virtual member of the chart may not be hold in a resulting monomial of a multi-turn not only when the virtual member of the chart changes itself, but when the virtual member of the chart stay unchanged but some other maximal occurrence becomes shorter. Let us study the possibilities in detail.

Consider monomials of type~\ref{stilde_lower1}. Let us use notations from Section~\ref{mt_configurations}. We assume that $b_h$ is the left neighbour of $a_h$ in the chart of $U_h$, $b_j$ is a maximal occurrence of a generalized fractional power corresponding to $b_h$ in the chart of $U_j$.
\begin{enumerate}
\item
If $b_h$ and $a_h$ are separated or touch at a point and $\LM(b_h) < \tau$, then $b_j$ may become not a virtual member of the chart and the $\widetilde{S}_{\lambda}$-condition may not hold for its left neighbour.
\begin{center}
\begin{tikzpicture}
\draw[|-|, black, thick] (-3,0)--(6,0);
\node at (-2.5, 0) [below] {$U_h$};
\draw[|-|, black, very thick] (-2,0.1)--(0.6,0.1);
\draw [thick, decorate, decoration={brace, amplitude=10pt}] (-1.8,0.1)--(0.4,0.1) node [midway, above, yshift=7] {$d$};
\draw[|-|, black, very thick] (0,-0.1)--(1.5,-0.1) node[midway, below] {$b_h$};
\draw[|-|, black, very thick] (4,0.1)--(5.5,0.1) node[midway, below] {$a_h$};
\node[text width=3cm, align=left, above, yshift=1] at (-3, 0) {\footnotesize{\baselineskip=10pt $\LM(d) = \lambda + \varepsilon$, $d$ is $\lambda$-forbidden\par}};
\end{tikzpicture}
\end{center}
\begin{center}
\begin{tikzpicture}
\draw[|-|, black, thick] (-3,0)--(6,0);
\node at (-2.5, 0) [below] {$U_j$};
\draw[|-|, black, very thick] (-2,0.1)--(0.6,0.1);
\draw [thick, decorate, decoration={brace, amplitude=10pt}] (-1.8,0.1)--(0.4,0.1) node [midway, above, yshift=7] {$d$};
\draw[|-|, black, very thick] (0,-0.1)--(1.5,-0.1) node[midway, below] {$b_j$};
\draw[|-|, black, very thick] (4,0.1)--(5,0.1) node[midway, below] {$a_j$};
\end{tikzpicture}
\end{center}
\item
If we have configurations~\ref{overlap_disappears} or~\ref{overlap_decreases} (page~\pageref{overlap_disappears}) for $b_j$, then $b_j$ may become not a virtual member of the chart and the $\widetilde{S}_{\lambda}$-condition may not hold for its left neighbour.
\begin{center}
\begin{tikzpicture}
\draw[|-|, black, thick] (0,0)--(6,0);
\node at (0.5, 0) [below] {$U_h$};
\draw[|-|, black, very thick] (1,0.1)--(3.2,0.1);
\draw [thick, decorate, decoration={brace, amplitude=10pt}] (1.5,0.1)--(3,0.1) node [midway, above, yshift=7] {$d$};
\draw[|-|, black, very thick] (2.7,-0.1)--(4.5,-0.1) node[midway, below] {$b_h$};
\draw[|-|, black, very thick] (4,0.1)--(5.5,0.1) node[midway, below] {$a_h$};
\node[text width=3cm, align=left, above, yshift=1] at (0, 0) {\footnotesize{\baselineskip=10pt $\LM(d) = \lambda + \varepsilon$, $d$ is $\lambda$-forbidden\par}};
\end{tikzpicture}
\begin{tikzpicture}
\draw[|-|, black, thick] (0,0)--(6,0);
\node at (0.5, 0) [below] {$U_j$};
\draw[|-|, black, very thick] (1,0.1)--(3.2,0.1);
\draw [thick, decorate, decoration={brace, amplitude=10pt}] (1.5,0.1)--(3,0.1) node [midway, above, yshift=7] {$d$};
\draw[|-|, black, very thick] (2.7,-0.1)--(4,-0.1) node[midway, below] {$b_j$};
\draw[|-|, black, very thick] (4,0.1)--(5,0.1) node[midway, below] {$a_j$};
\end{tikzpicture}
\end{center}
\item
Suppose we have configurations~\ref{overlap_appears}, \ref{overlap_increases} (page~\pageref{overlap_appears}) or $b_j = b_h$. If $b_j$ contains a $\lambda$-forbidden subword $d$ that has an overlap with $a_j$, then the $\widetilde{S}_{\lambda}$-condition may not hold for $b_j$ (because $a_j$ is not a virtual member of the chart).
\begin{center}
\begin{tikzpicture}
\draw[|-|, black, thick] (0,0)--(6,0);
\node at (0.5, 0) [below] {$U_h$};
\draw [thick, decorate, decoration={brace, amplitude=10pt}] (2.1,0.1)--(3.3,0.1) node [midway, above, yshift=7] {$d$};
\draw[|-|, black, very thick] (1.7,0.1)--(3.5,0.1) node[midway, below] {$b_h$};
\draw[|-|, black, very thick] (3,-0.1)--(4.5,-0.1) node[midway, below] {$a_h$};
\node[text width=3cm, align=left, above, yshift=1] at (0, 0) {\footnotesize{\baselineskip=10pt $\LM(d) = \lambda + \varepsilon$, $d$ is $\lambda$-forbidden\par}};
\end{tikzpicture}
\begin{tikzpicture}
\draw[|-|, black, thick] (0,0)--(6,0);
\node at (0.5, 0) [below] {$U_j$};
\draw [thick, decorate, decoration={brace, amplitude=10pt}] (2.1,0.1)--(3.3,0.1) node [midway, above, yshift=7] {$d$};
\draw[|-|, black, very thick] (1.7,0.1)--(3.5,0.1) node[midway, below] {$b_j$};
\draw[|-|, black, very thick] (3,-0.1)--(4,-0.1) node[midway, below] {$a_j$};
\end{tikzpicture}
\end{center}
\end{enumerate}
Obviously, the effects for the right neighbour of $a_h$ in the chart are the same.

Consider monomials of type~\ref{stilde_lower2}. First we do cancellations in $U_j = LR$ if possible, after that we obtain $U_j^{\prime} = L^{\prime}R^{\prime}$. Recall that we suppose that $a$ is a maximal occurrence of a generalized fractional power that is a terminal subword of $L^{\prime}$, $b$ is a maximal occurrence of a generalized fractional power that is an initial subword of $R^{\prime}$, $a^{\prime}$ and $b^{\prime}$ are generalized fractional powers that prolong $a$ and $b$ in $L^{\prime}R^{\prime}$ respectively.
\begin{enumerate}
\item
\label{changes_separated}
In any configuration except~\ref{all_separated2}, there may be a maximal occurrence $b_h$, $\LM(b_h) < \tau$, fully contained in the subword $L^{\prime}$ that becomes not a virtual member of the chart of $L^{\prime}R^{\prime}$. Then the $\widetilde{S}_{\lambda}$-condition may not hold for its left neighbour.
\begin{center}
\begin{tikzpicture}
\draw[|-|, black, thick] (-3,0)--(3, 0);
\draw[-|, black, thick] (3,0)--(6,0) node[near end, below] {$R^{\prime}$};
\node at (-2.5, 0) [below] {$L^{\prime}$};
\draw[|-|, black, very thick] (-2,0.1)--(0.6,0.1);
\draw [thick, decorate, decoration={brace, amplitude=10pt}] (-1.8,0.1)--(0.4,0.1) node [midway, above, yshift=7] {$d$};
\draw[|-|, black, very thick] (0,-0.1)--(1.5,-0.1) node[midway, below] {$b_h$};
\node[text width=3cm, align=left, above, yshift=1] at (-3, 0) {\footnotesize{\baselineskip=10pt $\LM(d) = \lambda + \varepsilon$, $d$ is $\lambda$-forbidden\par}};
\end{tikzpicture}
\end{center}
The same may happen in the subword $R^{\prime}$.
\item
Suppose we have configuration~\ref{strictly_touch2} (page~\pageref{strictly_touch2}). Then $a^{\prime}$ or $b^{\prime}$ may contain a $\lambda$-forbidden subword and do not satisfy the $\widetilde{S}_{\lambda}$-condition. We may obtain this situation if the right neighbour of $a^{\prime}$ in the chart of $L$ cancels out up to the end of $a^{\prime}$ or if the left neighbour of $b^{\prime}$ in the chart of $R$ cancels out up to the beginning of $b^{\prime}$.
\begin{center}
\begin{tikzpicture}
\draw[|-|, black, thick] (0,0)--(6,0);
\node at (0.3, 0) [below] {$L^{\prime}$};
\node at (5.7, 0) [below] {$R^{\prime}$};
\draw [thick, decorate, decoration={brace, amplitude=10pt}] (2.1,0.1)--(3.35,0.1) node [midway, above, yshift=7] {$d$};
\draw[|-|, black, very thick] (1.7,0.1)--(3.5,0.1) node[midway, below] {$a^{\prime}$};
\draw[|-|, black, very thick] (3.5,-0.1)--(5,-0.1) node[midway, below] {$b^{\prime}$};
\node[text width=3cm, align=left, above, yshift=1] at (0, 0) {\footnotesize{\baselineskip=10pt $\LM(d) = \lambda + \varepsilon$, $d$ is $\lambda$-forbidden\par}};
\end{tikzpicture}
\end{center}
\item
\label{stilde_3}
Suppose we have configuration~\ref{strictly_touch2} or~\ref{slightly_enlarge2} (page~\pageref{strictly_touch2}). In this case, $a^{\prime}$ may become too short after cancellations and may not be counted as a virtual member of the chart. If the left neighbour of $a_j$ contains a $\lambda$-forbidden subword $d$ that has an overlap with $a^{\prime}$, then the word $L^{\prime}R^{\prime}$ may not belong to $\widetilde{S}_{\lambda}$.
\begin{center}
\begin{tikzpicture}
\draw[|-|, black, thick] (0,0)--(6,0);
\node at (0.3, 0) [below] {$L^{\prime}$};
\node at (5.7, 0) [below] {$R^{\prime}$};
\draw [thick, decorate, decoration={brace, amplitude=10pt}] (1.2,0.1)--(2.45,0.1) node [midway, above, yshift=7] {$d$};
\draw[|-|, black, very thick] (0.8,0.1)--(2.6,0.1);
\draw[|-|, black, very thick] (2.2,-0.1)--(3.5,-0.1) node[midway, below] {$a^{\prime}$};
\draw[|-|, black, very thick] (3,0.1)--(5,0.1) node[midway, below] {$b^{\prime}$};
\node[text width=3cm, align=left, above, yshift=1] at (0, 0) {\footnotesize{\baselineskip=10pt $\LM(d) = \lambda + \varepsilon$, $d$ is $\lambda$-forbidden\par}};
\end{tikzpicture}
\end{center}
We may obtain a similar effect for the right neighbour of $b^{\prime}$.
\item
If we have configuration~\ref{slightly_enlarge2} (page~\pageref{slightly_enlarge2}), then $a^{\prime}$ or $b^{\prime}$ may contain a $\lambda$-forbidden subword that does not satisfy the $\widetilde{S}_{\lambda}$-condition. For example, we may obtain this case when $b^{\prime}$ is not a virtual member of the chart and $a^{\prime}$ contains a $\lambda$-forbidden subword $d$ that has an overlap with $b^{\prime}$.
\begin{center}
\begin{tikzpicture}
\draw[|-|, black, thick] (0,0)--(6,0);
\node at (0.3, 0) [below] {$L^{\prime}$};
\node at (5.7, 0) [below] {$R^{\prime}$};
\draw [thick, decorate, decoration={brace, amplitude=10pt}] (2.1,0.1)--(3.3,0.1) node [midway, above, yshift=7] {$d$};
\draw[|-|, black, very thick] (1.7,0.1)--(3.5,0.1) node[midway, below] {$a^{\prime}$};
\draw[|-|, black, very thick] (3,-0.1)--(4.1,-0.1) node[midway, below] {$b^{\prime}$};
\node[text width=3cm, align=left, above, yshift=1] at (0, 0) {\footnotesize{\baselineskip=10pt $\LM(d) = \lambda + \varepsilon$, $d$ is $\lambda$-forbidden\par}};
\end{tikzpicture}
\end{center}
\item
Suppose we have configuration~\ref{merge2} (page~\pageref{merge2}) and $ab$ is not a virtual member of the chart. If the left or the right neighbour of $ab$ contains a $\lambda$-forbidden subword $d$ that has an overlap with $ab$, then the word $L^{\prime}R^{\prime}$ may not belong to $\widetilde{S}_{\lambda}$ (similar to~\ref{stilde_3}).
\begin{center}
\begin{tikzpicture}
\draw[|-|, black, thick] (0,0)--(6,0);
\node at (0.3, 0) [below] {$L^{\prime}$};
\node at (5.7, 0) [below] {$R^{\prime}$};
\draw [thick, decorate, decoration={brace, amplitude=10pt}] (1.1,0.1)--(2.4,0.1) node [midway, above, yshift=7] {$d$};
\draw[|-|, black, very thick] (0.8,0.1)--(2.6,0.1);
\draw[|-|, black, very thick] (2.2,-0.1)--(3.2,-0.1) node[midway, below] {$ab$};
\node[text width=3cm, align=left, above, yshift=1] at (0, 0) {\footnotesize{\baselineskip=10pt $\LM(d) = \lambda + \varepsilon$, $d$ is $\lambda$-forbidden\par}};
\end{tikzpicture}
\end{center}
\item
If we have configuration~\ref{merge2} (page~\pageref{merge2}), the $\widetilde{S}_{\lambda}$-condition may not hold for $ab$.
\end{enumerate}

Consider monomials of type~\ref{stilde_lower3}. Recall that we suppose that $a$ is a maximal occurrence of a generalized fractional power that is a terminal subword of $L$, $b$ is a maximal occurrence of a generalized fractional power that is an initial subword of $R$, $a^{\prime}$ and $b^{\prime}$ are generalized fractional powers that prolong $a$ and $b$ in $La_jR$, respectively.
\begin{enumerate}
\item
In any configuration except~\ref{all_separated1} there may be a maximal occurrence $b_h$, $\LM(b_h) < \tau$, fully contained in the subword $L$ that becomes not a virtual member of the chart of $U_j$. Then the $\widetilde{S}_{\lambda}$-condition may not hold for its left neighbour.
\begin{center}
\begin{tikzpicture}
\draw[|-|, black, thick] (-3,0)--(6,0);
\node at (-2.5, 0) [below] {$U_j$};
\draw[|-|, black, very thick] (-2,0.1)--(0.6,0.1);
\draw [thick, decorate, decoration={brace, amplitude=10pt}] (-1.8,0.1)--(0.4,0.1) node [midway, above, yshift=7] {$d$};
\draw[|-|, black, very thick] (0,-0.1)--(1.5,-0.1) node[midway, below] {$b_h$};
\draw[|-|, black, very thick] (4,0.1)--(5,0.1) node[midway, below] {$a_j$};
\node[text width=3cm, align=left, above, yshift=1] at (-3, 0) {\footnotesize{\baselineskip=10pt $\LM(d) = \lambda + \varepsilon$, $d$ is $\lambda$-forbidden\par}};
\end{tikzpicture}
\end{center}
The same may happen in the subword $R$.
\item
If we obtain configuration~\ref{strictly_touch1} (page~\pageref{strictly_touch1}), then $a^{\prime}$ may be too short to be counted as a virtual member of the chart. In this case, the $\widetilde{S}_{\lambda}$-condition may not hold for its left neighbour.
\begin{center}
\begin{tikzpicture}
\draw[|-|, black, thick] (0,0)--(5,0);
\node at (0.3, 0) [below] {$L$};
\node at (4.7, 0) [below] {$R$};
\draw [thick, decorate, decoration={brace, amplitude=10pt}] (1.5,0.1)--(2.9,0.1) node [midway, above, yshift=7] {$d$};
\draw[|-|, black, very thick] (1.2,0.1)--(3.1,0.1);
\draw[|-|, black, very thick] (2.7,-0.1)--(3.5,-0.1) node[midway, below] {$a^{\prime}$};
\draw[|-|, black, very thick] (3.5,-0.1)--(4,-0.1) node[midway, below] {$a_j$};
\node[text width=3cm, align=left, above, yshift=1] at (0, 0) {\footnotesize{\baselineskip=10pt $\LM(d) = \lambda + \varepsilon$, $d$ is $\lambda$-forbidden\par}};
\end{tikzpicture}
\end{center}
Clearly, we may obtain a similar effect for the right neighbour of $b^{\prime}$.
\item
Suppose we have configuration~\ref{slightly_enlarge1} or~\ref{slightly_enlarge_precisely_one} (page~\pageref{slightly_enlarge1}). Then $a^{\prime}$ or $b^{\prime}$ may contain a $\lambda$-forbidden subword $d$ that does not satisfy the $\widetilde{S}_{\lambda}$-condition.
\begin{center}
\begin{tikzpicture}
\draw[|-|, black, thick] (0,0)--(6,0);
\node at (0.3, 0) [below] {$L$};
\node at (5.7, 0) [below] {$R$};
\draw [thick, decorate, decoration={brace, amplitude=10pt}] (2,0.1)--(3.3,0.1) node [midway, above, yshift=7] {$d$};
\draw[|-|, black, very thick] (1.5,0.1)--(3.5,0.1) node[midway, below] {$a^{\prime}$};
\draw[|-|, black, very thick] (3,-0.1)--(4,-0.1) node[midway, below] {$a_j$};
\node[text width=3cm, align=left, above, yshift=1] at (0, 0) {\footnotesize{\baselineskip=10pt $\LM(d) = \lambda + \varepsilon$, $d$ is $\lambda$-forbidden\par}};
\end{tikzpicture}
\begin{tikzpicture}
\draw[|-|, black, thick] (0,0)--(6,0);
\node at (0.3, 0) [below] {$L$};
\node at (5.7, 0) [below] {$R$};
\draw [thick, decorate, decoration={brace, amplitude=10pt}] (2,0.1)--(3.8,0.1) node [midway, above, yshift=7] {$d$};
\draw[|-|, black, very thick] (1.5,0.1)--(4,0.1) node[midway, below] {$a^{\prime}$};
\draw[|-|, black, very thick] (3.6,-0.1)--(5.5,-0.1) node[midway, below] {$b^{\prime}$};
\draw[|-|, black, very thick] (3,-0.1)--(3.6,-0.1) node[midway, below] {$a_j$};
\node[text width=3cm, align=left, above, yshift=1] at (0, 0) {\footnotesize{\baselineskip=10pt $\LM(d) \geqslant \lambda + \varepsilon$, $d$ is $\lambda$-forbidden\par}};
\end{tikzpicture}
\end{center}
\item
If we obtain configuration~\ref{merge1} (page~\pageref{merge1}), the $\widetilde{S}_{\lambda}$-condition may not hold for the virtual member $aa_jb$.
\end{enumerate}

\subsection{Multiplication of words from $\widetilde{S}_\lambda$}
\label{multiplication_def}
In Proposition~\ref{reduct_power} we showed one possible way of transformation of an arbitrary generalized fractional power into a sum of $\lambda$-semicanonical generalized fractional powers. But some of the obtained elements may have $\LM$-measure less than the given threshold $\tau$. In the further argument in Section~\ref{multiplication_def}, it is more convenient to deal with multi-turns such that all the resulting $\lambda$-semicanonical generalized fractional powers are of $\LM$-measure not less than $\tau$. In the following proposition, we construct an elementary multi-turn with this property.
\begin{proposition}
\label{safe_multi_turn}
Suppose $a_h$ is a generalized fractional power. Then there exists an elementary multi-turn $a_h \mapsto \sum_{\substack{j = 1 \\ j\neq h}}^{k} a_j$ such that $a_j$ are $\lambda$-semicanonical generalized fractional powers and $\LM(a_j) \geqslant \tau$, $j \neq h$, i.e., $a_j$ is always counted as a virtual member of the chart (possibly with greater word length and $\LM$-measure than $a_h$).

Moreover, one can choose this multi-turn such that every $a_j$, $j\neq h$, does not contain subwords of $v^{-1}$ of $\LM$-measure greater than $\varepsilon$.
\end{proposition}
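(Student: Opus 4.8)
The plan is to build the required elementary multi-turn in two stages: first make $a_h$ $\lambda$-semicanonical, then, if necessary, enlarge any resulting pieces that are too short, while preserving $\lambda$-semicanonicity and the absence of long subwords of $v^{-1}$. For the first stage I would simply invoke Proposition~\ref{reduct_power}: either $a_h$ is already $\lambda$-semicanonical, or there is an elementary multi-turn $a_h \mapsto \sum_{j\neq h} a_j$ with all $a_j$ $\lambda$-semicanonical (and of strictly smaller word length and $\LM$-measure). The only defect is that some $a_j$ may have $\LM(a_j) < \tau$. Using the explicit description of generalized fractional powers \eqref{path_type1}--\eqref{path_type3}, such small $a_j$ have one of the forms enumerated in the proof of Proposition~\ref{transversality}; after stripping off the fixed prefix/suffix determined by the initial and final points $I,F$ in the $v$-diagram, the remaining ``variable'' content is a power $w^{k}$ of $\LM$-measure zero (or the word $v_m^{\pm 1}$ in the rigid case, but since $a_j$ is $\lambda$-semicanonical that rigid piece is short and harmless).

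The key idea for the second stage is that these short $\lambda$-semicanonical pieces can be enlarged using the polynomial identities $P_1, P_2$ from the Example, i.e. the relations
\begin{equation*}
vw^{k} = w^{k-1} + w^{k-2} + \cdots + w + 1 + v, \qquad vw^{-k} = w^{-k} + w^{-k+1} + \cdots + w^{-1} + v
\end{equation*}
in $\mathbb{Z}_2\Fr/\Ideal$ (and their conjugates by the relevant prefixes/suffixes). Concretely, a short piece of the form $b_1 w^{k_j} b_2$ appearing in the support can be replaced by a linear combination in which every monomial has an extra factor $v$ (equivalently $v^{-1}$ absent), hence $\LM$-measure $\geqslant \LM(v) = 1 \geqslant \tau$, together with some new short monomials $b_1 w^{k'} b_2$ with smaller $|k'|$; iterating drives the powers of $w$ down until $k'=0$, at which point the leftover monomial is $b_1 b_2 = b'$, which after cancellation is exactly the rigid geodesic path $v_m^{\pm 1}$ of the $\LM < \tau$ classification. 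That rigid path is $\lambda$-semicanonical by hypothesis, so $\LM(v_m)\leqslant\lambda<1$; but one can absorb it by yet another turn using \eqref{eliminate_vinv} to trade $v_m^{-1}$ for $v_fwv_i+v_fv_i$ — except these are again of small $\LM$-measure, so instead one keeps it and notes that in the case $b'$ actually occurs it forces $\LM(a_h)<\tau$ too, and then a direct appeal to the argument of Proposition~\ref{transversality} (or rather its strengthening, as used in the proof of Lemma~\ref{additional_derived monomials}: there exists $a_{k_0}$ with $\LM(a_{k_0})\geqslant\tau+\varepsilon$) shows the support cannot consist entirely of such tiny monomials. So the right bookkeeping is: whenever the reduction produces a monomial of $\LM$-measure $<\tau$, it still contains a factor $w^{\pm k}$ with $k\geqslant 1$, apply the appropriate conjugated version of $P_1/P_2$ to that monomial; each such step strictly decreases $\sum|k_j|$ over the small monomials, so the process terminates with all surviving monomials of $\LM$-measure $\geqslant\tau$.

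It remains to check the two side conditions are maintained. First, $\lambda$-semicanonicity: the substitutions above insert only positive powers of $v$ and copies of $w^{\pm m}$ flanked by $v_i, v_f$ (or the relevant prefix/suffix), none of which is a $\lambda$-forbidden subword $v_m^{-1}$ with $\LM(v_m)>\lambda$ — this is the same elementary computation as in Proposition~\ref{reduct_power}, and one must verify no new $\lambda$-forbidden subword appears at the junctions, which follows from $|w|\ll\alpha$ so that $w$ contains no long $x$-blocks. Second, the no-long-$v^{-1}$-subword condition: this is automatic once every $a_j$ is $\lambda$-semicanonical with $\lambda>\tfrac12$, since a subword of $v^{-1}$ of $\LM$-measure $>\varepsilon$ is a $v_m^{-1}$ and, to not be $\lambda$-forbidden, would need $\LM(v_m)\leqslant\lambda$ — that still allows $\LM$ up to $\lambda$, so the condition is genuinely an extra demand; to get it I would further reduce, replacing any surviving $v_m^{-1}$-subword of $\LM$-measure $>\varepsilon$ by $v_fwv_i+v_fv_i$ via \eqref{eliminate_vinv} as in Proposition~\ref{reduct_power}, which strictly shortens word length, and then re-run the $P_1/P_2$ enlargement on whatever short monomials that creates. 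The main obstacle I anticipate is organising this double induction cleanly — the $\LM$-enlargement via $P_1/P_2$ can in principle reintroduce short subwords of $v^{-1}$, and the $v^{-1}$-reduction via \eqref{eliminate_vinv} can reintroduce small-$\LM$-measure monomials — so one needs a single well-founded measure (for instance the pair (total $\Fr$-word-length of small monomials, then $\sum|k_j|$) with lexicographic order, using that \eqref{eliminate_vinv} strictly decreases word length while $P_1/P_2$ preserves it but decreases $\sum|k_j|$) to certify termination. Assembling the whole chain of elementary multi-turns into a single elementary multi-turn is then routine, exactly as noted in Proposition~\ref{reduct_power}: successive elementary multi-turns correspond to adding the expressions \eqref{mturn1}--\eqref{mturn6}, whose sum is again of that form.
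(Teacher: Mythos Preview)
Your approach differs substantially from the paper's and is more complicated than necessary. The paper constructs the required multi-turn directly, without any iteration: write $a_h$ as $\text{prefix}\cdot M_h(v,w)\cdot\text{suffix}$ in one of the six forms from \eqref{path_type1}--\eqref{path_type3}, use commutativity in $\mathbb{Z}_2(w)$ to obtain the single elementary multi-turn $M_h(v,w)\mapsto w^{k}v^{l}$ with $k=\sum k_i$, $l=\sum l_i$, and then handle the sign of $l$. The key identity is $v^{-1}=v+vw^{2}$ (from $1=v+vw$ and $v^{-1}=1+w$), giving $v^{-m}\mapsto(v+vw^{2})^{m}$; for $l=0$ one uses $1\mapsto v+vw$. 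Every resulting monomial $M_j(v,w)$ then contains at least one positive power of $v$ and \emph{no} negative power, so after reattaching prefix/suffix one gets $\LM(a_j)\geqslant 1\geqslant\tau$, $\lambda$-semicanonicity, and the absence of $v^{-1}$-subwords of $\LM$-measure $>\varepsilon$ all at once, with nothing to iterate.

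Your two-stage strategy could probably be salvaged, but there is a concrete gap: the measure $\sum|k_j|$ you propose for the $P_1/P_2$ iteration does not decrease. Rewriting $w^{k_j}$ via $P_1$ yields small monomials $w^{k_j-1},\ldots,w^{0}$ (plus the large ones), so the contribution to $\sum|k_j|$ goes from $k_j$ to $k_j(k_j-1)/2$, which increases for $k_j\geqslant 4$; you would need a multiset-order argument instead. More to the point, the double induction you flag as the main obstacle is an artefact of using $v_m^{-1}\mapsto v_fwv_i+v_fv_i$ from \eqref{eliminate_vinv}, which produces short monomials, together with $P_1/P_2$, which can leave residual $v^{-1}$-content. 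The paper's choice of $v^{-1}\mapsto v+vw^{2}$ avoids both problems simultaneously: it never introduces negative $v$-content, so there is no interaction to manage and no termination measure to design.
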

\begin{proof}
Let us construct this elementary multi-turn directly using Definition~\ref{multiturn_def}. If we consider $v$ as an element of the field $\mathbb{Z}_2(w)$ such that $v^{-1} = 1 + w$, then in $\mathbb{Z}_2(w)$ we have $1 = vw + v$ and
\begin{equation}
\label{v_inv_safe_mt}
v^{-1} = 1 + w = (vw + v) + (vw + v)w = v + vw^2.
\end{equation}
Therefore, in $\mathbb{Z}_2(w)$ we have $v^{-m} = (v + vw^2)^m$. Hence, the non-commutative Laurent polynomial $x_1^{-m} + (x_1 + x_1x_2^2)^m$, $m > 0$, satisfies ~\eqref{vanish_in_field}. Hence, the transformation in $\mathbb{Z}_2\Fr$
\begin{equation}
\label{v_power_inv_safe_mt}
v^{-m} \mapsto (v + vw^2)^m, m > 0,
\end{equation}
is an elementary multi-turn.

Suppose $M_h(v, w)$ is a non-commutative monomial in $v, w$, that is,
\begin{equation*}
M_h(v, w) = w^{k_1}v^{l_1}\cdots w^{k_n}v^{l_n},
\end{equation*}
where $k_i, l_i \in \mathbb{Z}$. Again consider $M_h(v, w)$ as an element of the field $\mathbb{Z}_2(w)$ such that $v^{-1} = 1 + w$. Then, since $v$ and $w$ commute in $\mathbb{Z}_2(w)$, we have $w^{k_1}v^{l_1}\cdots w^{k_n}v^{l_n} = w^kv^l$, where $k = \sum_{i = 1}^n k_i$, $l = \sum_{i = 1}^n l_i$. Hence, the non-commutative Laurent polynomial $x_2^{k_1}x_1^{l_1}\cdots x_2^{k_n}x_1^{l_n} + x_2^kx_1^l$ satisfies~\eqref{vanish_in_field}. Therefore, the transformation in $\mathbb{Z}_2\Fr$
\begin{equation}
\label{comm_mt}
w^{k_1}v^{l_1}\cdots w^{k_n}v^{l_n} \mapsto w^kv^l, k = \sum\limits_{i = 1}^n k_i, l = \sum\limits_{i = 1}^n l_i,
\end{equation}
is an elementary multi-turn. If $l = \sum_{i = 1}^n l_i < 0$, from~\eqref{v_power_inv_safe_mt} it follows that
\begin{equation}
\label{comm_v_power_inv_mt}
w^{k_1}v^{l_1}\cdots w^{k_n}v^{l_n} \mapsto w^k(v + vw^2)^{-l}
\end{equation}
is an elementary multi-turn. Since $1 \mapsto v + vw$ is an elementary multi-turn, if $l = \sum_{i = 1}^n l_i = 0$, we obtain that
\begin{equation}
\label{comm_one_mt}
w^{k_1}v^{l_1}\cdots w^{k_n}v^{l_n} \mapsto w^k(v + vw)
\end{equation}
is an elementary multi-turn. Using~\eqref{comm_mt}--\eqref{comm_one_mt}, for every $l = \sum_{i = 1}^n l_i$ we obtain a multi-turn
\begin{equation}
\label{safe_mt_general}
M_h(v, w) \mapsto \sum\limits_{\substack{j = 1 \\ j\neq h}}^s M_j(v, w),
\end{equation}
where every $M_j(v, w)$, $j\neq h$, does not contain negative powers of $v$ and contains at least one positive power of $v$.

By definition, a generalized fractional power is a subword in a non-commutative monomial over the words $v$ and $w$. Recall that the types of generalized fractional powers are enumerated in (5) -- (7). Clearly, if we consider generalized fractional powers with the possibility of cancellations, we obtain that $a_h$ before cancellations has one of the following forms:
\begin{align*}
&v_f M_h(v, w)v_i,\\
&v_mv_fM_h(v, w)v_iv_m,\\
&w_fM_h(v, w)v_i,\\
&v_fM_h(v, w)w_i,\\
&w_f M_h(v, w)w_i,\\
&w_mw_fM_h(v, w)w_iw_m,
\end{align*}
where $M_h(v, w)$ is a monomial over the words $v$ and $w$.

Suppose $a_h = v_f M_h(v, w)v_i$, where $v_f M(v, w)_hv_i$ may have cancellations. From \eqref{safe_mt_general} it follows that the transformation $v_fM(v, w)_hv_i \mapsto \sum_{\substack{j = 1 \\ j\neq h}}^s v_fM_j(v, w)v_i$ is an elementary multi-turn after all possible cancellations in monomials are done. That is,
\begin{equation}
\label{safe_mt_case1}
a_h \mapsto \sum\limits_{\substack{j = 1 \\ j\neq h}}^s v_fM_j(v, w)v_i
\end{equation}
is an elementary multi-turn.

Let us check that~\eqref{safe_mt_case1} satisfies the conditions of Proposition~\ref{safe_multi_turn}. Since every $M_j(v, w)$, $j \neq h$, contains only positive powers of $v$ and $v$ has no cancellations with $w$ and $w^{-1}$, every monomial in the right hand side of this multi-turn is of $\LM$-measure not less than $1$, that is, it is especially of $\LM$-measure not less than $\tau$. Since $w^k$ can not contain subwords of $v$ and $v^{-1}$ of $\LM$-measure greater than $\varepsilon$, every monomial in the right hand side does not contain subwords of $v^{-1}$ of $\LM$-measure greater than $\varepsilon$. Thus, multi-turn \eqref{safe_mt_case1} satisfies the conditions of Proposition~\ref{safe_multi_turn}.

In a similar way we deal with $a_h$ of the other form and we obtain multi-turns
\begin{align}
\label{safe_mt_cases}
\begin{split}
&a_h \mapsto \sum\limits_{\substack{j = 1 \\ j\neq h}}^s v_mv_fM_j(v, w)v_iv_m, \ a_h \mapsto \sum\limits_{\substack{j = 1 \\ j\neq h}}^s w_fM_j(v, w)v_i,\\
&a_h \mapsto \sum\limits_{\substack{j = 1 \\ j\neq h}}^s v_fM_j(v, w)w_i, \ a_h \mapsto \sum\limits_{\substack{j = 1 \\ j\neq h}}^s w_fM_j(v, w)w_i,\\
&a_h \mapsto \sum\limits_{\substack{j = 1 \\ j\neq h}}^s w_mw_fM_j(v, w)w_iw_m,
\end{split}
\end{align}
possibly after cancellations in the right hand sides. Notice that only subwords of $w$ may be involved in cancellations. By the same argument as \eqref{safe_mt_case1}, it is proved that multi-turns \eqref{safe_mt_cases} satisfy the conditions of Proposition~\ref{safe_multi_turn}.
\end{proof}

Let $U_1$ and $U_2$ be monomials from $\widetilde{S}_\lambda$. Then their product $U_1U_2$ may not belong to $\widetilde{S}_\lambda$. According to Corollary~\ref{lemma_about_generating}, every monomial is equal to a sum of words from $S_\lambda$ modulo $\Ideal$. Moreover, every monomial is equal to a sum of words from $\widetilde{S}_\lambda$ modulo $\Ideal$, since $S_\lambda \subseteq \widetilde{S}_\lambda$. We will represent $U_1U_2$ as a sum of monomials from $\widetilde{S}_\lambda$. In this section, we analyse the process of reduction of the word $U_1U_2$ to a sum of words from $\widetilde{S}_\lambda$ modulo $\Ideal$ in detail. In parallel we will describe the multiplication diagram that encrypts the history of this process.

The process of the reduction of $U_1U_2$ is as follows.
\begin{enumerate}[label=\textbf{Step~\arabic*}, ref=Step~\arabic*]
\item
\label{cancellation_step}
Cancel $U_1U_2$ if possible. The multiplication diagram that encrypts cancellations is of the form
\begin{align*}
&\begin{tikzpicture}
\draw[|-, black, thick, arrow=0.6] (-1,0)--(1.5,0) node[midway, below] {$U_1^{\prime}$};
\draw[-|, black, thick, arrow=0.6] (1.5,0)--(4,0) node[midway, below] {$U_2^{\prime}$};
\draw[-|, black, thick] (1.5,0)--(1.5,0.7) node[near end, right, xshift=2] {${U^{\prime}}^{-1}$};
\draw[-|, black, thick] (1.5,0)--(1.5,0.7) node[near end, left, xshift=-2, yshift=-1] {$U^{\prime}$};
\draw[black, thick, arrow=1] (1.4,0.15)--(1.4,0.55);
\draw[black, thick, arrow=1] (1.6,0.55)--(1.6,0.15);
\node[circle,fill,inner sep=1] at (1.5,0) {};
\node[below] at (1.5,0) {$P$};
\end{tikzpicture}\\
&U_1=U_1^{\prime}U^{\prime}\\
&U_2={U^{\prime}}^{-1}U_2^{\prime}\\
&U= U_1^{\prime}U_2^{\prime} \text{ has no further cancellations}
\end{align*}
\end{enumerate}
Let $U = U_1^{\prime}U_2^{\prime}$. Now we consider the chart of $U$. As we described in Section~\ref{mt_configurations}, we have one of configurations~\ref{all_separated2}---\ref{merge2} (pages~\pageref{all_separated2}---\pageref{merge2}). If we have configuration~\ref{merge2} and the generalized fractional power that is obtained as a result of merging does not satisfy $\widetilde{S}_\lambda$-condition, then we go to \ref{multiturn_step} of the reduction process. So, let us first describe this case.

Denote the generalized fractional power that is obtained as a result of merging by $a_h$, $U = La_hR$.
\begin{center}
\begin{tikzpicture}
\draw[|-, black, thick] (-1,0)--(1.5,0) node[near start, below] {$L$};
\draw[-|, black, thick] (1.5,0)--(4,0) node[near end, below] {$R$};
\draw[-|, black, thick] (1.5,0)--(1.5,0.7);
\draw[ black, thick] (1.5,0)--(1.5,0.7) node[near end, left, xshift=-2, yshift=-1] {$U^{\prime}$};
\draw[|-|, black, very thick] (0.5,0) to node[near start, below] {$a_h$} (2.5,0);
\node[circle,fill,inner sep=1] at (1.5,0) {};
\node[below] at (1.5,0) {$P$};
\end{tikzpicture}
\end{center}
Suppose $a_h$ does not satisfy the $\widetilde{S}_\lambda$-condition. Then the next step of the multiplication process is as follows.
\begin{enumerate}[label=\textbf{Step~\arabic*}, ref=Step~\arabic*]
\setcounter{enumi}{1}
\item
\label{multiturn_step}
According to Proposition~\ref{safe_multi_turn}, a $\lambda$-forbidden virtual member $a_h$ can be reduced to a sum of $\lambda$-semicanonical generalized fractional powers using an appropriate elementary multi-turn $a_h \mapsto \sum_{\substack{j = 1 \\ j\neq h}}^{k} a_j$, $\LM(a_j) \geqslant \tau$. So, perform the multi-turn $U = La_hR \mapsto \sum_{\substack{j = 1 \\ j\neq h}}^{k} La_jR$, which comes from this elementary multi-turn.
\end{enumerate}
Since $\LM(a_j) \geqslant \tau$, the resulting monomials $La_jR$ belong to $\widetilde{S}_\lambda$.

The monomial $a_h$ corresponds to a unique path in the $v$-diagram (see Definition~\ref{v_diagram} is Section~\ref{basic_def}) with the initial point $I$ and the final point $F$. The monomials $a_j$, $j = 1, \ldots, k$, correspond to certain paths in this $v$-diagram with the same initial point $I$ and the same final point $F$. Then we add this $v$-diagram to the bottom of the multiplication diagram, gluing the point $I$ with the end of $L$ and point $F$ with the beginning of $R$. The previous part of the multiplication diagram is glued to the added $v$-diagram at the same point on $a_h$ as before.
\begin{center}
\begin{tikzpicture}
\coordinate (Begin) at (-1,0);
\coordinate (vb) at (1.5,0);
\coordinate (ve) at (2.5,0);
\coordinate (End) at (5.3,0);
\draw[black, thick, in=90, out=180-90, distance=-10] (vb) to coordinate [pos=0.1] (vel1) (ve);
\path[black, thick, in=90, out=180-90, distance=-10] (vb) to coordinate [pos=0.9] (vbr1) (ve);
\path[black, thick, in=100, out=180-100, distance=20] (vb) to coordinate [pos=0.4] (vb1) (ve);
\path [black, thick, in=100, out=180-100, distance=20](vb) to coordinate [pos=0.6] (ve1) (ve);
\draw[black, thick, in=100, out=180-100, distance=20] (vb) to coordinate [pos=0.5] (vm1) (ve);
\draw[black, thick] (Begin) to node[near start, above] {$L$} (vb);
\draw[black, thick] (ve) to node[near end, above] {$R$} (End);
\node[below] at (vb) {$I$};
\node[below] at (ve) {$F$};
\node[circle,fill,inner sep=1] at (vb) {};
\node[circle,fill,inner sep=1] at (ve) {};
\node[circle,fill,inner sep=1] at (vm1) {};
\node[below] at (vm1) {$P$};
\draw[black, thick] let \p{I}=(vm1) in (vm1) to node[midway, left] {$U^{\prime}$} (\x{I}, \y{I} + 20);
\end{tikzpicture}
\end{center}
So, one can read the monomials $La_jR$, $j = 1, \ldots, k$, in the low level of the obtained diagram. So far for \ref{multiturn_step}.

Now consider configurations~\ref{all_separated2}---\ref{slightly_enlarge2} in $U = U_1^{\prime}U_2^{\prime}$ after \ref{cancellation_step}. Then the monomial $U$ also may not belong to $\widetilde{S}_\lambda$. According to the classification given in Section~\ref{stilde_mt}, $U$ contains at most two subwords that do not satisfy the $\widetilde{S}_\lambda$-condition, one comes from the chart of $U_1$ and the other comes from the chart of $U_2$ (see analysis of monomials of type~\ref{stilde_lower2}, page~\pageref{changes_separated}; although here we deal with a product of two words from $\widetilde{S}_\lambda$, this analysis is still applicable).

First assume that $U$ contains only one virtual member of the chart that does not satisfy the $\widetilde{S}_{\lambda}$-condition. To be precise, assume that it comes from the chart of $U_1$, denote it by $b_h$. Then there are the following possible configurations depending on the position $b_h$ with respect to the point $P$:
\begin{center}
\begin{tikzpicture}
\draw[|-|, black] (0,0)--(6,0) node[at start, below] {$U$};
\draw[|-|, black, very thick] (1,0)--(3.3,0) node[midway, below] {$b_h$};
\node[circle,fill,inner sep=1] at (3,0) {};
\node[below] at (3,0) {$P$};
\end{tikzpicture}
\end{center}
\begin{center}
\begin{tikzpicture}
\draw[|-|, black] (0,0)--(6,0) node[at start, below] {$U$};
\draw[|-|, black, very thick] (0.8,0)--(3,0) node[midway, below] {$b_h$};
\node[circle,fill,inner sep=1] at (3,0) {};
\node[below] at (3,0) {$P$};
\end{tikzpicture}
\end{center}
\begin{center}
\begin{tikzpicture}
\draw[|-|, black] (0,0)--(6,0) node[at start, below] {$U$};
\draw[|-|, black, very thick] (1,0)--(2.5,0) node[midway, below] {$b_h$};
\node[circle,fill,inner sep=1] at (3,0) {};
\node[below] at (3,0) {$P$};
\end{tikzpicture}
\end{center}
As above we perform a multi-turn $Lb_hR \mapsto \sum_{\substack{j = 1 \\ j\neq h}}^{k} Lb_jR$ from Proposition~\ref{safe_multi_turn} that reduces $b_h$. Since $\LM(b_j) \geqslant \tau$, the resulting monomials $Lb_jR$ belong to $\widetilde{S}_{\lambda}$.

We add a new $v$-diagram to the multiplication diagram as we described above and obtain the final multiplication diagram.
\begin{center}
\begin{tikzpicture}
\coordinate (Begin) at (-1,0);
\coordinate (Middle) at (2.5,0);
\coordinate (End) at (5.3,0);
\coordinate (vbl) at (1.5,0);
\coordinate (vel) at (Middle);
\draw[black, thick, in=80, out=180-90, distance=-10] (vbl) to (vel);
\draw[black, thick, in=100, out=180-100, distance=20] (vbl) to (vel);
\draw[black, thick] (Begin) to node[near start, above] {$L$} (vbl);
\draw[black, thick] (Middle) to node[near end, above] {$R$} (End);
\path[black, thick, in=100, out=180-100, distance=20] (vbl) to coordinate[pos = 0.8] (ShiftedMiddle) (vel);
\draw[black, thick] let \p{I}=(ShiftedMiddle) in (ShiftedMiddle) to (\x{I}, \y{I} + 20);
\node[circle,fill,inner sep=1] at (ShiftedMiddle) {};
\node[right] at (ShiftedMiddle) {$P$};
\end{tikzpicture}
\end{center}
\begin{center}
\begin{tikzpicture}
\coordinate (Begin) at (-1,0);
\coordinate (Middle) at (2.5,0);
\coordinate (End) at (5.3,0);
\coordinate (vbl) at (1.5,0);
\coordinate (vel) at (Middle);
\draw[black, thick, in=80, out=180-90, distance=-10] (vbl) to (vel);
\draw[black, thick, in=100, out=180-100, distance=20] (vbl) to (vel);
\draw[black, thick] (Begin) to node[near start, above] {$L$} (vbl);
\draw[black, thick] (Middle) to node[near end, above] {$R$} (End);
\path[black, thick, in=100, out=180-100, distance=20] (vbl) to (vel);
\draw[black, thick] let \p{I}=(Middle) in (Middle) to (\x{I}, \y{I} + 20);
\node[circle,fill,inner sep=1] at (Middle) {};
\node[above, xshift=5] at (Middle) {$P$};
\end{tikzpicture}
\end{center}
\begin{center}
\begin{tikzpicture}
\coordinate (Begin) at (-1,0);
\coordinate (Middle) at (2.5,0);
\coordinate (End) at (5.3,0);
\coordinate (vbl) at (1.5,0);
\coordinate (vel) at (Middle);
\path let \p1=(Middle) in coordinate (velup) at (\x1, \y1 + 5);
\draw[black, thick, in=80, out=180-90, distance=-10] (vbl) to (vel);
\draw[black, thick, in=100, out=180-100, distance=20] (vbl) to (velup);
\draw[black, thick] (Begin) to node[near start, above] {$L$} (vbl);
\draw[black, thick] (Middle) to node[near end, above] {$R$} (End);
\path[black, thick, in=100, out=180-100, distance=20] (vbl) to (vel);
\draw[black, thick] let \p1=(Middle) in (Middle) to (\x1, \y1 + 20);
\node[circle,fill,inner sep=1] at (Middle) {};
\node[above, xshift=5] at (Middle) {$P$};
\end{tikzpicture}
\end{center}
\begin{center}
\begin{tikzpicture}
\coordinate (Begin) at (-1,0);
\coordinate (ShiftedMiddle) at (3,0);
\coordinate (End) at (5.3,0);
\coordinate (vbl) at (1.5,0);
\coordinate (vel) at (Middle);
\draw[black, thick, in=80, out=180-90, distance=-10] (vbl) to (vel);
\draw[black, thick, in=100, out=180-100, distance=20] (vbl) to (vel);
\draw[black, thick] (Begin) to node[near start, above] {$L$} (vbl);
\draw[black, thick] (Middle) to node[near end, above] {$R$} (End);
\path[black, thick, in=100, out=180-100, distance=20] (vbl) to (vel);
\draw[black, thick] let \p{I}=(ShiftedMiddle) in (ShiftedMiddle) to (\x{I}, \y{I} + 20);
\node[circle,fill,inner sep=1] at (ShiftedMiddle) {};
\node[above, xshift=5] at (ShiftedMiddle) {$P$};
\end{tikzpicture}
\end{center}

Now assume that $U$ contains two virtual members of the chart that do not satisfy the $\widetilde{S}_{\lambda}$-condition, denote them by $b_h$ and $c_h$. Then there are the following possible configurations depending on the position of $b_h$ and $c_h$ with respect to the point $P$.
\begin{center}
\begin{tikzpicture}
\draw[|-|, black] (0,0)--(6,0) node[at start, below] {$U$};
\draw[|-|, black, very thick] (1,0)--(3,0) node[midway, below] {$b_h$};
\node[circle,fill,inner sep=1] at (3,0) {};
\node[below] at (3,0) {$P$};
\draw[-|, black, very thick] (3,0)--(5,0) node[midway, below] {$c_h$};
\end{tikzpicture}
\end{center}
\begin{center}
\begin{tikzpicture}
\draw[|-|, black] (0,0)--(6,0) node[at start, below] {$U$};
\draw[|-|, black, very thick] (0.5,0)--(2.5,0) node[midway, below] {$b_h$};
\node[circle,fill,inner sep=1] at (3,0) {};
\node[below] at (3,0) {$P$};
\draw[|-|, black, very thick] (3.5,0)--(5.5,0) node[midway, below] {$c_h$};
\end{tikzpicture}
\end{center}
\begin{center}
\begin{tikzpicture}
\draw[|-|, black] (0,0)--(6,0) node[at start, below] {$U$};
\draw[|-|, black, very thick] (1,0)--(3.3,0) node[midway, below] {$b_h$};
\node[circle,fill,inner sep=1] at (3,0) {};
\node[below] at (3,0) {$P$};
\draw[|-|, black, very thick] (3.7,0)--(5.5,0) node[midway, below] {$c_h$};
\end{tikzpicture}
\end{center}
\begin{center}
\begin{tikzpicture}
\draw[|-|, black] (0,0)--(6,0) node[at start, below] {$U$};
\draw[|-|, black, very thick] (0.5,0)--(2.1,0) node[midway, below] {$b_h$};
\node[circle,fill,inner sep=1] at (3,0) {};
\node[below] at (3,0) {$P$};
\draw[|-|, black, very thick] (2.7,0)--(5.3,0) node[midway, below] {$c_h$};
\end{tikzpicture}
\end{center}
\begin{center}
\begin{tikzpicture}
\draw[|-|, black] (0,0)--(6,0) node[at start, below] {$U$};
\draw[|-|, black, very thick] (1,0)--(3.4,0) node[midway, below] {$b_h$};
\node[circle,fill,inner sep=1] at (3,0) {};
\node[below] at (3,0) {$P$};
\draw[|-|, black, very thick] (3.4,0)--(5.5,0) node[midway, below] {$c_h$};
\end{tikzpicture}
\end{center}
\begin{center}
\begin{tikzpicture}
\draw[|-|, black] (0,0)--(6,0) node[at start, below] {$U$};
\draw[|-|, black, very thick] (0.5,0)--(2.7,0) node[midway, below] {$b_h$};
\node[circle,fill,inner sep=1] at (3,0) {};
\node[below] at (3,0) {$P$};
\draw[|-|, black, very thick] (2.7,0)--(5.3,0) node[midway, below] {$c_h$};
\end{tikzpicture}
\end{center}
\begin{center}
\begin{tikzpicture}
\draw[|-|, black] (0,0)--(6,0) node[at start, below] {$U$};
\draw[|-|, black, very thick] (1,0)--(3.5,0) node[midway, below] {$b_h$};
\node[circle,fill,inner sep=1] at (3,0) {};
\node[below] at (3,0) {$P$};
\draw[|-|, black, very thick] (3.2,0.1)--(5.5,0.1) node[midway, below] {$c_h$};
\end{tikzpicture}
\end{center}
\begin{center}
\begin{tikzpicture}
\draw[|-|, black] (0,0)--(6,0) node[at start, below] {$U$};
\draw[|-|, black, very thick] (0.5,0.1)--(2.7,0.1) node[midway, below] {$b_h$};
\node[circle,fill,inner sep=1] at (3,0) {};
\node[below] at (3,0) {$P$};
\draw[|-|, black, very thick] (2.4,0)--(5.3,0) node[midway, below] {$c_h$};
\end{tikzpicture}
\end{center}
\begin{center}
\begin{tikzpicture}
\draw[|-|, black] (0,0)--(6,0) node[at start, below] {$U$};
\draw[|-|, black, very thick] (1,0.1)--(3.2,0.1) node[midway, below] {$b_h$};
\node[circle,fill,inner sep=1] at (3,0) {};
\node[below] at (3,0) {$P$};
\draw[|-|, black, very thick] (2.8,0)--(5,0) node[midway, below] {$c_h$};
\end{tikzpicture}
\end{center}

Then first, as above, we perform a multi-turn $Lb_hR \mapsto \sum_{\substack{j = 1 \\ j\neq h}}^{k} Lb_jR$ from Proposition~\ref{safe_multi_turn} that reduces $b_h$. Since $\LM(b_j) \geqslant \tau$, in every monomial $Lb_jR$ the image of $c_h$ is the only virtual member of the chart that does not satisfy the $\widetilde{S}_{\lambda}$-condition. Denote the image of $c_h$ in the monomial $Lb_jR$ by $c_h^{(j)}$. So, in every monomial $Lb_jR$ we perform a multi-turn from Proposition~\ref{safe_multi_turn} that reduces $c_h^{(j)}$. As a result, we obtain a sum of monomials from $\widetilde{S}_{\lambda}$.

After the reduction of $c_h^{(j)}$, we glue a new $v$-diagram to the multiplication diagram obtained after the reduction of $b_h$ and obtain the final multiplication diagram. Let us illustrate the first three configurations of $b_h$ and $c_h$. The multiplication diagrams for the remain configurations are constructed similarly.
\begin{center}
\begin{tikzpicture}
\coordinate (Begin) at (-1,0);
\coordinate (Middle) at (2.4,0);
\coordinate (End) at (5.3,0);
\coordinate (vbl) at (1.4,0);
\coordinate (vel) at (Middle);
\draw[black, thick, in=80, out=180-90, distance=-10] (vbl) to (vel);
\draw[black, thick, in=100, out=180-100, distance=20] (vbl) to (vel);
\coordinate (vbr) at (Middle);
\coordinate (ver) at (3.4,0);
\draw[black, thick, in=180-90, out=80, distance=-10] (vbr) to (ver);
\draw[black, thick, in=100, out=180-100, distance=20] (vbr) to (ver);
\draw[black, thick] (Begin) to node[near start, above] {$L$} (vbl);
\draw[black, thick] (ver) to node[near end, above] {$R$} (End);
\draw[black, thick] let \p{I}=(Middle) in (Middle) to (\x{I}, \y{I} + 20);
\node[circle,fill,inner sep=1] at (Middle) {};
\node[right] at (Middle) {$P$};
\end{tikzpicture}
\end{center}
\begin{center}
\begin{tikzpicture}
\coordinate (Begin) at (-1,0);
\coordinate (Middle1) at (2.2,0);
\coordinate (Middle) at (2.4,0);
\coordinate (Middle2) at (2.6,0);
\coordinate (End) at (5.3,0);
\coordinate (vbl) at (1.2,0);
\coordinate (vel) at (Middle1);
\draw[black, thick, in=80, out=180-90, distance=-10] (vbl) to (vel);
\draw[black, thick, in=100, out=180-100, distance=20] (vbl) to (vel);
\coordinate (vbr) at (Middle2);
\coordinate (ver) at (3.6,0);
\draw[black, thick, in=180-90, out=80, distance=-10] (vbr) to (ver);
\draw[black, thick, in=100, out=180-100, distance=20] (vbr) to (ver);
\draw[black, thick] (Begin) to node[near start, above] {$L$} (vbl);
\draw[black, thick] (ver) to node[near end, above] {$R$} (End);
\draw[black, thick] (Middle1) to (Middle2);
\draw[black, thick] let \p{I}=(Middle) in (Middle) to (\x{I}, \y{I} + 20);
\node[circle,fill,inner sep=1] at (Middle) {};
\node[below] at (Middle) {$P$};
\end{tikzpicture}
\end{center}
\begin{center}
\begin{tikzpicture}
\coordinate (Begin) at (-1,0);
\coordinate (Middle1) at (2.5,0);
\coordinate (Middle2) at (2.7,0);
\coordinate (End) at (5.3,0);
\coordinate (vbl) at (1.4,0);
\coordinate (vel) at (Middle);
\draw[black, thick, in=80, out=180-90, distance=-10] (vbl) to (vel);
\draw[black, thick, in=100, out=180-100, distance=20] (vbl) to (vel);
\coordinate (vbr) at (Middle2);
\coordinate (ver) at (3.7,0);
\draw[black, thick, in=180-90, out=80, distance=-10] (vbr) to (ver);
\draw[black, thick, in=100, out=180-100, distance=20] (vbr) to (ver);
\draw[black, thick] (vel) to (vbr);
\draw[black, thick] (Begin) to node[near start, above] {$L$} (vbl);
\draw[black, thick] (ver) to node[near end, above] {$R$} (End);
\path[black, thick, in=100, out=180-100, distance=20] (vbl) to coordinate[pos = 0.8] (ShiftedMiddle) (vel);
\draw[black, thick] let \p{I}=(ShiftedMiddle) in (ShiftedMiddle) to (\x{I}, \y{I} + 20);
\node[circle,fill,inner sep=1] at (ShiftedMiddle) {};
\node[right] at (ShiftedMiddle) {$P$};
\end{tikzpicture}
\end{center}

So, finally we obtain the multiplication diagram as a thin triangle. The result of the multiplication is a sum of words from $\widetilde{S}_\lambda$ that correspond to certain paths in the low level of the obtained thin triangle.

Thereby, we proved the following theorem.
\begin{theorem}
\label{thin_triangles_theorem}
Assume $U_1 + \Ideal, U_2 + \Ideal \in \mathbb{Z}_2\Fr / \Ideal$, where $U_1$ and $U_2$ are monomials from $\widetilde{S}_\lambda$. Then there exist monomials $Z_1, \ldots, Z_k$ from $\widetilde{S}_\lambda$ such that $U_1U_2 + \Ideal = \sum_{i = 1}^{k} Z_i + \Ideal$, and $U_1$, $U_2$ and $Z_i$ form a thin triangle consists of $v$-diagrams for every $i = 1, \ldots, k$.
\end{theorem}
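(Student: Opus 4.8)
The proof is essentially the construction carried out in Section~\ref{multiplication_def}; here I outline the strategy. The plan is to run the reduction of $U_1U_2$ to a sum of $\widetilde{S}_\lambda$-words in two phases and to read the multiplication diagram off the history of that reduction. First I would perform all the cancellations in the product, writing $U_1 = U_1'U'$, $U_2 = {U'}^{-1}U_2'$ with $U = U_1'U_2'$ reduced; this step is recorded in the diagram by folding the two segments $U_1$ and $U_2$ at their meeting point $P$, which becomes the apex of the triangle. Since $U_1, U_2 \in \widetilde{S}_\lambda$, every virtual member of the chart inherited from the surviving part of $U_1$ or of $U_2$ still satisfies the $\widetilde{S}_\lambda$-condition, so the only place where $U$ can fail to lie in $\widetilde{S}_\lambda$ is a neighbourhood of $P$. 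Using the classification of how a cancellation affects the chart (Section~\ref{mt_configurations}, cases~\ref{all_separated2}--\ref{merge2}) together with the analysis in Section~\ref{stilde_mt}, one concludes that $U$ has at most two virtual members of the chart, say $b_h$ and $c_h$, that violate the $\widetilde{S}_\lambda$-condition --- morally one coming from $U_1$ and one from $U_2$ --- and if two occur they sit on opposite sides of $P$ in one of the finitely many configurations listed there.

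Next I would eliminate these offending virtual members using the safe multi-turns of Proposition~\ref{safe_multi_turn}: for $b_h$ there is an elementary multi-turn $b_h \mapsto \sum_{j\neq h} b_j$ with every $b_j$ a $\lambda$-semicanonical generalized fractional power of $\LM$-measure $\geqslant \tau$, hence always a virtual member of the chart. Performing the corresponding multi-turn $Lb_hR \mapsto \sum_{j\neq h} Lb_jR$, the bound $\LM(b_j) \geqslant \tau$ together with Corollary~\ref{virtual_members_stability} guarantees that the images of all the other virtual members of $U$ are again virtual members of the chart of $Lb_jR$, so these monomials lie in $\widetilde{S}_\lambda$ except possibly at the image of $c_h$. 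Iterating once more --- reducing the image $c_h^{(j)}$ of $c_h$ in each $Lb_jR$ by the appropriate safe multi-turn --- produces a sum $\sum_i Z_i$ of monomials all belonging to $\widetilde{S}_\lambda$, and since the supports of these multi-turns lie in $\Ideal$ (Proposition~\ref{the_ideal_characterisation2}) we get $U_1U_2 + \Ideal = \sum_i Z_i + \Ideal$. Geometrically, each such multi-turn is recorded by gluing to the bottom of the diagram, along its points $I$ and $F$, the $v$-diagram of the path type \eqref{path_type1}--\eqref{path_type3} corresponding to $b_h$ (resp.\ $c_h$); since at most two such $v$-diagrams are attached and they meet only along the short arcs near $P$, the resulting figure is a thin triangle built out of $v$-diagrams whose long side reads off the words $Z_i$, which is exactly the asserted shape.

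The main obstacle is the combinatorial bookkeeping in the second paragraph: one must verify that a product of two $\widetilde{S}_\lambda$-words can acquire at most two $\widetilde{S}_\lambda$-violating virtual members and locate them relative to $P$, and then check that eliminating the first one by a safe multi-turn does not create a new violation away from the image of the second. Both points rest on the fact that the safe multi-turns output occurrences of $\LM$-measure $\geqslant \tau$, so that Corollary~\ref{virtual_members_stability} applies and the remainder of the chart is left structurally intact; the finitely many configurations of $b_h$ and $c_h$ around $P$ then have to be run through, which is routine but lengthy. Everything else --- the cancellation step, the identification of the accumulated diagram with a thin triangle, and the verification that the resulting $Z_i$ genuinely belong to $\widetilde{S}_\lambda$ --- follows directly from the definitions and the cited results.
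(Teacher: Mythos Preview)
Your proposal is correct and follows essentially the same approach as the paper: cancel to obtain $U = U_1'U_2'$, observe via the classification in Section~\ref{mt_configurations} and the analysis of Section~\ref{stilde_mt} that at most two virtual members near $P$ can violate the $\widetilde{S}_\lambda$-condition (the merged case~\ref{merge2} giving a single violation $a_h$), then eliminate them with the safe multi-turns of Proposition~\ref{safe_multi_turn} and glue the corresponding $v$-diagrams to form the thin triangle. The paper's argument is exactly this two-step construction, with the case distinctions you allude to spelled out pictorially.
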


\begin{remark}
If we use not an elementary multi-turn from Proposition~\ref{safe_multi_turn}, but rather an arbitrary elementary multi-turn $a_h \mapsto \sum_{\substack{j = 1 \\ j\neq h}}^{k} a_j$ in order to reduce $a_h$ at \ref{multiturn_step}, some $a_j$ may be of $\LM$-measure $\leqslant \varepsilon$. In this case, we may obtain further merging of virtual members of the chart of $La_jR$. Namely, the cases~\ref{merge2} and~\ref{merge1} (pages~\pageref{merge2}, \pageref{merge1}), in which the neighbours of $a_j$ in the chart merge, are possible. For example, for~\ref{merge1} we have
\vspace{0.1cm}
\begin{center}
\begin{tikzpicture}
\coordinate (S) at (0,0);
\coordinate (Li) at (2, 0);
\coordinate (ai) at (3.5, 0);
\coordinate (Ri) at (4, 0);
\coordinate (Rf) at (5.5, 0);
\coordinate (E) at (7.5, 0);
\draw[|-, black] (S)--(Li);
\draw[|-,black, very thick] (Li)--(ai) node [midway, above] {$l$};
\draw[|-, black, very thick] (ai)--(Ri) node [midway, above] {$a_j$};
\draw[black, thick, in=50, out=180-50, distance=60] (ai) to coordinate[pos=0.5] (A) (Ri);
\draw[-|, black, thick] let \p1 = (A) in (A)--(\x1, \y1+25) node[near end, right] {$U^{\prime}$};
\draw[|-|,black, very thick] (Ri)--(Rf) node [midway, above, xshift=5] {$r$};
\draw[-|, black] (Rf)--(E);
\draw [thick, decorate, decoration={brace, mirror, amplitude=10pt, raise=5pt}] (Li) to (Rf);
\node[text width=3cm, align=center, below, yshift=-10] at (3.5, 0) {\footnotesize{\baselineskip=10pt merged virtual member of the chart\par}};
\end{tikzpicture}
\end{center}
\vspace{0.1cm}
If there exists $a_j = 1$, we may obtain cancellations in the resulting monomial that contain $a_j = 1$.

So, when we reduce $U_1U_2$, we obtain a sequence of \ref{cancellation_step} and \ref{multiturn_step}. We can think of it as of a branching process, starting from the monomial $U_1U_2$. After each execution of \ref{cancellation_step} we obtain one monomial, after each execution of \ref{multiturn_step} we obtain a sum of monomials. Each monomial in the sum is treated independently and potentially gives us a new branch of the process. Since we may obtain merging of virtual members of the chart or cancellations at most in one resulting monomial, the process stops in all branches except at most one after each step. From Proposition~\ref{filtration_finite_property}, it follows that finally the process stops in all branches.

For every branch of the process, we can construct a multiplication diagram in the same way as above. If we do an arbitrary multi-turn in order to reduce the monomials obtained after the end of the sequence of \ref{cancellation_step} and \ref{multiturn_step} in a branch, we may obtain a multiplication diagram that is not a thin triangle, but has a form of tree. Nevertheless, the reduction of these monomials can be done using multi-turns described in Proposition~\ref{safe_multi_turn}, and in this case the multiplication diagram for every branch of the process again is a thin triangle but of more complicated form
\begin{center}
\begin{tikzpicture}
\coordinate (Begin) at (-1,0);
\coordinate (vb) at (1.5,0);
\coordinate (ve) at (2.5,0);
\coordinate (End) at (5.3,0);
\draw[black, thick, in=90, out=180-90, distance=-10] (vb) to coordinate [pos=0.1] (vel1) (ve);
\path[black, thick, in=90, out=180-90, distance=-10] (vb) to coordinate [pos=0.9] (vbr1) (ve);
\path[black, thick, in=100, out=180-100, distance=20] (vb) to coordinate [pos=0.4] (vb1) (ve);
\path [black, thick, in=100, out=180-100, distance=20](vb) to coordinate [pos=0.6] (ve1) (ve);
\draw[black, thick, in=100, out=180-100, distance=20] (vb) to coordinate [pos=0.5] (vm1) (ve);
\coordinate (vbl) at (0.5,0);
\coordinate (vel) at (1.5,0);
\draw[black, thick, in=80, out=180-90, distance=-10] (vbl) to (vel1);
\draw[black, thick, in=100, out=180-100, distance=20] (vbl) to (vel);
\coordinate (vbr) at (2.5,0);
\coordinate (ver) at (3.5,0);
\draw[black, thick, in=180-90, out=80, distance=-10] (vbr1) to (ver);
\draw[black, thick, in=100, out=180-100, distance=20] (vbr) to (ver);
\draw[black, thick] (Begin)--(vbl);
\draw[black, thick] (ver)--(End);
\path[black, thick, in=60, out=180-60, distance=40] (vb1) to coordinate [pos=0.4] (vb2) (ve1);
\path [black, thick, in=60, out=180-60, distance=40](vb1) to coordinate [pos=0.6] (ve2) (ve1);
\draw[black, thick, in=60, out=180-60, distance=40] (vb1) to coordinate [pos=0.5] (vm2) (ve1);
\node[above] at (vm2) {$\vdots$};
\path[black, thick, in=60, out=180-60, distance=40] (vb2) to coordinate [pos=0.4] (vb3) (ve2);
\path [black, thick, in=60, out=180-60, distance=40](vb2) to coordinate [pos=0.6] (ve3) (ve2);
\path[black, thick, in=60, out=180-60, distance=40] (vb2) to coordinate [pos=0.5] (vm3) (ve2);
\path[black, thick, in=60, out=180-60, distance=40] (vb3) to coordinate [pos=0.4] (vb4) (ve3);
\path [black, thick, in=60, out=180-60, distance=40](vb3) to coordinate [pos=0.6] (ve4) (ve3);
\draw[black, thick, in=60, out=180-60, distance=40] (vb3) to coordinate [pos=0.5] (vm4) (ve3);
\draw[black, thick] (vb3) to [bend left] (ve3);
\draw[black, thick] let \p{I}=(vm4) in (vm4) to (\x{I}, \y{I} + 20);
\end{tikzpicture}
\end{center}
So, we obtain the multiplication diagram of the whole multiplication process as a set of thin triangles. The result of the multiplication is a sum of words from $\widetilde{S}_\lambda$ that correspond to certain paths in the low levels of the obtained thin triangles.
\end{remark}

\end{document}